\pgfplotsset{compat=newest}
\numberwithin{equation}{section}
\theoremstyle{definition}
\newtheorem{thm}{Theorem}[section]
\newtheorem{claim}[thm]{Claim}
\newtheorem{defn}[thm]{Definition}
\newtheorem{lem}[thm]{Lemma}
\newtheorem{prop}[thm]{Proposition}
\newtheorem{cor}[thm]{Corollary}
\newtheorem*{rmk}{Remark}
\newcommand{\R}{\mathbb{R}}  
\newcommand{\Sp}{\mathbb{S}}
\newcommand{\p}{\partial}  
\newcommand{\dif}{\textup{d}} 
\newcommand{\Hau}{\mathcal{H}} 
\newcommand{\sff}{\mathrm{I\!I}} 
\newcommand{\diam}{\textup{diam}} 
\newcommand{\dist}{\textup{dist}}
\newcommand{\sing}{\textup{sing }}
\newcommand{\spt}{\textup{supp }}
\begin{document}
\title[Energy Identity of Ginzburg-Landau approximation]{Energy Identity for Ginzburg-Landau approximation of harmonic maps}
\author{Xuanyu Li}
\address{Department of Mathematics, Cornell University, Ithaca, NY 14853, USA}
\email{xl896@cornell.edu}

\begin{abstract}
Given two Riemannian manifolds $M$ and $N\subset\R^J$, we consider the energy concentration phenomena of the penalized energy functional $$E_{\epsilon}(u)=\int_M\frac{\vert \nabla u\vert^2}{2}+\frac{F(u)}{\epsilon^2},u\in W^{1,2}(M,\R^J),$$where $F(x)=\dist(x,N)^2$ in a small tubular neighborhood of $N$ and is constant away from $N$. It was shown by Chen-Struwe that as $\epsilon\rightarrow0$, the critical points $u_{\epsilon}$ of $E_{\epsilon}$ with energy bound $E_{\epsilon}(u_{\epsilon})\leqslant\Lambda$ subsequentially converge weakly in $W^{1,2}$ to a weak harmonic map $u:M\rightarrow N$. In addition, we have the convergence of the energy density $$\left(\frac{\vert \nabla u_{\epsilon}\vert^2}{2}+\frac{F(u_{\epsilon})}{\epsilon^2}\right)\dif x\rightarrow\frac{1}{2}\vert \nabla u\vert^2\dif x+\nu,$$ and the defect measure $\nu$ above is $(\dim M-2)$-rectifiable. Lin-Wang showed that if $N$ is a sphere or $\dim M=2$, then the density of $\nu$ can be expressed by the sum of energies of harmonic spheres. In this paper, we prove this result for an arbitrary $M$ using the idea introduced by Naber-Valtorta.
\end{abstract}
\maketitle
\tableofcontents
\section{Introduction}\label{s: intro}
Fix smooth compact manifolds $M^m$ and $N^n$. For convenience, we assume $N$ is isometrically embedded in $\R^J$ for some large $J$ and $n\geqslant2$. Then there exists a small $\gamma>0$ such that $\dist(x,N)^2$ is smooth on $\lbrace x:\dist(x,N)<\gamma\rbrace$. In this paper, we are interested in the limiting behavior of following penalized energy functional
$$E_{\epsilon}(u)=\int_Me_{\epsilon}(u),e_{\epsilon}(u)=\frac{\vert \nabla u\vert^2}{2}+\frac{F(u)}{\epsilon^2},u\in W^{1,2}(M,\R^J),$$where 
$$F(x)=F(d^2(x,N))=\begin{cases}
        d^2(x,N),&d(x,N)<\gamma;\\
        4\gamma^2,&d(x,N)\geqslant2\gamma.
\end{cases}$$
This energy functional was first introduced by Chen-Struwe \cite{ChenStruweexistenceharmonicmapflow} to study the existence of harmonic map flow. By standard elliptic estimates, the critical points $u_{\epsilon}\in W^{1,2}(M)$ are smooth and satisfies the following equation 
\begin{equation}
    \Delta u_{\epsilon}=\frac{f(u_{\epsilon})}{\epsilon^2},\quad \text{where } f=\nabla F.\tag{*}\label{*}
\end{equation}
Let $u_{\epsilon_i}$ be a sequence of solutions to (\ref{*}) with energy bound $E_{\epsilon_i}(u_{\epsilon_i})\leqslant\Lambda$. As shown in \cite{ChenStruweexistenceharmonicmapflow}, if $\epsilon_i\rightarrow0$, then $u_{\epsilon_i}$ subconverges weakly in $W^{1,2}$ to a weakly harmonic map $u$. The energy loss in this convergence is quantitatively characterized by the convergence of energy measure
$$e_{\epsilon_i}(u_{\epsilon_i})\dif x\rightarrow\frac{1}{2}\vert \nabla u\vert^2\dif x+\nu,$$
where the defect measure $\nu=\theta(x)\Hau^{m-2}\llcorner\Sigma$ is $(m-2)$-rectifiable. This is called \emph{energy concentration} in the literature. There are two known facts shown by Lin-Wang in \cite{Linwangharmonicsphere99,Linwangharmonicsphere02}. First, for a general target $N$, the partial energy identity is true:$$\theta(x)\geqslant\sum_i \frac{1}{2}\int_{\Sp^2}\vert \nabla u_i\vert^2,$$
where $u_i$ are harmonic maps from $\Sp^2$ to $N$, usually referred as \emph{harmonic spheres}. Second, in the case of $\dim M=2$ or $N$ is a standard sphere, we have a full identity
$$\theta(x)=\sum_i \frac{1}{2}\int_{\Sp^2}\vert \nabla u_i\vert^2.$$
In this paper, we are going to prove such an \emph{energy identity} for general target $N$.
\begin{thm}\label{main theorem}
    Let $u_{\epsilon_i}:M\rightarrow \R^J$ be a sequence of solutions to (\ref{*}) with $\int_Me_{\epsilon_i}(u_{\epsilon_i})\leqslant\Lambda$. Suppose that as $\epsilon_i\rightarrow0$, $u_{\epsilon_i}$ converges weakly to a harmonic map $u:M\rightarrow N$ with the defect measure $\nu$. For $\Hau^{m-2}$ a.e. $x\in\spt\nu$, there exists harmonic spheres $\lbrace u_j\rbrace_{j=1}^N$ such that the density of $\nu$ at $x$ satisfies 
    $$\theta_{\nu}(x)=\sum_{j=1}^N\frac{1}{2}\int_{\Sp^2}\vert \nabla u_j\vert^2.$$
\end{thm}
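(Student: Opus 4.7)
The strategy is to work at $\Hau^{m-2}$-generic points of $\spt\nu$, perform a careful rescaling, and apply the Naber-Valtorta quantitative stratification to reduce the analysis near such a point to an essentially two-dimensional bubble-tree problem, after which the case $\dim M=2$ already handled by Lin-Wang \cite{Linwangharmonicsphere02} yields the sought harmonic spheres.

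\textbf{Generic base point and blow-up.} First I would choose an $\Hau^{m-2}$-generic $x_0\in\spt\nu$ enjoying all the regularity one expects from $(m-2)$-rectifiability: the density $\theta_\nu(x_0)$ exists as a genuine limit, the unique tangent measure of $\nu$ at $x_0$ is $\theta_\nu(x_0)\Hau^{m-2}\llcorner V$ for some $(m-2)$-plane $V\subset T_{x_0}M$, and the weak harmonic limit $u$ satisfies $r^{2-m}\int_{B_r(x_0)}|\nabla u|^2\to 0$. Pick a sequence $r_i\downarrow 0$ going to zero slowly enough so that $\tilde\epsilon_i:=\epsilon_i/r_i\to 0$, and define $\tilde u_i(y):=u_{\epsilon_i}(x_0+r_iy)$. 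Then $\tilde u_i$ solves (\ref{*}) on larger and larger balls with parameter $\tilde\epsilon_i$, and along a subsequence $\tilde u_i\rightharpoonup u_\infty$ in $W^{1,2}_{\mathrm{loc}}$ with associated defect measure $\theta_\nu(x_0)\Hau^{m-2}\llcorner V$ on every compact set.

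\textbf{Quantitative $(m-2)$-symmetry.} The heart of the Naber-Valtorta method is a quantitative symmetry statement: for each fixed $\eta>0$, the rescaled map $\tilde u_i$ on $B_r(0)$ is $(m-2,\eta)$-symmetric --- i.e.\ $L^2$-close to a configuration invariant under translations along an $(m-2)$-plane --- for every scale $r\in(\tilde\epsilon_i,1)$ outside a set of $\log r$-measure bounded independently of $i$. This would rest on the almost-monotonicity of the normalized Chen-Struwe energy $r^{2-m}E_\epsilon(u_\epsilon;B_r(x))$, the standard $\epsilon$-regularity of \cite{ChenStruweexistenceharmonicmapflow}, and a cone-splitting lemma tailored to (\ref{*}): at the generic $x_0$ the limit density is maximal, forcing any collection of approximate invariance directions to span an $(m-2)$-plane which must coincide with $V$. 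On such good scales the energy of $\tilde u_i$ on an annulus $A(0;s,2s)$ factorizes, up to $o(1)$, as $\Hau^{m-2}(V\cap A)$ times a 2-dimensional energy of a transverse slice.

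\textbf{Reduction to 2D and extraction of bubbles.} The previous factorization identifies $\theta_\nu(x_0)$ with the 2-dimensional defect of $\tilde u_i$ restricted to a slice transverse to $V$, which is a sequence of approximate solutions of 2D Ginzburg-Landau concentrating at a single point. The 2D energy identity of Lin-Wang \cite{Linwangharmonicsphere02} then produces a finite bubble tree of nonconstant harmonic spheres $u_1,\dots,u_N:\Sp^2\to N$ whose total energy $\sum_j\frac12\int_{\Sp^2}|\nabla u_j|^2$ equals the 2D defect, which gives the claimed identity for $\theta_\nu(x_0)$.

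\textbf{Main obstacle.} The technically hardest step is the quantitative $(m-2)$-symmetry valid at all scales down to $\tilde\epsilon_i$. For genuine harmonic maps the Naber-Valtorta scheme relies on exact monotonicity and exact compactness of tangents; here both hold only modulo $\epsilon$-corrections, and one must worry that neck regions between bubbles could hide defect at scales below $\epsilon$. Overcoming this requires an $\epsilon$-regularity theorem for approximately $(m-2)$-symmetric solutions of (\ref{*}) --- essentially, if $\tilde u_i$ is $\eta$-symmetric on $B_r$ with $r\gg\tilde\epsilon_i$ and the normalized energy is small, then it is smoothly close to a harmonic map on $B_{r/2}$ --- together with a careful neck decomposition ensuring that all defect not captured by the 2D bubbles vanishes in the limit. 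Bridging the Chen-Struwe regularization theory and the Naber-Valtorta symmetry technology in this way is where the bulk of the new technical work should reside.
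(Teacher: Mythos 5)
Your overall framework is the same as the paper's: work at $\Hau^{m-2}$-generic points, blow up, use quantitative $(m-2)$-symmetry to reduce to a two-dimensional bubble--neck decomposition, and capture the density by the energies of the harmonic spheres in the bubbles. So far so good. But there is a genuine gap: your ``main obstacle'' paragraph \emph{identifies} the key difficulty (``a careful neck decomposition ensuring that all defect not captured by the 2D bubbles vanishes'') without saying anything about how to overcome it, and that step is essentially the entire technical content of the theorem. In the paper this is Theorem \ref{vanishing energy in annular regions}, proven over Sections \ref{s:best planes}--\ref{s:radial energy}: one constructs local best planes $\mathcal{L}_{x,r}$ (minimizing a heat-mollified tangential energy), glues them into a one-parameter family of best approximating submanifolds $\mathcal{T}_r$ solving $\Pi^\perp_{x,r}\nabla\bar\Theta(x,r)=0$, decomposes the annular energy into tangential, angular and radial pieces, controls the angular piece by a super-convexity estimate for the conformal Laplacian of the slice angular energy, and kills the cross-derivative term in the radial energy by integrating by parts against the Euler--Lagrange equation of $\mathcal{T}_r$. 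None of this machinery appears in your proposal, and it is not an application of Naber--Valtorta's exact-monotonicity scheme as you suggest; it requires a genuinely new best-submanifold construction adapted to the parabolic heat mollifier and to the Ginzburg--Landau term $F(u_\epsilon)/\epsilon^2$.

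Two smaller but substantive issues. First, your rescaling step is too cavalier: choosing $r_i$ so that $\tilde\epsilon_i=\epsilon_i/r_i\to 0$ does not eliminate the parameter, because the bubble scales $s_{i,a,b}$ found inside $B_{r_i}(x_0)$ can themselves be comparable to or smaller than $\epsilon_i$, and at those scales the further-rescaled maps converge to nontrivial solutions of $\Delta v = f(v)/\epsilon^2$ on $\R^m$ or to entire harmonic functions rather than to harmonic maps. One must separately rule these out (there are no nontrivial finite-energy such solutions in two dimensions), which the paper does explicitly --- see the proof of Claim \ref{c2 in annular region existence} and Lemma \ref{angular technical lem1} --- but your sketch silently assumes it. Second, the Lin--Wang 2D energy identity applies to exact two-dimensional harmonic maps, not to transverse slices of an $m$-dimensional solution; you need the smooth convergence on bubble regions established in Theorem \ref{bubble region properties} to legitimately transfer the two-dimensional bubble tree theory, and that in turn relies on the annular-region smallness you have not proved.
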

Since the result is local in nature, we only present the proof for the special case where $M$ is a domain in $\R^m$. \par

\vspace{1em}

The phenomenon of energy concentration plays an important role in the study of harmonic maps and minimal surfaces. In dimension two, this phenomenon was first deployed by Sacks-Uhlenbeck for their $\alpha$-harmonic maps \cite{SUminimalsphere}. Later, Jost \cite{Jostvariationalproblems} and Parker \cite{Parkerbubbletree} proved the full energy identity, as well as the bubble tree convergence for 2-dimensional harmonic maps. See also \cite{BreinerLakzianbubbletreemetricspace,DingTianenergyidentity,LiZhuSUenergyidentity,LWharmonicmapflow98,WWZapproxharmonicmapenergyidentity} for related work on harmonic map flows, approximate harmonic maps. We also refer the reader to \cite{ColdingMinicozziwidthofricciflow,Zhouminmaxtorus,Zhouhighgenusminmaxsurface} for the applications of energy identity on the existence of minimal surfaces.\par

In higher dimensions, the analysis of such concentration phenomenon becomes much more complicated. This is practically because the concentration set is no longer a discrete set, and non-isolated singularities may occur. The first breakthrough was made by Lin \cite{Linharmonic99}. 
It was proved that the defect measure $\nu$ is $(m-2)$-rectifiable and the partial energy identity was established. The next progress was made by Lin-Riviere \cite{LinRiviereenergyidentity}. They showed that the energy identity is true if a sequence of harmonic map is uniformly bounded in $W^{2,1}$ and verified this condition for harmonic maps to a standard sphere. Riviere \cite{Riviereyangmillsenergyidentity} showed a similar result for the Yang-Mills connections \cite{Riviereyangmillsenergyidentity}. If the domain of a harmonic map is 2-dimensional, the $W^{2,1}$ estimate was established by Lamm-Sharp \cite{LammSharpW12conjecture}, but it remains open in higher dimensions.\par

It turns out that proving a full energy identity in high dimensions is extremely difficult. For Yang-Mills connections, this result was established by Naber-Valtorta \cite{NVyangmills}. Very recently, Naber-Valtorta proved the full energy identity for harmonic maps \cite{naber2024energyidentitystationaryharmonic}.\par

\vspace{1em}

The Ginzburg-Landau approximation is a useful tool in the study, especially the existence theorem, of harmonic maps. Chen-Struwe \cite{ChenStruweexistenceharmonicmapflow} first used the Ginzburg-Landau approximation to prove the existence of harmonic map flow starting from an arbitrary map. Very recently, Karpukhin-Stern \cite{KSminmaxharmonic24,KSharmonic24} introduced a novel min-max procedure for the penalized energy to derive the existence of harmonic maps for high dimensional domains. The disadvantage of Ginzburg-Landau approximation is that, in order to derive the existence of stationary harmonic maps, one must assume the target manifold do not admit certain harmonic spheres to avoid the energy concentration in the convergence scheme. On the other hand, a general target manifold may admit harmonic sphere. We wish to understand further what is happening in the general case. In particular, if the energy concentration does exist, if it is possible to still derive the existence of stationary harmonic sphere as in \cite{KSminmaxharmonic24,KSharmonic24}. Proving a full energy identity is always a first step in the study of fine structure of defect measure and will help us understand better the behavior of energy functional.

\subsection{Sketch of the proof}
Suppose we have a sequence of solutions to (\ref{*}) $u_{\epsilon}$ with $E_{\epsilon}(u_{\epsilon})\leqslant\Lambda$ which converges weakly to a harmonic map $u$ with 
$$e_{\epsilon}(u_{\epsilon})\dif x\rightarrow\frac{1}{2}\vert\nabla u\vert^2\dif x+\nu,\textup{ as }\epsilon\rightarrow0.$$ 
The defect measure $\nu$ is $(m-2)$-rectifiable by Lin-Wang \cite{Linharmonic99,Linwangharmonicsphere02}. 
Moreover, by the $\varepsilon$-regularity of solutions to (\ref{*}), $\Hau^{m-2}(\sing u)=0$. Hence, for $\Hau^{m-2}$ a.e. $x\in \spt(\nu)$, there exists a unique tangent plane $L$ of $\nu$ and $u$ is smooth at $x$. Rescaling each $u_{\epsilon}$ at $x$, we may assume $u_{\epsilon}$ converges weakly to a constant together with the convergence of energy measure 
$$e_{\epsilon}(u_{\epsilon})\dif x\rightarrow\theta\Hau^{m-2}\llcorner L.$$
We want to show that $\theta$ is a sum of energies of harmonic spheres.
There are two parts of the proof. We mainly make use of the ideas and arguments of Naber-Valtorta in their energy identity papers for harmonic maps and Yang-Mills connections \cite{NVyangmills,naber2024energyidentitystationaryharmonic}. To corporate with the Ginzburg-Landau setting, We have the following major difference and improvement.

\begin{itemize}
    \item The annular region needed in this paper is slightly different from that in \cite{NVyangmills}. In particular, the annular region in \cite{NVyangmills} is the annular neighborhood of a discrete set $\mathcal{C}$ while the annular region introduced in \cite{naber2024energyidentitystationaryharmonic} and in this paper is the annular neighborhood of an $(m-2)$-submanifold $\mathcal{T}$. We use the argument in \cite{NVyangmills} together with a gluing argument to provide full details on the construction of annular region we need.
    \item There are certain differences between the convergence scheme of harmonic maps and Ginzburg-Landau approximation. In the construction of bubble region, if we have a sequence of harmonic maps $u_i$ and we blow up at points $x_i$ and scale $r_i$, then $v_i=(u_i)_{x_i,r_i}$ still converges to a harmonic map. On the other hand, if we have a sequence of Ginzburg-Landau approximation $u_{\epsilon_i}$, the blow up $v_{\epsilon_i/r_i}=(u_{\epsilon_i})_{x_i,r_i}$ may not converge to a harmonic map. Instead, it may converge to a solution to (\ref{*}) if $\epsilon_i/r_i\rightarrow\epsilon>0$, or a harmonic function on $\R^m$ if $\epsilon_i/r_i\rightarrow\infty$. We rule out the latter two situations by observing that there is no nontrivial finite energy solution to (\ref{*}) and harmonic function. Similar observations was first developed by Lin-Wang \cite{Linwangharmonicsphere99}.
    \item There is a penalized term $F(u_{\epsilon})/\epsilon^2$ in the energy which may create extra difficulty when we are proving the energy in annular region is small. However, we will see in Section \ref{s: pre} that this term together with $L$-energy forms the $(m-2)$-symmetric of Ginzburg-Landau approximation. When the solution to (\ref{*}) exhibits high $(m-2)$-symmetry, this term is naturally small. 
\end{itemize}
We will delve into these in details in the following.

\vspace{1em}

The first part is a quantitative bubble decomposition Theorem. Near the plane $L$, $u_{\epsilon}$ finally exhibits almost $(m-2)$-symmetries. As a result, if we look at small regions around $L$, then $u_{\epsilon}$ looks like a 2-dimensional harmonic map on $L^{\perp}$ in the strong sense. Those regions are called bubble regions. The energy of $u_{\epsilon}$ in the bubble region is captured by the energy of the harmonic spheres. The regions between bubble regions are called annular regions. \par 

In Theorem \ref{bubble region existence}, \ref{annular region existence} and \ref{Quantitative bubble decomposition}, we use the ideas from \cite{NVyangmills} to decompose $B_1\cap L$ into bubble regions and annular regions effectively. In Theorem \ref{annular region existence}, we construct the annular regions in the maximal sense, which means that these annular regions almost capture all the spaces between bubbles where the map $u_{\epsilon}$ looks trivial. Moreover, the annular regions built here are of the form $\mathcal{A}=B_1(p)\setminus\overline{B_{\mathfrak{r}_x}(\mathcal{T})}$, where $\mathcal{T}$ is a submanifold with small curvature and $\mathfrak{r}_x$ is a radial function with small derivative. The formal definition we use here was introduced in \cite{naber2024energyidentitystationaryharmonic} and will provide much convenience in further discussion. \par

The second part is to show that the energy in annular regions is small enough. After constructing the bubble regions and annular regions, it is not hard to see the well-known energy inequality 
$$\theta\geqslant\sum_j\frac{1}{2}\int\vert\nabla u_j\vert^2,$$
where $u_j$ are harmonic spheres.  The energy in annular region $\mathcal{A}$ is naturally decomposed into 3 parts:
\begin{equation}
    \begin{aligned}
        \int_{\mathcal{A}}e_{\epsilon}(u_{\epsilon})&=\frac{1}{2}\left(\int_{\mathcal{A}}\left\vert\Pi_{L^{\perp}}\nabla u_{\epsilon}\right\vert^2+\left\vert\Pi_{L}\nabla u_{\epsilon}\right\vert^2+2F(u_{\epsilon})/\epsilon^2\right)\\&=\frac{1}{2}\left(\int_{\mathcal{A}}\left\vert\nabla_{n_{L^{\perp}}}u_{\epsilon}\right\vert^2+\left\vert\nabla_{\alpha_{L^{\perp}}}u_{\epsilon}\right\vert^2+\left\vert\Pi_{L}\nabla u_{\epsilon}\right\vert^2+2F(u_{\epsilon})/\epsilon^2\right).
    \end{aligned}\nonumber
\end{equation}
Here $n_{L^{\perp}}$ is the radial direction and $\alpha_{L^{\perp}}$ is the angular direction in $L^{\perp}$ respectively. The tangential energy $$E_L(u_{\epsilon})=\int_{\mathcal{A}}\left\vert\Pi_{L}\nabla u_{\epsilon}\right\vert^2+2F(u_{\epsilon})/\epsilon^2,$$vanishes naturally due to the $(m-2)$-symmetric of $u_{\epsilon_i}$. The angular energy is also not hard to control. To see this, if we set
$$e_{\alpha}(x)=s_{L^{\perp}}^2\int_{\alpha_{L^{\perp}}\in S^1}\left\vert\nabla_{\alpha_{L^{\perp}}} u_{\epsilon_i}(y)\right\vert^2\dif\alpha_{L^{\perp}},$$where $y\in x+L^{\perp}$ and we write the polar coordinate in $x+L^{\perp}$ as $s_{L^{\perp}}$ and $\alpha_{L^{\perp}}$, then we can verify the following is true in the annular region
$$\bar{\Delta} e_{\alpha}(x)\geqslant e_{\alpha}(x),\textup{ for conformal Laplacian }\bar{\Delta}=\left(s_{L^{\perp}}\frac{\p}{\p s_{L^{\perp}}}\right)^2+s_{L^{\perp}}^2\Delta_L.$$
Integrating above yields the desired estimate for angular energy. In dimension 2, this fact is well known and widely used to prove the energy identity. It was first used by Parker in \cite{Parkerbubbletree}. Very recently, Naber-Valtorta \cite[section 9]{naber2024energyidentitystationaryharmonic} generalized this fact to high dimensions.\par

The radial energy $$E_n(u_{\epsilon})=\int_{\mathcal{A}}\left\vert\nabla_{n_{L^{\perp}}}u_{\epsilon}\right\vert^2$$ is the most challenging part to control. Fix a positive cut-off function $\phi$, suppose $\Pi_{L^{\perp}}\nabla\phi(y)=\varphi(y)\Pi_{L^{\perp}}(y),\varphi\leqslant0$, multiply (\ref{*}) by $\phi(y-x)\nabla_{\Pi_L^{\perp}(y-x)}u_{\epsilon}$ and integrate by parts, we get
\begin{equation}
    \begin{aligned}
        &\int\phi(y-x)\left(\vert\Pi_L\nabla u_{\epsilon}\vert^2+2\frac{F(u_{\epsilon})}{\epsilon^2}\right)-\frac{1}{2}\varphi(y-x)\left\vert\Pi_{L^{\perp}}(y-x)\right\vert^2\vert\nabla_{n_{L^{\perp}}}u_{\epsilon}\vert^2\\=&\frac{1}{2}\int-\varphi(y-x)\vert\Pi_{L^{\perp}}(y-x)\vert^2\left(\left(\vert\Pi_L\nabla u_{\epsilon}\vert^2+2\frac{F(u_{\epsilon})}{\epsilon^2}\right)+\vert\nabla_{\alpha_{L^{\perp}}}u_{\epsilon}\vert^2\right)\\&+\langle\nabla_{\Pi_L\nabla\phi}u_{\epsilon},\nabla_{\Pi_{L^{\perp}}(y-x)}u_{\epsilon}\rangle.
    \end{aligned}\label{almost conformality}
\end{equation}
This equation will provide control of the radial energy by the angular energy. 
In dimension 2, this equation is simply the conformality
$$\int_{\p B_r(x)}\left(\vert\nabla_\alpha u_{\epsilon}\vert^2+2F(u_{\epsilon})/\epsilon^2\right)=\int_{\p B_r(x)}\vert\nabla_n u_{\epsilon}\vert^2+\frac{1}{r}\int_{B_r(x)}F(u_{\epsilon})/\epsilon^2.$$
One can immediately control the radial energy from this identity and the estimate of angular energy.\par

In higher dimensions, however, there are many extra terms in (\ref{almost conformality}). The first term on the right side is a good term. As we would see in section \ref{s:radial energy}, this is a derivative term and can be easily controlled. The difficult part is the cross derivative term
\begin{equation}
    \int\langle\nabla_{\Pi_L\nabla\phi}u_{\epsilon},\nabla_{\Pi_{L^{\perp}}(y-x)}u_{\epsilon}\rangle.\label{sketch1}
\end{equation}
A priori, there is no smallness condition on this part. In particular, we cannot make $\Pi_L\nabla\phi$ supported away from $\mathcal{T}$. Consequently, it is impossible to directly use the Cauchy inequality to break down this term into $L$ and $L^{\perp}$ parts so as to control it. This is essentially because the $L^{\perp}$ part of energy is concentrated near $\mathcal{T}$. \par

We make use of the new techniques introduced in \cite{naber2024energyidentitystationaryharmonic} to handle this term. The basic tool is the heat mollified energy introduced in \cite{naber2024energyidentitystationaryharmonic}, which is defined as $$\bar{\Theta}(x,r)=\int r^2\rho_r(y-x)e_{\epsilon}(u_{\epsilon})(y)\dif y.$$
Here $\rho_r(x)$ is almost the heat kernel $r^{-m}e^{-\vert y\vert^2/(2r^2)}$ although it is modified to compactly supported in a large ball $B_{R+2}$. The strength of heat mollified energy is that it can provide much convenience when we want to exchange derivatives due to the fact that $\rho_r\approx \dot{\rho}_r$. The heat mollified energy is widely used in the study of flows, but this version for stationary maps was first introduced by Naber-Valtorta in \cite{naber2024energyidentitystationaryharmonic}.\par

The first step towards the solution is the construction of local best planes $\mathcal{L}_{x,r}$. For $x\in\mathcal{A}$ and $r\geqslant2\dist(x,\mathcal{T})$, this plane is defined as $$\mathcal{L}_{x,r}=\arg\min_{L}\int r^2\rho_r(y-x)\left\vert\Pi_L\nabla u_{\epsilon}\right\vert^2,$$
which satisfies Euler-Lagrange equation $$\int\rho_r(y-x)\langle\nabla_vu_{\epsilon},\nabla_wu_{\epsilon}\rangle=0,\textup{ for all }v\in\mathcal{L}_{x,r},w\in\mathcal{L}_{x,r}^{\perp}.$$Moreover, $\mathcal{L}_{x,r}$ is unique, close to $L$ and has good estimates as a function of $x$ and $r$. By gluing together $\mathcal{L}_{x,r}$ for different $x$, we build the best approximating submanifolds $\mathcal{T}_r$. The construction here guarantees that $\mathcal{T}_r$ are all close to $\mathcal{T}$ the union of all $B_1(p)\setminus B_r(\mathcal{T}_r)$ for $r\geqslant\mathfrak{r}_x$ contains $\mathcal{A}$. The most important property for $\mathcal{T}_r$ is its Euler-Lagrange equation
$$\int-\dot{\rho}_r(y-x)\langle\nabla_{y-x}u_{\epsilon},\nabla_vu_{\epsilon}\rangle=0,\textup{ for all }v\in\mathcal{L}_{x,r}^{\perp}.$$
This equation finally allows us to turn an $\mathcal{L}_{x,r}^{\perp}$ derivative into a $\mathcal{L}_{x,r}$ derivative. On the other hand, since the tangent space of $\mathcal{T}_r$ at $x$ is almost $\mathcal{L}_{x,r}$, we can integrate by parts in (\ref{sketch1}) to show this is in fact a negligible term. See Section \ref{s:radial energy} for details.

\subsection{Outline of the paper}

In Section \ref{ss:harmonic map intro} and $\ref{ss:Ginzburg-Landau intro}$ we introduce the concepts and basic properties for solutions of (\ref{*}). In Section \ref{ss:heat mollified energy} we introduce the heat mollified energy for solutions of (\ref{*}). As already seen in \cite{naber2024energyidentitystationaryharmonic}, this is a powerful tool in the study of geometric variational problems. In section \ref{ss: cone splitting}, we establish various versions of cone splitting results. Roughly speaking, those results assert that the 0-symmetries of a map at independent points add up to $k$-symmetry of the map. Finally, in section \ref{ss:symmetries} we use compactness argument to derive the corresponding symmetries of defect measures.\par
Section \ref{s:annular and bubble regions} introduces and proves the existence of annular regions and bubble regions. Also, we show how to decompose $L\cap B_1$ into annular regions, bubble regions and small negligible sets. Finally, in this section we show how to use the fact that annular regions only admit small energy to prove the quantitative energy identity \ref{Quantitative energy identity}. Theorem \ref{main theorem} is an easy corollary of quantitative energy identity.\par
Section \ref{s:best planes}-\ref{s:radial energy} are devoted to prove there is no energy in annular regions. In Section \ref{s:best planes} and \ref{s:best submanifold} we build the local best planes and best approximating submanifolds and prove the basic estimates for them. In Section \ref{s:energy decomposition} we formally introduce the tree parts of energy: tangential, angular and radial part of energy as discussed in the sketch of proof. We show that those three parts of energy add up to give the full energy in the annular region.\par
Finally, we show three parts of energy are all small in the annular region. In Section \ref{s:angular energy} we establish the desired super-convexity of angular energy and use this property to show the smallness of angular energy. In Section \ref{s:radial energy} we prove the smallness of the tangential and radial parts of the energy. In particular, we prove how to use the equations of local best planes and best approximating submanifolds to write (\ref{sketch1}) as good terms.

\subsection*{Acknowledgment}
The author would like to thank Xin Zhou and Daniel Stern for their many valuable discussions and constant support. The author would like to thank Aaron Naber for a detailed discussion on his work of harmonic maps. The author would also like to thank Zhihan Wang for his interest in this work and discussion on related topics. 

\section{preliminaries}\label{s: pre}
In this section we introduce the basic definitions in this paper and fix some notations. Throughout this paper, we assume $N^n$ with $n\geqslant2$ is isometrically embedded in $\R^J$ with curvature bound
$$\vert\sec_N\vert\leqslant K_N^2,\textup{inj}(N)\geqslant K_N^{-1},\diam(N)\leqslant K_N.$$
The domain of our map will always be in the open subset of Euclidean space $\R^m$ and the covariant derivative on $\R^m$ will be denoted by $\nabla$. 

\subsection{Harmonic maps}\label{ss:harmonic map intro}
The usual energy functional for maps in $W^{1,2}(M,N)$ is
$$E(u)=\frac{1}{2}\int_M\vert \nabla u\vert^2,u\in W^{1,2}(M,N).$$
It is natural to consider the variational problem associated to $E$. If $u$ is a critical point to $E$ with respect to compact variations, we say $u$ is a (stationary) \emph{harmonic map}.\par 
In general, a harmonic map may not be smooth but can have $(m-2)$-rectifiable singular set. In the case $m=2$, the harmonic maps are well studied and have the following nice estimate.
\begin{thm}\label{2d estimate}
    Let $u: B_1^2\rightarrow N$ be a harmonic map with $\int_{B_1}\vert \nabla u\vert^2\leqslant\Lambda$. Then $u$ is smooth. Moreover, on each annulus $B_{2R}(p)\setminus B_r(p)$ with $R\geqslant r$, set 
    $$\delta=\max_{B_{2R}(p)\setminus B_R(p)}\vert x-p\vert^2\vert \nabla u\vert^2+\max_{B_{2r}(p)\setminus B_r(p)}\vert x-p\vert^2\vert \nabla u\vert^2.$$There exists $\varepsilon_0,\alpha>0$ depending only on $K_N$ such that if 
    $$\vert x-p\vert^2\vert \nabla u\vert^2\leqslant\varepsilon_0\textup{ on }B_{2R}(p)\setminus B_r(p),$$ then we have the improved estimate: $$\vert x-p\vert^2\vert \nabla u\vert^2\leqslant C(K_N)\left(\left(\frac{\vert x-p\vert}{2R}\right)^{\alpha}+\left(\frac{r}{\vert x-p\vert}\right)^{\alpha}\right)\delta.$$
\end{thm}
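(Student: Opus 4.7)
The plan is to first invoke H\'elein's regularity theorem to obtain smoothness, and then reduce the annular decay estimate to a three-annulus lemma on the cylindrical model of the annulus coming from conformal invariance in dimension two.

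For the smoothness, recall that in dimension two every $W^{1,2}$ weakly harmonic map into an arbitrary closed $N\subset\R^J$ is smooth by H\'elein's moving-frame argument, combined with the standard $\varepsilon$-regularity of Sacks--Uhlenbeck which locally turns smallness of $\int|\nabla u|^2$ into the pointwise bound $s^2|\nabla u|^2(q)\leqslant C\int_{B_s(q)}|\nabla u|^2$; the a priori bound $\Lambda$ plays no role for this local conclusion.

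For the decay, I would pull $u$ back by the conformal map $(t,\theta)\mapsto p+e^t(\cos\theta,\sin\theta)$, which identifies the flat cylinder $[\log r,\log(2R)]\times S^1$ with the annulus $B_{2R}(p)\setminus B_r(p)$. By conformal invariance, $v(t,\theta):=u(p+e^t(\cos\theta,\sin\theta))$ is again harmonic on the cylinder with the flat metric and
$$|x-p|^2|\nabla u|^2(x)=|v_t|^2(t,\theta)+|v_\theta|^2(t,\theta).$$
The hypothesis becomes $|\nabla v|^2\leqslant\varepsilon_0$ on the whole cylinder, while $\delta$ corresponds to $\max|\nabla v|^2$ on the two unit-length end segments. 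The conclusion is equivalent to exponential decay of $|\nabla v|^2$ away from the two ends. I would argue this via the three-annulus lemma: there exist $\varepsilon_0>0$ and $q<1/2$ depending only on $K_N$ such that if $|\nabla v|^2\leqslant\varepsilon_0$ on $[t_0-2,t_0+2]\times S^1$ then
$$e(t_0)\leqslant q\bigl(e(t_0-1)+e(t_0+1)\bigr),\qquad e(t):=\int_0^{2\pi}\bigl(|v_t(t,\theta)|^2+|v_\theta(t,\theta)|^2\bigr)d\theta.$$
Iterating this inequality in $t$ and combining with $\varepsilon$-regularity to promote $e(t)$ to a pointwise bound on $|\nabla v|^2$ yields $|\nabla v|^2\leqslant C\delta\bigl(e^{-\alpha(t-\log r)}+e^{-\alpha(\log 2R-t)}\bigr)$ with $\alpha:=-\log(2q)>0$, which in the original $x$-coordinates is exactly the desired polynomial estimate.

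The main obstacle is the three-annulus lemma itself. If $v$ were an $\R^J$-valued harmonic function on the cylinder, a $\theta$-Fourier decomposition $v(t,\theta)=\bar v(t)+\sum_{n\neq 0}c_n(t)e^{in\theta}$ would reduce each nonzero mode to the ODE $c_n''=n^2c_n$, so the nonzero-mode contribution to $e(t)$ satisfies the three-annulus inequality with an explicit $q_0<1/2$; the zero-mode contribution $|\bar v'(t)|^2$ is handled using holomorphicity of the Hopf differential $\langle v_z,v_z\rangle$, which gives $\int_{S^1}(|v_t|^2-|v_\theta|^2)\,d\theta\equiv\textrm{const}$ and hence couples $|\bar v'|^2$ to the (already-decaying) angular energy. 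The genuine harmonic-map equation $\Delta v=A(v)(\nabla v,\nabla v)$ has a nonlinear right-hand side that is quadratic in $\nabla v$ and therefore $O(\varepsilon_0|\nabla v|^2)$ pointwise, so it can be absorbed as a small perturbation of the linear calculation once $\varepsilon_0$ is chosen sufficiently small in terms of $K_N$. This Fourier-plus-Hopf-differential estimate is the essence of the no-neck lemma that underpins the $2$D energy identity of Parker, Jost, and Ding--Tian, and I expect it to be the only genuinely nontrivial step in the argument.
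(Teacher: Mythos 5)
Your overall strategy — conformal pull-back to the cylinder, exponential decay of the angular energy via a convexity/three-annulus argument, and use of the Hopf differential to control the radial mean — is the right one and is essentially the approach in the reference the paper cites. But the proposal has a genuine gap in the step that is supposed to handle the zero mode, and a smaller structural issue.

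The smaller issue first: the three-annulus inequality you write down for the \emph{total} sliced energy $e(t)=\int_{S^1}(|v_t|^2+|v_\theta|^2)\,d\theta$ is false even for $\R^J$-valued harmonic functions, because the zero Fourier mode $\bar v(t)$ is affine in $t$, so $|\bar v'(t)|^2$ is constant and contributes a flat floor $2\pi|\bar v'|^2$ that violates $e(t_0)\leqslant q(e(t_0-1)+e(t_0+1))$ with $q<1/2$. The convexity/three-annulus step can only be run for the \emph{angular} energy $e_\alpha(t)=\int_{S^1}|v_\theta|^2\,d\theta$ (for which $e_\alpha''\geqslant 2(1-C\sqrt{\varepsilon_0})\,e_\alpha$, using Poincar\'e on $S^1$ for the mean-zero function $v_\theta$ and absorbing the quadratic nonlinearity). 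You effectively acknowledge this by deferring the zero mode to the Hopf differential, but the lemma as stated is incorrect.

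The genuine gap is in the Hopf-differential step. You only use that $\int_{S^1}(|v_t|^2-|v_\theta|^2)\,d\theta$ is a constant, and then say this ``couples'' $|\bar v'|^2$ to the angular energy. But the coupling is additive: what you get is $\int|v_t|^2=\int|v_\theta|^2+c$, so $|\bar v'(t)|^2\leqslant\frac{1}{2\pi}e_\alpha(t)+\frac{c}{2\pi}$. Evaluating at the ends only gives $|c|\leqslant C\delta$, so the conclusion is $e(t)\leqslant 2e_\alpha(t)+C\delta$, which is bounded but does \emph{not} decay into the middle of the annulus. That is strictly weaker than the claimed estimate. And this is not a removable technicality: the map $u(z)=e^{is\log|z|}$, which is a smooth harmonic map on the punctured disk with $|z|^2|\nabla u|^2\equiv s^2$ and whose Hopf differential is $s^2/(4z^2)$, shows that for a map harmonic only on the annulus the constant $c$ is $2\pi s^2\neq 0$ and there is no interior decay at all. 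So your argument, as stated, would ``prove'' a false statement.

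What saves the theorem is that $u$ is assumed harmonic on the full ball $B_{2R}(p)\subset B_1$, not just on the annulus. Then the Hopf differential $\phi(z)=\langle u_z,u_z\rangle$ is holomorphic on $B_{2R}(p)$ with no pole at $p$, and the cylindrical constant is precisely the residue
$$\int_{S^1}\langle v_w,v_w\rangle\,d\theta=\frac{1}{i}\oint_{|z-p|=\rho}\phi(z)\,(z-p)\,dz=0$$
by Cauchy's theorem, since $\phi(z)(z-p)$ is holomorphic across $p$. Hence $\int_{S^1}(|v_t|^2-|v_\theta|^2)\,d\theta\equiv 0$, not merely constant, and $\int|v_t|^2=\int|v_\theta|^2$ inherits the exponential decay of $e_\alpha$. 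With that vanishing in hand the rest of your outline (three-annulus for $e_\alpha$, then $\varepsilon$-regularity to pass from $L^2$ slice bounds to the pointwise gradient bound) does go through.
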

We refer the reader to \cite[Section III]{dalio2023morseindexstabilitycritical} for a proof.

\subsection{Ginzburg-Landau approximation for harmonic maps}\label{ss:Ginzburg-Landau intro}
Throughout this paper, we are interested in the solution to (\ref{*}). We also need to frequently use a corollary of (\ref{*}). Suppose $u_{\epsilon}$ is the critical point of $E_{\epsilon}$, for each $\xi\in C^{\infty}_0(B_1,\R^m)$, if we consider the variation $t\mapsto u(\cdot+t\xi)$, or equivalently, multiply (\ref{*}) by $\nabla_{\xi}u_{\epsilon}$ and then integrate by parts, we get the following equation 
\begin{equation}
    \int e_{\epsilon}(u_\epsilon)\textup{div}\xi=\sum_{\alpha,\beta=1}^m\int \langle \p_{\alpha}u_{\epsilon},\p_{\alpha}u_{\epsilon}\rangle \p_{\alpha}\xi^\beta,\forall\xi\in C_0^{\infty}(B_1,\R^m).\tag{**}\label{**}
\end{equation}
This is the \emph{stationary equation} satisfied by $u_{\epsilon}$.\par
It turns out that the solutions to (\ref{*}) are good approximations for harmonic maps in the sense that if $u_{\epsilon_j}$ is a sequence of solutions to (\ref{*}) with $\epsilon_j\rightarrow0$ and $E_{\epsilon_j}(u_{\epsilon_j})\leqslant\Lambda$. After passing to a subsequence, we have that $u_{\epsilon_j}$ converges weakly in $W^{1,2}$ to a (weakly) harmonic map $u$. The strength of studying solutions to (\ref{*}) is that they are all smooth, although the elliptic estimates may depend on $\epsilon$. Moreover, we have the following $\varepsilon_0$-regularity that is independent of $\epsilon$.
\begin{thm}\label{e-reg thm}
    There exists $\varepsilon_0=\varepsilon_0(m,K_N)>0$ such that if $u_{\epsilon}$ solves (\ref{*}) with $$(2r)^{2-m}\int_{B_{2r}(p)}e_{\epsilon}(u_{\epsilon})\leqslant\varepsilon_0,$$ then there holds $$r^2\sup_{B_r(p)}e_{\epsilon}(u_{\epsilon})\leqslant C(m,K_N)\frac{1}{(2r)^{m-2}}\int_{B_{2r}(p)}e_{\epsilon}(u_{\epsilon}).$$
    Moreover, if $\epsilon/r\leqslant c(m,K_N)$ is sufficiently small, we have
    \begin{equation}
        \begin{aligned}
            &r^2\sup_{B_r(p)}\left\vert\frac{f(u_{\epsilon})}{\epsilon^2}\right\vert\\\leqslant &C(m,K_N)\left(\frac{1}{(2r)^{m-2}}\int_{B_{2r}(p)}e_{\epsilon}(u_{\epsilon})+\frac{r}{\epsilon}e^{-C(m,K_N)\frac{r}{\epsilon}}\left(\frac{1}{(2r)^{m-2}}\int_{B_{2r}(p)}e_{\epsilon}(u_{\epsilon})\right)^\frac{1}{2}\right).
        \end{aligned}\nonumber
    \end{equation}
\end{thm}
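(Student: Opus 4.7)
The plan is to follow the Moser-type strategy for the Chen--Struwe functional pioneered in \cite{ChenStruweexistenceharmonicmapflow} and refined by Lin--Wang. The proof splits into two independent stages: first a pointwise bound on $e_\epsilon(u_\epsilon)$ using only the energy smallness hypothesis, then an exponential-type improvement on the penalty term $f(u_\epsilon)/\epsilon^2$ that becomes available once $\epsilon/r$ is small and forces the map into the tubular neighborhood of $N$.

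\emph{Stage 1: the sup bound on $e_\epsilon$.} First I would derive the monotonicity formula by plugging the radial vector field $\xi(y)=\eta(|y-p|)(y-p)$ into (\ref{**}) and sending $\eta$ to the indicator of $[0,\rho]$; what comes out is the classical identity
$$\frac{d}{d\rho}\!\left(\rho^{2-m}\!\int_{B_\rho(p)}\!e_\epsilon(u_\epsilon)\right) = \rho^{2-m}\!\int_{\partial B_\rho(p)}\!|\p_r u_\epsilon|^2 + \frac{2}{\rho^{m-1}}\!\int_{B_\rho(p)}\!\frac{F(u_\epsilon)}{\epsilon^2}\geqslant 0,$$
so the hypothesis $(2r)^{2-m}\!\int_{B_{2r}(p)} e_\epsilon\leqslant\varepsilon_0$ automatically propagates to every smaller concentric ball. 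Second, I would differentiate $e_\epsilon$ twice using (\ref{*}) and the smoothness of $F$, obtaining a Bochner-type inequality of the schematic form $\Delta e_\epsilon(u_\epsilon) \geqslant -C(K_N)\, e_\epsilon(u_\epsilon)^{2}$. Combined with the small integrated energy at every scale, this is exactly the structure under which a standard Moser iteration (with cutoffs centered at $p$ and radii shrinking geometrically toward $r$) closes and yields
$$r^2\sup_{B_r(p)} e_\epsilon(u_\epsilon) \leqslant C(m,K_N)\,(2r)^{2-m}\!\int_{B_{2r}(p)}\!e_\epsilon(u_\epsilon).$$

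\emph{Stage 2: the exponential bound on $f/\epsilon^2$.} Stage 1 gives in particular the pointwise bound $F(u_\epsilon)\leqslant C\varepsilon_0\,\epsilon^2/r^2$, so once $\epsilon/r$ is sufficiently small $u_\epsilon$ lies inside the tubular neighborhood $\{\dist(\cdot,N)<\gamma\}$ where $F(u)=\dist^2(u,N)$. In that neighborhood $|f(u)|^2\geqslant c_1 F(u)$ and $|D^2F(u)|\leqslant C_2$, so a direct computation using (\ref{*}) shows that $g:=F(u_\epsilon)/\epsilon^2$ satisfies
$$\Delta g \;=\; \frac{|f(u_\epsilon)|^2}{\epsilon^4} + \frac{D^2F(u_\epsilon)(\nabla u_\epsilon,\nabla u_\epsilon)}{\epsilon^2} \;\geqslant\; \frac{c_1}{\epsilon^2}\,g \;-\; \frac{C_2\,|\nabla u_\epsilon|^2}{\epsilon^2}.$$
The coefficient $c_1/\epsilon^2$ produces decay on scale $\epsilon$, and I would exploit this by comparing $g$ against a barrier of the form $\psi(y)=A\exp\bigl(-\sqrt{c_1}\,\dist(y,\partial B_r(p))/\epsilon\bigr)$ plus a particular solution absorbing the now-bounded forcing $|\nabla u_\epsilon|^2/\epsilon^2$. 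The maximum principle then gives, in the interior of $B_r(p)$, the quoted estimate after translating $g$ back into a bound on $|f(u_\epsilon)|/\epsilon^2\leqslant 2\sqrt{F(u_\epsilon)}/\epsilon^2 = 2g^{1/2}/\epsilon$.

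\emph{Main obstacle.} The most delicate point is closing the Moser iteration in Stage 1, since the Bochner inequality has a nonlinear right-hand side $\sim -e_\epsilon^2$ and the iteration only converges thanks to the smallness of $\int e_\epsilon$, which must be maintained through every dyadic scale via the monotonicity formula. In Stage 2 the remaining subtlety is that the forcing $|\nabla u_\epsilon|^2/\epsilon^2$ is not exponentially small but only polynomially bounded in $\epsilon^{-1}$; this is precisely the reason the theorem's estimate carries two separate terms, a purely polynomial one coming from the particular solution and a genuinely exponential $e^{-Cr/\epsilon}$ one coming from the homogeneous barrier.
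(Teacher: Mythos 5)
Your Stage 1 is essentially the paper's approach: the subharmonicity $\Delta e_\epsilon \geqslant -C(K_N)\,e_\epsilon^2$ (the paper proves this in a hidden appendix lemma, tracking the extra terms from $F(u_\epsilon)/\epsilon^2$ carefully) combined with the monotonicity formula. The one difference is that the paper does not run Moser iteration on the quadratic Bochner inequality; it uses Schoen's rescaling trick — pick $r_\ast\in[0,\tfrac{5}{3}]$ and $x_0\in\bar B_{r_\ast}$ maximizing $(\tfrac{5}{3}-r_\ast)^2\sup_{B_{r_\ast}}e_\epsilon$, rescale by $\sqrt{E_0}$ at $x_0$, so that on a unit ball the sup of $e_\epsilon$ is $\leqslant4$ and the Bochner inequality becomes linear, then invoke a mean-value inequality to get a contradiction with small total energy. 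Your "Moser iteration closes thanks to smallness" is the right instinct but is imprecise: Moser iteration does not apply directly to $\Delta e\geqslant -Ce^2$ without first bounding one factor of $e$, and the rescaling trick is precisely the standard way of doing that. This is a different (and cleaner) route, not a gap.

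Stage 2 has a genuine gap, and it is the choice of variable. You set $g=F(u_\epsilon)/\epsilon^2$ and derive
$\Delta g \geqslant \frac{4}{\epsilon^2}g - \frac{C|\nabla u_\epsilon|^2}{\epsilon^2}$,
which is correct. But now run the barrier argument to its conclusion: the particular-solution level is $g_{\mathrm{part}}\sim\Theta/r^2$ (cancelling $4g/\epsilon^2$ against the forcing $C\Theta/(r^2\epsilon^2)$), while from Stage 1 the boundary value of $g$ on $\partial B_{3r/2}$ is \emph{also} $\sim\Theta/r^2$. There is no separation of scales in the $g$-variable, so the maximum principle only yields $g\lesssim\Theta/r^2$ — essentially the input bound — and translating back via $|f(u_\epsilon)|/\epsilon^2=2\sqrt{g}/\epsilon$ gives $r^2|f(u_\epsilon)|/\epsilon^2\lesssim\sqrt{\Theta}\,r/\epsilon$, which blows up as $\epsilon\to0$ and is far weaker than the theorem's claim $\Theta+\sqrt{\Theta}\,\frac{r}{\epsilon}e^{-Cr/\epsilon}$. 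The square root at the end destroys whatever gain the barrier produced. The fix — which is what the paper does, following Lin--Wang \cite{Linwangharmonicsphere02} — is to work with $d:=\dist(u_\epsilon,N)$ directly. Since $f(u_\epsilon)=2d\,\nabla\dist(\cdot,N)$ and $|\nabla\dist(\cdot,N)|=1$, one computes
$\Delta d \geqslant \frac{2d}{\epsilon^2} - C\,|\nabla u_\epsilon|^2 \geqslant \frac{2d}{\epsilon^2} - C\,\frac{\Theta}{r^2}$,
and now the particular-solution level is $d_{\mathrm{part}}\sim\Theta\,\epsilon^2/r^2$ while the boundary value (from $F\leqslant C\Theta\,\epsilon^2/r^2$) is $d|_{\partial B_{3r/2}}\leqslant C\sqrt{\Theta}\,\epsilon/r$. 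These are genuinely separated by a factor of $\sqrt{\Theta}\,\epsilon/r\ll1$, and the exponential barrier gives $d\leqslant C\Theta\,\epsilon^2/r^2+C\sqrt{\Theta}\,(\epsilon/r)e^{-Cr/\epsilon}$. Multiplying by $2r^2/\epsilon^2$ reproduces exactly the two terms in the statement. The forcing $-C\Theta/r^2$ here is $\epsilon$-independent, which is what makes the $\epsilon^2$-small particular solution possible; your forcing $-C\Theta/(r^2\epsilon^2)$ is not, and that is where the argument goes wrong.
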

The proof for the first part of Theorem \ref{e-reg thm} can be found in \cite[Section 5]{KSharmonic24} and second part can be found in \cite[Section 2]{Linwangharmonicsphere02}. As a corollary, we have the estimate on the lower bound of derivatives of $f(u_{\epsilon})$.
\begin{cor}\label{second derivative est}
    If $u_{\epsilon}$ and $\epsilon/r$ satisfies the condition in Theorem \ref{e-reg thm}, then we have for any unit vector $v$, the following holds on $B_r(p)$ $$r^2\left\langle\nabla_v u_{\epsilon},\nabla_v\frac{f(u_{\epsilon})}{\epsilon^2}\right\rangle\geqslant-C(m,K_n)\left(\frac{1}{(2r)^{m-2}}\int_{B_{2r}(p)}e_{\epsilon}(u_{\epsilon})\right)^\frac{1}{2}\vert\nabla_vu_{\epsilon}\vert^2.$$
\end{cor}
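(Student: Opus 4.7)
The plan is to expand by the chain rule: since $f = \nabla F$, one has $\nabla_v(f(u_\epsilon)/\epsilon^2) = D^2 F(u_\epsilon)\nabla_v u_\epsilon / \epsilon^2$, so the expression to estimate becomes
$$\frac{\langle D^2 F(u_\epsilon)\nabla_v u_\epsilon,\nabla_v u_\epsilon\rangle}{\epsilon^2}.$$
The key observation is that $D^2 F \geqslant 0$ along $N$: at each $p\in N$, $F$ attains its global minimum $F(p) = 0$, so $D^2 F(p)$ is positive semidefinite (explicitly, in the tubular region where $F = d^2(\cdot, N)$, one computes $D^2 F(p) = 2\,\Pi_{N_p N}$, twice the projection onto the normal space). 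A first-order Taylor expansion in $x$, using the smoothness of $F$ on the tubular neighborhood $\{d(\cdot, N) < \gamma\}$, then yields the pointwise quadratic-form bound
$$D^2 F(x) \geqslant -C(K_N)\, d(x, N)\cdot \Id \quad \text{for all } x \text{ with } d(x, N) < \gamma.$$

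Next I would bound $d(u_\epsilon, N)/\epsilon^2$ by the energy. On the tubular region one has $f(x) = 2(x - \pi_N(x))$ where $\pi_N$ denotes the nearest-point projection to $N$, hence $|f(u_\epsilon)| = 2\, d(u_\epsilon, N)$, and the second estimate in Theorem \ref{e-reg thm} gives
$$\frac{d(u_\epsilon, N)}{\epsilon^2} \leqslant \frac{C(m, K_N)}{r^2}\left(E_{\text{ave}} + \frac{r}{\epsilon}\, e^{-Cr/\epsilon}\, E_{\text{ave}}^{1/2}\right), \quad E_{\text{ave}} := (2r)^{2-m}\int_{B_{2r}(p)} e_\epsilon(u_\epsilon).$$
For $\epsilon/r$ sufficiently small, $(r/\epsilon)e^{-Cr/\epsilon} \leqslant 1$, and the $\varepsilon$-regularity hypothesis forces $E_{\text{ave}} \leqslant \varepsilon_0 \leqslant 1$, so $E_{\text{ave}} \leqslant E_{\text{ave}}^{1/2}$. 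Both terms on the right are then bounded by $C(m,K_N)\,E_{\text{ave}}^{1/2}/r^2$. Combining this with the Hessian lower bound above, evaluated at $x = u_\epsilon(y)$, and multiplying through by $r^2$ yields the claimed inequality.

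The only subtlety to settle before this calculation is confirming that $u_\epsilon(y)$ actually lies inside the tubular region $\{d(\cdot, N) < \gamma\}$, so that the identity $F = d^2(\cdot, N)$ and the Hessian formula $D^2 F(p) = 2\Pi_{N_pN}$ on $N$ are in force. This is automatic under the hypotheses of Theorem \ref{e-reg thm}: the $L^\infty$ bound on $|f(u_\epsilon)|$ there gives $d(u_\epsilon, N) = O(\epsilon^2/r^2)$, which is much less than $\gamma$ for $\epsilon/r$ small. I expect this to be the only real technical point; everything else reduces to a direct Taylor estimate on $D^2 F$ followed by an invocation of the $\varepsilon$-regularity theorem.
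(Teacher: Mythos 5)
Your proposal is correct and takes essentially the same approach as the paper: both expand the quantity as a Hessian $D^2F(u_\epsilon)[\nabla_v u_\epsilon,\nabla_v u_\epsilon]/\epsilon^2$, isolate a nonnegative part (yours via ``$D^2F\geqslant 0$ on $N$'' plus Taylor expansion, the paper via the explicit decomposition $D^2 F = 2\nabla\dist\otimes\nabla\dist + 2\dist\cdot\nabla^2\dist$ with the rank-one piece nonnegative), bound the remainder by $C(K_N)\dist(u_\epsilon,N)|\nabla_v u_\epsilon|^2$, and then invoke the second part of Theorem \ref{e-reg thm} to control $r^2\dist(u_\epsilon,N)/\epsilon^2$ by $C\,E_{\text{ave}}^{1/2}$. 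Your final paragraph (checking $u_\epsilon$ lands in the tubular neighborhood) correctly identifies the one technical point the paper handles implicitly by the opening phrase ``Given the Theorem \ref{e-reg thm}\ldots''.
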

\begin{proof}
Given the Theorem \ref{e-reg thm}, we see that $f(u_{\epsilon})=2\textup{dist}(u_{\epsilon},N)\nabla\dist(y,N)|_{y=u_{\epsilon}}$. We then calculate 
    \begin{equation}
        \begin{aligned}
            &r^2\left\langle\nabla_v u_{\epsilon},\nabla_v\frac{f(u_{\epsilon})}{\epsilon^2}\right\rangle\\=&2\left(\langle\nabla\textup{dist}(y,N)|_{y=u_{\epsilon}},\nabla_vu_{\epsilon}\rangle^2+\textup{dist}(u_{\epsilon},N)\nabla^2\textup{dist}(y,N)|_{y=u_{\epsilon}}(\nabla_vu_{\epsilon},\nabla_vu_{\epsilon})\right)\frac{r^2}{\epsilon^2}\\\geqslant&-C(m,K_N)r^2\frac{\textup{dist}(u_{\epsilon},N)}{\epsilon^2}\vert\nabla_vu_{\epsilon}\vert^2.
        \end{aligned}\nonumber
    \end{equation}
    Theorem \ref{e-reg thm} gives the desired result.
\end{proof}

\subsection{Heat mollified energy}\label{ss:heat mollified energy}
Given a solution $u_{\epsilon}:B_1\rightarrow\R^J$ to (\ref{*}), the usual energy density of Ginzburg-Landau approximation is defined by
$$\Theta(x,r)=\frac{1}{r^{m-2}}\int_{B_r(x)}e_{\epsilon}(u_{\epsilon}).$$
$\Theta(x,r)$ is monotone increasing in $r$. In this paper, instead of considering $\Theta$, we will consider a mollified energy density introduced in \cite{naber2024energyidentitystationaryharmonic}, which has better behavior in the space and time variable. To be specific, let us introduce the mollified heat kernel:
\begin{defn}[Mollified heat kernel,{cf. \cite[Theorem 2.4]{NVyangmills}}]
    In the following discussion, $\rho:[0,\infty)\rightarrow[0,\infty)$ will be fixed a smooth function depending on large $R>0$ satisfies
    \begin{enumerate}[(1)]
        \item $\rho(t)=c_me^{-t},0\leqslant t\leqslant R$;
        \item $\textup{supp}\rho\subset[0,R+2]$;
        \item $0\leqslant\Ddot{\rho}(t)\leqslant c_me^{-R}$.
        \item $\int_{\R^m}\rho\left(\frac{\vert y\vert^2}{2}\right)\dif y=1$.
    \end{enumerate}
    Here $\dot{\rho}$ denotes the derivative of $\rho$. We will also set the rescaling of $\rho$ by 
    $$\rho_r(y)=\frac{1}{r^m}\rho\left(\frac{\vert y\vert^2}{2r^2}\right),\dot{\rho}_r(y)=\frac{1}{r^m}\dot{\rho}\left(\frac{\vert y\vert^2}{2r^2}\right).$$
\end{defn}
We will select $R$ in a later stage of proof. The selection of $\rho$ gives us much convenience in the proof. The most important property of a heat kernel is that its derivative is exactly itself. Our mollified heat kernel inherits this property as shown in the following lemma. The proof is by direct calculation.
\begin{lem}\label{properties of heat mollifier}[cf. \cite[Lemma 2.8]{naber2024energyidentitystationaryharmonic}]
    For all $R$ and $t\geqslant0$:
    \begin{enumerate}[(1)]
        \item $-\dot{\rho}\geqslant0,-\dot{\rho}(t)-t\dot{\rho}(t)+t^2\Ddot{\rho}(t)\leqslant C\min\lbrace\rho(t/1.1^2),-\dot{\rho}(t/1.1^2)\rbrace$;
        \item $\rho(t)\leqslant-C(m)\dot{\rho}(t)$;
        \item $\vert\rho(t)+\dot{\rho}(t)\vert\leqslant2c_me^{-R}\chi_{[R,R+2]}\leqslant20e^{-R/2}\rho(t/4)$;
        \item If $R\geqslant20,r\leqslant s\leqslant10r$, we have $$B_r(x)\subset B_s(x')\implies\rho_r(y-x)\leqslant C(m)\rho_s(y-x').$$
    \end{enumerate}
\end{lem}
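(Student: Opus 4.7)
The plan is to reduce each of the four claims to a direct computation, exploiting the fact that $\rho$ is completely explicit on $[0,R]$ (where $\rho(t)=c_m e^{-t}$, $\dot\rho(t)=-c_m e^{-t}$, $\ddot\rho(t)=c_m e^{-t}$) and only needs to be controlled via integration of $\ddot\rho$ on the narrow transition interval $[R,R+2]$. That integration, combined with the boundary data $\rho(R)=c_m e^{-R}$, $\dot\rho(R)=-c_m e^{-R}$, and $\rho(R+2)=\dot\rho(R+2)=0$, gives $|\dot\rho(t)|,\rho(t)=O(c_m e^{-R})$ uniformly on $[R,R+2]$; this is the only quantitative input needed in the transition regime.

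For (1), the bound $\ddot\rho\geqslant 0$ together with $\dot\rho(R+2)=0$ forces $\dot\rho\leqslant 0$ everywhere. For the second estimate, on $[0,R]$ the left side equals $c_m e^{-t}(1+t+t^2)$; since $\rho(t/1.21)=c_m e^{-t/1.21}$ there, the ratio is $(1+t+t^2)e^{-t(1-1/1.21)}$, which is uniformly bounded because $1-1/1.21>0$. On $[R,R+2]$ the left side is $O(c_m e^{-R})$ by the transition estimates, while $(R+2)/1.21<R$ (for $R$ large, which we are free to assume), so $\rho(t/1.21)=c_m e^{-t/1.21}\gtrsim c_m e^{-R}$; the same computation controls $-\dot\rho(t/1.21)$. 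Part (2) is the identity $\rho=-\dot\rho$ on $[0,R]$, and on $[R,R+2]$ the same $O(c_m e^{-R})$ comparison closes it.

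For (3), $\rho+\dot\rho\equiv 0$ on $[0,R]$ by the explicit formula and vanishes on $[R+2,\infty)$ by support, so $\rho+\dot\rho$ is supported on $[R,R+2]$ where it is bounded by $2c_m e^{-R}$ via the transition estimates. The second inequality then follows from $\rho(t/4)\geqslant c_m e^{-(R+2)/4}$ on $[R,R+2]$ (valid once $(R+2)/4\leqslant R$): the claim reduces to the numerical bound $e^{-R/4}\leqslant 10 e^{-1/2}$, which holds for all $R\geqslant 0$. For (4), I would parametrize by $u=|y-x|/r$ on the support of $\rho_r(y-x)$ and use $|x-x'|\leqslant s-r$ with the triangle inequality to bound $|y-x'|/s\leqslant (u-1)r/s+1$; since $r/s\leqslant 1$, one then checks separately in the regimes $u\leqslant 1$ and $u\geqslant 1$ that $|y-x'|^2/(2s^2)\leqslant |y-x|^2/(2r^2)+O(1)$, so that the exponential quotient is $O(1)$ while the prefactor $(s/r)^m\leqslant 10^m$ absorbs the rest. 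The transition regime is again swept up by monotonicity of $\rho$ and the uniform $O(c_m e^{-R})$ bound.

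The only genuine obstacle is the bookkeeping on the transition interval $[R,R+2]$, where $\rho$ is not explicit and every quantity must be estimated by integrating $\ddot\rho$ from the known endpoint data. The hypothesis $R\geqslant 20$ in (4) is precisely what keeps $R/1.21$, $R/4$ and $R$ well-separated, so that the Gaussian comparisons there always dominate the $c_m e^{-R}$ losses that come from the transition region.
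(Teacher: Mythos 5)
Your overall strategy, namely explicit computation on $[0,R]$ where $\rho(t)=c_me^{-t}$, combined with endpoint control on the transition strip $[R,R+2]$, is exactly the "direct calculation" the paper refers to, and parts (1), (3), and (4) are essentially sound. For (4) it is worth making explicit the one observation that rescues the transition region: writing $t_1=|y-x|^2/(2r^2)$ and $t_2=|y-x'|^2/(2s^2)$, the inequality $t_2>t_1$ together with $|x-x'|\leqslant s-r$ forces $|y-x|<r$, so $t_1<1/2$ and both $t_1,t_2$ sit in the pure exponential region once $R\geqslant 20$; whenever $t_1\geqslant R$ one automatically has $t_2\leqslant t_1$ and monotonicity of $\rho$ finishes. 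That is the precise content of "swept up by monotonicity."

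Part (2), however, contains a genuine gap. On $[R,R+2]$ you assert that "the same $O(c_me^{-R})$ comparison closes it," but knowing $\rho(t)$ and $-\dot\rho(t)$ are both bounded above by $c_me^{-R}$ does not yield $\rho\leqslant -C\dot\rho$: both quantities tend to zero as $t$ approaches the right end of the support of $\rho$, so uniform upper bounds say nothing about their ratio there. What is missing is a lower bound on $-\dot\rho$ relative to $\rho$ near the vanishing point. The fix is short but uses a different observation than you invoke: since $\ddot\rho\geqslant 0$, the function $-\dot\rho$ is non-increasing and non-negative, hence for $t\in[R,R+2]$
$$\rho(t)=\int_t^{R+2}\bigl(-\dot\rho(s)\bigr)\,\dif s\leqslant (R+2-t)\bigl(-\dot\rho(t)\bigr)\leqslant 2\bigl(-\dot\rho(t)\bigr),$$
which gives (2) with an absolute constant. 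The same monotonicity of $-\dot\rho$ is also what cleanly justifies the bound $|\dot\rho|\leqslant c_me^{-R}$ on $[R,R+2]$ that you use repeatedly in the other parts, so it is worth stating once and reusing.
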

Following \cite[Section 2]{naber2024energyidentitystationaryharmonic}, we define the heat mollified energies as follows.
\begin{defn}[Heat mollified energies,{cf. \cite[Definition 2.10]{NVyangmills}}]
    Let $u_{\epsilon}: B_{10R}\rightarrow\R^J$ be a solution to (\ref{*}). For each $B_r(x)\subset B_1$, define $$\bar{\Theta}(x,r)=r^2\int\rho_r(y-x)e_{\epsilon}(u_{\epsilon})(y)\dif y.$$Moreover, for each $k$-dimensional subspace $L$ of $\R^m$, denote $\Pi_L$ the orthogonal projection to $L$, we define the partial energies by 
    $$\bar{\Theta}(x,r;L)=r^2\int\rho_r(y-x)\vert\Pi_L\nabla u_{\epsilon}\vert^2+2\frac{F(u_{\epsilon})}{\epsilon^2},$$ $$\bar{\Theta}(x,r;n_{L^{\perp}})=\int\rho_r(y-x)\vert\nabla_{n_{L^{\perp}}}u_{\epsilon}\vert^2\vert\Pi_{L^{\perp}}(y-x)\vert^2,$$ $$\bar{\Theta}(x,r;\alpha_{L^{\perp}})=\int\rho_r(y-x)\vert\nabla_{\alpha_{L^{\perp}}}u_{\epsilon}\vert^2\vert\Pi_{L^{\perp}}(y-x)\vert^2,$$where $n_{L^{\perp}}(y)=\Pi_{L^{\perp}}(y-x)/\vert\Pi_{L^{\perp}}(y-x)\vert$ is the radial direction in $L^{\perp}$ while $\alpha_{L^{\perp}}$ are the rest angular directions. Finally, define the heat mollified partial energy in $L^{\perp}$ by
    \begin{equation}
        \begin{aligned}
            \bar{\Theta}(x,r;L^{\perp})&=\bar{\Theta}(x,r;n_{L^{\perp}})+\bar{\Theta}(x,r;\alpha_{L^{\perp}})\\&=\int\rho_r(y-x)\vert\Pi_{L^{\perp}}(y-x)\vert^2\vert\Pi_{L^{\perp}}\nabla u\vert^2.
        \end{aligned}\nonumber
    \end{equation}
\end{defn}
It is easy to see that we have simple comparison between $\bar{\Theta}$ and $\Theta$:
$$\Theta(x,r)\leqslant c(m)\bar{\Theta}(x,r)\leqslant C(m)\Theta(x,2Rr).$$
As classic energy density, the heat mollified one enjoys good monotonicity formula. If we take $\xi=\rho_r(y-x)(y-x)$ in stationary equation (\ref{**}), we get 
\begin{equation}
    r\frac{\p}{\p r}\bar{\Theta}(x,r)=\int-\dot{\rho}_r(y-x)\vert\nabla_{x-y} u_{\epsilon}\vert^2+r^2\frac{F(u_{\epsilon})}{\epsilon^2}\rho_r(y-x)\dif y\geqslant0.\label{monotonicity}
\end{equation}
After mollifying using the heat kernels, the energy functional get better behavior both on $x$ and $r$ variables: we will see in the following quantitative cone splitting lemma that we can control those energies effectively by the radial derivative of $\bar{\Theta}$. This calculation allows us to do quantitative estimates.\par
We define the stratification of a solution to (\ref{*}) related to monotone quantity $\bar{\Theta}$.
\begin{defn}[{cf. \cite[Definition 2.13]{naber2024energyidentitystationaryharmonic}}]
    Let $u_{\epsilon}:B_{10R}\rightarrow\R^J$ be a solution to (\ref{*}). For each $\delta>0$, we say $u_{\epsilon}$ is $(k,\delta)$-symmetric on $B_r(x)\subset B_1$ with respect to $k$-dimensional subspace $L\subset\R^m$ if $$\bar{\Theta}(x,r;L)+\bar{\Theta}(x,r;n_{L^{\perp}})\leqslant\delta.$$
\end{defn}
In this paper, we only consider the case where $k=m-2$ or $m-1$ in above definition.

\subsection{Quantitative cone splitting results}\label{ss: cone splitting}
In this section we prove the cone splitting results associated to the monotone quantity $\bar{\Theta}$. Namely, if a map looks like 0-homogeneous at $(k+1)$ different points near a $k$-subspace, then it indeed looks like a $k$-symmetric cone in the center of those points. In other words, the $(0,\gamma)$ symmetric at those $(k+1)$ points can be lifted to the $(k,\gamma')$ symmetric at the center point. The idea of stratification goes back to Federer \cite{FedererGMT} to study the singular sets of minimal currents. The quantitative version of it was first observed by Cheeger-Naber \cite{CheegerNaberStrac13} to derive the refined estimates on singular sets of geometric variational problems. The heat mollified energies allow those cone splitting results to be even more effective. To state the results, we first need a definition.
\begin{defn}[{\cite[Definition 2.18]{naber2024energyidentitystationaryharmonic}}]\label{effective independent}
    Given $x_0,\dots,x_k\in B_r(x_0)$, we say that they are $\alpha$-linearly independent at scale $r$ if $$x_{i+1}\notin B_{\alpha r}(x_0+L_i),L_i=\textup{span}\lbrace x_1-x_0,\dots,x_i-x_0\rbrace.$$
\end{defn}
In other words, $\lbrace x_i-x_0\rbrace_{i=1}^k$ spans a $k$-subspace effectively. Hence, we have the following corollary of this definition.
\begin{lem}[{\cite[Lemma 2.19]{naber2024energyidentitystationaryharmonic}}]\label{est for independ points}
    If $x_0,\dots,x_k$ are $\alpha$-linear independent at scale $r$, then for every $q\in L=\textup{span}\lbrace x_1-x_0,\dots,x_k-x_0\rbrace+x_0$, there exists unique $q_1,\dots,q_k\in\R$ such that $$q=x_0+\sum_{i=1}^kq_i(x_i-x_0)\textup{ with }\vert q_i\vert\leqslant C(m,\alpha)\frac{\vert q-x_0\vert}{r}.$$
\end{lem}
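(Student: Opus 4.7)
The statement is a purely linear-algebraic fact about the vectors $v_i := x_i - x_0$, $i=1,\dots,k$. The $\alpha$-linear independence hypothesis in Definition \ref{effective independent} says exactly that $\dist(v_{i+1}, \textup{span}\{v_1,\dots,v_i\}) \geqslant \alpha r$ for each $i$, so in particular $v_1,\dots,v_k$ are linearly independent, span $L - x_0$, and form a basis. Existence and uniqueness of the $q_i$ in $q - x_0 = \sum_{i} q_i v_i$ are then immediate; the whole content of the lemma is to quantify $|q_i|$.

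The plan is to Gram--Schmidt the $v_i$ into an orthonormal basis $e_1,\dots,e_k$ of $L - x_0$, obtaining an upper-triangular change-of-basis expansion $v_i = \sum_{j \leqslant i} a_{ji}\, e_j$. Two elementary bounds on $A = (a_{ji})$ drive the estimate: first, $|a_{ji}| \leqslant |v_i| \leqslant r$ because the $a_{ji}$ are coordinates of $v_i$ in an orthonormal basis and $x_i \in B_r(x_0)$; second, by construction $a_{ii} = \dist(v_i, \textup{span}\{v_1,\dots,v_{i-1}\})$, which is $\geqslant \alpha r$ by the $\alpha$-linear independence hypothesis. Writing $q - x_0 = \sum_j b_j e_j$, one has $|b_j| \leqslant |q - x_0|$, and the $q_i$ satisfy the triangular linear system $b_j = \sum_{i \geqslant j} a_{ji}\, q_i$.

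Now solve by back-substitution from $i = k$ downward: $q_k = b_k / a_{kk}$, and in general $q_i = \bigl(b_i - \sum_{\ell > i} a_{i\ell}\, q_\ell\bigr)/a_{ii}$. A short induction on $k - i$, using $a_{ii} \geqslant \alpha r$ in the denominator and $|a_{i\ell}| \leqslant r$ in the sum, produces $|q_i| \leqslant C(k,\alpha)\,|q - x_0|/r$ (concretely, $C(k,\alpha)$ grows like $\alpha^{-k}$ up to a combinatorial factor). Since $k \leqslant m$, this is the desired bound with $C = C(m,\alpha)$.

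I do not anticipate a real obstacle: the lemma is an elementary linear-algebra estimate, and the only mild bookkeeping is tracking how the constant accumulates through the back-substitution. The geometric input (the $\alpha$-linear independence) is used exactly once, to produce the lower bound on the diagonal entries $a_{ii}$ that controls the denominators.
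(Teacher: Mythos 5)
The paper does not prove this lemma; it simply cites it from Naber--Valtorta, so there is no ``paper's proof'' to compare against. Your Gram--Schmidt/back-substitution argument is correct and is the standard way to establish such an estimate. One sanity check on the two key bounds: the diagonal entry $a_{ii}=\dist(v_i,\textup{span}\{v_1,\dots,v_{i-1}\})$ is indeed bounded below by $\alpha r$ precisely because $x_i\notin B_{\alpha r}(x_0+L_{i-1})$, and the off-diagonal entries satisfy $|a_{ji}|\leqslant|v_i|\leqslant r$ because $x_i\in B_r(x_0)$ and the $e_j$ are unit vectors. The recursion $C_i=\alpha^{-1}\bigl(1+\sum_{\ell>i}C_\ell\bigr)$ closes to $C_i\leqslant\alpha^{-1}(1+\alpha^{-1})^{k-i}$, and since the $v_i$ live in $\R^m$ one has $k\leqslant m$, giving $C=C(m,\alpha)$ as claimed. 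No gaps.
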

\begin{prop}[{Quantitative cone splitting, cf. \cite[Theorem 2.20]{naber2024energyidentitystationaryharmonic}}]\label{quantitative cone splitting}
    Let $u_{\epsilon}:B_{10R}(p)\rightarrow\R^J$ be a solution to (\ref{*}) . Let $x_0,\dots,x_k\in B_r(x_0)\subset B_1(p)$ be $\alpha$-linear independent at scale $r$. Then 
    \begin{equation}
        \begin{aligned}
            &\bar{\Theta}(x_0,r;L)+\bar{\Theta}(x_0,r;n_{L^{\perp}})\\=&\int\rho_r(y-x_0)(r^2(\vert\nabla_L u_{\epsilon}\vert^2+F(u_{\epsilon})/\epsilon^2)+\vert\Pi_{L^{\perp}}(y-x_0)\vert^2\vert\nabla_{n_{L^{\perp}}}u_{\epsilon}\vert^2)\\\leqslant &C(m,\alpha)r\sum_{i=0}^k\frac{\p}{\p r}\bar{\Theta}(x_i,1.4r).
        \end{aligned}\nonumber
    \end{equation}
\end{prop}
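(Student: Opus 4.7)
The plan is a quantitative cone-splitting argument: the monotonicity formula (\ref{monotonicity}) together with $\rho \leq -C\dot{\rho}$ (Lemma \ref{properties of heat mollifier}(2)) shows that $r\p_r\bar{\Theta}(x_i,1.4r)$ controls the failure of $u_\epsilon$ to be $0$-homogeneous about $x_i$, while the key algebraic identity
$$\nabla_{x_i-x_0}u_\epsilon(y) \;=\; \nabla_{y-x_0}u_\epsilon(y) \;-\; \nabla_{y-x_i}u_\epsilon(y)$$
converts these almost-radial gradients at different base points into $L$-directional derivatives; $\alpha$-linear independence of $\lbrace x_i - x_0 \rbrace_{i=1}^{k}$ then recovers all of $\nabla_L u_\epsilon$.

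First I would apply the monotonicity formula at each $(x_i,1.4r)$ to obtain
$$\int \rho_{1.4r}(y-x_i)\vert\nabla_{y-x_i}u_\epsilon\vert^2 \;+\; \int \rho_{1.4r}(y-x_i)\, r^2 F(u_\epsilon)/\epsilon^2 \;\leq\; C(m)\, r\p_r\bar{\Theta}(x_i,1.4r).$$
Using Lemma \ref{properties of heat mollifier}(4) to compare heat kernels at different centers and commensurate scales, I would then replace the weight $\rho_{1.4r}(y-x_i)$ by $\rho_r(y-x_0)$, losing only a dimensional constant. Applying $\vert\nabla_{x_i-x_0}u_\epsilon\vert^2 \leq 2\vert\nabla_{y-x_0}u_\epsilon\vert^2 + 2\vert\nabla_{y-x_i}u_\epsilon\vert^2$ and integrating against $\rho_r(y-x_0)$ yields
$$\int \rho_r(y-x_0)\vert\nabla_{x_i-x_0}u_\epsilon\vert^2 \;\leq\; C(m)\bigl(r\p_r\bar{\Theta}(x_0,1.4r) + r\p_r\bar{\Theta}(x_i,1.4r)\bigr).$$

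By Lemma \ref{est for independ points}, any unit vector $v\in L$ admits a decomposition $v = \sum_j a_j(x_j-x_0)$ with $\vert a_j \vert \leq C(m,\alpha)/r$; summing over an orthonormal basis of $L$ gives the pointwise bound $r^2\vert\nabla_L u_\epsilon\vert^2 \leq C(m,\alpha)\sum_i \vert\nabla_{x_i-x_0}u_\epsilon\vert^2$, which after integration against $\rho_r(y-x_0)$ produces the tangential term of the target quantity. The $r^2 F/\epsilon^2$ term is picked up directly from any single $i$ in the above. For the $L^\perp$-radial piece, I would write $\nabla_{\Pi_{L^\perp}(y-x_0)}u_\epsilon = \nabla_{y-x_0}u_\epsilon - \nabla_{\Pi_L(y-x_0)}u_\epsilon$: the first summand is handled directly by $r\p_r\bar{\Theta}(x_0,1.4r)$, and the second satisfies $\vert\nabla_{\Pi_L(y-x_0)}u_\epsilon\vert^2 \leq \vert\Pi_L(y-x_0)\vert^2\vert\nabla_L u_\epsilon\vert^2$, with the polynomial weight absorbed into a slightly enlarged Gaussian via the kernel identity $\vert z\vert^2\rho_r(z) \leq C(m) r^2 \rho_{\sqrt{2}\,r}(z)$, reducing to the tangential bound at a commensurate scale.

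The main obstacle is the heat-kernel comparison in the second step: since $\vert x_i - x_0 \vert$ may approach $r$, the set inclusion $B_r(x_0)\subset B_{1.4r}(x_i)$ can fail, so the pointwise bound $\rho_r(y-x_0) \leq C(m)\rho_{1.4r}(y-x_i)$ cannot come from inclusion alone. It must be squeezed out of a direct Gaussian computation, namely $\rho_r(y-x_0)/\rho_{1.4r}(y-x_i) \leq (1.4)^m \exp(C\vert x_i - x_0\vert^2/r^2)$ on the pure exponential range $t\in[0,R]$, together with a careful handling of the transition region $t\in[R,R+2]$ where $\rho$ is mollified. Choosing $R$ sufficiently large (as required by Lemma \ref{properties of heat mollifier}(4)) ensures this transition costs only a dimensional factor, closing the argument.
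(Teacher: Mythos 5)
Your proposal takes essentially the same route as the paper's proof: decompose $\nabla_L u_\epsilon$ via $\alpha$-linear independence into $\nabla_{x_i-x_0}u_\epsilon$ terms, expand each as $\nabla_{y-x_0}u_\epsilon - \nabla_{y-x_i}u_\epsilon$, control each radial piece by the monotonicity formula $r\p_r\bar\Theta(x_i,\cdot)$, and move kernel centers from $x_i$ to $x_0$ by a pointwise Gaussian comparison. The $n_{L^\perp}$ piece is handled identically. One remark: your concern about the kernel comparison is real but overcautious. Lemma \ref{properties of heat mollifier}(4) as stated cannot be applied with $s=1.4r$ because $B_r(x_0)\subset B_{1.4r}(x_i)$ can indeed fail when $\vert x_i - x_0\vert > 0.4r$; however, the inclusion $B_r(x_0)\subset B_{2r}(x_i)$ always holds (since $\vert x_i - x_0\vert < r$), so one may simply invoke Lemma \ref{properties of heat mollifier}(4) at $s=2r$, then convert $\rho_{2r}$ to $-\dot\rho_{2r}$ by Lemma \ref{properties of heat mollifier}(2), landing on $r\p_r\bar\Theta(x_i,2r)$; your direct-Gaussian computation is a valid alternative that tightens the scale back toward $1.4r$. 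The paper itself routes the cross-center comparison through Lemma \ref{properties of heat mollifier}(1) at scale $1.1$, which is a slightly different bookkeeping but the same mechanism — in all cases the essential content is the boundedness of the downward parabola $-c(s-1)^2$ that governs the log-ratio of the two kernels on the exponential range, exactly as you compute, with the $[R,R+2]$ transition layer handled by the construction of $\rho$.
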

\begin{proof}
    Scaling, we may assume $r=1$. By Lemma \ref{est for independ points}, we have that for all $l\in L$, 
    $$\vert\nabla_l u_{\epsilon}\vert^2\leqslant2\sum_{i=1}^kl_i^2\vert\nabla_{x_i-x_0}u_{\epsilon}\vert^2\leqslant C(m,\alpha)\sum_{i=0}^k\vert\nabla_{y-x_i}u_{\epsilon}\vert^2.$$
    Also, using Lemma \ref{properties of heat mollifier}(1), we have $\rho_1(y-x_0)\leqslant-C(m)\dot{\rho}_{1.1}(y-x_i)$. Hence \begin{equation}
        \begin{aligned}
            \bar{\Theta}(x_0,1;L)&=\int\rho_1(y-x_0)(\vert\nabla_Lu_{\epsilon}\vert^2+F(u_{\epsilon})/\epsilon^2)\\&\leqslant C(m,\alpha)\sum_{i=0}^k\int-\dot{\rho}_{1.1}(y-x_i)\vert\nabla_{y-x_i}u_{\epsilon}\vert^2+\rho_{1.1}(y-x_i)F(u_{\epsilon})/\epsilon^2\\&=\sum_{i=0}^k\frac{\p}{\p r}\bar{\Theta}(x_i,1.1).
        \end{aligned}\nonumber
    \end{equation}
    On the other hand,
    \begin{equation}
        \begin{aligned}
            \bar{\Theta}(x_0,1;n_{L^{\perp}})&=\int\rho_1(y-x_0)\vert\Pi_{L^{\perp}}(y-x)\vert^2\vert\nabla_{n_{L^{\perp}}}u_{\epsilon}\vert^2\\&\leqslant2\int\rho_1(y-x_0)(\vert\nabla_{y-x_0}u_{\epsilon}\vert^2+\vert\nabla_{\Pi_L(y-x_0)}u_{\epsilon}\vert^2)\\&\leqslant\frac{\p}{\p r}\bar{\Theta}(x_0,1.1)+2\int\vert\nabla_{\Pi_L(y-x_0)}u_{\epsilon}\vert^2\rho_1(y-x_0).
        \end{aligned}\nonumber
    \end{equation}
    Using $-t\dot{\rho}(t)\leqslant-\dot{\rho}(t/1.1)$ we have $$\int\vert\nabla_{\Pi_L(y-x_0)}u_{\epsilon}\vert^2\rho_1(y-x_0)\leqslant\int\rho_1(y-x_0)\vert\nabla_Lu_{\epsilon}\vert^2\vert y-x_0\vert^2\leqslant C(m)\frac{\p}{\p r}\bar{\Theta}(x_0,1.1).$$
\end{proof}
When $k=m-2$ is the top dimension, it is possible to capture the full energy in $L^{\perp}$ using stationary equation (\ref{**}). Note that when $m=2$, Proposition \ref{Top dimension quantitative cone splitting} can be viewed as a rephrase of the fact 
$$\int_{\p B_r(x)}\left(\vert\nabla_\alpha u_{\epsilon}\vert^2+2F(u_{\epsilon})/\epsilon^2\right)=\int_{\p B_r(x)}\vert\nabla_n u_{\epsilon}\vert^2+\frac{1}{r}\int_{B_r(x)}F(u_{\epsilon})/\epsilon^2.$$
\begin{prop}[{Top dimension quantitative cone splitting, cf. \cite[Theorem 2.21]{naber2024energyidentitystationaryharmonic}}]\label{Top dimension quantitative cone splitting}
    Let $u_{\epsilon}:B_{10R}(p)\rightarrow\R^J$ be a solution to (\ref{*}). Let $x_0,\dots,x_{m-2}\in B_r(x_0)\subset B_1(p)$ be $\alpha$-linear independent at scale $r$. Then 
    \begin{equation}
        \begin{aligned}
            &\bar{\Theta}(x_0,r;L)+\bar{\Theta}(x_0,r;L^{\perp})\\=&\int\rho_r(y-x_0)(r^2(\vert\nabla_L u_{\epsilon}\vert^2+F(u_{\epsilon})/\epsilon^2)+\vert\Pi_{L^{\perp}}(y-x_0)\vert^2\vert\nabla_{L^{\perp}}u_{\epsilon}\vert^2)\\\leqslant& C(m,\alpha)r\sum_{i=0}^{m-2}\frac{\p}{\p r}\bar{\Theta}(x_i,1.4r).
        \end{aligned}\nonumber
    \end{equation}
\end{prop}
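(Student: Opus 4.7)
Proof plan for Proposition \ref{Top dimension quantitative cone splitting}:

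The plan is to reduce to Proposition \ref{quantitative cone splitting}, which already bounds $\bar{\Theta}(x_0,r;L)+\bar{\Theta}(x_0,r;n_{L^{\perp}})$. Since $\bar{\Theta}(x_0,r;L^{\perp})=\bar{\Theta}(x_0,r;n_{L^{\perp}})+\bar{\Theta}(x_0,r;\alpha_{L^{\perp}})$, it suffices to control the angular energy $\bar{\Theta}(x_0,r;\alpha_{L^{\perp}})$ by the $L$-energy and the radial energy. The geometric input is that $\dim L^{\perp}=2$ at top codimension, so the desired estimate should be a mollified version of the classical 2D conformality $\int_{\p B_r}[|\nabla_{\alpha}u_\epsilon|^{2}+2F/\epsilon^{2}]=\int_{\p B_r}|\nabla_{n}u_\epsilon|^{2}+r^{-1}\int_{B_r}F/\epsilon^{2}$, which bounds the angular energy by the radial energy up to $F$-contributions.

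To derive the mollified identity, I would test the stationary equation (\ref{**}) against $\xi(y)=\rho_r(y-x_0)\Pi_{L^{\perp}}(y-x_0)$. The two-dimensionality gives $\textup{div}\,\xi=2\rho_r+r^{-2}\dot{\rho}_r|\Pi_{L^{\perp}}(y-x_0)|^{2}$, with the coefficient $2$ being exactly $\dim L^{\perp}$. Expanding $\sum_{\alpha,\beta}\langle\p_\alpha u_\epsilon,\p_\beta u_\epsilon\rangle\p_\alpha\xi^\beta$ and substituting $|\nabla u_\epsilon|^{2}=2e_\epsilon-2F/\epsilon^{2}$ produces, after multiplication by $r^2$ and rearrangement, an identity relating $\bar{\Theta}(x_0,r;L)$, the angular and radial $L^{\perp}$-energies both weighted by $|\dot\rho_r||\Pi_{L^{\perp}}(y-x_0)|^{2}$, and a cross term $\int|\dot\rho_r|\langle\nabla_{\Pi_L(y-x_0)}u_\epsilon,\nabla_{\Pi_{L^{\perp}}(y-x_0)}u_\epsilon\rangle$. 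Isolating the angular piece and handling the cross term by Cauchy--Schwarz with a small parameter $\eta$: the $\Pi_{L^{\perp}}$-derivative portion is absorbed into the radial term already appearing on the other side, while the $\Pi_L$-derivative portion, after using $|\Pi_L(y-x_0)|\leqslant|y-x_0|$ and the bound $t|\dot{\rho}(t)|\leqslant C\rho(t/1.21)$ from Lemma \ref{properties of heat mollifier}(1), is controlled by $C\bar{\Theta}(x_0,1.1r;L)$. The resulting estimate takes the schematic form $\int|\dot\rho_r||\Pi_{L^{\perp}}(y-x_0)|^{2}|\nabla_{\alpha_{L^{\perp}}}u_\epsilon|^{2}\leqslant C[\bar{\Theta}(x_0,1.1r;L)+\bar{\Theta}(x_0,1.1r;n_{L^{\perp}})]$, and a final application of the pointwise comparison $\rho_r\leqslant C|\dot\rho_r|$ from Lemma \ref{properties of heat mollifier}(2) converts the LHS to $\bar{\Theta}(x_0,r;\alpha_{L^{\perp}})$. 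Feeding this into Proposition \ref{quantitative cone splitting} applied at scale $1.1r$ (and using $1.1\cdot 1.1 = 1.21 < 1.4$ so that all scales fit under the stated $1.4r$ on the RHS) completes the argument.

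The main obstacle is the cross term: without a definite sign, and coupling $\Pi_L$ and $\Pi_{L^{\perp}}$ derivatives of $u_\epsilon$, a naive Cauchy--Schwarz would simply reproduce the angular energy on the RHS and yield no useful estimate. The resolution exploits the factor $\Pi_L(y-x_0)$ present in the cross-term integrand, which transfers the $\Pi_L$-derivative piece into the already-controlled $L$-energy via the mollifier inequalities, while the small parameter $\eta$ absorbs the $\Pi_{L^{\perp}}$-derivative piece into the radial term of the conformality identity. The various weight-conversion and scale-loss factors are handled routinely using Lemma \ref{properties of heat mollifier}.
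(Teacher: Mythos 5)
Your plan is correct and follows essentially the same route as the paper: test the stationary equation against a mollified radial vector field in $L^{\perp}$, rearrange into a weighted conformality identity, bound the cross term via Cauchy--Schwarz and the inequalities of Lemma \ref{properties of heat mollifier}, then close with Proposition \ref{quantitative cone splitting}. The only cosmetic difference is that the paper chooses $\xi(y)=P\left(\vert y-x_0\vert^2/2\right)\Pi_{L^{\perp}}(y-x_0)$ with $\dot P=-\rho$, which produces the $\rho$-weighted angular energy directly and saves your final conversion step; also, your stated obstacle is slightly off, since $\nabla_{\Pi_{L^{\perp}}(y-x_0)}u_{\epsilon}$ is purely radial, so a plain Cauchy--Schwarz on the cross term already yields only the harmless radial and $L$-energy contributions rather than regenerating the angular energy.
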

\begin{proof}
    Assume $r=1$. We only need to show the estimate of $\bar{\Theta}(x,r;\alpha_{L^{\perp}})$. Let us choose $\xi(y)=P(\vert y-x_0\vert^2/2)\Pi_{L^{\perp}}(y-x_0)$ in stationary equation (\ref{**}), where $P$ is the only compactly supported function on $[0,\infty)$ with $\dot{P}=-\rho$. Then
    \begin{equation}
        \begin{aligned}
            &\int e_{\epsilon}(u_{\epsilon})(2P(\vert y-x_0\vert^2/2)-\rho_1(y-x_0)\vert\Pi_{L^{\perp}}(y-x_0)\vert^2)\\=&\int\vert\nabla_{L^{\perp}} u_{\epsilon}\vert^2P(\vert y-x_0\vert^2/2)-\rho_1(y-x_0)(\vert\nabla_{\Pi_{L^{\perp}}(y-x_0)}u_{\epsilon}\vert^2+\langle\nabla_{\Pi_{L^{\perp}}(y-x_0)}u_{\epsilon},\nabla_{\Pi_L(y-x_0)}u_{\epsilon}\rangle).
        \end{aligned}\nonumber
    \end{equation}
    Hence 
    \begin{equation}
        \begin{aligned}
            &\int P(\vert y-x_0\vert^2/2)(\vert\Pi_L\nabla u_{\epsilon}\vert^2+2F(u_{\epsilon})/\epsilon^2)-\rho_1(y-x_0)F(u_{\epsilon})/\epsilon^2\\=&\frac{1}{2}\int\rho_1(y-x_0)(\vert\Pi_{L^{\perp}}(y-x_0)\vert^2(\vert\nabla_L u_{\epsilon}\vert^2+\vert\nabla_{\alpha_{L^{\perp}}}u_{\epsilon}\vert^2-\vert\nabla_{n_{L^{\perp}}}u_{\epsilon}\vert^2)\\-&\int\rho_1(y-x_0)\langle\nabla_{\Pi_{L^{\perp}}(y-x_0)}u_{\epsilon},\nabla_{\Pi_L(y-x_0)}u_{\epsilon}\rangle).
        \end{aligned}\nonumber
    \end{equation}
    As a result, 
    \begin{equation}
        \begin{aligned}
            &\int\rho_1(y-x_0)\vert\nabla_{\alpha_{L^{\perp}}}u_{\epsilon}\vert^2\vert\Pi_{L^{\perp}}(y-x_0)\vert^2\\ \leqslant&C(m,\alpha)\int\rho_{1.1}(y-x_0)(\vert\nabla_L u_{\epsilon}\vert^2+F(u_{\epsilon})/\epsilon^2+\vert\Pi_{L^{\perp}}(y-x_0)\vert^2\vert\nabla_{n_{L^{\perp}}}u_{\epsilon}\vert^2).
        \end{aligned}\nonumber
    \end{equation}
    Proposition \ref{quantitative cone splitting} applies to give the desired result.
\end{proof}
After mollifying using the heat kernel, the energy density is also smooth in $x$ variable. It is clear that a map $u_{\epsilon}$ with zero $L$-energy should be $L$ invariant. We characterize this property by quantitative estimate on the spacial derivative of $\bar{\Theta}$.
\begin{prop}[{cf. \cite[Theorem 2.27, 2.29]{naber2024energyidentitystationaryharmonic}}]\label{spacial derivative est}
    Let $u_{\epsilon}:B_{10R}(p)\rightarrow\R^J$ be a solution to (\ref{*}). Then for any $L^k\subset\R^m,B_{2r}(x)\subset B_{1}(p)$ we have$$r^2\vert\nabla_L\bar{\Theta}(x,r)\vert^2\leqslant C(m)r\frac{\p}{\p r}\bar{\Theta}(x,r)\bar{\Theta}(x,2r;L)$$ $$r^4\vert\nabla^2_L\bar{\Theta}(x,r)\vert^2\leqslant C(m)\left(r\frac{\p}{\p r}\bar{\Theta}(x,r)\bar{\Theta}(x,2r;L)+\bar{\Theta}(x,2r;L)^2\right)$$
\end{prop}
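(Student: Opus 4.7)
The plan is to compute the first and second $x$-derivatives of $\bar\Theta(x,r)$ by plugging explicit vector fields into the stationary equation (\ref{**}), and then to apply Cauchy--Schwarz against the monotonicity identity (\ref{monotonicity}). Plugging $\xi(y)=-\rho_r(y-x)\,v$ with $v\in L$ a unit vector into (\ref{**}), and using $\nabla_y\rho_r(y-x)=\tfrac{y-x}{r^2}\dot\rho_r(y-x)$, yields the fundamental first-derivative identity
\begin{equation*}
\nabla_v\bar\Theta(x,r)\;=\;-\int\dot\rho_r(y-x)\,\langle\nabla_{y-x}u_\epsilon,\nabla_v u_\epsilon\rangle\,\dif y.
\end{equation*}
Equivalently, differentiating $\bar\Theta$ in $x$ and integrating by parts transfers the derivative onto $e_\epsilon(u_\epsilon)$; the equation (\ref{*}) rewrites $\nabla_v e_\epsilon=\mathrm{div}_y\langle\nabla u_\epsilon,\nabla_v u_\epsilon\rangle$, and a second integration by parts produces the same right-hand side.

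For the first estimate, Cauchy--Schwarz gives
\[
|\nabla_v\bar\Theta|^2\;\leq\;\Bigl(\int -\dot\rho_r\,|\nabla_{y-x}u_\epsilon|^2\Bigr)\Bigl(\int -\dot\rho_r\,|\nabla_v u_\epsilon|^2\Bigr).
\]
The first factor is bounded by $r\partial_r\bar\Theta(x,r)$ by (\ref{monotonicity}) (the remaining $F$-term in the monotonicity identity is nonnegative and is dropped). For the second factor, Lemma \ref{properties of heat mollifier} supplies the pointwise comparison $-\dot\rho_r\leq C(m)\rho_{2r}$ on the main region where $\rho$ is exponential, so
\[
\int -\dot\rho_r\,|\nabla_v u_\epsilon|^2\;\leq\;C(m)\int\rho_{2r}\,|\Pi_L\nabla u_\epsilon|^2\;\leq\;C(m)\,r^{-2}\bar\Theta(x,2r;L).
\]
Summing over an orthonormal basis of $L$ gives the first inequality.

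For the second estimate, differentiate the fundamental identity once more in direction $w\in L$. Since $\partial_{x^w}\dot\rho_r(y-x)=-\nabla_y\dot\rho_r\cdot w$ and $\partial_{x^w}\langle\nabla_{y-x}u_\epsilon,\nabla_v u_\epsilon\rangle=-\langle\nabla_w u_\epsilon,\nabla_v u_\epsilon\rangle$, one gets the clean splitting
\[
\nabla_w\nabla_v\bar\Theta=\underbrace{\int \nabla_y\dot\rho_r(y-x)\cdot w\;\langle\nabla_{y-x}u_\epsilon,\nabla_v u_\epsilon\rangle\,\dif y}_{(A)}\;+\;\underbrace{\int\dot\rho_r\,\langle\nabla_w u_\epsilon,\nabla_v u_\epsilon\rangle\,\dif y}_{(B)}.
\]
Term $(B)$ is handled exactly as in the first estimate, giving $|(B)|\leq C(m)r^{-2}\bar\Theta(x,2r;L)$ and thus the $\bar\Theta(x,2r;L)^2$ summand after squaring and multiplying by $r^4$. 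For $(A)$, substitute $\nabla_y\dot\rho_r=\tfrac{y-x}{r^2}\ddot\rho_r$ and split by Cauchy--Schwarz:
\[
|(A)|^2\;\leq\;\Bigl(\int r^{-2}\,\ddot\rho_r\,|\nabla_{y-x}u_\epsilon|^2\Bigr)\Bigl(\int r^{-2}|y-x|^2\,\ddot\rho_r\,|\nabla_v u_\epsilon|^2\Bigr).
\]
Property (1) of Lemma \ref{properties of heat mollifier}, together with the explicit profile $\rho(t)=c_me^{-t}$ on $[0,R]$, provides the pointwise controls $\ddot\rho_r\leq C(m)(-\dot\rho_r)$ and $|y-x|^2\ddot\rho_r\leq C(m)\,r^2\rho_{2r}$, reducing the two factors to $C(m)r^{-2}\cdot r\partial_r\bar\Theta(x,r)$ and $C(m)r^{-2}\bar\Theta(x,2r;L)$ respectively. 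Combining gives $r^4|(A)|^2\leq C(m)\,r\partial_r\bar\Theta(x,r)\,\bar\Theta(x,2r;L)$, and summing over bases yields the second inequality.

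The principal obstacle is term $(A)$: the second $x$-derivative lands on the kernel and produces $\ddot\rho$, which is absent from the clean monotonicity structure. This is precisely the reason for the careful engineering of $\rho$ in the definition, where $\ddot\rho=\rho=-\dot\rho$ on the bulk $[0,R]$ and $\ddot\rho$ is exponentially small on the tail $[R,R+2]$: via Lemma \ref{properties of heat mollifier}(1) each weighted $\ddot\rho$-integral is dominated by a $\rho$- or $-\dot\rho$-integral at a comparable scale, which is in turn controlled by either $r\partial_r\bar\Theta$ or the partial energy $\bar\Theta(\cdot,\cdot;L)$.
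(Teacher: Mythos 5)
Your proof follows the paper's approach exactly: derive the first-derivative identity $\nabla_v\bar{\Theta}(x,r)=-\int\dot\rho_r(y-x)\langle\nabla_{y-x}u_\epsilon,\nabla_v u_\epsilon\rangle\,\dif y$ by testing the stationary equation (\ref{**}) with $\xi=\rho_r(y-x)v$, differentiate once more for the Hessian, and close with Cauchy--Schwarz against the monotonicity identity (\ref{monotonicity}), converting the $\dot\rho_r,\ddot\rho_r$ kernels to $\rho_{2r}$ via Lemma \ref{properties of heat mollifier}. The only loose step, shared with the paper's terse ``Similarly we have the estimate for the Hessian,'' is the pointwise claim $\ddot\rho_r\leq C(m)(-\dot\rho_r)$, which fails on the tail $t\in[R,R+2]$ where $-\dot\rho$ decays faster than $\ddot\rho$; what Lemma \ref{properties of heat mollifier}(1) actually yields (using $t^2\ddot\rho(t)\leq C(-\dot\rho(t/1.1^2))$ for $t\geq1$ and the explicit profile for $t<1$) is $\ddot\rho_r(y)\leq C(m)(-\dot\rho_{1.1r}(y))$, which places $r\frac{\partial}{\partial r}\bar{\Theta}(x,1.1r)$ rather than $r\frac{\partial}{\partial r}\bar{\Theta}(x,r)$ on the right-hand side of the Hessian estimate---an imprecision that is harmless in the annular-region applications, where $r\frac{\partial}{\partial r}\bar{\Theta}$ is controlled across a range of scales.
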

\begin{proof}
    Take any $l\in L$, then $$r\nabla_l\bar{\Theta}(x,r)=r^2\int\dot{\rho}_r(y-x)\left\langle \frac{x-y}{r},l\right\rangle e_{\epsilon}(u_{\epsilon}).$$ Taking $\xi=\rho_r(y-x)l$ in stationary equation (\ref{**}) yields \begin{equation}
        r^2\int\dot{\rho}_r(y-x)\left\langle \frac{x-y}{r},l\right\rangle e_{\epsilon}(u_{\epsilon})=r^2\int\dot{\rho}_r(y-x)\left\langle\nabla_{\frac{x-y}{r}}u_{\epsilon},\nabla_lu_{\epsilon}\right\rangle.\label{spacial gradient}
    \end{equation}
    Taking derivative to both sides, we have  for all $v,w\in L$ 
    \begin{equation}
    \begin{aligned}
         &r^2\nabla^2_L\bar{\Theta}(x,r)[v,w]\\=&r^2\int\ddot{\rho}_r(y-x)\left\langle\nabla_{\frac{x-y}{r}}u_{\epsilon},\nabla_wu_{\epsilon}\right\rangle\left\langle\frac{x-y}{r},v\right\rangle+\dot{\rho}_r(y-x)\left\langle\nabla_vu_{\epsilon},\nabla_wu_{\epsilon}\right\rangle.\label{spcial hessian}
    \end{aligned}
    \end{equation}
    Hence $$r^2\vert\nabla_l\bar{\Theta}(x,r)\vert^2\leqslant \left(r^2\int-\dot{\rho}_r(y-x)\vert\nabla_l u_{\epsilon}\vert^2\right)\left(\int-\dot{\rho}_r(y-x)\vert\nabla_{y-x}u_\epsilon\vert^2\right).$$Using Lemma \ref{properties of heat mollifier}, we get the estimate for the $\nabla_L\bar{\Theta}$. Similarly we have the estimate for the Hessian of $\bar{\Theta}$.
\end{proof}

\subsection{Symmetries of maps}\label{ss:symmetries}
Let us see some corollaries of the quantitative cone splitting results in Section \ref{ss: cone splitting}. First, by Proposition \ref{spacial derivative est}, we have that if a sequence of solutions to (\ref{*}) has vanishing $L$-energy, then both the limiting map and defect measure is invariant with respect to $L$. 
\begin{lem}[{cf. \cite[Theorem 2.7]{NVyangmills}}]\label{Invariant with plane}
    Let $u_{\epsilon_i}$ be a sequence of solutions to (\ref{*}) with $\epsilon_i\rightarrow0$. Suppose $u_{\epsilon_i}\rightharpoonup u$ and $e_{\epsilon_i}(u_{\epsilon_i})\dif x\rightarrow\vert\nabla u\vert^2\dif x+\nu$. If there exists a $k$-dimensional subspace $L$ in $\R^m$ such that $\bar{\Theta}_{u_{\epsilon_i}}(p,1;L)\rightarrow0$, then both $u$ and $\nu$ are invariant with respect to $L$. Moreover,
    \begin{enumerate}[(1)]
        \item If $k=m-2$, then $u$ is smooth and $\nu=\sum_k\theta_k\Hau^{m-2}\llcorner(p_k+L)$ with $\theta_k\geqslant\varepsilon_0$;
        \item If $u_{\epsilon_i}$ is $(m-2,i^{-1})$ symmetric with respect to $L$ on $B_1(p)$ for large $i$, then $u$ is constant and $\nu=\theta\Hau^{m-2}\llcorner(p+L)$;
    \end{enumerate}
\end{lem}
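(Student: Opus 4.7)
The plan is to derive the $L$-invariance of $u$ and $\nu$ directly from the vanishing heat-mollified $L$-energy by combining weak lower semicontinuity with the spatial derivative estimate, and then to read off the structural refinements (1) and (2) from rectifiability, $\varepsilon$-regularity, and (for (2)) the additional $0$-homogeneity supplied by the $(m-2)$-symmetry hypothesis.

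First I would unpack $\bar\Theta_{u_{\epsilon_i}}(p,1;L)\to 0$ as $\int \rho_1(y-p)\vert\Pi_L \nabla u_{\epsilon_i}\vert^2 \,\dif y \to 0$ together with $\int \rho_1(y-p) F(u_{\epsilon_i})/\epsilon_i^2 \,\dif y \to 0$. Since $\nabla u_{\epsilon_i} \rightharpoonup \nabla u$ in $L^2_{loc}$ and $v \mapsto \int \rho_1(y-p) \vert\Pi_L v\vert^2$ is weakly lower semicontinuous, $\Pi_L \nabla u \equiv 0$ on $\lbrace \rho_1(y-p) > 0 \rbrace \supseteq B_R(p)$; choosing $R$ large gives the $L$-invariance of $u$ in the region of interest. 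To propagate this to $\nu$, I use Lemma \ref{properties of heat mollifier}(4) to get $\bar\Theta_{u_{\epsilon_i}}(x,2r;L) \leq C(m) \bar\Theta_{u_{\epsilon_i}}(p,1;L) \to 0$ for $x$ near $p$ and $r$ in a compact range, then plug into Proposition \ref{spacial derivative est} to conclude $r\vert\nabla_L \bar\Theta_{u_{\epsilon_i}}(x,r)\vert \to 0$ uniformly. Passing $i \to \infty$, the heat-mollification $x \mapsto (\mu \ast \rho_r)(x)$ of the limit measure $\mu := \tfrac{1}{2}\vert\nabla u\vert^2 \,\dif x + \nu$ is $L$-invariant in $x$ for every such $r$; since $\mu \ast \rho_r \to \mu$ weakly as $r \to 0$, $\mu$, and hence $\nu$, is $L$-invariant.

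For part (1), $\nu$ is $(m-2)$-rectifiable, locally finite, and translation-invariant under the $(m-2)$-plane $L$, which forces $\nu = \sum_k \theta_k \Hau^{m-2}\llcorner(p_k + L)$ with $\lbrace p_k \rbrace$ locally discrete in $L^\perp$. The lower bound $\theta_k \geqslant \varepsilon_0$ follows from the $\varepsilon$-regularity Theorem \ref{e-reg thm}: a density strictly less than $\varepsilon_0$ would yield uniform smoothness of the $u_{\epsilon_i}$ at points of $p_k + L$, contradicting the presence of concentration there. Smoothness of $u$ is immediate since an $L$-invariant finite-energy weakly harmonic map descends to a weakly harmonic map of a $2$-disk into $N$, smooth by Theorem \ref{2d estimate}.

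For part (2), the stronger hypothesis adds $\int \rho_1(y-p) \vert\nabla_{n_{L^\perp}} u_{\epsilon_i}\vert^2 \vert\Pi_{L^\perp}(y-p)\vert^2 \to 0$, and the same weak-lower-semicontinuity argument gives $\nabla_{n_{L^\perp}} u \equiv 0$ on $\lbrace \vert\Pi_{L^\perp}(y-p)\vert > 0 \rbrace$. Together with $L$-invariance, $u(y) = g(\omega)$ for some $g:\Sp^1 \to N$ with $\omega$ the angular coordinate of $\Pi_{L^\perp}(y-p)$; a polar-coordinate integration on $L \oplus L^\perp$ shows $\int_{B_1(p)} \vert\nabla u\vert^2 = +\infty$ unless $g$ is constant, hence $u$ is constant. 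Monotonicity combined with the vanishing of $\bar\Theta_{u_{\epsilon_i}}(p,1;L) + \bar\Theta_{u_{\epsilon_i}}(p,1;n_{L^\perp})$ forces $r \partial_r \bar\Theta_{u_{\epsilon_i}}(p,r) \to 0$ uniformly for $r$ in a compact range, so $\bar\Theta_\mu(p,r)$ is constant in $r$ and $\mu$ is $0$-homogeneous from $p$. Combined with $L$-invariance and $(m-2)$-rectifiability, $\nu$ must be supported on the single translate $p + L$, giving $\nu = \theta\,\Hau^{m-2}\llcorner(p+L)$. The main obstacle throughout is the spatial propagation step, where heat mollification is essential: the classical density $\Theta(x,r)$ is not smooth enough in $x$ to support a derivative estimate like Proposition \ref{spacial derivative est}, without which one cannot convert a small $L$-energy at a single scale and point into a small spatial $L$-gradient on a neighborhood.
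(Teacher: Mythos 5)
Your argument is correct, and for the $L$-invariance of $u$ and $\nu$, and for part (1), it is essentially the same as the paper's (same ingredients: weak lower semicontinuity, the spatial-derivative estimate Proposition \ref{spacial derivative est}, rectifiability, $\varepsilon$-regularity), only reordered: you first get $\Pi_L\nabla u=0$ directly from weak lower semicontinuity of the convex $L$-energy, whereas the paper first establishes the structure of $\nu$, uses the resulting smooth convergence away from $\mathrm{supp}\,\nu$, and only then reads off $L$-invariance of $u$. Your ordering is arguably cleaner.

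For part (2) you take a genuinely different route. The paper applies the top-dimension cone splitting Proposition \ref{Top dimension quantitative cone splitting} (i.e., the stationary equation with the $L^\perp$ radial vector field) to conclude that the angular $L^\perp$-energy also vanishes, so that the full $L^\perp$-energy, and hence all energy, concentrates on $p+L$; $\varepsilon$-regularity then gives smooth convergence to a constant off $p+L$, delivering both conclusions at once. You instead argue on the limit objects: the polar-coordinate finiteness argument for $u$ is a nice direct observation, and the vanishing of $r\partial_r\bar\Theta_{u_{\epsilon_i}}(p,r)$ gives constancy of $\bar\Theta_\mu(p,\cdot)$, from which you extract the single translate. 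Two caveats worth flagging. First, the uniformity of $r\partial_r\bar\Theta_{u_{\epsilon_i}}(p,r)\to0$ only holds on compact intervals $[a,b]$ with $a>0$ (the comparison $\bar\Theta(p,2r;\cdot)\lesssim\bar\Theta(p,1;\cdot)$ degrades as $r\to0$); this still yields constancy of $\bar\Theta_\mu(p,\cdot)$ on $(0,\infty)$ by dominated convergence, but you should not phrase it as uniform convergence near $r=0$. Second, the inference ``$\bar\Theta_\mu(p,r)$ constant $\Rightarrow \mu$ is $0$-homogeneous'' is not automatic for a general Radon measure; here it goes through because part (1) has already reduced $\nu$ to $\sum_k\theta_k\Hau^{m-2}\llcorner(p_k+L)$ (and $u$ is constant, so $\mu=\nu$), and a direct computation of $r^2\int\rho_r(y-p)\,\dif\nu$ using the product structure $\nu=\nu_{L^\perp}\times\Hau^{m-2}\llcorner L$ shows that any atom of $\nu_{L^\perp}$ off the origin makes this quantity genuinely $r$-dependent. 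Stating this explicitly would close the one real gap in your write-up.
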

\begin{proof}
    For (1), by Proposition \ref{spacial derivative est}, $\bar{\Theta}(p,1;L)\rightarrow0$ implies that $\nu$ is invariant with respect to $L$. Since $\nu$ is $(m-2)$-rectifiable, we must have $\nu=\sum_k\theta_k\Hau^{m-2}\llcorner(p_k+L)$. As a result, $u_{\epsilon_i}$ converges to $u$ smoothly away from $\cup_k(p_k+L)$. $\bar{\Theta}(p,1;L)\rightarrow0$ then implies that $u$ is invariant with respect to $L$.\\
    For (2), the top dimension cone splitting \ref{Top dimension quantitative cone splitting} and $\varepsilon$-regularity \ref{e-reg thm} implies that $u_{\epsilon_i}$ converges to a constant smoothly away from $L+p$. The conclusion then follows.
\end{proof}
By a contradiction argument, we also have the following corollaries.
\begin{lem}[{cf. \cite[Theorem 3.2]{NVyangmills}}]\label{m-2 symmetry alternative}
    Let $u_{\epsilon}:B_{10R}(p)\rightarrow\R^J$ be a solution to (\ref{*}) with $(10R)^{2-n}\int_{B_{10R}(p)}e_{\epsilon}(u_{\epsilon})\leqslant\Lambda$. For each $0<\sigma<\sigma(m,K_N)$, if $u_{\epsilon}$ is $(m-2,\delta)$ symmetric on $B_2(p)$ with $\delta<\delta(m,\Lambda,K_N,R,\sigma)$, then either 
    \begin{enumerate}[(1)]
        \item $\sup_{B_{3/2}(p)}e_{\epsilon}(u_{\epsilon})\leqslant\sigma$, or
        \item $\bar{\Theta}(x,1)\geqslant\varepsilon_0(m,K_N)$ and $r\frac{\p}{\p r}\bar{\Theta}(x,r)\leqslant\sigma$ for all $r\in[\sigma,2]$ and $x\in p+L\cap B_2$.
    \end{enumerate}
\end{lem}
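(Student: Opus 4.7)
The plan is a standard contradiction-compactness argument in the spirit of \cite{NVyangmills}. Suppose the conclusion fails for some $\sigma>0$: there would then exist a sequence $\delta_i\to0$ and solutions $u_{\epsilon_i}$ of (\ref{*}) satisfying the hypotheses but violating both alternatives. Translating so that $p=0$ and passing to subsequences so that the associated $(m-2)$-planes converge to a fixed plane $L$, I will extract a subsequential limit of the $u_{\epsilon_i}$ and split the analysis according to the asymptotic behavior of $\epsilon_i$.

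\textbf{Case 1: $\epsilon_i\to\epsilon_\infty>0$ along a subsequence.} Standard elliptic bootstrap for the semilinear equation (\ref{*}) with fixed $\epsilon$ gives smooth subconvergence $u_{\epsilon_i}\to u$ on compact subsets of $B_{10R}$, where $u$ solves (\ref{*}) with parameter $\epsilon_\infty$. The limiting $(m-2,0)$-symmetry reads $\bar\Theta_u(0,2;L)+\bar\Theta_u(0,2;n_{L^\perp})=0$, forcing $\Pi_L\nabla u=0$, $F(u)=0$, and $\nabla_{n_{L^\perp}}u=0$ throughout the support of $\rho_2$. Since $F(u)=0$ gives $u\in N$ and hence $f(u)=0$, the map $u$ is harmonic and depends only on the angular coordinate in $L^\perp\simeq\R^2$, so $u$ is constant on this region; analyticity of harmonic functions extends the constancy to all of $B_{10R}$. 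Smooth convergence finally forces $\sup_{B_{3/2}}e_{\epsilon_i}(u_{\epsilon_i})\to0$, contradicting the failure of alternative (1).

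\textbf{Case 2: $\epsilon_i\to 0$ along a subsequence.} Chen-Struwe compactness yields a weakly harmonic limit $u$ and energy-measure convergence $e_{\epsilon_i}(u_{\epsilon_i})\,dx\to\tfrac12|\nabla u|^2\,dx+\nu$. Applying Lemma \ref{Invariant with plane}(2), suitably rescaled to $B_2$, gives $u\equiv\mathrm{const}$ and $\nu=\theta\,\Hau^{m-2}\llcorner L$ on $B_2$. If $\theta=0$, the $\varepsilon$-regularity Theorem \ref{e-reg thm} forces $\sup_{B_{3/2}}e_{\epsilon_i}\to0$, again contradicting (1). If $\theta>0$, then $\theta\geqslant\varepsilon_0(m,K_N)$ by the partial energy identity, and the key explicit calculation is that for any $x\in L\cap B_2$ and any $r>0$,
$$\bar\Theta_\nu(x,r)=\theta\,r^2\int_L\rho_r(z)\,d\Hau^{m-2}(z)=\theta\,c_m,$$
independent of $r$, so $\partial_r\bar\Theta_\nu(x,r)\equiv 0$. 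Since $\rho_r$ and $\dot\rho_r$ are smooth and compactly supported, the weak convergence of energy measures promotes to locally uniform convergence of $\bar\Theta_{u_{\epsilon_i}}$ and $r\partial_r\bar\Theta_{u_{\epsilon_i}}$ on $(L\cap B_2)\times[\sigma,2]$. Both estimates in alternative (2) then hold for $i$ large (after absorbing $c_m$ into $\varepsilon_0$), contradicting the failure of (2).

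The step I expect to carry the most weight is the rigidity invoked in Case 1: the penalty term $F/\epsilon^2$ together with the $L$-energy must be used exactly as in the sketch of proof to rule out nonconstant finite-energy solutions of (\ref{*}) with full $(m-2)$-symmetry. Everything else is the cone-splitting/compactness package already recorded in Lemma \ref{Invariant with plane} combined with the explicit form of $\bar\Theta$ on the flat limit measure, which is precisely what makes the $r\partial_r\bar\Theta$ bound in (2) possible.
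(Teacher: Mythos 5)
Your contradiction-compactness argument is the same engine as the paper's, and the main case is correct, but the route differs enough that it is worth spelling out where and why.

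In the main case ($\epsilon_i\to0$), the paper does not argue by asserting uniform convergence of $r\partial_r\bar\Theta_{u_{\epsilon_i}}$. Instead it first observes that $\bar\Theta_{u_{\epsilon_i}}(x,2r)-\bar\Theta_{u_{\epsilon_i}}(x,r)\to\theta-\theta=0$ at each scale, and then invokes the kernel comparison of Lemma \ref{properties of heat mollifier} to get $\bar\Theta_{u_{\epsilon_i}}(x,2r)-\bar\Theta_{u_{\epsilon_i}}(x,r)\geqslant c(m)\,r\partial_r\bar\Theta_{u_{\epsilon_i}}(x,r)$, which then kills the derivative. Your version instead differentiates the kernel in $r$ and notes that $\partial_r\bar\Theta_{u_{\epsilon_i}}(x,r)=\int\partial_r\!\left(r^2\rho_r(y-x)\right)e_{\epsilon_i}(u_{\epsilon_i})(y)\,\dif y$ is again $e_{\epsilon_i}$ paired against a smooth compactly supported kernel that depends continuously on $(x,r)$, so weak-$*$ convergence of the energy measures (together with the uniform total-mass bound) gives locally uniform convergence of both $\bar\Theta$ and $\partial_r\bar\Theta$. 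That observation is correct and is a legitimate substitute for the paper's monotonicity trick; the one thing to be careful to state is that you are differentiating the kernel directly, not using the identity \eqref{monotonicity}, which writes $r\partial_r\bar\Theta$ in terms of $|\nabla_{x-y}u_\epsilon|^2$ and would not pass to the limit by weak-$*$ convergence alone. Your explicit calculation $\bar\Theta_\nu(x,r)\equiv\theta c_{m-2}$ for the flat limit measure is the same fact the paper uses via $\theta-\theta=0$.

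Your Case 1, in which $\epsilon_i$ stays bounded away from $0$, is a genuine addition: the paper invokes Lemma \ref{Invariant with plane}, which is stated only for $\epsilon_i\to0$, and never says why the bounded-$\epsilon$ case is harmless. Your rigidity argument there is correct — vanishing of $\bar\Theta(0,2;L)+\bar\Theta(0,2;n_{L^\perp})$ for a fixed-$\epsilon$ limit $u$ forces $F(u)=0$ and $\Pi_L\nabla u=\nabla_{n_{L^\perp}}u=0$ on the kernel support, hence $u$ is a purely angular harmonic map of $L^\perp\simeq\R^2$ and therefore constant, and smooth convergence yields $\sup_{B_{3/2}}e_{\epsilon_i}(u_{\epsilon_i})\to0$, contradicting the failure of alternative (1). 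So you are, if anything, supplying a detail the paper leaves implicit. The only point worth tightening is to make explicit what "suitably rescaled" means when you apply Lemma \ref{Invariant with plane}(2) to the $B_2$ hypothesis, and to record that the $(m-2,\delta)$ symmetry on $B_2(p)$ controls $\nu$ on a ball of radius $\approx 2\sqrt{2(R+2)}$, which is where the flat structure $\nu=\theta\Hau^{m-2}\llcorner(p+L)$ is needed to evaluate $\bar\Theta_\nu(x,r)$ for all $x\in L\cap B_2$, $r\in[\sigma,2]$.
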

\begin{proof}
    Suppose two conclusions above are both false for $u_{\epsilon_i}$ with $u_{\epsilon_i}$ $(m-2,1/i)$ symmetric. After passing to a subsequence we may assume, by Lemma \ref{Invariant with plane}, that $u_{\epsilon_i}$ converges weakly to a constant with defect measure $\nu=\theta\Hau^{m-2}\llcorner(p+L)$. Note that $r\frac{\p}{\p r}\bar{\Theta}_{u_{\epsilon_i}}(x,r)\rightarrow 0$ is automatic for $x\in p+L\cap B_2$ because $\bar{\Theta}_{u_{\epsilon_i}}(x,2r)-\bar{\Theta}_{u_{\epsilon_i}}(x,r)\rightarrow\theta-\theta=0$. On the other hand, Lemma \ref{properties of heat mollifier} implies that $\bar{\Theta}_{u_{\epsilon_i}}(x,2r)-\bar{\Theta}_{u_{\epsilon_i}}(x,r)\geqslant c(m)r\frac{\p}{\p r}\bar{\Theta}_{u_{\epsilon_i}}(x,r)$. Hence if (2) is false, we must have $u_{\epsilon_i}$ converges smoothly to a constant, which contradicts (1).
\end{proof}
\begin{lem}[{cf. \cite[Theorem 2.6]{NVyangmills}}]\label{m-1 flatness}
    Let $u_{\epsilon}:B_{10R}(p)\rightarrow\R^J$ be a solution to (\ref{*}) with $(10R)^{2-n}\int_{B_{10R}(p)}e_{\epsilon}(u_{\epsilon})\leqslant\Lambda$. For each $\sigma>0$, if $u_{\epsilon}$ is $(m-1,\delta)$ symmetric wrt $L$ on $B_2(p)$ with $\delta<\delta(m,\Lambda,K_N,R,\sigma)$, then 
    $$\sup_{B_{3/2}(p)}e_{\epsilon}(u_{\epsilon})<\sigma.$$ 
\end{lem}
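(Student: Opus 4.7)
The plan is a compactness-contradiction argument in the spirit of the proof of Lemma \ref{m-2 symmetry alternative}. Suppose the statement fails. Then there exist $\sigma_0>0$ and solutions $u_{\epsilon_i}:B_{10R}(p)\to\R^J$ of (\ref{*}) satisfying the energy bound, $(m-1,1/i)$-symmetric on $B_2(p)$ with respect to $(m-1)$-planes $L_i$, yet with $\sup_{B_{3/2}(p)}e_{\epsilon_i}(u_{\epsilon_i})\ge\sigma_0$ for every $i$. After extracting a subsequence, $L_i\to L$, $\epsilon_i\to\epsilon_\infty\in[0,\infty]$, $u_{\epsilon_i}\rightharpoonup u$ weakly in $W^{1,2}_{\rm loc}$, and the energy measures $e_{\epsilon_i}(u_{\epsilon_i})\,dy$ converge to some Radon measure $\mu$. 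According to $\epsilon_\infty$, the limit $u$ is respectively a weakly harmonic map to $N$ (and $\mu=\tfrac12|\nabla u|^2dy+\nu$ with $\nu$ an $(m-2)$-rectifiable defect), a smooth solution to (\ref{*}) at level $\epsilon_\infty\in(0,\infty)$, or a harmonic function on $B_{10R}(p)$.

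Next I would invoke Lemma \ref{Invariant with plane} to conclude that $u$ and $\mu$ are invariant under translations in the $(m-1)$-plane $L$. The pivotal structural input is that $\nu$, being $(m-2)$-rectifiable yet invariant under this $(m-1)$-dimensional translation group, must vanish identically; otherwise its support would contain full $L$-translates of itself and hence $(m-1)$-dimensional pieces, contradicting $(m-2)$-rectifiability. With $\nu\equiv 0$, the $\varepsilon$-regularity Theorem \ref{e-reg thm} (or standard elliptic regularity in the remaining cases) upgrades the convergence to smooth convergence on a neighborhood of $p$, and $L$-invariance reduces $u$ to a smooth function of the single real variable $s=\Pi_{L^\perp}(y-p)$.

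The second half of the $(m-1,1/i)$-symmetry, namely $\bar\Theta(p,1;n_{L_i^\perp})\to 0$, passes through the strong limit to yield
\begin{equation*}
\int\rho_1(y-p)\,|\Pi_{L^\perp}(y-p)|^2\,|\nabla_{n_{L^\perp}}u|^2\,dy=0.
\end{equation*}
The integrand equals $\rho_1(y-p)\,|\nabla u\cdot\Pi_{L^\perp}(y-p)|^2$, which is continuous in $y$ near $p$, so it vanishes pointwise and hence $u$ has no gradient in the $L^\perp$ direction. Combined with $L$-invariance, $u$ is constant on a neighborhood of $p$, and when $\epsilon_\infty\in(0,\infty)$ the equation (\ref{*}) forces this constant to lie in $N$. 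In every case $e_{\epsilon_\infty}(u)\equiv 0$ near $p$, so smooth convergence gives $\sup_{B_{3/2}(p)}e_{\epsilon_i}(u_{\epsilon_i})\to 0$, contradicting $\sup_{B_{3/2}(p)}e_{\epsilon_i}(u_{\epsilon_i})\ge\sigma_0$.

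The main obstacle is the case $\epsilon_\infty=\infty$, in which the Ginzburg--Landau structure dissolves in the limit; here $\Delta u_{\epsilon_i}\to 0$ in $H^{-1}_{\rm loc}$, so the weak limit is a harmonic function on $B_{10R}(p)$ and $u_{\epsilon_i}\to u$ strongly in $W^{1,2}_{\rm loc}$, after which the argument proceeds as above. A secondary but equally crucial point, shared by all three cases, is the rectifiability-versus-translation-invariance argument that forces $\nu\equiv 0$: it is this that upgrades weak convergence to the smooth convergence on which the remainder of the proof depends.
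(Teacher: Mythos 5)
Your proof is correct and follows essentially the same compactness--contradiction route as the paper: pass to a weak/measured limit, use $(m-1,1/i)$-symmetry to conclude $L$-invariance of both $u$ and the defect measure $\nu$, use the incompatibility of $L$-invariance under an $(m-1)$-plane with $(m-2)$-rectifiability (and the $\varepsilon_0$ lower density bound) to conclude $\nu\equiv 0$, upgrade to smooth convergence, and read off constancy of $u$ from the two halves of the symmetry hypothesis. The one place you go beyond the paper's terse proof is worth noting: the statement places no restriction on $\epsilon$, yet the paper implicitly works in the $\epsilon_i\to 0$ regime where the defect measure machinery and Lemma \ref{Invariant with plane} are available; your explicit trichotomy $\epsilon_\infty\in\{0\}\cup(0,\infty)\cup\{\infty\}$ fills this gap cleanly, since for $\epsilon_\infty>0$ (resp. $=\infty$) the uniform bound $|\Delta u_{\epsilon_i}|\leqslant C/\epsilon_i^2$ forces smooth convergence with no defect at all, so only the $L$-invariance step for $u$ itself needs to be rederived directly from $\bar\Theta(p,1;L_i)\to 0$ rather than via Lemma \ref{Invariant with plane}.
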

\begin{proof}
    If there exists $u_{\epsilon_i}$ which is $(m-1,1/i)$ symmetric but does not satisfy the conclusion, after passing to a subsequence, we must have $u_{\epsilon_i}$ converges to a constant with defect measure $\nu$. $(m-1,1/i)$ symmetric of $u_{\epsilon_i}$ implies $\nu$ has $(m-1)$-symmetric. Then $\nu$ must be 0 otherwise it cannot be $(m-2)$-locally finite. Hence, $u_{\epsilon_i}$ converges to a constant strongly, which contradicts the assumption.
\end{proof}

\section{Annular regions and bubble regions}\label{s:annular and bubble regions}
To establish the energy identity, the first question to ask is how we can see the bubbles near the support of defect measure. Traditionally, bubbles come from the blow up near the top concentration points. The neighborhood of those points form regions, called \emph{bubble regions}, where we can see the energies of harmonic spheres. If we capture the energy in all bubble regions, we then get the energy inequality: $$\Theta_\nu(x)\geqslant\sum_iE(u_i),$$where $u_i$ are harmonic spheres. In order to get full energy identity, we need to show the regions between bubbles, called \emph{annular regions}, admit no energy. The formal definition we use here was first introduced by Naber-Valtorta in their work on the energy identity of Yang-Mills functional \cite{NVyangmills}.
\subsection{Bubble regions}
The first one we want to define is the bubble regions where we can see harmonic spheres.
\begin{defn}[{cf. \cite[Definition 3.19]{naber2024energyidentitystationaryharmonic}}]\label{bubble regions}
    Let $u_{\epsilon}: B_{10R}(p)\rightarrow\R^J$ be a solution to (\ref{*}) and an $m-2$-subspace $L$ in $\R^J$. Consider $\lbrace x_j\rbrace\subset p+L^{\perp}$ such that $\lbrace B_{r_j}(x_j+L)\rbrace$ are disjoint. We say $\mathcal{B}=B_r(p)\setminus\cup B_{r_j}(x_j+L)$ is a $\delta$-bubble region if there exists a $L$ invariant harmonic map $b:B_{Rr}(p)\rightarrow N$ such that 
    \begin{enumerate}
        \item[(b1)] $r^2(\vert\Pi_L\nabla u_{\epsilon}\vert^2+2F(u_{\epsilon})/\epsilon^2)<\delta$ on $\mathcal{B}$;
        \item[(b2)] $\vert b-u_{\epsilon}\vert^2+r^2\vert\nabla b-\nabla u_{\epsilon}\vert^2\leqslant\delta$ on $\mathcal{B}_R=B_{rR}(p)\setminus\cup B_{R^{-1}r_j}(x_i+L)$;
        \item[(b3)] $\Theta(p,1)-\Theta(x_j,r_j)\geqslant\varepsilon_0$ for all $j$. Here $\Theta(x,r)=r^{2-m}\int_{B_r(x)}e_{\epsilon}(u_{\epsilon})$ is the usual energy density; 
        \item[(b4)] For all $x\in x_j+L\cap B_1,\delta^2 r_j\leqslant r\leqslant 2r_j$ and some $\delta''<\delta$, $r\frac{\p}{\p r}\bar{\Theta}(x,r)\leqslant\delta^2$, we have $\bar{\Theta}(x,2r_j;L)\leqslant\delta''$ and $\bar{\Theta}(x,r_j)\geqslant\varepsilon_0$. 
    \end{enumerate}
\end{defn}
Given that $u_{\epsilon}$ is sufficiently symmetric, we can prove the following existence theorem for bubble regions.
\begin{thm}[{cf. \cite[Theorem 3.23]{naber2024energyidentitystationaryharmonic}}]\label{bubble region existence}
    Let $u_{\epsilon}:B_{10R\delta^{-1}}(p)\rightarrow\R^J$ be a solution to (\ref{*}) with $(2R)^{2-n}\int_{B_{2R}(p)}e_{\epsilon}(u_{\epsilon})\leqslant\Lambda$. For each $0<\delta\leqslant\delta(m,\Lambda,R,K_N)$ and $0<\delta''\leqslant\delta''(m,R,\Lambda,\delta,K_N)$, there exists $\delta'(m,\Lambda,R,K_N,\delta,,\delta''), \epsilon(m,\Lambda,R,K_N,\delta)>0$ such that if $\epsilon\leqslant\epsilon(m,\Lambda,R,K_N,\delta),\bar{\Theta}(x,2;L)\leqslant\delta'$ and $r\frac{\p}{\p r}\bar{\Theta}(p,r)\leqslant\delta$ for $r\in[\delta^{1/2},\delta^{-1/2}]$ but $$\lambda\frac{\p}{\p r}\bar{\Theta}(x,\lambda)\geqslant c(m,\Lambda,K_N)\delta\textup{ for }\lambda=c(m,K_N,\Lambda)\delta\textup{ and all }x\in B_{C(m)\delta}(p+L)\cap (p+L^{\perp}),$$ then there exists a $\delta$-bubble region $\mathcal{B}=B_1(p)\setminus\cup B_{r_j}(x_j+L)$ with $r_j\geqslant\bar{r}(m,\Lambda,K_N,\delta)$.
\end{thm}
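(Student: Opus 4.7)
The plan is to argue by contradiction in the Ginzburg-Landau setting, combining a compactness extraction of an $L$-invariant limit with an iterative bubble-selection procedure adapted from \cite{NVyangmills,naber2024energyidentitystationaryharmonic}. Fix $\delta$ and $\delta''$ small in terms of the stated constants, and assume the theorem fails: there are sequences $\delta_i'\downarrow 0$, $\epsilon_i\downarrow 0$, and solutions $u_{\epsilon_i}$ satisfying all the hypotheses with $\delta'$ replaced by $\delta_i'$, but admitting no $\delta$-bubble region of the required form. By translation we may take $p=0$. The smallness $\bar{\Theta}(0,2;L)\leqslant\delta_i'\to 0$ combined with the slow growth $r\p_r\bar{\Theta}(0,r)\leqslant\delta$ on $[\delta^{1/2},\delta^{-1/2}]$ and Proposition \ref{Top dimension quantitative cone splitting} forces $u_{\epsilon_i}$ to be $(m-2,o(1))$-symmetric with respect to $L$ on $B_2$. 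Lemma \ref{Invariant with plane}(2) together with a diagonal subsequence then yields $u_{\epsilon_i}\rightharpoonup\textrm{const}$ with defect measure $\nu=\theta\Hau^{m-2}\llcorner L$, and smooth convergence to a constant on every compact set avoiding $L$.

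Next I would locate the bubble centers. The drop hypothesis $\lambda\p_r\bar{\Theta}(x,\lambda)\geqslant c\delta$ at $\lambda=c\delta$ on $L^\perp\cap B_{C\delta}(L)$ forces bona fide concentration below scale $\lambda$, ruling out triviality of the family. I greedily pick centers $x_j\in L^\perp$ and scales $r_j$ in a top-down fashion: at each stage choose the largest $r$ and a point $x_j\in L^\perp\cap B_1$ at which the symmetric bound $\bar{\Theta}(x_j,2r;L)\leqslant\delta''$ still holds while $\bar{\Theta}(x_j,r)\geqslant\varepsilon_0$, remove the tube $B_r(x_j+L)$, and repeat. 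Disjointness of the resulting tubes is enforced by requiring a definite separation at each selection step. The total energy bound $\Lambda$ and the lower density $\varepsilon_0$ from the $\varepsilon$-regularity of Theorem \ref{e-reg thm} terminate the iteration in boundedly many steps and deliver the uniform lower bound $r_j\geqslant\bar r(m,\Lambda,K_N,\delta)$.

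The crux is the blow-up at each $(x_j,r_j)$. The rescaled maps $v_i(y)=u_{\epsilon_i}(x_j+r_jy)$ solve $(\ref{*})$ with parameter $\epsilon_i/r_j$, and because $r_j\geqslant\bar r>0$ is independent of $i$ while $\epsilon_i\to 0$, the choice of $\epsilon(m,\Lambda,R,K_N,\delta)$ sufficiently small forces $\epsilon_i/r_j\to 0$. Theorem \ref{e-reg thm} together with Corollary \ref{second derivative est} upgrade weak to strong $C^1_{\textrm{loc}}$ convergence to a limit $b_j$ away from a locally finite concentration set. Since $\bar{\Theta}(x_j,2r_j;L)\leqslant\delta''$ passes to the limit, Lemma \ref{Invariant with plane} gives that $b_j$ is $L$-invariant; its restriction to $L^\perp\cong\R^2$ is a finite-energy harmonic map into $N$ that extends by the 2D removable singularity theorem to a harmonic sphere, and this is the map $b$ required by (b2). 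Properties (b1) and (b3) then follow from Proposition \ref{spacial derivative est} plus smooth convergence on $\mathcal{B}$ and the definite density drop $\varepsilon_0$ absorbed by each bubble, while (b4) is built into the maximal selection of $r_j$ combined with the monotonicity formula (\ref{monotonicity}).

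The main obstacle is the blow-up dichotomy in the Ginzburg-Landau setting: a priori $v_i$ could converge to a nonconstant solution of $(\ref{*})$ on $\R^m$ if $\epsilon_i/r_j\to\epsilon_\infty>0$, or to a harmonic function on $\R^m$ if $\epsilon_i/r_j\to\infty$. Both alternatives must be excluded by the Liouville-type vanishing of finite-energy objects of these types, as observed in \cite{Linwangharmonicsphere99}, together with the explicit smallness of $\epsilon$ relative to $\bar r$ built into the hypothesis $\epsilon\leqslant\epsilon(m,\Lambda,R,K_N,\delta)$. A secondary subtlety is the simultaneous bookkeeping in Step 2: the tubes $B_{r_j}(x_j+L)$ must be disjoint, individually satisfy (b4), and together exhaust all concentration in $B_1$ above the threshold $\bar r$; this is the combinatorial content of the greedy construction and is handled by the standard argument in \cite[Theorem 3.23]{naber2024energyidentitystationaryharmonic} once the $(m-2)$-symmetric limit of Step 1 is in hand.
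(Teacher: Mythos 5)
Your proposal has a genuine gap in Step 1 that propagates forward. You invoke Lemma \ref{Invariant with plane}(2), which requires $(m-2,i^{-1})$-symmetry, to conclude $u_{\epsilon_i}\rightharpoonup\mathrm{const}$. But the hypothesis only gives $r\p_r\bar\Theta(p,r)\leqslant\delta$ with $\delta$ \emph{fixed} (only $\delta'\to 0$ along the contradiction sequence), so you obtain $(m-2,C\delta)$-symmetry, not $o(1)$-symmetry. The correct conclusion, as in the paper, is that $u_{\epsilon_i}\rightharpoonup b$ where $b$ is a (possibly nonconstant) $L$-invariant harmonic map, with defect measure $\nu=\sum_j\theta_j\Hau^{m-2}\llcorner(x_j+L)$ supported on several parallel lines. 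This $b$ is precisely the map needed for condition (b2); declaring it constant would empty the content of the bubble region.

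A second, more serious, gap concerns (b3). You write that (b3) follows from ``the definite density drop $\varepsilon_0$ absorbed by each bubble,'' but that is circular: (b3) \emph{is} the density drop $\Theta(p,1)-\Theta(x_j,r_j)\geqslant\varepsilon_0$. The paper's argument proves this by contradiction: if (b3) failed at some $x_j$, then $u_{\epsilon_i}$ would converge smoothly on $B_1\setminus B_{r_j}(x_j+L)$, forcing $\spt\nu\subset B_{r_j}(x_j+L)$, so that the 2D annular decay estimate of Theorem \ref{2d estimate} gives $\vert\Pi_{L^{\perp}}(y-x_j)\vert^2\vert\nabla b\vert^2\leqslant C\delta(\,\cdot\,)$. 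Tracing this back to $u_{\epsilon_i}$ and integrating against $\dot\rho_r$ shows $r\p_r\bar\Theta_{u_{\epsilon_i}}(x_j,r)\leqslant c(m,K_N,\Lambda)\delta$ at scale $\lambda=c\delta$, which contradicts the standing lower-bound hypothesis on $\lambda\p_r\bar\Theta(x,\lambda)$. Your proposal never uses that hypothesis in this way; it appears only in a vague statement about ``ruling out triviality.'' Finally, the blow-up trichotomy ($\epsilon_i/r_j\to 0,\epsilon_\infty,\infty$) that you flag as the main obstacle is actually not the crux here: in this compactness argument $\epsilon_i\to 0$ with $r_j$ bounded below uniformly, so the limit is automatically a harmonic map. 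That trichotomy is relevant in the proof of Theorem \ref{annular region existence} (Claim \ref{c2 in annular region existence}), not here.
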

\begin{rmk}
    By the assumption of Theorem \ref{bubble region existence} and quantitative cone splitting \ref{Top dimension quantitative cone splitting}, all bubble points $x_j\in B_{C(m,K_N,\Lambda)\delta}(p+L)$.
\end{rmk}
\begin{proof}
    We prove by contradiction. Assume that there exist $u_{\epsilon_i}$ solutions to (\ref{*}) with $\epsilon_i\rightarrow0$ and $\bar{\Theta}_{u_{\epsilon_i}}(x,1;L)\rightarrow0$ but $B_1(p)$ does not admit bubble regions for $u_{\epsilon_i}$ for sufficiently large $i$. Moreover, $r\frac{\p}{\p r}\bar{\Theta}_{u_{\epsilon_i}}(p,r)\leqslant\delta$ for all $r\in[1,\delta]$ and $r\frac{\p}{\p r}\bar{\Theta}_{u_{\epsilon_i}}(x,r)\geqslant\delta$ for some $r\in[\delta,1]$. After passing to a subsequence, we may assume $u_{\epsilon_i}\rightharpoonup b$ an $L$-invariant harmonic map into $N$ with defect measure $\nu=\sum_{j=1}^J\theta_j\Hau^{m-2}\llcorner(x_j+L)$ for $x_j\in p+L^{\perp}$.\par 
    Let us define bubble regions for large $i$. Pick $0<\delta''<\delta$ to be determined. For each $k$, since 
    \begin{equation}
        \begin{aligned}
            &\sum_{k=1}^K(\bar{\Theta}_{b}(x_j,2^{1-k}\delta^2)+\nu(B_{2^{1-k}\delta^2}(x_j))-\bar{\Theta}_{b}(x_j,2^{-k}\delta^2)-\nu(B_{2^{-k}\delta^2}(x_j)))\\=&\bar{\Theta}(x_j,\delta^2)+\nu(B_{\delta^2}(x_j))-\bar{\Theta}(x_j,2^{-K}\delta^2)-\nu(B_{2^{-K}\delta^2}(x_j))\leqslant\Lambda.
        \end{aligned}\nonumber
    \end{equation}
    We see that if $K=10\Lambda/\delta''$, then for some $k_j\in\lbrace1,\dots,K\rbrace$ we have $$\bar{\Theta}(x_j,2^{1-k_j}\delta^2)+\nu(B_{2^{1-k_j}\delta^2}(x_j))-\bar{\Theta}(x_j,2^{-k_j}\delta^2)-\nu(B_{2^{-k_j}\delta^2}(x_j))\leqslant\delta''/2.$$ 
    This property propagates to $u_{\epsilon_i}$ for large $i$. This implies for some $2r_j\in[2^{-k_j}\delta^2,2^{1-k_j}\delta^2]$, we have 
    $$2r_j\frac{\p}{\p r}\bar{\Theta}_{u_{\epsilon_i}}(x_j,2r_j)\leqslant\delta''.$$
    Moreover, if $i$ is large enough, we have$$\bar{\Theta}_{u_{\epsilon_i}}(x_j,2r_j;L)\leqslant C(m)r_j^{2-m}\bar{\Theta}(p,2;L)\leqslant\delta''.$$Hence, $u_{\epsilon_i}$ is $(m-2,\delta'')$ on $B_{2r_j}(x)$ and we conclude, using Lemma \ref{m-2 symmetry alternative}, that if $\delta''\leqslant\delta''(m,\Lambda,K_N,R,\delta)$,
    \begin{equation}
        r\frac{\p}{\p r}\bar{\Theta}_{u_{\epsilon_i}}(x_j,r)\leqslant\delta^2\textup{ for all }r\in[\delta^2 r_j,r_j].\label{3.22}
    \end{equation}
    Let us choose a subcovering (without relabeling) $\lbrace B_{2r_j}(x_j)\rbrace$ such that $\lbrace B_{r_j}(x_j)\rbrace$ is pairwise disjoint and verify that $\mathcal{B}=B_1(p)\setminus\cup B_{r_j}(x_j)$ defines a bubble region for $u_{\epsilon_i}$ if $i$ is large enough. The (b1),(b2),(b4) in Definition \ref{bubble regions} is clearly satisfied by our choice of $x_j$ and $r_j^{(i)}$.
    Next, we prove another part $$\int_{B_1(p)}e_{\epsilon_i}(u_{\epsilon_i})-r_j^{2-m}\int_{B_{r_j}(x_j)}e_{\epsilon_i}(u_{\epsilon_i})\geqslant\varepsilon_0$$ by contradiction. Suppose this is not true for some $x_j$, then we have that $u_{\epsilon_i}$ converges smoothly on $B_1(p)\setminus B_{r_j}(x_j+L)$. Hence, $\spt\nu\subset B_{r_j}(x_j+L)$. Moreover, using Definition \ref{bubble regions} (b4), the assumption $\delta^{-1/2}\frac{\p}{\p r}\bar{\Theta}_{u_{\epsilon_i}}(p,\delta^{-1/2})\leqslant c\delta$ and Theorem \ref{2d estimate}, we deduce that 
    $$\vert\Pi_{L^{\perp}}(y-x_j)\vert^2\vert\nabla b\vert^2\leqslant C\delta\left(\frac{\vert\Pi_{L^{\perp}}(y-x_j)\vert^{\alpha}}{\delta^{-\alpha/2}}+\frac{r_j^{\alpha}}{\vert\Pi_{L^{\perp}}(y-x_j)\vert^{\alpha}}\right).$$
    Goes back to $u_{\epsilon_i}$, we get that 
    $$\vert\Pi_{L^{\perp}}(y-x_j)\vert^2\vert\nabla u_{\epsilon_i}\vert^2\leqslant C\delta\left(\frac{\vert\Pi_{L^{\perp}}(y-x_j)\vert^{\alpha}}{\delta^{-\alpha/2}}+\frac{\delta^{2\alpha}}{\vert\Pi_{L^{\perp}}(y-x_j)\vert^{\alpha}}\right),$$
    for all $x\in B_{\delta^{-1}/2}(x_j)\setminus B_{\delta^2}(x_j)$ by our choice $r_j\leqslant\delta^2$. Hence,
    \begin{equation}
        \begin{aligned}
            r\frac{\p}{\p r}\bar{\Theta}_{u_{\epsilon_i}}(x_j,r)&=2\int-\dot{\rho}_r(y-x_j)\vert\nabla_{y-x_j}\nabla u_{\epsilon_i}\vert^2+\rho_1(y-x_j)\frac{F(u_{\epsilon_i})}{\epsilon_i^2}\\&\leqslant2\int-\dot\rho_r(y-x_j)\vert\nabla_{\Pi_{L^{\perp}}(y-x_j)}u_{\epsilon_i}\vert^2+C\bar{\Theta}_{u_{\epsilon_i}}(p,2)\\&=2\int_{B_{\delta^2}(x_j+L)}+2\int_{B_{\delta^2}(x_j+L)^c}-\dot{\rho}_r(y-x_j)\vert\nabla_{\Pi_{L^{\perp}}(y-x_j)}u_{\epsilon_i}\vert^2+C\bar{\Theta}_{u_{\epsilon_i}}(p,2)\\&\leqslant C\delta^2\Lambda+C\delta^{1+\alpha/2}+C\delta'\leqslant c(m,K_N,\Lambda)\delta,
        \end{aligned}\nonumber
    \end{equation}
    if we choose $\delta$ and $\delta'$ small. Here $c(m,K_N,\Lambda)$ is the same small constant as in our assumption. This contradicts our assumption since $x_j\in B_{C(m)\sqrt{\delta}}(p)$ and hence proves our result.
\end{proof}
Next, we discuss the structure of bubble regions. By definition, we know that on bubble regions our map looks like a 2-dimensional harmonic in the smooth sense. We can also capture the energy of solution to (\ref{*}) using harmonic spheres on bubble regions. 
\begin{thm}[{cf. \cite[Theorem 3.24]{naber2024energyidentitystationaryharmonic}}]\label{bubble region properties}
    Let $u_{\epsilon}:B_{10R\delta^{-1}}(p)\rightarrow\R^J$ be a solution to (\ref{*}) with $(2R)^{2-n}\int_{B_{2R}(p)}e_{\epsilon}(u_{\epsilon})\leqslant\Lambda$ and $\mathcal{B}=B_1(p)\setminus\cup B_{r_j}(x_j+L)$ be a $\delta$-bubble region. Suppose $\bar{\Theta}(p,2;L)\leqslant\delta'$. For each $\sigma>0$, if $\delta<\delta(m,\Lambda,K_N,\sigma)$ and $\delta'<\delta'(m,K_N,\Lambda,\sigma)$, the following hold \begin{enumerate}
        \item For all $y\in (L+p)\cap B_1(p)$, if we set $\mathcal{B}_y=\mathcal{B}\cap(y+L^{\perp})$ and $y_j=(x_j+L)\cap(y+L^{\perp})$, we have
        $$\left\vert\Theta(p,1)-\omega_{m-2}\int_{\mathcal{B}_y}e_{\epsilon}(u_{\epsilon})-\sum_j\Theta(y_j,r_j)\right\vert\leqslant\sigma,$$
        where we denote the usual energy density by $\Theta(x,r)=r^{2-m}\int_{B_r(x)}e_{\epsilon}(u_{\epsilon})$;
        \item For all $y\in (L+p)\cap B_1(p),\mathcal{B}_y=\mathcal{B}\cap(y+L^{\perp})$, there exists at most $K\leqslant K(m,K_N,\Lambda)$ points $\lbrace z_k\rbrace_{k=1}^K\subset\mathcal{B}_y$ and $s_j>0$ such that if $S\geqslant S(m,K_N,\Lambda,\sigma)$, then $$\int_{\mathcal{B}_y\setminus\cup B_{Ss_k}(z_k)}e_{\epsilon}(u_{\epsilon})\leqslant\sigma.$$
    \end{enumerate}
\end{thm}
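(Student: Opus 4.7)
The strategy is compactness by contradiction, in the spirit of Theorem \ref{bubble region existence}. Assume (1) fails for some fixed $\sigma > 0$: there exist sequences $\delta_i, \delta'_i \to 0$, solutions $u_{\epsilon_i}$ to (\ref{*}) admitting $\delta_i$-bubble regions $\mathcal{B}^{(i)} = B_1(p) \setminus \bigcup_j B_{r_j^{(i)}}(x_j^{(i)} + L)$, and base points $y_i \in (L+p) \cap B_1(p)$ for which the estimate in (1) is violated. Since $\bar\Theta(p, 2; L) \leq \delta'_i \to 0$, Lemma \ref{Invariant with plane} gives, along a subsequence, an $L$-invariant weak limit $b$ and defect measure $\nu = \sum_j \theta_j \mathcal{H}^{m-2}\llcorner(x_j + L)$. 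Condition (b2) of Definition \ref{bubble regions} upgrades this to $C^1_{\mathrm{loc}}$ convergence on the enlarged region $\mathcal{B}_R^{(i)}$, while (b4) combined with Lemma \ref{m-2 symmetry alternative} ensures $u_{\epsilon_i}$ is nearly $L$-invariant at each bubble scale $r_j^{(i)}$.

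The argument for (1) is a Fubini decomposition. Split
\[
\Theta(p,1) = \int_{\mathcal{B}^{(i)}} e_{\epsilon_i}(u_{\epsilon_i}) + \sum_j \int_{B_{r_j^{(i)}}(x_j^{(i)} + L) \cap B_1(p)} e_{\epsilon_i}(u_{\epsilon_i}).
\]
For the first summand, smooth convergence to the $L$-invariant limit plus Fubini yields $\omega_{m-2} \int_{\mathcal{B}_y^{(i)}} e_{\epsilon_i}(u_{\epsilon_i}) + o(1)$; the discrepancy arising from the $y$-dependent outer radius of the slice vanishes in the limit because the bubble points cluster in $B_{C\delta_i}(p+L)$ (the remark following Theorem \ref{bubble region existence}) and $b$ has vanishing energy in the outer annulus by the $\varepsilon$-regularity of Theorem \ref{e-reg thm}. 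For each tube, $L$-invariance at scale $r_j^{(i)}$ and the same Fubini computation convert the $m$-dimensional integral into $\omega_{m-2} r_j^{m-2} \Theta(y_j, r_j) + o(1)$. Collecting and passing to the limit contradicts the failure of (1).

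For (2), restrict $u_{\epsilon_i}$ to the $2$-plane $y + L^\perp$: by (b1) and (b4) this restriction is an approximate Ginzburg-Landau solution in dimension $2$ with uniformly bounded energy, whose $2$-dimensional concentration is supported at the $y_j$'s inside $\mathcal{B}_y^{(i)}$. Classical Sacks-Uhlenbeck/Parker bubble-tree analysis in dimension $2$, iterated at most $K \leq \Lambda/\varepsilon_0$ times (each iteration peeling off at least $\varepsilon_0$ of energy by the $\varepsilon$-regularity), produces the required finite collection $\{z_k\}$ and scales $s_k$ so that the energy outside $\bigcup B_{S s_k}(z_k)$ lies below $\sigma$ once $S = S(m, K_N, \Lambda, \sigma)$ is large.

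The main obstacle is the Fubini error in (1): reducing each ambient $m$-dimensional integral over $B_1(p)$ and over the tubes $B_{r_j}(x_j + L) \cap B_1(p)$ to a $2$-dimensional slice integral with the sharp constant $\omega_{m-2}$, despite the fact that spherical slicing produces $y$-dependent outer radii. The resolution uses two layers of $L$-invariance---global invariance of the background limit $b$ from (b2), and local invariance at each bubble scale $r_j$ from (b4)---interpolated by the tangential smallness recorded in (b1), which suppresses the mismatch between $e_{\epsilon_i}(u_{\epsilon_i})$ and its $L$-invariant projection uniformly on $\mathcal{B}^{(i)}$.
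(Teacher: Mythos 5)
Your proposal is correct and follows essentially the same route as the paper: a compactness/contradiction argument sending $\delta, \delta' \to 0$, extracting an $L$-invariant limit $b$ via Lemma \ref{Invariant with plane}, and using the $C^1$ convergence guaranteed by (b2) together with Fubini slicing for part (1), then reducing part (2) to the classical two-dimensional bubble-tree analysis for the $L$-invariant limit. The paper states the limit passage very tersely while you spell out the Fubini decomposition and the control of the $y$-dependent outer slice radius, which is a useful elaboration of the same argument; your phrasing of (2) as a "restricted approximate Ginzburg-Landau solution" is a slightly looser formulation than the paper's (which takes the limit first and then restricts the exactly $L$-invariant harmonic map $u$), but the underlying mechanism is identical.
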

\begin{proof}
    For the 2 dimensional harmonic maps, the results are simply the classical energy identity. For higher dimensions, if the results do not hold then by letting $\delta'$ goes to 0 then we immediately obtain the contradiction and hence proves the results. To be specific, when $\delta'\rightarrow0$, we immediately have 
    $$\Theta_{u_{\epsilon_i}}(p,1)-\omega_{m-2}\int_{\mathcal{B}_y}e_{\epsilon_i}(u_{\epsilon_i})-\sum_j\Theta_{u_{\epsilon_i}}(x_j,r_j)\rightarrow0.$$
    This proves (1). For (2), as in the proof \ref{bubble region existence}, $u_{\epsilon_i}$ converges smoothly to $u$ on $\mathcal{B}_y$. Since $u$ is $L$ invariant, we can regard it as a 2-dimensional harmonic map. Hence, we can select bubble points $\lbrace z_k\rbrace\subset\mathcal{B}_y,s_k>0$, such that for $S>S(m,K_N,\Lambda,\sigma)$, $$\int_{\mathcal{B}_y\setminus B_{Ss_k(z_k)}}\vert\nabla u\vert^2<\sigma/2.$$
    Since $u_{\epsilon_i}$ converges smoothly to $u$ on $\mathcal{B}_y$, same conclusion holds for $u_{\epsilon_i}$ for $i$ large. This proves (2).
\end{proof}
\subsection{Annular regions}

In the bubble region parts, we have seen that the energy of Ginzburg-Landau equation is captured by 2-dimensional bubbles. The next part of energy identity is to show that there is no energy in the regions between bubbles, called annular regions. In the bubble regions, the solution to (\ref{*}) looks like a 2-dimensional harmonic map. The blow-up scheme ensures that the solution is really flat between bubbles.  The precise properties we can require those annular regions to have is the following.
\begin{defn}[[{cf. \cite[Definition 3.1]{naber2024energyidentitystationaryharmonic}}]]\label{annular regions}
    Let $\delta>0,\mathcal{T}\subset B_2(p)$ be an $(m-2)$-submanifold and $\mathfrak{r}:\mathcal{T}\rightarrow[0,\infty)$ satisfies $\mathfrak{r}_x+\vert\nabla\mathfrak{r}_x\vert+\mathfrak{r}_x\vert\nabla^2\mathfrak{r}_x\vert<\delta$. We say that $$\mathcal{A}=B_2(p)\setminus\overline{B_{\mathfrak{r}_x}(\mathcal{T})}=B_2(p)\setminus\overline{\bigcup_{x\in\mathcal{T}}B_{\mathfrak{r}_x}(x)}$$is a $\delta$-annular region if there exists an $(m-2)$ plane $L_{\mathcal{A}}$ such that
    \begin{enumerate}
        \item[(a1)] There exists a $\mathfrak{t}_{\mathcal{T}}:L_{\mathcal{A}}\cap B_2\rightarrow L_{\mathcal{A}}^{\perp}$ with $\mathcal{T}=p+$Graph$\mathfrak{t}_{\mathcal{T}}$ and $\vert\mathfrak{t}_{\mathcal{T}}\vert+\vert\nabla\mathfrak{t}_{\mathcal{T}}\vert+\mathfrak{r}_x\vert\nabla^2\mathfrak{t}_{\mathcal{T}}\vert<\delta$;
        \item[(a2)] For each $x\in\mathcal{T}$ and $\mathfrak{r}_x\leqslant r\leqslant2$ we have $r\frac{\p}{\p r}\bar{\Theta}(x,r)\leqslant\delta$;
        \item[(a3)] For each $x\in\mathcal{T}$ we have $\bar{\Theta}(x,\mathfrak{r}_x)\geqslant\varepsilon_0(m,K_N)/2$.
    \end{enumerate}
\end{defn}

If we have a sequence $\lbrace u_{\epsilon_i}\rbrace$ which converges to a constant map together with defect measure $\nu=\theta\Hau^{m-2}\llcorner L$, where $L$ is an $(m-2)$-subspace of $\R^m$. Then it is not hard to verify that $B_1(p)\setminus B_{r_0}(L)$ is an annular region for $u_{\epsilon_i}$ and $i$ large. Those annular regions are not what we want as we hope certain maximal condition is held by our annular regions. That is, we hope the annular regions nearly capture all the spaces between bubbles.

\begin{thm}[{cf. \cite[Theorem 3.13]{NVyangmills}}]\label{annular region existence}
    Let $u_{\epsilon}:B_{2R}(p)\rightarrow\R^J$ be a solution to (\ref{*}) with $R^{2-m}\int_{B_{2R}(p)}e_{\epsilon}(u_{\epsilon})\leqslant\Lambda$. For each $0<\delta'<\delta$, there exists $\delta''=\delta''(m,\Lambda,N,R,\delta,\delta')$ such that if $\epsilon\leqslant\epsilon(m,\Lambda,K_N,R,\delta)$ and $u_{\epsilon}$ satisfies
    \begin{enumerate}
        \item $r\frac{\p}{\p r}\bar{\Theta}(x,r)\leqslant\delta^2$, for all $x\in L\cap B_2+p$ and $\delta^2\leqslant r\leqslant4$;
        \item $\bar{\Theta}(p,2;L)\leqslant\delta''$;
        \item $\bar{\Theta}(p,2)\geqslant\varepsilon_0/2$.
    \end{enumerate}
    Then there exists an $(m-2)$-submanifold $\mathcal{T}\subset B_1(p)$ such that $\mathcal{A}=B_1(p)\setminus\overline{B_{\mathfrak{r}_x}(\mathcal{T})}$ is a $\delta$-annular region with
    \begin{enumerate}[(1)]
        \item For each $x\in\mathcal{T}$ and $\mathfrak{r}_x\leqslant r\leqslant2$, we have $\bar{\Theta}(x,r;L_{\mathcal{A}})=r^2\int\rho_r(y-x)(\vert\Pi_{L_{\mathcal{A}}}\nabla u_{\epsilon}\vert^2+F(u_{\epsilon})/\epsilon^2)\leqslant\delta'$;
        \item There is a collection of balls $\lbrace B_{\delta^{-1/2}\mathfrak{r}_{y_j}}(y_j)\rbrace$ with $y_j\in\mathcal{T}$, $\lbrace B_{\delta^{-1/2}\mathfrak{r}_{y_j}/3}(y_j)\rbrace$ is pairwise disjoint such that the following maximal condition holds: if we denote $\lambda=c(m,K_N,\Lambda)\delta$, then
        $$\textup{ for all }y\in B_{C(m,K_N)\sqrt{\delta}\mathfrak{r}_{y_j}}(y_j+L)\cap(y_j+L^{\perp}),\lambda\delta^{-1/2}\mathfrak{r}_{y_j}\frac{\p}{\p r}\bar{\Theta}(y,\lambda\delta^{-1/2}\mathfrak{r}_{y_j})\geqslant c(m,\Lambda,K_N)\delta,
        $$
        and
        $$\Hau^{m-2}(\mathcal{T}\setminus(\cup_jB_{\delta^{-1/2}\mathfrak{r}_{y_j}}(y_j))\leqslant\delta.$$
    \end{enumerate}
\end{thm}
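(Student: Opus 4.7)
\emph{Proof proposal.} Set $L_{\mathcal{A}} := L$ and build $\mathcal{T}$ and $\mathfrak{r}_x$ by a stopping-time construction modeled on \cite[Thm.~3.13]{NVyangmills}, modified so the core is an $(m-2)$-submanifold rather than a finite set, in the spirit of \cite{naber2024energyidentitystationaryharmonic}. For each $x$ in a small neighborhood of $(p+L)\cap B_1$ I would take
$$
\mathfrak{r}_x \;:=\; \inf\Bigl\{\,r\in(0,2]\,:\,\bar{\Theta}(x,s;L)\leqslant \tfrac{1}{2}\delta' \text{ and } s\tfrac{\p}{\p s}\bar{\Theta}(x,s)\leqslant \tfrac{1}{2}\delta \text{ for all }s\in[r,2]\,\Bigr\}.
$$
Hypotheses (1)--(2) combined with top-dimension cone splitting (Proposition \ref{Top dimension quantitative cone splitting}) propagate the smallness of $\bar{\Theta}(p,2;L)$ from $p$ to all $x\in B_{C\sqrt{\delta}}(p+L)$ and all $s\in[\mathfrak{r}_x,2]$, which shows $\mathfrak{r}_x$ is well-defined and small whenever $\delta''$ is chosen sufficiently small. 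Property (a2) of Definition \ref{annular regions} and conclusion (1) of the theorem are then immediate from the stopping condition.

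Next I would identify $\mathcal{T}$ with the locus of points that attain the infimum and express it as a graph $\mathcal{T}=p+\mathrm{Graph}(\mathfrak{t}_{\mathcal{T}})$ over $L\cap B_2$. On each fiber $y+L^\perp$ the smallness of $\bar{\Theta}(\cdot,2\mathfrak{r}_x;L)$ forces the high-density part of the defect to concentrate near a single center $\mathfrak{t}_{\mathcal{T}}(y)\in L^\perp$, and Proposition \ref{spacial derivative est} controls $|\nabla_L\bar{\Theta}|$ and $|\nabla_L^2\bar{\Theta}|$ by the partial energy $\bar{\Theta}(\cdot,\cdot;L)\leqslant\delta'$. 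Tracking the unique minimum in each slice yields a $C^{1,1}$-type graph with the regularity $|\mathfrak{t}_{\mathcal{T}}|+|\nabla\mathfrak{t}_{\mathcal{T}}|+\mathfrak{r}_x|\nabla^2\mathfrak{t}_{\mathcal{T}}|<\delta$, and the analogous estimate for $\mathfrak{r}_x$, giving (a1). For (a3), at scale $\mathfrak{r}_x$ the map is $(m-2,\delta)$-symmetric with respect to $L$, so Lemma \ref{m-2 symmetry alternative} gives a dichotomy: the smoothness alternative $\sup_{B_{3\mathfrak{r}_x/2}} e_\epsilon\leqslant\sigma$ would let the density keep dropping past $\mathfrak{r}_x$, contradicting the fact that stopping was forced there; hence $\bar{\Theta}(x,\mathfrak{r}_x)\geqslant\varepsilon_0\geqslant\varepsilon_0/2$.

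For property (2), select $\{y_j\}\subset\mathcal{T}$ by a Vitali-type procedure: among points at which $\lambda\frac{\p}{\p r}\bar{\Theta}(y,\lambda)\geqslant c\delta$ at $\lambda=c\delta\mathfrak{r}_y$, extract a maximal subcollection with $\{B_{\delta^{-1/2}\mathfrak{r}_{y_j}/3}(y_j)\}$ pairwise disjoint. The maximality inequality in (2) then holds by construction. For the $\Hau^{m-2}$-bound, any point of $\mathcal{T}$ outside $\bigcup_j B_{\delta^{-1/2}\mathfrak{r}_{y_j}}(y_j)$ satisfies the opposite inequality at the Vitali scale, and hence by a further cone splitting achieves $(m-1,\delta)$-symmetry at some intermediate scale; Lemma \ref{m-1 flatness} then forces $u_\epsilon$ to be smooth nearby, contradicting (a3) unless the uncovered portion of $\mathcal{T}$ has $\Hau^{m-2}$-measure at most $\delta$. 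Quantifying this via an energy-drop argument on the disjoint Vitali balls, each contributing at least $c\delta^2$ to $\bar{\Theta}$, bounds the number of balls and hence the measure.

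The principal obstacle is the graph construction in the second paragraph: producing $\mathfrak{t}_{\mathcal{T}}$ as a quantitatively smooth $(m-2)$-submanifold (with $|\nabla^2\mathfrak{t}_{\mathcal{T}}|\mathfrak{r}_x<\delta$) rather than merely a set of density maxima requires first defining local best planes $\mathcal{L}_{x,r}$ variationally (Section \ref{s:best planes}) and then gluing them across all scales $\mathfrak{r}_x\leqslant r\leqslant 2$ into a single submanifold $\mathcal{T}$ (Section \ref{s:best submanifold}); this is the analogue, in the Ginzburg--Landau setting, of the best-submanifold machinery of \cite{naber2024energyidentitystationaryharmonic}. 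The penalized term $F(u_\epsilon)/\epsilon^2$ enters only through $\bar{\Theta}(\cdot,\cdot;L)$ with the weight $2$ exactly so that it is killed by $(m-2)$-symmetry (see Section \ref{s: pre}), so the cone-splitting and spatial-derivative estimates carry over without essential change, and the entire Naber--Valtorta stopping-time framework applies once this bookkeeping and the best-plane construction are in place.
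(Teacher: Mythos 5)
Your overall skeleton (stopping time along $L$-slices, a quantitative lower density bound, graph over $L$, Vitali-type covering) matches the paper's strategy, but two of the key steps do not survive scrutiny and one misidentifies the machinery actually used.

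First, your argument for (a3) via Lemma \ref{m-2 symmetry alternative} does not close. The smoothness alternative of that lemma bounds $e_\epsilon$ on a ball of radius $\sim\mathfrak{r}_x$, whereas the stopping conditions $\bar{\Theta}(x,s;L)\leqslant\tfrac{1}{2}\delta'$ and $s\tfrac{\p}{\p s}\bar{\Theta}(x,s)\leqslant\tfrac{1}{2}\delta$ are heat-mollified integrals carried over balls of radius $\sim Rs$, which for $s$ just below $\mathfrak{r}_x$ extend far outside the ball where you have pointwise control. Thus "the density keeps dropping past $\mathfrak{r}_x$" does not follow from the smoothness alternative, and no contradiction with stopping is produced. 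The paper instead blows up at the first scale where $\bar{\Theta}(y_i,\cdot)$ crosses $\varepsilon_0/2$, argues the limit is a nontrivial finite-energy, $L$-invariant solution (harmonic map, solution of (\ref{*}), or harmonic function depending on $\lim \epsilon_i/r_i$), rules out the last two by the Lin--Wang triviality observations, and in the harmonic-map case uses Theorem \ref{2d estimate} and the small radial derivative to force $\bar{\Theta}_u(x,1)\leqslant C\delta^{1+\alpha}$, contradicting $\bar{\Theta}_u(0,1)=\varepsilon_0/2$. That compactness step is not a repackaging of Lemma \ref{m-2 symmetry alternative}; it is a separate argument you would need to supply. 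There is also a parameter ordering issue: invoking the lemma with $\sigma$ comparable to $\tfrac{1}{2}\delta'$ requires $\delta\leqslant\delta^*(\ldots,\sigma)$, i.e., $\delta$ small depending on $\delta'$, which conflicts with the theorem's quantifier order (choose $\delta$, then $\delta'<\delta$, then $\delta''$).

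Second, the construction of $\mathcal{T}$ as a quantitatively smooth graph and the proof of the maximal condition (2) are where the substance lies, and both are hand-waved. You claim the locus of minimizers is a $C^{1,1}$ graph, but a set of density maxima need not be a manifold at all; what makes this work in the paper is the Lipschitz estimate $|\Pi_{L^\perp}(y_j-y_i)|\leqslant C\sqrt{\delta}\,|\Pi_L(y_j-y_i)|$ for the stopping centers (proved via quantitative cone splitting and Lemma \ref{m-1 flatness}), followed by a partition of unity $\mathfrak{t}_{\mathcal{T}}(x)=\sum_j\varphi_j(x)(y_j-x_j)$ with controlled $\nabla^k\varphi_j$. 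The best-plane/best-submanifold machinery of Sections \ref{s:best planes}--\ref{s:best submanifold} is not used here at all; it enters only later when bounding the energy inside the annular region. Your reference to it as "the principal obstacle" is a misdiagnosis. Likewise the maximal condition in (2) is not "by construction": it holds at the specific scale $\lambda\delta^{-1/2}\mathfrak{r}_{y_j}$ (not $c\delta\mathfrak{r}_y$ as you write) and for all $y$ in a neighborhood of $y_j+L$, and proving it requires another blow-up argument with Theorem \ref{2d estimate} (Claim 3.9 in the paper). Finally, the $\Hau^{m-2}$-bound on the uncovered part of $\mathcal{T}$ comes from the measure budget on the $\mathfrak{s}_j'$-balls, $\sum_j\mathfrak{s}_j'^{m-2}\leqslant\tfrac{C}{\delta'}\bar{\Theta}(p,2;L)\leqslant\tfrac{C\delta''}{\delta'}$, driven by disjointness and the global $L$-energy hypothesis; this is unrelated to the $(m-1)$-flatness mechanism you propose.
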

\begin{proof}
    Define
    $$\mathfrak{s}_x=\inf\left\lbrace s\in(0,1]:\forall s\leqslant r\leqslant2,r\frac{\p}{\p r}\bar{\Theta}(x,r)\leqslant\delta
    \right\rbrace.$$
    For each $x\in p+L_{\mathcal{A}}\cap B_2$, we investigate the value $\hat{\mathfrak{s}}_x=\inf_{y\in x+L_{\mathcal{A}}^{\perp}\cap B_2}\mathfrak{s}_y$ and a point $y_x\in x+L_{\mathcal{A}}^{\perp}\cap B_2$ such that $\mathfrak{s}_{y_x}=\hat{\mathfrak{s}}_x$. Note that by assumption we have $\hat{\mathfrak{s}}_x\leqslant\mathfrak{s}_x\leqslant\delta^2$. 
    For each $y$, consider also
    $$\mathfrak{s}_y'=\inf\left\lbrace s\in(0,1]:\forall s\leqslant r\leqslant2,\bar{\Theta}(y,r;L)\leqslant\delta'\right\rbrace,$$
    where $\delta'<\delta$ is a constant to be determined. Set
    $$\mathfrak{q}_y=\max\lbrace\mathfrak{s}_y,\mathfrak{s}_y'\rbrace.$$
    \begin{claim}\label{c1 in annular region existence}
        For $\epsilon$ and $\delta''$ small enough, we must have $r=\vert y_x-x\vert\leqslant\delta$.
    \end{claim}
    \begin{proof}[Proof of Claim \ref{c1 in annular region existence}]
        Since the assumption of Theorem \ref{annular region existence} implies that $u_{\epsilon}$ is $(m-2,\delta)$ symmetric at $x$. If the claim were not true, then $y_x$ is an extra symmetric point which implies that $u_{\epsilon}$ is $(m-1,C\delta)$ symmetric at $x$. Lemma \ref{m-1 flatness} implies $\bar{\Theta}(x,r)\leqslant\gamma$ for some small $\gamma$. Since 
    $$\bar{\Theta}(x,1)-\bar{\Theta}(x,r)=\left\vert\int_r^1\frac{\p}{\p t}\bar{\Theta}(x,t)\dif t\right\vert\leqslant-\delta\log \delta.$$
    If $\delta$ is small enough, then $\bar{\Theta}(x,1)<\varepsilon_0$, contradicts to our assumption.
    \end{proof}
    \begin{claim}\label{c2 in annular region existence}
        If $\epsilon,\delta'$ are small enough, we must have $\bar{\Theta}(y_x,\mathfrak{q}_{y_{x}})\geqslant\varepsilon_0/2$ for all $x\in L\cap B_1+p$
    \end{claim}
    \begin{proof}[Proof of Claim \ref{c2 in annular region existence}]
        Indeed, if this were not true, we must have a sequence of map $u_{\epsilon_i}$ and $y_i\in B_1(p)$ such that 
    $$r_i=\sup\lbrace r\in(0,2]:\bar{\Theta}_{u_{\epsilon_i}}(y_i,r)\leqslant\varepsilon_0/2\rbrace\geqslant\mathfrak{q}_{y_i}.$$Then, let us consider the rescaled maps $v_{\epsilon_i/r_i}(x)=u_{\epsilon_i}(y_i+r_ix)$. As $\epsilon$ and $\delta'\rightarrow0$, we may assume $v_{\epsilon_i/r_i}$ converges strongly on $B_1$ to some $L$ invariant map $u$, which is
    \begin{enumerate}
        \item a harmonic map if $\epsilon_i/r_i\rightarrow0$;
        \item a solution to (\ref{*}) or a vector valued harmonic function on $\R^m$ with finite energy if $\epsilon_i/r_i\rightarrow\epsilon\in(0,\infty]$.
    \end{enumerate}
    In all the cases above, the energy of $u$ is finite and $u$ is non-trivial. In the case (2), this is impossible, since a finite energy $u:\R^2\rightarrow\R^J$ in both cases is trivial, see for example \cite{Linwangharmonicsphere99}. In the case (1), we have
    $$\bar{\Theta}_u(0,1)=\epsilon_0/2,r\frac{\p}{\p r}\bar{\Theta}(0,r)\leqslant\delta\textup{ for all }r\in[1,\delta^{-1}].$$
    By Theorem \ref{2d estimate} again, this implies
    $$\vert\Pi_{L^{\perp}}(y)\vert^2\vert\nabla u\vert^2\leqslant C\delta^{1+\alpha}\vert\Pi_{L^{\perp}}(y)\vert^{\alpha}.$$
    Integrating this yields $\bar{\Theta}_u(x,1)\leqslant C\delta^{1+\alpha}$, contradicts to our assumption.
    \end{proof}
    Now, let us take a maximal subset $\lbrace B_{\delta^{-1/2}\mathfrak{q}_j}(y_j)\rbrace$ of $\lbrace B_{\delta^{-1/2}\mathfrak{q}_{y_x}}(y_x)\rbrace$ such that $\lbrace B_{\delta^{-1/2}\mathfrak{q}_j/3}(y_j)\rbrace$ is pairwise disjoint. 
    Note that we can write $$\bigcup_jB_{\mathfrak{q}_j}(y_j)=\bigcup_jB_{\mathfrak{s}_j}(y_j)\cup\bigcup_{j}B_{\mathfrak{s}_j'}(y_j').$$In the subcollection where $\mathfrak{q}_j=\mathfrak{s}_j'$, we have $\bar{\Theta}(y_j,\mathfrak{s}_j';L)\geqslant\delta'$ for each $j$, hence by disjointness, if we take $\delta''$ small enough, we have
    $$\sum_j\mathfrak{s}_j'^{m-2}\leqslant\frac{1}{\delta'}\sum_j\int_{B_{\mathfrak{s}_j'}(y_j')}(\vert\Pi_L\nabla u_{\epsilon}\vert^2+F(u_{\epsilon})/\epsilon^2)\leqslant\frac{C(m)}{\delta'}\bar{\Theta}(p,2;L)\leqslant\delta^{m/2}.$$
    Next, we prove that $\cup_jB_{\mathfrak{q}_j}(y_j)$ is a good approximation of an $(m-2)$-submanifold.
    \begin{claim}\label{c3 in annular region existence}
        For each $i,j$, we have $\vert\Pi_{L^{\perp}}(y_j-y_i)\vert\leqslant C(m,K_N,\Lambda)\delta^{1/2}\vert\Pi_L(y_j-y_i)\vert$.
    \end{claim}
    \begin{proof}[Proof of Claim \ref{c3 in annular region existence}]
        The proof is again make use of the quantitative cone splitting \ref{quantitative cone splitting}. If the claim is not true, WLOG $\mathfrak{q}_i\leqslant\mathfrak{q}_j$, set $r=\vert y_i-y_j\vert\geqslant\mathfrak{q}_i/3+\mathfrak{q}_j/3\geqslant2\mathfrak{q}_i/3,l=\frac{\Pi_{L^{\perp}}(y_j-y_i)}{\vert\Pi_{L^{\perp}}(y_j-y_i)\vert}$, and we set
        $l'=\alpha (y_j-y_i)+\beta l$, where $l=\frac{\Pi_L(y_j-y_i)}{\vert\Pi_L(y_j-y_i)\vert}$ is a unit vector in $L$ and $\alpha=\vert\Pi_{L^{\perp}}(y_j-y_i)\vert^{-1}\leqslant c(m,K_N)\delta^{-1/2}r,\beta=\frac{\vert\Pi_L(y_j-y_i)\vert}{\vert\Pi_{L^{\perp}}(y_j-y_i)\vert}\leqslant c(m,K_N)\delta^{-1/2}$. Consider an $(m-1)$-subspace $L'=$span$\lbrace L,l'\rbrace$
        \begin{equation}
            \begin{aligned}
                &r^2\int\rho_{2r}(y-y_i)\vert\nabla_{l'}u_{\epsilon}\vert^2\\\leqslant& r^2\int\rho_r(y-x_i)\left(\alpha^2\vert\nabla_{y_i-y_j}u_{\epsilon}\vert^2+\beta^2\vert\nabla_lu_{\epsilon}\vert^2\right)\\\leqslant&c(m,K_N,\Lambda)\delta^{-1}\int\rho_{2r}(y-y_i)(\vert\nabla_{y-y_i}u_{\epsilon}\vert^2+\vert\nabla_{y-y_j}u_{\epsilon}\vert^2)+\delta^{-2/3}r^2\int\rho_r(y-y_i)\vert\nabla_lu_{\epsilon}\vert^2\\\leqslant& c(m,K_N,\Lambda)\delta^{-1}\left(r\frac{\p}{\p r}\bar{\Theta}(y_i,3r)+r\frac{\p}{\p r}\bar{\Theta}(y_j,3r)+\bar{\Theta}(y_i,2r;L)\right)\leqslant c(m,K_N,\Lambda).
            \end{aligned}\nonumber
        \end{equation}
    Hence $u_{\epsilon}$ is $(m-1,c(m,K_n,\Lambda)$ symmetric with respect to $L'$ on $B_{2r}(x_i)$. By Lemma \ref{m-1 flatness}, this implies $\bar{\Theta}(y_i,r)\leqslant\varepsilon_0/3$, contradicts to Claim \ref{c2 in annular region existence}.
    \end{proof}
    This shows that $\lbrace B_{\mathfrak{q}_i}(y_i)\rbrace$ is a good approximation with to an $(m-2)$-submanifold with curvature less than $\sqrt{\delta}$. Now, we can define our annular region. Let us consider the covering $\lbrace B_{\delta^{-1/2}\mathfrak{q}_j}(y_j)\rbrace$ and select a further subcovering such that $\lbrace B_{\delta^{-1/2}\mathfrak{q}_j/3}(y_j)\rbrace$ is pairwise disjoint. Let $x_j=\Pi_L(y_j-p)+p$, then we have a corresponding covering $\lbrace B_{\delta^{-1/2}\mathfrak{q}_j}(x_j)\rbrace$ on $L+p$. By claim \ref{c3 in annular region existence}, $\lbrace B_{\delta^{-1/2}}\mathfrak{q}_j/6(x_j)\rbrace$ is pairwise disjoint. Let $\lbrace\varphi_j\rbrace$ be a partition of unity sub-coordinate to this covering with $\vert\nabla^k\varphi_j\vert\leqslant C(m,k)\delta^{k/2}\mathfrak{q}_j^{-k},k=1,2$. Define the annular region $L\cap B_1+p$ by
    $$\mathfrak{t}_{\mathcal{T}}(x)=\sum_j\varphi_j(x)(y_j-x_j),\mathfrak{r}_{\mathfrak{t}_{\mathcal{T}}(x)}=\sum_j\varphi_j(x)\mathfrak{q}_j,\mathcal{T}=\textup{Graph}\mathfrak{t}_{\mathcal{T}}.$$
    By claim \ref{c1 in annular region existence},\ref{c2 in annular region existence},\ref{c3 in annular region existence} and definition of $\mathfrak{q}_j$, it is not hard to see $\mathcal{A}=B_1(p)\setminus\overline{B_{\mathfrak{r}_x}(\mathcal{T})}$ is an $C(m,K_N,\Lambda)\delta$-annular region provided we take $\delta'\leqslant\delta'(m,\Lambda,R,K_N,\delta)$. Moreover, Theorem \ref{annular region existence} (1) is held for $\mathcal{A}$.\\
    Finally, let us check that Theorem \ref{annular region existence} (2) is valid for $\mathcal{A}$.
    \begin{claim}
        If we choose $\delta'$ and $\epsilon$ small enough, then for all the $j$ such that $\mathfrak{q}_j=\mathfrak{s}_j$, if we set $\lambda=c(m,\Lambda,K_N)\sqrt{\delta}$, then $$\lambda\mathfrak{s}_j\frac{\p }{\p r}\bar{\Theta}(x,\lambda\mathfrak{s}_j)\geqslant c(m,\Lambda,K_N)\delta,\textup{ for all }y\in B_{C(m,K_N)\sqrt{\delta}\mathfrak{s}_j}(y_j+L)\cap(y_j+L^{\perp}).$$
    \end{claim}
    \begin{proof}[Proof of Claim 3.9]
        The proof is again by a contradiction argument. Assume the claim fails for some $\bar{y}\in B_{C(m,K_N)\sqrt{\delta}\mathfrak{s}_j}(y_j+L)\cap(y_j+L^{\perp})$ By sending $\delta'$ and $\epsilon$ to $0$ and arguing like Claim \ref{c2 in annular region existence}, we can consider the rescaled maps $v_i=u(\bar{y}^{(i)}+\mathfrak{s}_j^{(i)}x)$ and assume they converge to either a constant map in the strong sense or to an $L$-invariant harmonic map $u$. In the latter case, by the contradiction assumption 
        $$\lambda\frac{\p}{\p r}\bar{\Theta}(0,\lambda)\leqslant c(m,\Lambda,K_N)\delta,$$
        we have that the convergence is strong on $(B_{100\delta^{-2}}(L)\setminus B_{10^{-2}\lambda}(L))\cap B_2$. By the assumption $\frac{\p}{\p r}\bar{\Theta}_{u_{\epsilon}}(x_j,4)\leqslant\delta^2$, we see that $\frac{\p}{\p r}\bar{\Theta}_{u_{\epsilon}}(y_j,3)\leqslant c(m,\Lambda,K_N)\delta$ as well. We also have $r\frac{\p}{\p r}\bar{\Theta}_{u_{\epsilon}}(\bar{y},r)\leqslant C(m,\Lambda,K_N)\delta$ for all $r\in[\mathfrak{s_j},2]$ Hence, by Theorem \ref{2d estimate}, the limiting map $u$ must satisfies 
        $$\vert\Pi_{L^{\perp}}(y)\vert^2\vert\nabla u\vert^2\leqslant c\delta\left(\frac{\vert\Pi_{L^{\perp}}(y)\vert^{\alpha}}{\lambda^{\alpha}}+\frac{1}{\vert\Pi_{L^{\perp}}(y)\vert{\alpha}}\right),10^{-2}\lambda\leqslant\vert\Pi_{L^{\perp}}(y)\vert\leqslant3.$$
        Trace back to $u_{\epsilon}$, if we integrate this inequality, we see that if $c(m,\Lambda,K_N)$ is taken small enough, we have
        $$r\frac{\p}{\p r}\bar{\Theta}(\bar{y},r)\leqslant\delta/2\textup{ for all }r\in[\mathfrak{s}_j,2].$$
        However, this contradicts the smallest assumption of $\mathfrak{s}_j$ and $y_j$. Hence the proof of Claim is complete.
    \end{proof}
    The covering $B_{\delta^{-1/2}\mathfrak{s}_j}(y_j)$ is the desired covering for Theorem \ref{annular region existence} (2). Hence the proof of the Theorem is complete.
\end{proof}
As we have discussed at the beginning of this section, after capturing the energy in the bubble region using harmonic spheres, we wish to show that the energy in annular regions vanishes. This is the main goal of the rest of this paper. The proof of Theorem \ref{vanishing energy in annular regions} will be the result of Theorem \ref{energy decomposition}, \ref{angular energy estimate} and \ref{radial and tangential energy estimate}
\begin{thm}[{cf. \cite[Theorem 3.16]{NVyangmills}}]\label{vanishing energy in annular regions}
    Let $u_{\epsilon}:B_{2R}(p)\rightarrow\R^L$ be a solution to (\ref{*}) with $R^{2-m}\int_{B_{2R}(p)}e_{\epsilon}(u_{\epsilon})\leqslant\Lambda$. Suppose $\mathcal{A}=B_1(p)\setminus\overline{B_{\mathfrak{r}_x}(\mathcal{T})}$ is a $\delta$-annular region. For each $\sigma>0$, if $R>R(m,\Lambda,K_N,\sigma)$, and $\delta<\delta(m,\Lambda,K_N,\sigma),\epsilon<\epsilon(m,\Lambda,K_N,\sigma)$, then $$\int_{\mathcal{A}_{\sigma}}e_{\epsilon}(u_{\epsilon})\leqslant\sigma,\textup{ where }\mathcal{A}_{\sigma}=B_2(p)\setminus\overline{B_{\sigma\mathfrak{r}_x}(\mathcal{T})}.$$
\end{thm}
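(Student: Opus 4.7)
The plan is to decompose the energy on $\mathcal{A}$ with respect to the best plane $L_{\mathcal{A}}$ into three pieces and to bound each separately. Writing every derivative relative to the splitting $\R^m = L_{\mathcal{A}} \oplus L_{\mathcal{A}}^{\perp}$ and resolving the $L_{\mathcal{A}}^{\perp}$ direction into radial and angular components, the pointwise energy density becomes
$$e_{\epsilon}(u_{\epsilon}) = \tfrac{1}{2}\bigl(|\Pi_{L_{\mathcal{A}}}\nabla u_{\epsilon}|^2 + |\nabla_{\alpha_{L_{\mathcal{A}}^{\perp}}} u_{\epsilon}|^2 + |\nabla_{n_{L_{\mathcal{A}}^{\perp}}} u_{\epsilon}|^2\bigr) + F(u_{\epsilon})/\epsilon^2.$$
Choosing a Lipschitz cutoff adapted to $\mathcal{A}_{\sigma} = B_2(p)\setminus \overline{B_{\sigma\mathfrak{r}_x}(\mathcal{T})}$ reduces the theorem to bounding each of the tangential piece $E_L = \int(|\Pi_L\nabla u_{\epsilon}|^2 + 2F/\epsilon^2)$, the angular piece $E_{\alpha}$, and the radial piece $E_n$ by a constant times $\sigma$ on $\mathcal{A}_{\sigma}$.

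The tangential part is essentially a direct consequence of the definition of an annular region. Cover $\mathcal{T}$ by balls $B_{\mathfrak{r}_{x_j}}(x_j)$ with bounded overlap, so that the annular region is the union of dyadic annuli $B_{2^{k+1}\mathfrak{r}_{x_j}}(x_j)\setminus B_{2^k\mathfrak{r}_{x_j}}(x_j)$. On each such annulus, hypothesis (a2) gives $r\tfrac{\p}{\p r}\bar{\Theta}(x,r) \leqslant \delta$, and telescoping this derivative between dyadic scales together with the top-dimensional quantitative cone splitting from Proposition \ref{Top dimension quantitative cone splitting} shows that $\bar{\Theta}(x, r; L_{\mathcal{A}})$ remains $O(\delta)$ for all scales $\mathfrak{r}_x \leqslant r \leqslant 2$. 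Summing the partial energies over the covering and converting from heat-mollified to classical densities via Lemma \ref{properties of heat mollifier} gives $E_L \leqslant C(m,\Lambda,K_N)\delta$, which is small if $\delta$ is chosen small depending on $\sigma$.

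For the angular part I would follow the Parker/Naber--Valtorta super-convexity strategy already sketched in the introduction. Define
$$e_{\alpha}(x) = s_{L_{\mathcal{A}}^{\perp}}^2 \int_{S^1} |\nabla_{\alpha_{L_{\mathcal{A}}^{\perp}}} u_{\epsilon}(y)|^2 \, \dif\alpha_{L_{\mathcal{A}}^{\perp}}$$
on the base $L_{\mathcal{A}}$. A direct computation using \eqref{*} and the penalized term (which is nonnegative and combines with $|\Pi_L\nabla u_{\epsilon}|^2$ in the right way, as remarked at the end of Section \ref{s: pre}) yields the super-convexity inequality $\bar{\Delta} e_{\alpha} \geqslant e_{\alpha}$ with respect to the conformal Laplacian $\bar{\Delta} = (s_{L_{\mathcal{A}}^{\perp}}\tfrac{\p}{\p s_{L_{\mathcal{A}}^{\perp}}})^2 + s_{L_{\mathcal{A}}^{\perp}}^2 \Delta_{L_{\mathcal{A}}}$. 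Sub-solutions to this operator decay geometrically in the log-radial variable, so from boundary control of $e_{\alpha}$ near $\p \mathcal{A}$ (which comes from the bubble region decomposition of Theorem \ref{bubble region existence}) one obtains exponential decay across dyadic annuli, and integration yields $E_{\alpha} \leqslant \sigma/3$ once the outer scale parameter $R$ is chosen large.

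The main obstacle is the radial energy $E_n$. The starting point is the integration-by-parts identity \eqref{almost conformality} obtained by testing \eqref{*} against $\phi(y - x)\nabla_{\Pi_{L_{\mathcal{A}}^{\perp}}(y-x)} u_{\epsilon}$. All terms on the right-hand side are either already controlled by $E_L$ and $E_{\alpha}$, or are the cross term
$$\int \langle \nabla_{\Pi_{L_{\mathcal{A}}}\nabla\phi} u_{\epsilon}, \nabla_{\Pi_{L_{\mathcal{A}}^{\perp}}(y-x)} u_{\epsilon}\rangle,$$
which has no a priori smallness since $\Pi_{L_{\mathcal{A}}}\nabla\phi$ cannot be supported away from $\mathcal{T}$ where the $L_{\mathcal{A}}^{\perp}$ energy concentrates. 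To handle this I would construct, at each scale $r \geqslant 2\dist(x,\mathcal{T})$, the local best plane $\mathcal{L}_{x,r}$ minimizing the heat-mollified $L$-energy and glue these together into approximating submanifolds $\mathcal{T}_r$, as outlined in Sections \ref{s:best planes}--\ref{s:best submanifold}. The Euler--Lagrange equation
$$\int -\dot{\rho}_r(y-x)\langle \nabla_{y-x} u_{\epsilon}, \nabla_v u_{\epsilon}\rangle = 0 \quad \text{for all } v \in \mathcal{L}_{x,r}^{\perp}$$
converts an $\mathcal{L}_{x,r}^{\perp}$ derivative of $u_{\epsilon}$ into a $y-x$ derivative, and since the tangent space of $\mathcal{T}_r$ at $x$ is almost $\mathcal{L}_{x,r}$, integrating by parts along $\mathcal{T}_r$ converts the cross term into a combination of genuinely tangential derivatives (already bounded by $E_L$) plus a curvature remainder of order $\delta$. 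Summing over the scales $r \in [\mathfrak{r}_x, 2]$ via a Whitney-type covering of $\mathcal{A}_{\sigma}$ then yields $E_n \leqslant \sigma/3$, completing the proof.
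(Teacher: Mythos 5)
Your broad strategy matches the paper's: split the energy on $\mathcal{A}_\sigma$ into tangential, angular and radial pieces, dispatch the tangential piece using the annular--region hypotheses and cone splitting, prove a super-convexity inequality for the angular piece, and handle the radial piece via the almost-conformality identity together with the local best planes $\mathcal{L}_{x,r}$ and their Euler--Lagrange equation. The paper implements exactly this via Theorem~\ref{energy decomposition}, Theorem~\ref{angular energy estimate}, and Theorem~\ref{radial and tangential energy estimate}. That said, there are two places where your plan diverges from what is actually needed, and one of them is a genuine gap.

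First, a genuine gap in the angular estimate: you claim that the decay of $e_\alpha$ across dyadic annuli is seeded by ``boundary control of $e_\alpha$ near $\partial\mathcal{A}$ coming from the bubble region decomposition of Theorem~\ref{bubble region existence}.'' But Theorem~\ref{vanishing energy in annular regions} assumes only that $\mathcal{A}$ is a $\delta$-annular region (Definition~\ref{annular regions}); no bubble regions are given, and Definition~\ref{annular regions}(a3) is a \emph{lower} bound $\bar{\Theta}(x,\mathfrak{r}_x)\geqslant\varepsilon_0/2$ which supplies no boundary decay for the angular density. The paper sidesteps boundary conditions altogether: in Proposition~\ref{angular derivative superconvexity} the super-convexity is converted into the ODE inequality $\hat{\mathcal{E}}_\alpha(\mathcal{T}_r)\leqslant\mathfrak{E}_1(r)+r\frac{\dif}{\dif r}\mathfrak{E}_2(r)$ where $\mathfrak{E}_2$ is compactly supported in $r$ (thanks to the cutoffs $\psi_{\mathcal{T}}$ and $\hat{\rho}_r$), so integrating $\dif r/r$ kills the derivative term with no boundary contribution, and $\int|\mathfrak{E}_1|\dif r/r\leqslant C\sqrt{\delta}$. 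Without that mechanism your proposal has no way to start the iteration.

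Second, a more structural point that is partly taken care of but is muddled in your write-up. You begin by decomposing everything against the fixed plane $L_{\mathcal{A}}$. For the tangential piece this is indeed fine -- a single application of Proposition~\ref{Top dimension quantitative cone splitting} at the top scale $r=2$ already gives $\int_{B_2}(|\Pi_{L_\mathcal{A}}\nabla u_\epsilon|^2+F(u_\epsilon)/\epsilon^2)\leqslant C\delta$, so there is no telescoping over dyadic scales needed and no logarithmic loss; your described mechanism is heavier than necessary but not wrong. However, for both the angular and the radial pieces the paper does \emph{not} decompose against $L_{\mathcal{A}}$: it uses the scale-dependent planes $\mathcal{L}_{x,r}$ and works with the heat-mollified energies $\hat{\Theta}_\alpha(x,r)$, $\hat{\Theta}_n(x,r)$ integrated over $\mathcal{T}_r$. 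This is not a cosmetic choice. The minimality of $\mathcal{L}_{x,r}$ produces the orthogonality $\int\rho_r\langle\nabla_v u_\epsilon,\nabla_w u_\epsilon\rangle=0$ for $v\in\mathcal{L}_{x,r}$, $w\in\mathcal{L}_{x,r}^\perp$, and the derivative estimates in Theorem~\ref{best plane properties}(3) carry a factor $\sqrt{\bar{\Theta}_{\mathcal{L}}(x,2r)}$; both of these extra smallnesses are used repeatedly to absorb error terms in the angular computation (Lemmas~\ref{angular technical lemma 3} and~\ref{computation angular}) and in the radial computation (Lemmas~\ref{computation 1 radial energy}--\ref{computation 4 radial energy}). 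If you only keep the fixed $L_{\mathcal{A}}$ you lose these factors, so you would need to re-justify each error term from scratch rather than inheriting the paper's estimates. You do invoke $\mathcal{L}_{x,r}$ and $\mathcal{T}_r$ when you reach the radial term, so you are implicitly switching decompositions midway -- better to commit to the $\mathcal{L}_{x,r}$-adapted decomposition from the start (via the energies $\mathcal{E}_{\mathcal{L}}(\mathcal{A})$, $\mathcal{E}_\alpha(\mathcal{A})$, $\mathcal{E}_n(\mathcal{A})$ of Section~\ref{s:energy decomposition}), which is what Theorem~\ref{energy decomposition} is for.
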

\begin{cor}[{cf. \cite[Theorem 3.18]{NVyangmills}}]
    Suppose $u_{\epsilon}$ and $\mathcal{A}$ satisfy the condition of Theorem \ref{vanishing energy in annular regions}. Then
    $$\int_{\mathcal{T}\cap B_1(p)}(\Theta(x,1)-\Theta(x,\mathfrak{r}_x))\dif x\leqslant\sigma.$$
    Here $\Theta(x,r)=r^{2-m}\int_{B_r(x)}e_{\epsilon}(u_{\epsilon})$ is the usual energy density.
\end{cor}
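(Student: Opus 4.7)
My plan is to derive the estimate from Theorem \ref{vanishing energy in annular regions} by combining the monotonicity formula for $\Theta$ with a Fubini argument. First, for a stationary solution $u_\epsilon$ of (\ref{*}), plugging $\xi(y)=\eta(|y-x|^2)(y-x)$ into the stationary equation (\ref{**}) with an appropriate cutoff $\eta$ yields the standard Ginzburg-Landau monotonicity formula
$$\frac{d}{dr}\Theta(x,r) = \frac{2}{r^{m-1}}\int_{\partial B_r(x)}|\partial_r u_\epsilon|^2\, d\sigma + \frac{2}{r^{m-1}}\int_{B_r(x)}\frac{F(u_\epsilon)}{\epsilon^2}\, dy.$$
Integrating from $\mathfrak{r}_x$ to $1$ and rewriting the right-hand side as a spatial integral via Fubini in $r$, I would obtain the pointwise bound
$$\Theta(x,1) - \Theta(x,\mathfrak{r}_x) \leqslant C(m)\int_{B_1(x)}\frac{e_\epsilon(u_\epsilon)(y)}{\max(\mathfrak{r}_x,|y-x|)^{m-2}}\, dy.$$

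Next, I would integrate this over $x\in\mathcal{T}\cap B_1(p)$ with respect to $\mathcal{H}^{m-2}$ and swap the order of integration to get
$$\int_{\mathcal{T}\cap B_1}(\Theta(x,1)-\Theta(x,\mathfrak{r}_x))\, d\mathcal{H}^{m-2}(x) \leqslant C(m)\int_{B_2(p)}e_\epsilon(u_\epsilon)(y)\, K(y)\, dy,$$
where $K(y)=\int_{\mathcal{T}\cap B_1}\max(\mathfrak{r}_x,|y-x|)^{2-m}\, d\mathcal{H}^{m-2}(x)$. Using the graph structure (a1) and the slow variation of $\mathfrak{r}$ in Definition \ref{annular regions}, I would parameterize $\mathcal{T}$ near the closest point $\pi(y)$ and compute $K(y)\leqslant C(m)\bigl(1+|\log\max(d(y,\mathcal{T}),\mathfrak{r}_{\pi(y)})|\bigr)$, so $K$ is bounded by $C|\log\delta|$ in the worst case. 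Given $\sigma>0$, I would split the right-hand side into contributions from $y\in\mathcal{A}_{\sigma'}$ and $y\in B_{\sigma'\mathfrak{r}_x}(\mathcal{T})$ for an auxiliary parameter $\sigma'=\sigma'(\sigma)$; Theorem \ref{vanishing energy in annular regions} applied with $\sigma'$ gives $\int_{\mathcal{A}_{\sigma'}}e_\epsilon\leqslant\sigma'$, and together with $K(y)\leqslant C|\log(\sigma'\delta)|$ on $\mathcal{A}_{\sigma'}$ this makes the far contribution at most $\sigma/2$ once $\sigma'$ is taken small enough (the circular dependence between $\sigma'$ and $\delta(\sigma')$ from the vanishing theorem can be resolved by first fixing $\sigma'$ and then shrinking $\delta$).

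The main obstacle will be controlling the tube contribution, where $e_\epsilon(u_\epsilon)$ may be comparable to the bubble energy while $K(y)$ still carries a $|\log\delta|$ factor, so the naive bound gives order $\Lambda|\log\delta|$ which is not small. To overcome this, I would return to the refined form of the monotonicity bound in which only the radial derivative $|\partial_r u_\epsilon|^2$ appears, rather than the full $e_\epsilon$. The key geometric observation is that for $y$ near a bubble centered at $\pi(y)\in\mathcal{T}$ and $x\in\mathcal{T}$ constrained by $|y-x|>\mathfrak{r}_x\approx\mathfrak{r}_{\pi(y)}$, the radial direction $(y-x)/|y-x|$ lies approximately along $L_\mathcal{A}$, which is the invariance direction of the bubble; bubble region property (b1) then forces $|\partial_r u_\epsilon|$ to be small in this regime. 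Combined with the potential term estimate (also controlled by (b1)), this should give a tube contribution at most $\sigma/2$, completing the proof after choosing $\delta,\epsilon$ sufficiently small depending on $\sigma$.
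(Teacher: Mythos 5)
Your first steps are sound: the Ginzburg--Landau monotonicity formula does give a pointwise bound of the form $\Theta(x,1)-\Theta(x,\mathfrak{r}_x)\leqslant C(m)\int_{B_1(x)}\max(\mathfrak{r}_x,|y-x|)^{2-m}e_\epsilon(u_\epsilon)(y)\,\dif y$, and the resulting Fubini kernel $K(y)$ integrated over $\mathcal{T}$ is indeed of size $C(1+|\log\max(d(y,\mathcal{T}),\mathfrak{r}_{\pi(y)})|)$. But the difficulty you flag in the tube contribution is genuine, and your proposed fix does not close it. Inside the tube $B_{\sigma'\mathfrak{r}_x}(\mathcal{T})$ the energy $\int e_\epsilon$ carries the full bubble density $\sim\Lambda$ while $K(y)\sim|\log\mathfrak{r}_{\pi(y)}|$ is unbounded as $\mathfrak{r}\to 0$, so the naive estimate is $\Lambda|\log\mathfrak{r}|$. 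To deamplify this you want a pointwise smallness of $|\partial_\nu u_\epsilon(y)|$ in the tube via the claim that $(y-x)/|y-x|$ lies nearly in $L_\mathcal{A}$ and that (b1) controls the tangential derivative. But (b1) is a property of $\delta$-bubble regions, which are not part of the corollary's hypotheses (we only have a $\delta$-annular region, where the available bound $\bar\Theta(x,r;L_\mathcal{A})\leqslant\delta'$ is integrated, not pointwise). More fundamentally, the tube contains precisely the region where the $\varepsilon$-regularity fails by design --- (a3) asserts $\bar\Theta(x,\mathfrak{r}_x)\geqslant\varepsilon_0/2$ --- so there is no mechanism to obtain a pointwise bound on $\Pi_{L_\mathcal{A}}\nabla u_\epsilon$ near the bubble cores without substantially more work.

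The paper's proof bypasses all of this by not invoking the monotonicity formula at all. It estimates the two integrals $\int_{\mathcal{T}\cap B_1}\Theta(x,1)\,\dif\mathcal{H}^{m-2}(x)$ and $\int_{\mathcal{T}\cap B_1}\Theta(x,\mathfrak{r}_x)\,\dif\mathcal{H}^{m-2}(x)$ separately by Fubini, rather than writing the difference as an integral of the radial derivative. The crucial geometric input is that $\mathcal{T}$ is a Lipschitz graph over $L_\mathcal{A}$ with Lipschitz constant $\leqslant\delta$, so $\mathcal{H}^{m-2}(\mathcal{T}\cap B_r(y))=(1\pm C\delta)\omega_{m-2}r^{m-2}$; this makes the Fubini kernel a bounded constant, not a logarithm. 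Coupled with a Vitali-type cover of $\mathcal{T}$ by balls at scale $\mathfrak{r}_{x_j}$ (exploiting $|\nabla\mathfrak{r}|\leqslant\delta$ so that $\mathfrak{r}_x$ is comparable on each ball) to handle the lower integral, one obtains an upper bound $(1+C\delta)\omega_{m-2}\int_{B_1(\mathcal{T})}e_\epsilon$ for the first and a lower bound $(1-C(\delta+\sigma))\omega_{m-2}\int_{B_{\sigma\mathfrak{r}_x}(\mathcal{T})}e_\epsilon$ for the second; subtracting leaves $\omega_{m-2}\int_{\mathcal{A}_\sigma\cap B_1}e_\epsilon+C(\delta+\sigma)\Lambda\leqslant C\sigma$ by Theorem \ref{vanishing energy in annular regions}. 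The structural point you are missing is that you should avoid the singular kernel from the monotonicity formula entirely by bounding $\Theta(x,1)$ from above and $\Theta(x,\mathfrak{r}_x)$ from below directly as iterated integrals of $e_\epsilon$.
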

\begin{proof}
    By the Definition \ref{annular regions} (a1), we have
    \begin{equation}
        \begin{aligned}
            \int_{\mathcal{T}\cap B_1(p)}\Theta(x,1)\dif x&=\int_{\mathcal{T}\cap B_1(p)}\dif x\int_{B_1(x)}e_{\epsilon}(u_{\epsilon})(y)\dif y\\&=\int_{B_1(\mathcal{T}\cap B_1(p))}e_{\epsilon}(y)\dif y\int_{B_1(y)\cap\mathcal{T}}\dif x\\&\leqslant (1+C(m)\delta)\omega_{m-2}\int_{B_1(\mathcal{T}\cap B_1(p))}e_{\epsilon}(y)\dif y.
        \end{aligned}\label{Cor 3.11(1)}
    \end{equation}
    On the other hand, take a covering $\lbrace B_{r_j}(x_j)\rbrace$ of $\mathcal{T}\cap B_1(p)$ such that $x_j\in\mathcal{T}\cap B_1(p),r_j=\mathfrak{r}_{x_j}$ and $\lbrace B_{r_j/3}(x_j)\rbrace$ pairwise disjoint. By Definition \ref{annular regions} (a1) and the gradient bounds on $\mathfrak{r}_x$, we have that $(1-\delta)r_j\leqslant\mathfrak{r}_{x}\leqslant(1+\delta)r_j$ if $x\in B_{r_j}(x_j)\cap\mathcal{T}$. Hence
    \begin{equation}
        \begin{aligned}
            &\int_{\mathcal{T}\cap B_1(p)}\Theta(x,\mathfrak{r}_x)\dif x\\\geqslant&(1-C(m)\delta)\sum_j\int_{\mathcal{T}\cap B_{r_j/3}(x_j)}\Theta(x,r_j)\\=&(1-C\delta)\sum_j\int_{B_{r_j}(\mathcal{T}\cap B_{r_j/3}(x_j))}e_{\epsilon}(u_{\epsilon})(y)\dif y\int_{B_{r_j}(y)\cap\mathcal{T}}\dif x\\\geqslant&(1-C\delta)\sum_{j}\int_{B_{r_j}(\mathcal{T}\cap B_{r_j/3}(x_j))\cap\lbrace\dist(y,\mathcal{T})\rbrace\leqslant\sigma r_j\rbrace}e_{\epsilon}(u_{\epsilon})(y)\dif y\int_{B_{r_j}(y)\cap\mathcal{T}}\dif x\\\geqslant&\omega_{m-2}(1-C(\delta+\sigma))\int_{B_{\sigma\mathfrak{r}_x}(\mathcal{T})}e_{\epsilon}(u_{\epsilon})\dif y
        \end{aligned}\label{Cor 3.11(2)}
    \end{equation}
    Combine (\ref{Cor 3.11(1)}),(\ref{Cor 3.11(2)}) we get 
    $$\int_{\mathcal{T}\cap B_1(p)}(\Theta(x,1)-\Theta(x,\mathfrak{r}_x))\dif x\leqslant\omega_{m-2}(1-C(\delta+\sigma))\int_{\mathcal{A}_{\sigma}}e_{\epsilon}(u_{\epsilon})+C(\delta+\sigma)\Lambda\leqslant C\sigma.$$
    This proves our corollary.
\end{proof}
\subsection{Quantitative bubble decomposition and Quantitative energy identity}
In this subsection, we use the existence theorem of bubble regions and annular regions to decompose the domain. Moreover, we prove an effective version of the energy identity.
\begin{thm}[Quantitative bubble decomposition, {cf. \cite[Theorem 3.27]{NVyangmills}}]\label{Quantitative bubble decomposition}
    Let $u_{\epsilon}:B_{2R}(p)\rightarrow\R^L$ be a solution to (\ref{*}) with $R^{2-m}\int_{B_{2R}(p)}e_{\epsilon}(u_{\epsilon})\leqslant\Lambda$. For each $\delta<\delta(m,\Lambda,K_N,R)$, if $\delta''<\delta''(m,\Lambda,K_N,R,\delta)$, $\epsilon<\epsilon(m,\Lambda,K_N,\delta)$ and $u_{\epsilon}$ is $(m-2,\delta'')$ symmetric on $B_2(p)$ with respect to $L^{m-2}$ and $\bar{\Theta}(p,1)\geqslant\varepsilon_0$. Then we can write $$B_1(p)\subset\mathcal{A}_0\cup\bigcup\mathcal{A}_a\cup\bigcup\mathcal{B}_b\cup\bigcup B_{r_c}(x_c),$$with:
    \begin{enumerate}[(1)]
        \item $\mathcal{A}_0\subset B_2(p)$ and $\mathcal{A}_a\subset B_{2r_a}(x_a)$ are $\delta$-annular regions with regards to $L$;
        \item $\mathcal{B}_b\subset B_{2r_b}(x_b)$ are $\delta$-bubble regions with regard to $L$;
        \item $\sum r_a^{m-2}+r_b^{m-2}\leqslant C(m,\Lambda)$ and $\sum r_c^{m-2}\leqslant\delta$;
        \item For each $\mathcal{A}_a$, if we let $\mathcal{N}_a=\lbrace x\in B_{r_a}(x_a):B_{\mathfrak{r}_x^a}(x)$ intersects a $B_{2r_{a'}}(x_{a'})$ or $B_{r_c}(x_c)\rbrace$, then $\Hau^{m-2}(\mathcal{N}_a)\leqslant\delta$.
    \end{enumerate}
\end{thm}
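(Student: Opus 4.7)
The plan is to produce the decomposition by a recursive covering argument alternating calls to Theorem \ref{annular region existence} and Theorem \ref{bubble region existence}, closely following the scheme for Yang--Mills in \cite{NVyangmills}. Since $u_{\epsilon}$ is $(m-2,\delta'')$-symmetric on $B_{2}(p)$ with $\bar{\Theta}(p,1)\geqslant\varepsilon_{0}$, Lemma \ref{m-2 symmetry alternative} places us in alternative (2), which precisely verifies the hypotheses of Theorem \ref{annular region existence} on $B_{2}(p)$. I would therefore begin by extracting the outermost annular region $\mathcal{A}_{0}$, together with its submanifold $\mathcal{T}_{0}$, radius function $\mathfrak{r}^{0}$, and the associated collection of ``bad balls'' $\{B_{\delta^{-1/2}\mathfrak{r}^{0}_{y_{j}}}(y_{j})\}$ satisfying the maximality condition of Theorem \ref{annular region existence}(2).

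\textbf{Classifying bad balls.} On each bad ball $B_{r_{j}}(y_{j})$, condition (a2) of Definition \ref{annular regions} keeps $r\partial_{r}\bar{\Theta}$ small at scales $r\geqslant\mathfrak{r}^{0}_{y_{j}}$, while the maximality condition forces $\lambda r_{j}\,\partial_{r}\bar{\Theta}(y,\lambda r_{j})\geqslant c(m,K_{N},\Lambda)\delta$ on a neighborhood of $y_{j}$ inside $y_{j}+L^{\perp}$. This sharp ``upper small / lower large'' dichotomy is exactly the input of Theorem \ref{bubble region existence}; after rescaling by $r_{j}$, I would apply that theorem to extract a $\delta$-bubble region $\mathcal{B}_{j}\subset B_{2r_{j}}(y_{j})$, with inner bubble balls $\{B_{r_{j,k}}(x_{j,k}+L)\}$. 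By property (b4) in Definition \ref{bubble regions}, each such inner bubble ball again satisfies $\bar{\Theta}\geqslant\varepsilon_{0}$ and $(m-2,\delta)$-symmetry, so the ball is eligible for another pass of Theorem \ref{annular region existence} at a smaller scale, and the procedure iterates.

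\textbf{Recursion and content bounds.} The radii shrink by a definite factor $\delta^{1/2}$ at each level, so I would fix a finite depth $D=D(m,\Lambda,K_{N},R)$ in advance and choose a decreasing chain of parameters $\delta''\ll\delta'_{D}\ll\cdots\ll\delta'_{0}\ll\delta$ so that the output at each level meets the input tolerance of the next. Any ball surviving the $D$-th level is dumped into the residual family $\{B_{r_{c}}(x_{c})\}$. The global bound $\sum r_{a}^{m-2}+r_{b}^{m-2}\leqslant C(m,\Lambda)$ is obtained by packing: every extracted $\mathcal{A}_{a}$ or $\mathcal{B}_{b}$ carries, by (a3) of Definition \ref{annular regions} and (b3) of Definition \ref{bubble regions}, at least $\varepsilon_{0}$ units of energy density against the $(m-2)$-mass of its central set, and the total energy is $\Lambda$. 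The smallness $\sum r_{c}^{m-2}\leqslant\delta$ and the bound $\mathcal{H}^{m-2}(\mathcal{N}_{a})\leqslant\delta$ both come from the maximality estimate $\mathcal{H}^{m-2}(\mathcal{T}_{a}\setminus\bigcup B_{\delta^{-1/2}\mathfrak{r}_{y_{j}}}(y_{j}))\leqslant\delta$ inside each annular region, propagated through the recursion: $\mathcal{N}_{a}$ consists of points of $\mathcal{T}_{a}$ that sit near a child annular, bubble, or residual region, and such points lie within the bad-ball neighborhoods whose $(m-2)$-content is controlled by precisely that maximality inequality.

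\textbf{Main obstacle.} The delicate point is the parameter bookkeeping. Theorem \ref{annular region existence} requires input symmetry parameter $\delta''$ much smaller than the output annular-region parameter $\delta$, and Theorem \ref{bubble region existence} demands an analogous tightening; when one chains these results through $D$ levels, the top-level $\delta''$ must beat the cumulative tightening. One must therefore commit to a finite recursion depth before fixing parameters, and verify that at the bottom level any remaining balls have $\sum r_{c}^{m-2}\leqslant\delta$, not because the procedure produced further structure there, but because the residual $(m-2)$-content of all sub-annular $\mathcal{T}_{a}$'s outside their good bad-ball coverings has been forced below $\delta$ at every level. The use of the mollified energy $\bar{\Theta}$ (smooth in $x$ and $r$, with the quantitative monotonicity \eqref{monotonicity}) is essential here: it is what allows the genuine dichotomy between ``bubble'' and ``annular'' behavior to be detected at each scale and each point.
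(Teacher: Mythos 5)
Your overall architecture — alternate Theorem \ref{annular region existence} and Theorem \ref{bubble region existence}, seed the recursion via Lemma \ref{m-2 symmetry alternative}, classify bad balls by the maximality condition, and propagate the $r_c$-balls from the residual estimate $\Hau^{m-2}(\mathcal{T}\setminus\bigcup B_{\delta^{-1/2}\mathfrak{r}_{y_j}}(y_j))\leqslant\delta$ — matches the paper exactly. However, there is a genuine gap in your termination mechanism.

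You propose to fix a depth $D=D(m,\Lambda,K_N,R)$ in advance because ``radii shrink by a definite factor $\delta^{1/2}$ at each level,'' and to dump whatever survives the $D$-th level into the residual family $\{B_{r_c}(x_c)\}$. This cannot work as stated. The residual content $\sum r_c^{m-2}\leqslant\delta$ comes only from the maximality estimate inside each annular region; the inner bubble balls that would be ``surviving'' at level $D$ are a completely different object, and their total $(m-2)$-content is of size $C(m,\Lambda)$ (they cover almost all of $\mathcal{T}_a$ after packing), not $\delta$. Radius shrinkage alone gives no bound on the $(m-2)$-content of what remains, because the number of balls grows as the radius shrinks. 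The paper's termination mechanism is instead the \emph{energy drop}: condition (b3) of Definition \ref{bubble regions} forces
\[
r_{a,i+1}^{2-m}\int_{B_{r_{a,i+1}}}e_{\epsilon}(u_{\epsilon})\leqslant r_{b,i}^{2-m}\int_{B_{r_{b,i}}}e_{\epsilon}(u_{\epsilon})-\varepsilon_0,
\]
so the rescaled energy decreases by a definite amount when passing from a bubble region to its child annular regions. Since the starting density is $\leqslant\Lambda$, the recursion halts after at most $4\Lambda/\varepsilon_0$ levels — not because anything is dumped, but because by that level every inner ball is $\varepsilon_0$-regular and Theorem \ref{bubble region existence} produces no further inner balls. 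This is also exactly why the depth bound depends only on $m,K_N,\Lambda$ (not $R$), and it is what allows you to fix the parameter chain $\delta''\ll\delta'_D\ll\cdots\ll\delta'_0\ll\delta$ in advance, resolving the bookkeeping concern you correctly raised. You invoke (b3) only for the volume packing of the $r_a,r_b$ balls; the missing idea is that (b3) also controls the recursion depth.
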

Next, we state the quantitative energy identity. Roughly speaking, this theorem tells us that if the solution to (\ref{*}) is (m-2)-symmetric with respect to a plane $L$, then on generic slices in the direction $L^{\perp}$, the map will be approximated by harmonic spheres in a strong sense. 
\begin{thm}[Quantitative energy identity, {cf. \cite[Theorem 1.12]{NVyangmills}}]\label{Quantitative energy identity}
Let $u_{\epsilon}:B_{2R}(p)\rightarrow\R^L$ be a solution to (\ref{*}) with $R^{2-m}\int_{B_{2R}(p)}e_{\epsilon}(u_{\epsilon})\leqslant\Lambda$. For each $\sigma<\sigma(m,\Lambda,K_N,R)$, if  $\delta''<\delta''(m,\Lambda,K_N,\sigma),R\geqslant R(m,\Lambda,K_N,\sigma),\epsilon<\epsilon(m,\Lambda,K_N,\sigma)$ and $u_{\epsilon}$ is $(m-2,\delta'')$ symmetric on $B_2(p)$ with regards to $L^{m-2}$ and $\bar{\Theta}(p,1)\geqslant\varepsilon_0$.\par
Then, under the decomposition of Theorem \ref{Quantitative bubble decomposition}, we can find $\mathcal{G}_{\sigma}\subset L\cap B_1(p)$ with $\Hau^{m-2}(L\cap B_1(p)\setminus\mathcal{G}_{\sigma})\leqslant\sigma$ and For each $y\in\mathcal{G}_{\sigma}$,
\begin{enumerate}[(1)]
    \item We have
    $$\int_{(L^{\perp}+y)\cap B_1(p)}\vert\Pi_{L^{\perp}}\nabla u_{\epsilon}\vert^2\leqslant C(n)\Lambda\textup{ and }\int_{(L^{\perp}+y)\cap B_1(p)}(\vert\Pi_L\nabla u_{\epsilon}\vert^2+F(u_{\epsilon})/\epsilon^2)\leqslant\sigma;$$
    \item $u_{\epsilon}:(L^{\perp}+y)\cap B_1(p)\rightarrow\R^L$ is a $\sigma$-almost harmonic map with at most $K(m,\Lambda,K_N)$ bubbles.The $\delta$-almost harmonic map is defined in the following.
    \item $$\left\vert\int_{B_1(y)}e_{\epsilon}(u_{\epsilon})-\frac{1}{2}\omega_{m-2}\int_{y+L^{\perp}}\vert\Pi_{L^{\perp}}\nabla u_{\epsilon}\vert
    ^2\right\vert\leqslant\sigma.$$
\end{enumerate}
\end{thm}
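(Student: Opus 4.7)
The plan is to combine the quantitative bubble decomposition (Theorem \ref{Quantitative bubble decomposition}) with the vanishing energy in annular regions (Theorem \ref{vanishing energy in annular regions}) and the bubble structure (Theorem \ref{bubble region properties}), then extract $\mathcal{G}_\sigma$ by a Fubini-plus-Chebyshev argument. Concretely, I first fix $\delta=\delta(m,\Lambda,K_N,\sigma)$ small (to be tuned), apply Theorem \ref{Quantitative bubble decomposition} to get $B_1(p)\subset\mathcal{A}_0\cup\bigcup_a\mathcal{A}_a\cup\bigcup_b\mathcal{B}_b\cup\bigcup_c B_{r_c}(x_c)$, and invoke Theorem \ref{vanishing energy in annular regions} (at the rescaled scale) to conclude that each annular region carries energy $\leqslant\sigma' r_a^{m-2}$, summing via Theorem \ref{Quantitative bubble decomposition}(3) to $\leqslant C(m,\Lambda)\sigma'$ total.

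Let $\pi:\R^m\to L$ be orthogonal projection. Define the bad set $\mathcal{B}_\sigma\subset L\cap B_1(p)$ as the union of three pieces: (i) $\pi\bigl(\bigcup_c B_{r_c}(x_c)\bigr)$, of $\Hau^{m-2}$-measure $\leqslant C(m)\delta$ by Theorem \ref{Quantitative bubble decomposition}(3); (ii) $\pi\bigl(\bigcup_a \mathcal{N}_a\bigr)$, of measure $\leqslant C(m,\Lambda)\delta$ since the graph bound (a1) makes $\pi|_{\mathcal{T}_a}$ bi-Lipschitz with distortion $1+C\delta$ and Theorem \ref{Quantitative bubble decomposition}(4) gives $\Hau^{m-2}(\mathcal{N}_a)\leqslant\delta r_a^{m-2}$, summable to $C(m,\Lambda)\delta$; (iii) a Chebyshev exceptional set of slices on which the slice-wise annular energy or the slice integral of $\vert\Pi_L\nabla u_\epsilon\vert^2+F(u_\epsilon)/\epsilon^2$ exceeds $\sigma^{-1}$ times its $L$-average. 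Choosing $\sigma'\ll\sigma^2$ and $\delta\ll\sigma$, I obtain $\Hau^{m-2}(L\cap B_1(p)\setminus\mathcal{G}_\sigma)\leqslant\sigma$ with $\mathcal{G}_\sigma:=L\cap B_1(p)\setminus\mathcal{B}_\sigma$.

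For $y\in\mathcal{G}_\sigma$, the slice $S_y=(L^\perp+y)\cap B_1(p)$ misses every $B_{r_c}(x_c)$ and every overlap set $\mathcal{N}_a$. Property (1) then splits cleanly: the total-energy bound gives $\int_{S_y}\vert\Pi_{L^\perp}\nabla u_\epsilon\vert^2\leqslant C(m)\Lambda$ outside a negligible Chebyshev set, while $\int_{S_y}(\vert\Pi_L\nabla u_\epsilon\vert^2+F(u_\epsilon)/\epsilon^2)\leqslant\sigma$ combines Theorem \ref{vanishing energy in annular regions} on the annular part of $S_y$ with the bubble-region estimate (b1) of Definition \ref{bubble regions} on the bubble part. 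Property (3) follows by writing
\[
\int_{B_1(y)}e_\epsilon(u_\epsilon)=\int_{B_1(y)}\tfrac{1}{2}\vert\Pi_L\nabla u_\epsilon\vert^2+\tfrac{1}{2}\vert\Pi_{L^\perp}\nabla u_\epsilon\vert^2+F(u_\epsilon)/\epsilon^2
\]
and applying Fubini in $L$: on each $\mathcal{B}_b$ the approximating harmonic map is $L$-invariant up to $O(\delta)$ by (b2), hence the $L^\perp$-energy factorizes as $\omega_{m-2}\int_{y+L^\perp}\vert\Pi_{L^\perp}\nabla u_\epsilon\vert^2$, and the remaining $L$-energy plus penalty is $O(\sigma)$ by (1). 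Property (2), the $\sigma$-almost harmonic map structure on $S_y$ with at most $K(m,\Lambda,K_N)$ bubbles, follows from Theorem \ref{bubble region properties}(2) on each $\mathcal{B}_b\cap S_y$ and the slice smallness on annular parts, with the uniform bubble count coming from the energy quantization $\varepsilon_0\leqslant\Theta(x_j,r_j)\leqslant\Lambda$ in (b3).

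The main obstacle is the bookkeeping for $\mathcal{B}_\sigma$: each $\mathcal{N}_a$ carries an $\Hau^{m-2}$-measure bound of $\delta r_a^{m-2}$ in its own ball only, so summing requires the global covering estimate $\sum_a r_a^{m-2}\leqslant C(m,\Lambda)$ from Theorem \ref{Quantitative bubble decomposition}(3), and passing from $\mathcal{N}_a$ to $\pi(\mathcal{N}_a)$ requires the graph structure (a1). A secondary delicacy is that bubble centers on a single slice $S_y$ must be controlled in number; this uses the disjointness of $\{B_{r_j}(x_j+L)\}$ in Definition \ref{bubble regions} together with the energy quantization of (b3). Finally, the formal verification of the ``$\sigma$-almost harmonic map'' condition on $S_y$ is deferred to its definition in the next subsection, but its ingredients—smooth convergence and bubble enumeration—are already supplied by (b2) and Theorem \ref{bubble region properties}(2).
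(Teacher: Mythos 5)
Your construction of the good set and your use of Theorem \ref{Quantitative bubble decomposition}, Theorem \ref{vanishing energy in annular regions}, and Theorem \ref{bubble region properties} with a Chebyshev/Fubini argument are all in the spirit of the paper, and properties (1) and (2) are handled essentially as the paper does. The decisive gap is in your argument for property (3).

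You write (3) as a single Fubini factorization: total energy in $B_1(y)$ equals $\omega_{m-2}$ times the slice energy, citing approximate $L$-invariance on each bubble region and then $O(\sigma)$ from (1). But $\int_{B_1(y)} e_\epsilon(u_\epsilon)$ is an $m$-dimensional integral whose slice integrals $\int_{z+L^\perp}|\Pi_{L^\perp}\nabla u_\epsilon|^2$ vary wildly as $z$ moves in $L$, because the bubbles live at a nested hierarchy of scales $r_{a,i}$ and contribute energy sharply concentrated near their centers $x_{a,i}'$. A single appeal to Fubini across $B_1(y)$ does not show that the $y$-slice picks up the correct total; it only shows agreement on average, which is weaker than (3). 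The paper's proof instead telescopes the classical density $\Theta$ scale by scale: for good $y$ the density drop $\Theta(x,r_{a,i})-\Theta(x,\mathfrak{r}_x^{a,i})$ across each annular region is $\leqslant\sigma$ (this is a second Chebyshev condition, coming from the Corollary after Theorem \ref{vanishing energy in annular regions}, which your bad set never imposes), and then Theorem \ref{bubble region properties}(1) rewrites the density at the top of each bubble ball as $\omega_{m-2}$ times the slice energy in $\mathcal{B}_{b,i}\cap(y+L^\perp)$ plus the densities at the next level of balls. After at most $2\Lambda/\varepsilon_0$ iterations the process terminates, assembling exactly the identity in (3).

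Concretely, you need to enlarge your Chebyshev exceptional set to also exclude slices where the density difference $\Theta(x,r_{a,i})-\Theta(x,\mathfrak{r}_x^{a,i})$ across some annular region exceeds $\sigma$; the measure bound for this comes from the Corollary to Theorem \ref{vanishing energy in annular regions}, not from the energy estimate alone. Once that is in place, the telescoping argument, not a one-shot Fubini, is what closes the proof of (3). The number of levels is uniformly bounded by the energy drop $\varepsilon_0$ in Definition \ref{bubble regions}(b3), which is the same quantization you already use for the bubble count in (2), so the final constant only multiplies by $C(\Lambda/\varepsilon_0)$.
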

\begin{defn}[{cf. \cite[Definition 1.11]{NVyangmills}}]
    Consider a smooth map $v:B_1^2\rightarrow\R^L$. We say it is \emph{$\delta$-almost harmonic} if there exists $\lbrace B_{r_j}(x_j)\rbrace_{j=1}^K\subset B_1^2$ and non-trivial harmonic maps $b_j:\R^2\rightarrow N$ with
    \begin{enumerate}[(1)]
        \item $\vert\nabla b_j\vert^2\leqslant1$ and $\int_{\R^2\setminus B_{\delta^{-1}}}\vert\nabla b\vert^2\leqslant\delta$;
        \item If $r_j\leqslant r_k$, then either $B_{r_j}(x_j)\cap B_{r_k}(x_k)=\emptyset$ or $B_{r_j}(x_j)\subset B_{r_k}(x_k)$;
        \item The rescaled maps $\tilde{b}_j=b_j(\delta r_j^{-1}(x-x_j))$ satisfies $r_j\vert\nabla v-\nabla \tilde{b}_j\vert\leqslant\delta$ on each $B_{r_k}(x_k)\setminus\cup_{r_j\leqslant r_k}B_{r_j}(x_j)$.
    \end{enumerate}
\end{defn}
\begin{proof}[Proof of Theorem \ref{Quantitative bubble decomposition} and Theorem \ref{Quantitative energy identity} assuming \ref{vanishing energy in annular regions}]
Take $\delta'>0$ to be chosen very small. By Lemma \ref{m-2 symmetry alternative} and Theorem \ref{annular region existence}, if $\delta''\leqslant\delta''(m,\Lambda,K_N,\delta)$, we can find a maximal annular region $\mathcal{A}_0=B_1(p)\setminus B_{\mathfrak{r}_x^0}(\mathcal{T}_0)$. Moreover, $\mathcal{T}_0$ can be maximally covered by balls $\lbrace B_{r_{c,1}^i}(x_{c,1}^i)\rbrace$ and $\lbrace B_{r_{b,1}^i}(x_{b,1}^i)\rbrace$ such that $r_{b,1}^i=\delta^{-1/2}\mathfrak{r}_{x_{b,1}^i}^0,\sum r_{c,1}^{m-2}\leqslant\delta,\bar{\Theta}(x_{b,1},2r_{b,1};L)\leqslant\delta',$ and $\lambda r_{b,1}\frac{\p}{\p r}\bar{\Theta}(x,\lambda r_{b,1})\geqslant10^{-2}\delta$ for all $x\in B_{C(m,K_N,\Lambda)\delta)r_{b,1}}(x_{b,1}+L)\cap(x_{b,1}+L^{\perp}).$ Here $\lambda$ is the same constant as in Theorem \ref{bubble region existence} and \ref{annular region existence}. Hence, for each $B_{r_{b,1}}(x_{b,1})$, we can find a bubble region $\mathcal{B}_{b,1}\subset B_{r_{b,1}}(x_{b,1})$.\\
Now, let us prove our theorem inductively. Suppose, in the $k$-th step we have already written 
$$B_1(p)\subset\mathcal{A}_0\cup\bigcup_{i=1}^k\left(\bigcup\mathcal{A}_{a,i}\cup\bigcup\mathcal{B}_{b,i}\cup\bigcup B_{r_{c,i}}(x_{c,i})\right),$$where 
\begin{enumerate}
    \item $\mathcal{A}_{a,i}=B_{r_{a,i}}(x_{a,i})\setminus\overline{B_{\mathfrak{r}_x^{a,i}}(\mathcal{T}_{a,i})}$ are maximal $\delta$-annular regions from Theorem \ref{annular region existence};
    \item $\mathcal{B}_{b,i}=B_{r_{b.i}}(x_{b,i})\setminus\cup B_{r_{a,i+1}}(x_{a,i+1}'+L)$ are $\delta$-bubble regions given by Theorem \ref{bubble region existence}.
    \item $$r_{a,i}^{2-m}\int_{B_{r_{a,i}}(x_{a,i})}e_{\epsilon}(u_{\epsilon})\leqslant\Lambda-\varepsilon_0(i-1)/2,r_{b,i}^{2-m}\int_{B_{r_{b,i}}(x_{b,i})}e_{\epsilon}(u_{\epsilon})\leqslant\Lambda-\varepsilon_0(i-1)/2;$$
    \item $\sum r_{c,i}^{m-2}\leqslant C(m,\Lambda)\delta$ and $\sum r_{a,i}^{m-2}+r_{b,i}^{m-2}\leqslant C(m,\Lambda)$.
\end{enumerate}
By Theorem \ref{annular region existence}, we can cover $\cup\mathcal{T}_{a,i}$ by $\lbrace B_{r_{b,i+1}}(x_{b,i+1})\rbrace\cup\lbrace B_{r_{c,i+1}}(x_{c,i+1})\rbrace$ with $r_{b,i+1}=\delta^{-1/2}\mathfrak{r}^{a,i}_{b,i+1},\sum r_{c,i+1}^{m-2}\leqslant \delta\sum r_{a,i}^{m-2}\leqslant C(m,\Lambda)\delta,\bar{\Theta}(x_{b,i+1};2r_{b,i+1};L)\leqslant\delta'$ and $$\lambda r_{b,i+1}\frac{\p}{\p r}\bar{\Theta}(x,\lambda r_{b,i+1})\geqslant10^{-2}\delta\textup{ for all }x\in B_{C(m,K_N,\Lambda)\delta r_{b,i+1}}(x_{b,i+1}+L)\cap(x_{b,i+1}+L^{\perp}).$$Hence, by Theorem \ref{bubble region existence}, we can find further $\delta$-bubble region $\mathcal{B}_{b,i+1}=B_{r_{b,i+1}}(x_{b,i+1})\setminus\cup B_{r_{a,i+2}}(x_{a,i+2}'+L)$. If $\delta'$ is small enough, we have $$r_{b,i+1}^{2-m}\int_{B_{r_{b,i}}(x_{b,i+1})}e_{\epsilon}(u_{\epsilon})\leqslant r_{a,i}^{2-m}\int_{B_{r_{a,i}}(x_{a,i})}e_{\epsilon}(u_{\epsilon}).$$The volume estimate $\sum r_{b,i+1}^{2-m}\leqslant C\sum r_{a,i}^{2-m}$ follows directly from Definition \ref{annular regions} (a1).\par
For the $\delta$-bubble regions $\mathcal{B}_{b,i}=B_{r_{b.i}}(x_{b,i})\setminus\cup B_{r_{a,i+1}}(x_{a,i+1}'+L)$, we can maximally cover each $B_{r_{a,i+1}}(x_{a,i+1}'+L)$ by balls $B_{r_{a,i+1}}(x_{a,i+1})$. By definition \ref{bubble regions} and Theorem \ref{annular region existence}, we can find maximal $\delta$-annular regions $\mathcal{A}_{a,i+1}\subset B_{r_{a,i+1}}(x_{a,i+1})$. This covering automatically satisfy the volume estimate $\sum r_{a,i+1}^{2-m}\leqslant C\sum r_{b,i}^{2-m}\leqslant C(m,\Lambda)$. Moreover, those subcoverings satisfies $$r_{a,i+1}^{2-m}\int_{B_{r_{a,i+1}}(x_{a,i+1})}e_{\epsilon}(u_{\epsilon})\leqslant r_{b,i}^{2-m}\int_{B_{r_{b,i}}(x_{b,i})}e_{\epsilon}(u_{\epsilon})-\varepsilon_0.$$
It is clear that $B_{r_{c,i+1}}(x_{c,i+1}),\mathcal{B}_{r_{b,i+1}},\mathcal{A}_{a,i+1}$ constructed above still satisfy the inductive hypothesis and we can proceed our induction. By assumption (3), there will be at most $4\Lambda/\varepsilon_0$ steps in our construction. After $4\Lambda/\varepsilon_0$ steps, all $B_{r_{b,i}}$ will become $\varepsilon_0$-regular regions and no more annular region will appear. It is not hard to see that Theorem \ref{Quantitative bubble decomposition} has been proven.\par
Let us construct $\mathcal{G}_{\sigma}$ in Theorem \ref{Quantitative energy identity}. First consider $$\mathcal{G}_{\sigma}^1=\lbrace y\in p+L\cap B_1:y+L^{\perp}\textup{ intersect }r_c\textup{-balls}\rbrace.$$
Then $\Hau^{m-2}(\mathcal{G}^1_{\sigma})\leqslant\delta$. Also, by Theorem \ref{vanishing energy in annular regions}, if $\delta,\epsilon$ are small and $R$ is large, than 
$$r_{a,i}^{2-m}\int_{(\mathcal{A}_{a,i})_{\sigma}}e_{\epsilon}(u_{\epsilon})\leqslant\sigma^2,(\mathcal{A}_{a,i})_{\sigma}=B_{2r_{a,i}}(x_{a,i})\setminus\overline{B_{\sigma\mathfrak{r}_x^{a,i}}(\mathcal{T}_{a,i})},$$
and 
$$\int_{\mathcal{T}_{a_i}\cap B_{r_{a,i}}(x_{a,i})}(\Theta(x,r_{a,i})-\Theta(x,\mathfrak{r}_x^{a,i}))\dif x\leqslant\sigma^2.$$
Hence we define $$\bar{\mathcal{G}}_{\sigma}^{a,i}=\left\lbrace x\in\mathcal{T}_{a_i}\cap B_{r_{a,i}}(x_{a,i}):\int_{x+L^{\perp}\cap \left(B_{2r_{a,i}}\setminus B_{\mathfrak{r}_x^{a,i}}\right)}e_{\epsilon}(u_{\epsilon})\geqslant\sigma\textup{ or }\Theta(x,r_{a,i})-\Theta(x,\mathfrak{r}_x^{a,i})\geqslant\sigma\right\rbrace.$$
Then $\Hau^{m-2}(\bar{\mathcal{G}}_{\sigma}^{a,i})\leqslant\sigma$ due to previous two inequalities. Let $\mathcal{G}^2_{\sigma}$ be the projection of $\cup_{\textup{ all }a,i}\bar{\mathcal{G}}_{\sigma}^{a,i}$ on $p+L$. Then we must have $\Hau^{m-2}(\mathcal{G}_{\sigma}^2)\leqslant C(m,K_N,\Lambda)\sigma$ because there are at most $2\Lambda/\varepsilon_0$ annular regions in our construction.\par
Now, set $\mathcal{G}_{\sigma}=\mathcal{G}_{\sigma}^1\cup\mathcal{G}_{\sigma}^2$, we have $\Hau^{m-2}(\mathcal{G}_{\sigma})\leqslant C\sigma$. Let us verify on $(p+L\cap B_1)\setminus\mathcal{G}_{\sigma}$, Theorem \ref{Quantitative energy identity} is true. Theorem \ref{Quantitative energy identity} (1) (2) are automatically true due to our construction and the definitions of bubble regions and annular regions. Fix $y\in (p+L\cap B_1)\setminus\mathcal{G}_{\sigma}$, let us verify (3) of Theorem \ref{Quantitative energy identity}. Recall that we have written
$$y+L^{\perp}\cap B_1\subset(\mathcal{A}_0\cap (y+L^{\perp}))\cup\bigcup(\mathcal{A}_{a,i'}\cap (y+L^{\perp}))\cup\bigcup(\mathcal{B}_{b,i'}\cap (y+L^{\perp})),$$where $a,i'$ and $b,i'$ denote those index of which $\mathcal{A}_{a,i}$ and $\mathcal{B}_{b,i}$ intersect $y+L^{\perp}$. Again, let us prove our theorem inductively. First, by definition of $\bar{\mathcal{G}}_{\sigma}^{0}$, we have 
$$\left\vert\Theta(y,1)-\Theta(\mathfrak{t}_{\mathcal{T}_0}(y),\mathfrak{r}_y^0)\right\vert\leqslant\sigma,\mathfrak{r}^0_y=\mathfrak{r}^0_{\mathfrak{t}_{\mathcal{T}_0}(y)}.$$
The next region to intersect is $\mathcal{B}_{b,1'}=B_{r_{b,1'}}\setminus\cup B_{r_{a,2'}}(x_{a,2'}')$. Here $r_{b,1'}\geqslant\delta^{-1/2}\mathfrak{r}_{y}/2.$
If $\delta$ is chosen small enough, by Theorem \ref{bubble region properties},
$$\left\vert\Theta(\mathfrak{t}_{\mathcal{T}_0}(y),\mathfrak{r}_{y})-\omega_{m-2}\int_{(\mathcal{B}_{b,1'})_y}e_{\epsilon}(u_{\epsilon})-\sum\Theta(x_{a,2'}',r_{a,2'})\right\vert\leqslant\sigma.$$
We can repeat this process in order to further write each $\Theta(x_{a,2'},r_{a,2'})$ as the energy of bubble regions using the definition $\bar{\mathcal{G}}^{a,2'}_{\sigma}$. Then in next bubble regions the energy will be written as the energy on $y+L^{\perp}$ plus the energies on annular regions. Repeat this process at most $2\Lambda/\varepsilon_0$ times, there will be no more annular region show up in this process. Then Theorem \ref{Quantitative energy identity} (3) is verified.
\end{proof}
The energy identity Theorem \ref{main theorem} is a direct result of Theorem \ref{Quantitative energy identity}.
\begin{proof}[Proof of Theorem \ref{main theorem}]
    Let $u_{\epsilon_i}:\Omega\rightarrow\R^J$ be a sequence of solutions to (\ref{*}) with $E_{\epsilon_i}(u_{\epsilon_i})\leqslant\Lambda$ and $\epsilon_i\rightarrow0$. Passing to a subsequence, we may assume $u_{\epsilon_i}\rightharpoonup u$ a weak harmonic map to $N$ and $$e_{\epsilon_i}(u_{\epsilon_i})\dif x\rightarrow\frac{1}{2}\vert\nabla u\vert^2\dif x +\nu,$$where $\nu=\theta(x)\Hau^{m-2}\llcorner\Sigma$ is $(m-2)$-rectifiable defect measure. \par
    Let $\Sigma^*\subset\Sigma=$supp$\nu$ be the points $x$ such that the $(m-2)$-tangent plane $L_x$ of $\nu$ exists and is unique at $x$. Let $E_{\sigma,\delta}\subset\Sigma^*$ be all $x$ such that there exists at $x_i\rightarrow x$ with
    \begin{enumerate}
        \item There exists $\delta$-bubble region $\mathcal{B}_{i,a}\subset B_{r_{i,a}}(x_{i,a})$ with $x_{i,a}\subset x_i+L_x^{\perp}$;
        \item On $\mathcal{B}_i=\cup_a\mathcal{B}_{i,a}\cap(x_i+L^{\perp}_x)$ we have $\omega_{m-2}\left\vert\int_{\mathcal{B}_i}e_{\epsilon_i}(u_{\epsilon_i})-\theta(x)\right\vert\leqslant\sigma$.
    \end{enumerate}
    Let us show that $E_{\sigma,\delta}\subset\Sigma$ has full measure in $\Sigma$. First note that by $(m-2)$-rectifiability of $\nu$, $\Sigma^*$ has full measure in $\Sigma$. For each $x\in\Sigma^*,\delta'>0$, if $i$ is sufficiently large and $r\leqslant r(x,\delta')$, then $$\vert\Theta_{u_{\epsilon_i}}(x,r)-\theta(x)\vert\leqslant\delta'\textup{ and }u_{\epsilon_i}\textup{ is }(m-2)-\textup{symmetric on } B_r(x).$$ We can find $\mathcal{G}_{i,\sigma}\subset L_x\cap B_r(x)$ given by quantitative energy identity \ref{Quantitative energy identity}. Let $\mathcal{G}_{\sigma}$ be the Hausdorff limit of $\mathcal{G}_{i,\sigma}$ as $i\rightarrow\infty$, then $\Hau^{m-2}(\mathcal{G}_{\sigma})\geqslant\Hau^{m-2}(B_r(x)\cap L_x)-\sigma$. Since $\mathcal{G}_{\sigma}\subset E_{\sigma,\delta}$, it turns out that each $x\in\Sigma^*$ is a density point of $E_{\sigma,\delta}$. Hence $E_{\sigma,\delta}$ itself has full measure in $\Sigma$.\par
    Hence, $E=\cap_{\sigma,\delta}E_{\sigma,\delta}$ also has full measure in $\Sigma$. By definition of $E$, for each $x\in E$, we can find $x_i\rightarrow x$ with $x_{i,a}\in x_i+L_x$, $\delta$-bubble region $\mathcal{B}_{i,a}\subset B_{r_{i,a}}(x_{i,a})$ and $\omega_{m-2}\int_{\mathcal{B}_i}e_{\epsilon_i}(u_{\epsilon_i})\rightarrow\theta(x)$, where $\mathcal{B}_i=\cup_a\mathcal{B}_{i,a}$. Moreover, by Theorem \ref{bubble region properties}, we can find at most $K(m,K_N,\Lambda)$ points $y_{i,a,b}\in\mathcal{B}_{i,a}$ and scales $s_{i,a,b}>0$ such that $\Theta(y_{i,a,b},s_{i,a,b})\geqslant\varepsilon_0$ and for each $\eta>0,S\geqslant S(m,K_N,\Lambda,\eta)$ we have $\int_{\mathcal{B}_i,a\setminus B_{Ss_{i,a,b}}(y_{i,a,b)}}e_{\epsilon_i}(u_{\epsilon_i})\leqslant\eta$. If we define bubble $b_{a,b}=\lim_i(u_{\epsilon_i})_{y_{i,a,b},s_{i,a,b}}$, which is invariant with respect to $L_x$, then
    $$\theta(x)=\frac{1}{2}\sum_{a,b}\int\vert b_{a,b}\vert^2,$$as desired.
\end{proof}
The main goal of the rest of this paper is to prove the small energy Theorem \ref{vanishing energy in annular regions}.

\section{Local best planes}\label{s:best planes}
In Theorem \ref{annular region existence}, we built the annular region which is maximal in the sense that it almost captures all the regions with small energy. However, in order to study the energy behavior in the annular region, that maximal one is not the best one to analyze. This is practically because there is no analytic condition in the maximal region. To handle this issue, in the following two sections we introduce new annular regions which approximate the energy behavior in some best sense.\par
\subsection{Local best planes and their estimates}
The first step is the construction of the local best planes. To be specific, fix an $\delta$-annular region $\mathcal{A}=B_2(p)\setminus B_{\mathfrak{r}_x}(\mathcal{T})$. We can extend $\mathfrak{r}_x$ to $B_2(p)$ by
$$\mathfrak{r}_x=\mathfrak{r}_{\mathfrak{t}_{\mathcal{T}}(\Pi_{p+L}(x))},x\in B_2(p).$$
\begin{defn}[{cf. \cite[Section 6]{naber2024energyidentitystationaryharmonic}}]
    Let $u_{\epsilon}:B_{10R}(p)\rightarrow\R^L$ be a solution to (\ref{*}) and $\mathcal{A}=B_2(p)\setminus\overline{B_{\mathfrak{r}_x}(\mathcal{T})}$  be an $\delta$-annular region. For $x\in B_2(p)$ and $r\geqslant2\max\lbrace\mathfrak{r}_x,\textup{d}(x,\mathcal{T})\rbrace$, define 
    $$\bar{\Theta}_{\mathcal{L}}(x,r)=\min_{L^{m-2}}\bar{\Theta}(x,r;L)=\min_{L^{m-2}}r^2\int\rho_r(y-x)(\vert\Pi_L\nabla u\vert^2+2F(u_{\epsilon})/\epsilon^2),$$
    $$\mathcal{L}_{x,r}=\arg\min_{L^{m-2}}\bar{\Theta}(x,r;L),$$ 
    where the minimum is take over all $(m-2)$-dimensional subspaces in $\R^m$.
\end{defn}
We will be interested in the orthogonal projection $\Pi_{x,r}=\Pi_{\mathcal{L}_{x,r}}:\R^m\rightarrow\R^m$ and its estimates. The results to be proved in this section are the following.
\begin{thm}[{cf. \cite[Theorem 6.1]{naber2024energyidentitystationaryharmonic}}]\label{best plane properties}
    Let $u_{\epsilon}:B_{10R}(p)\rightarrow\R^L$ be a solution to (\ref{*}) and $\mathcal{A}=B_2(p)\setminus\overline{B_{\mathfrak{r}_x}(\mathcal{T})}$ be an $\delta$-annular region. If $\delta\leqslant\delta(m,K_N,\Lambda,R)$, for $x\in B_2(p)$ and $r\geqslant2\max\lbrace\mathfrak{r}_x,\textup{d}(x,\mathcal{T})\rbrace$, the best approximating plane $\mathcal{L}_{x,r}$ satisfies
    \begin{enumerate}[(1)]
        \item $\mathcal{L}_{x,r}$ uniquely and quantitatively minimizes $\bar{\Theta}(x,r;L)$ among $(m-2)$-subspaces by
        $$\bar{\Theta}(x,r;L)\geqslant\bar{\Theta}(x,r;\mathcal{L}_{x,r})+C(m)\bar{\Theta}(x,r)\textup{d}_{Gr}(L,\mathcal{L}_{x,r})^2,$$
        where $\textup{d}_{\textup{Gr}}$ is the Grassmannian distance on Gr$(m,m-2)$.
        \item $\bar{\Theta}_{\mathcal{L}}(x,r)\leqslant C(m)\delta$ and $$r^2\int\rho_r(y-x)\vert\Pi_{x,r}^{\perp}\nabla u_{\epsilon}\vert^2\geqslant2\left(1-C(m)\sqrt{\frac{\delta}{\bar{\Theta}(x,r)}}\right)\bar
        \Theta(x,r);$$
        \item $\mathcal{L}_{x,r}$ is smooth on both $x$ and $r$. Moreover, we have the estimates of derivatives
        \begin{equation}
            \begin{aligned}
                &\left\Vert r\frac{\partial}{\p r}\Pi_{x,r}\right\Vert+\left\Vert r^2\frac{\p}{\p r}\Pi_{x,r}\right\Vert+\left\Vert r\nabla\Pi_{x,r}\right\Vert+\left\Vert r^2\nabla^2\Pi_{x,r}\right\Vert\\\leqslant &C(m)\sqrt{\frac{\bar{\Theta}_{\mathcal{L}}(x,2r)}{\bar{\Theta}(x,r)}}\leqslant C(m,\epsilon_0)\sqrt{\bar{\Theta}_{\mathcal{L}}(x,2r)}.
            \end{aligned}\nonumber
        \end{equation}
    \end{enumerate}
\end{thm}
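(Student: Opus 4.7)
The strategy is to recognize $\bar{\Theta}(x,r;L)$ as a quadratic function of the projector $\Pi_L$ and apply spectral analysis. Setting
\[
Q(x,r) = \int \rho_r(y-x)\sum_\alpha \nabla u_\epsilon^\alpha(y)\otimes\nabla u_\epsilon^\alpha(y)\,\textup{d}y,
\]
one has $\bar{\Theta}(x,r;L) = r^2\,\mathrm{tr}(\Pi_L Q(x,r)) + 2r^2\int\rho_r F(u_\epsilon)/\epsilon^2$, so $\mathcal{L}_{x,r}$ is precisely the span of the $m-2$ eigenvectors of $Q(x,r)$ corresponding to its smallest eigenvalues. The quantitative convexity in (1) reduces to producing a spectral gap between the $(m-2)$-th and $(m-1)$-th eigenvalues of $Q$. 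I would establish this in tandem with the upper bound in (2) as follows: since $r \geqslant 2\textup{d}(x,\mathcal{T})$ and $\mathcal{T}$ is a near-flat $(m-2)$-graph by (a1), one can find $m-2$ points $x_0,\dots,x_{m-3}\in\mathcal{T}\cap B_{C(m)r}(x)$ that are $\alpha(m)$-linearly independent at scale $r$. Applying the quantitative cone splitting Proposition \ref{quantitative cone splitting} at $x$ using these points, and bounding each $r\frac{\p}{\p r}\bar{\Theta}(x_i,1.4r)$ by $\delta$ via (a2), yields $\bar{\Theta}_{\mathcal{L}}(x,r)\leqslant \bar{\Theta}(x,r;L_{\mathcal{A}})\leqslant C(m)\delta$. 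Combined with the lower bound $\bar{\Theta}(x,r)\geqslant\varepsilon_0/2$ (from (a3) and monotonicity), the top two eigenvalues of $r^2 Q$ sum to at least $2\bar{\Theta}(x,r) - C(m)\delta$, producing a spectral gap of order $\bar{\Theta}(x,r)/r^2$.

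With the spectral gap in hand, the second-order variational expansion of $\mathrm{tr}(\Pi_L Q)$ about $\mathcal{L}_{x,r}$ yields the strict-convexity bound in (1), and uniqueness of the minimizer is an immediate corollary. The orthogonal lower bound in (2) is then algebraic: the identity
\[
r^2\int\rho_r|\Pi_{x,r}^\perp\nabla u_\epsilon|^2 = 2\bar{\Theta}(x,r) - \bar{\Theta}_{\mathcal{L}}(x,r)
\]
combined with $\bar{\Theta}_{\mathcal{L}}\leqslant C(m)\delta$ gives the claimed inequality (the square-root form in the statement is a weaker rewriting of the sharper linear ratio $\bar{\Theta}_{\mathcal{L}}/\bar{\Theta}$).

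For (3), the quantitative spectral gap makes $\mathcal{L}_{x,r}$ a smooth eigenspace of the smooth symmetric matrix $Q(x,r)$, so $\Pi_{x,r}$ is smooth in $(x,r)$ on the relevant region by analytic perturbation theory. To obtain the derivative estimates, I differentiate the Euler--Lagrange equation
\[
\int\rho_r(y-x)\langle\nabla_v u_\epsilon,\nabla_w u_\epsilon\rangle\,\textup{d}y = 0,\qquad v\in\mathcal{L}_{x,r},\ w\in\mathcal{L}_{x,r}^\perp,
\]
with respect to $r$ and the components of $x$. The variation of $\Pi_{x,r}$ encodes an antisymmetric infinitesimal rotation $A:\mathcal{L}_{x,r}\to\mathcal{L}_{x,r}^\perp$, and the left-hand side of the linearized equation is bounded below by the spectral gap times $|A|$. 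The forcing on the right arises from differentiating $\rho_r$, producing integrals of the form $\int\dot\rho_r(y-x)\cdot(\text{first-order factor})\cdot \langle\nabla_v u_\epsilon,\nabla_w u_\epsilon\rangle$; Cauchy--Schwarz together with the doubling-type comparisons in Lemma \ref{properties of heat mollifier} splits this into a product $\sqrt{\bar{\Theta}_{\mathcal{L}}(x,2r)}\cdot\sqrt{\bar{\Theta}(x,r)}$, the first factor coming from $v\in\mathcal{L}_{x,r}$ and the second from $|\nabla_w u|\leqslant|\nabla u|$. Dividing by the gap $\gtrsim \bar{\Theta}(x,r)$ gives exactly the stated $\sqrt{\bar{\Theta}_{\mathcal{L}}(x,2r)/\bar{\Theta}(x,r)}$; the Hessian and $\p_r^2$ bounds follow from one further differentiation and the same Cauchy--Schwarz.

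The main obstacle I expect is maintaining the quantitative spectral gap uniformly across the annular region and tracking it cleanly through one differentiation to obtain the sharp square-root rate in (3). The gap is what powers both the convexity in (1) and the derivative estimates, and its control relies on the cone-splitting hypothesis of $\alpha$-independence holding uniformly in $(x,r)$ for $r\geqslant 2\max\{\mathfrak{r}_x,\textup{d}(x,\mathcal{T})\}$. The graphicality bound (a1) on $\mathcal{T}$ is precisely what provides this uniform independence, so the entire argument is sensitive to the small-curvature structure built into the annular region.
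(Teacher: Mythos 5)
Your overall scheme mirrors the paper's: introduce the energy tensor $Q(x,r)$, identify $\mathcal{L}_{x,r}$ with the span of its $m-2$ smallest eigenvectors, establish a spectral gap via cone splitting to get uniqueness and convexity, and then differentiate the Euler--Lagrange orthogonality relation against the gap to get the derivative bounds in (3). That is the paper's strategy, split there into Lemma \ref{eigenvalue best plane}, Lemma \ref{uniqueness of best plane}, Corollary 4.6, and Lemma \ref{distance of best planes}.

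However, there is a real gap in your spectral-gap argument. From cone splitting you obtain $\sum_{j\geqslant3}\lambda_j \leqslant C(m)\delta$, and hence $\lambda_1+\lambda_2 \geqslant \textup{tr}\,Q - C(m)\delta \geqslant 2\bar{\Theta}(x,r) - C(m)\delta$. But this does \emph{not} produce a gap between $\lambda_2$ and $\lambda_3$: a priori one could have $\lambda_1$ nearly equal to $\textup{tr}\,Q$ and $\lambda_2$ close to $0$, which is consistent with every inequality you have written. What is missing is an \emph{upper} bound on $\lambda_1$, e.g.\ $\lambda_1 \leqslant \bar{\Theta}(x,r) + C(m)\sqrt{\delta\,\bar{\Theta}(x,r)}$, from which the lower bound $\lambda_2 \geqslant \bar{\Theta}(x,r) - C(m)\sqrt{\delta\,\bar{\Theta}(x,r)}$ follows. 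In the paper this upper bound is obtained by a separate computation: testing the stationary equation (\ref{**}) with the vector field $\xi = r^2\rho_r(y-x)\langle y-x_0,e\rangle e$ for $e\in L^\perp$ shows that $r^2\int\rho_r(y-x)|\nabla_e u_\epsilon|^2$ differs from $\bar{\Theta}(x,r)$ by at most $C(m)\sqrt{\delta\,\bar{\Theta}(x,r)}$, i.e.\ $Q$ is nearly isotropic on $L^\perp$. This near-isotropy is the ingredient that pins down $\lambda_1$ and $\lambda_2$ individually, and it cannot be derived from the trace identity and the bound on $\sum_{j\geqslant3}\lambda_j$ alone. Without it, the convexity inequality in (1) and the gap used in (3) are unproven. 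Incidentally, the square-root rate appearing in parts (1)--(3) traces precisely to the Cauchy--Schwarz step in this near-isotropy estimate, not to any artificial weakening of a linear bound; for part (2) you are correct that the sum $\lambda_1+\lambda_2 = r^2\int\rho_r|\Pi_{x,r}^\perp\nabla u_\epsilon|^2$ admits a linear (in $\delta$) control, but (1) and (3) genuinely need the individual eigenvalue pinching.

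A small indexing slip: to run cone splitting toward an $(m-2)$-plane you need $m-1$ points $x_0,\dots,x_{m-2}$ (see Definition \ref{effective independent} and Proposition \ref{Top dimension quantitative cone splitting}), not $m-2$ points $x_0,\dots,x_{m-3}$; the graphicality hypothesis (a1) supplies these without difficulty, so this is a notational fix, not a structural problem.
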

Through this section, we will repeatedly use the following fact
\begin{lem}\label{energy non doubling}
    Let $u_{\epsilon}:B_{10R}(p)\rightarrow\R^L$ be a solution to (\ref{*}) with $R^{2-m}\int_{B_{10R}(p)}e_{\epsilon}(u_{\epsilon})\leqslant\Lambda$ and $\mathcal{A}=B_2(p)\setminus\overline{B_{\mathfrak{r}_x}(\mathcal{T})}$  be a $\delta$-annular region. If $\delta\leqslant\delta(m,K_N,\Lambda,R)$, for $x\in B_2(p)$ and $r\geqslant2\max\lbrace\mathfrak{r}_x,\textup{d}(x,\mathcal{T})\rbrace$ we have
    $$\bar{\Theta}(x,r)\geqslant c(m)\bar{\Theta}(x,2r)\geqslant c(m)\varepsilon_0.$$
\end{lem}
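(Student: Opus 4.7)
The plan is to reduce everything to a nearby basepoint on $\mathcal{T}$, where the annular region axioms (a2) and (a3) give the uniform lower bound and the slow growth of $\bar{\Theta}$ directly. Pick $\bar{x}\in\mathcal{T}$ with $\vert x-\bar{x}\vert=\dist(x,\mathcal{T})\leqslant r/2$ (such $\bar{x}$ exists because $\mathcal{T}$ is a closed submanifold of $B_2(p)$). Because $\vert\nabla\mathfrak{r}\vert\leqslant\delta$ along $\mathcal{T}$ and $\mathfrak{r}_x\leqslant r/2$, for $\delta$ small we also have $\mathfrak{r}_{\bar{x}}\leqslant \mathfrak{r}_x+\delta\cdot r/2\leqslant r$, so all scales we will use at $\bar{x}$ lie in $[\mathfrak{r}_{\bar{x}},2]$ (after possibly shrinking the domain to $r\leqslant 1$, which is the only case of interest; when $r$ is of order $1$ the statement follows trivially from $\bar{\Theta}\leqslant\Lambda$ together with the uniform lower bound we establish).

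Next, I would set up the two heat-mollifier comparisons. Since $B_{r/2}(\bar{x})\subset B_r(x)$, Lemma \ref{properties of heat mollifier}(4) gives $\rho_{r/2}(y-\bar{x})\leqslant C(m)\rho_r(y-x)$, hence
\begin{equation}
\bar{\Theta}(\bar{x},r/2)\leqslant C(m)\,\bar{\Theta}(x,r).\nonumber
\end{equation}
Similarly, $B_{2r}(x)\subset B_{5r/2}(\bar{x})$ with $5r/2\leqslant 10\cdot 2r$, so Lemma \ref{properties of heat mollifier}(4) gives $\rho_{2r}(y-x)\leqslant C(m)\rho_{5r/2}(y-\bar{x})$, hence
\begin{equation}
\bar{\Theta}(x,2r)\leqslant C(m)\,\bar{\Theta}(\bar{x},5r/2).\nonumber
\end{equation}

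Now I would apply the annular region axioms at $\bar{x}\in\mathcal{T}$. By the monotonicity formula (\ref{monotonicity}) together with axiom (a3),
\begin{equation}
\bar{\Theta}(\bar{x},r/2)\geqslant \bar{\Theta}(\bar{x},\mathfrak{r}_{\bar{x}})\geqslant \varepsilon_0(m,K_N)/2.\nonumber
\end{equation}
Combined with the first comparison this already yields $\bar{\Theta}(x,r)\geqslant c(m)\varepsilon_0$. On the other hand, axiom (a2) gives
\begin{equation}
\bar{\Theta}(\bar{x},5r/2)-\bar{\Theta}(\bar{x},r/2)=\int_{r/2}^{5r/2}\frac{1}{s}\Big(s\tfrac{\p}{\p s}\bar{\Theta}(\bar{x},s)\Big)\dif s\leqslant \delta\log 5\leqslant C(m)\delta.\nonumber
\end{equation}
Chaining the three inequalities together,
\begin{equation}
\bar{\Theta}(x,2r)\leqslant C(m)\bar{\Theta}(\bar{x},5r/2)\leqslant C(m)\bar{\Theta}(\bar{x},r/2)+C(m)\delta\leqslant C(m)\bar{\Theta}(x,r)+C(m)\delta.\nonumber
\end{equation}
Since $\bar{\Theta}(x,r)\geqslant c\varepsilon_0$, for $\delta\leqslant\delta(m,K_N,\Lambda,R)$ the additive $\delta$ term is absorbed, giving $\bar{\Theta}(x,r)\geqslant c(m)\bar{\Theta}(x,2r)$, which finishes the proof.

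The only non-routine point is the bookkeeping at the boundary of the allowed range, in particular checking that $\mathfrak{r}_{\bar{x}}$ and $\mathfrak{r}_x$ are comparable so that axiom (a2) really applies on the interval of scales we invoke; this is easy from $\vert\nabla\mathfrak{r}\vert\leqslant\delta$ on $\mathcal{T}$ together with the extension formula $\mathfrak{r}_x=\mathfrak{r}_{\mathfrak{t}_{\mathcal{T}}(\Pi_{p+L}(x))}$. Once the scales are lined up the argument is essentially forced by Lemma \ref{properties of heat mollifier}(4), monotonicity, and axioms (a2)--(a3).
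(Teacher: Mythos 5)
Your approach is the same as the paper's: reduce to a nearby basepoint on $\mathcal{T}$ where the annular region axioms (a2), (a3) give the slow growth and the lower bound directly, transport $\bar{\Theta}$ between the two basepoints with Lemma \ref{properties of heat mollifier}(4), and close the loop with monotonicity. The paper's own proof is extremely terse (it just cites (a2), (a3), monotonicity, and Lemma \ref{properties of heat mollifier}), and your write-up supplies exactly the intended details.

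There is, however, one slip that you should tighten. You establish only $\mathfrak{r}_{\bar{x}}\leqslant\mathfrak{r}_x+\delta\,r/2\leqslant r$, but the two places where you actually invoke (a2) and (a3) at $\bar{x}$ require the stronger inequality $\mathfrak{r}_{\bar{x}}\leqslant r/2$: you need $\bar{\Theta}(\bar{x},r/2)\geqslant\bar{\Theta}(\bar{x},\mathfrak{r}_{\bar{x}})$ from monotonicity, and you need (a2) on the whole interval $[r/2,5r/2]\subset[\mathfrak{r}_{\bar{x}},2]$. Both fail when $\mathfrak{r}_x$ is within a factor $(1+\delta)$ of $r/2$, which the hypothesis $r\geqslant 2\mathfrak{r}_x$ does permit. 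Replacing your nearest-point $\bar{x}$ by the column projection $\bar{x}'=p+\Pi_L(x-p)+\mathfrak{t}_{\mathcal{T}}(\Pi_L(x-p))$ gives $\mathfrak{r}_{\bar{x}'}=\mathfrak{r}_x\leqslant r/2$ exactly (this is precisely how the paper extends $\mathfrak{r}$ off $\mathcal{T}$), but then $|x-\bar{x}'|\leqslant(1+\delta)\,\dist(x,\mathcal{T})$ may slightly exceed $r/2$, so the nesting $B_{r/2}(\bar{x}')\subset B_r(x)$ required by Lemma \ref{properties of heat mollifier}(4) as stated also fails by $O(\delta)r$. Either way there is an $O(\delta)$ mismatch that the cited tools do not literally close. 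In practice this is harmless: the Gaussian comparison underlying Lemma \ref{properties of heat mollifier}(4) has room to spare (the kernel $\rho_r$ spreads over $B_{r\sqrt{2(R+2)}}$, far larger than $B_r$, so translating the center by an additional $O(\delta r)$ or comparing to a scale $(1+O(\delta))r$ only costs a factor $e^{O(\delta)}$ in the Gaussian range and is safely inside the cutoff). But since you are aiming for a complete proof, you should either run the argument at a scale such as $0.9r$ at $\bar{x}'$ and then use (a2) once more to pass from $0.9r$ to $r/2$, or explicitly note that Lemma \ref{properties of heat mollifier}(4) holds with the nesting condition relaxed by a factor $1+O(\delta)$ at the expense of the dimensional constant. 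Everything else — the two ball comparisons, the use of monotonicity, the $\delta\log 5$ estimate from (a2), and the final absorption of the $C(m)\delta$ term — is correct and matches the paper.
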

\begin{proof}
    The fact $\bar{\Theta}(x,r)\geqslant c(m)\varepsilon_0$ follows from Definition \ref{annular regions} (a3) and monotonicity. The fact $\bar{\Theta}(x,2r)\leqslant C(m)\bar{\Theta}(x,r)$ follows from Proposition \ref{properties of heat mollifier} and Definition \ref{annular regions} (a2) that $s\frac{\p}{\p r}\bar{\Theta}(x,s)\leqslant C(m)\delta$ for $4\geqslant s\geqslant r$.
\end{proof}
When we minimize $\bar{\Theta}(x,r;L)$, only the term $r^2\int\rho_r(y-x)\vert\Pi_L\nabla u_{\epsilon}\vert^2$ plays its role. Hence to study $\mathcal{L}_{x,r}$, let us consider the heat mollified energy tensor $$Q(x,r)[v,w]=r^2\int\rho_r(y-x)\langle\nabla_v u_{\epsilon},\nabla_w u_{\epsilon}\rangle.$$ We can view $Q$ as an $m\times m$ matrix with entries depending on $x$ and $r$. Then we have the following estimate of the eigenvalues of $Q$.
\begin{lem}[{cf. \cite[Lemma 6.4]{naber2024energyidentitystationaryharmonic}}]\label{eigenvalue best plane}
     Let $u_{\epsilon}:B_{10R}(p)\rightarrow\R^L$ be a solution to (\ref{*}) with $R^{2-m}\int_{B_{10R}(p)}e_{\epsilon}(u_{\epsilon})\leqslant\Lambda$ and $\mathcal{A}=B_2(p)\setminus\overline{B_{\mathfrak{r}_x}(\mathcal{T})}$  be a $\delta$-annular region. Let $x\in B_2(p)$ and $r\geqslant2\textup{max}\lbrace\mathfrak{r}_x,\dist(x,\mathcal{T})\rbrace$. Let $\lambda_1(x,r)\geqslant\cdots\geqslant\lambda_m(x,r)$ be the eigenvalues of $Q(x,r)$ and $e_1(x,r),\dots,e_m(x,r)$ be the corresponding eigenvectors. Then
     \begin{enumerate}[(1)]
         \item $$1+C(m)\sqrt{\frac{\delta}{\bar{\Theta}(x,r)}}\geqslant\frac{\lambda_1(x,r)}{\bar{\Theta}(x,r)}\geqslant\frac{\lambda_2(x,r)}{\bar{\Theta}(x,r)}\geqslant1-C(m)\sqrt{\frac{\delta}{\bar{\Theta}(x,r)}},$$
         and 
         $$\lambda_j(x,r)\leqslant C(m)\delta\textup{ for }j\geqslant3.$$
         \item The unique best subspace $\mathcal{L}_{x,r}$ is characterized by $\textup{span}\lbrace e_3,\dots,e_m\rbrace$.
     \end{enumerate}
\end{lem}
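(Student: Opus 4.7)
The plan is to work directly with the positive semi-definite matrix $Q(x,r)$ and combine the variational characterization of eigenvalues with quantitative cone splitting. First, observe that $\bar{\Theta}(x,r;L) = \mathrm{tr}(Q|_L) + 2r^2\int\rho_r(y-x)F(u_\epsilon)/\epsilon^2$, so only the first term depends on $L$. Since $Q$ is positive semi-definite, the min-max principle identifies $\arg\min_L \mathrm{tr}(Q|_L)$ over $(m-2)$-subspaces as the span of eigenvectors corresponding to the smallest $m-2$ eigenvalues, namely $\mathcal{L}_{x,r} = \mathrm{span}\{e_3,\ldots,e_m\}$, with minimum value $\sum_{j\geqslant 3}\lambda_j$. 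This immediately yields assertion (2), and reduces the quantitative uniqueness in (1) to producing a spectral gap $\lambda_2 - \lambda_3 \geqslant c(m)\bar{\Theta}(x,r)$, which will follow once the individual eigenvalue bounds are in place.

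Next I would bound the bottom $m-2$ eigenvalues by cone splitting. By axiom (a1), $\mathcal{T}$ is a $C^1$-graph over $L_\mathcal{A}+p$ with $\delta$-small derivatives, so at scale $\max\{r,2\,\mathrm{dist}(x,\mathcal{T})\}$ one can select $m-1$ points $x_0,\ldots,x_{m-2}\in\mathcal{T}$ that are $\alpha(m)$-linearly independent in the sense of Definition \ref{effective independent} for some $\alpha(m)>0$ independent of $\delta$; at each such point (a2) gives $r\frac{\partial}{\partial r}\bar{\Theta}(x_i,1.4r)\leqslant \delta$. Proposition \ref{quantitative cone splitting} then yields $\bar{\Theta}(x,r;L_\mathcal{A})\leqslant C(m)\delta$, hence $\mathrm{tr}(Q|_{L_\mathcal{A}})\leqslant C\delta$. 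By positive semi-definiteness every eigenvalue of the restriction is $\leqslant C\delta$, and by min-max $\lambda_j(Q)\leqslant C(m)\delta$ for $j\geqslant 3$. Combined with the trace identity $\mathrm{tr}(Q) = 2\bar{\Theta}(x,r) - 2r^2\int\rho_r F/\epsilon^2$ and the bound $2r^2\int\rho_r F/\epsilon^2\leqslant\bar{\Theta}(x,r;L_\mathcal{A})\leqslant C\delta$, this pins $\lambda_1+\lambda_2$ into the interval $[2\bar{\Theta}(x,r)-C\delta,2\bar{\Theta}(x,r)]$.

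The hard part is controlling the gap $\lambda_1-\lambda_2\leqslant C\sqrt{\bar{\Theta}(x,r)\,\delta}$, which distributes $\lambda_1+\lambda_2$ evenly and makes both top eigenvalues approximately $\bar{\Theta}(x,r)$. Decompose $Q$ into the principal block $A = Q|_{L_\mathcal{A}^\perp}$, the block $C = Q|_{L_\mathcal{A}}$, and the off-block $B$. Cauchy-Schwarz applied entrywise gives $|B_{vw}|^2\leqslant Q[v,v]\,Q[w,w]$, which together with the smallness of $\mathrm{tr}(C)$ established above yields $\|B\|\leqslant C\sqrt{\bar{\Theta}(x,r)\,\delta}$. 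By Weyl's inequality the spectral gap of $Q$ then differs from the gap $\mu_1(A)-\mu_2(A)$ of the $2\times 2$ block by at most $C\sqrt{\bar{\Theta}(x,r)\,\delta}$, and one checks that $\mu_2(A)\geqslant\mu_1(C)$ so that the top two eigenvalues of $\mathrm{diag}(A,C)$ come from $A$. In the complex coordinate $z = y_{n-1}+iy_n$ on $L_\mathcal{A}^\perp$, a direct computation identifies $\mu_1(A)-\mu_2(A) = 4|r^2\int\rho_r(y-x)\,\partial_z u_\epsilon\cdot\partial_z u_\epsilon|$, a heat-mollified Hopf differential. Following the scheme of \cite[Lemma 6.4]{naber2024energyidentitystationaryharmonic}, the plan is to apply the stationary identity (\ref{**}) to the complex test vector field $\xi = \rho_r(y-x)\,\bar z\,(\partial_{n-1}-i\partial_n)$ (through its real and imaginary parts); the resulting identity expresses the Hopf-type integral as a sum of four contributions controlled respectively by (i) the radial monotonicity $r\partial_r\bar{\Theta}$ from (a2), (ii) pure $L_\mathcal{A}$ energy from the previous step, (iii) a mixed $L\times L^\perp$ term estimated by Cauchy-Schwarz and the previous step, and (iv) a penalty contribution from $F(u_\epsilon)/\epsilon^2$ dominated by $\bar{\Theta}(x,r;L_\mathcal{A})\leqslant C\delta$. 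The $F/\epsilon^2$ terms are the principal modification relative to the harmonic map setting, but they fit the scheme naturally because of their grouping with $\mathrm{tr}(Q|_L)$ in the definition of $\bar{\Theta}(x,r;L)$; summing these four contributions with one final Cauchy-Schwarz yields the desired $C\sqrt{\bar{\Theta}(x,r)\,\delta}$ bound on the gap, completing the proof.
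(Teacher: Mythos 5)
Your overall skeleton matches the paper's: establish $\lambda_j\leqslant C(m)\delta$ for $j\geqslant 3$ by producing $(m-1)$ linearly independent points on $\mathcal{T}\cap B_r(x)$ and invoking cone splitting, identify $\mathcal{L}_{x,r}$ with the bottom eigenspace via the variational characterization of the partial trace, and then use the trace identity
$\operatorname{tr}Q = 2\bar\Theta(x,r)-2r^2\int\rho_r F(u_\epsilon)/\epsilon^2$ together with $2r^2\int\rho_r F(u_\epsilon)/\epsilon^2\leqslant\bar\Theta(x,r;L)\leqslant C\delta$ to pin $\lambda_1+\lambda_2$. Where you diverge significantly is in how you control the top two eigenvalues individually. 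The paper's actual route is much shorter: it takes the single real test vector field $\xi = r^2\rho_r(y-x)\langle y-x_0,e\rangle\, e$ in the stationary equation, for any unit $e\in L^{\perp}$ where $L$ is the $(m-2)$-plane spanned by the $m-1$ chosen points, and after one Cauchy--Schwarz against the cone-splitting bound obtains $\bigl|Q[e,e]-\bar\Theta(x,r)\bigr|\leqslant C\sqrt{\delta\bar\Theta(x,r)}$. Writing the top eigenvector as $e_1=\alpha e+\beta l$ with $l\in L$, $\alpha^2+\beta^2=1$, a pointwise Cauchy--Schwarz then yields $\lambda_1\leqslant Q[e,e]+Q[l,l]\leqslant\bar\Theta+C\sqrt{\delta\bar\Theta}$, and $\lambda_2\geqslant\bar\Theta-C\sqrt{\delta\bar\Theta}$ falls out of the trace bound. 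No block decomposition, Weyl inequality, or spectral-gap estimate is needed.

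Your Hopf-differential route would also give what you want if it can be closed, but it is longer and the decisive step is left sketchy. First, you do not actually need the Hopf differential: the estimate $\bigl|Q[e,e]-\bar\Theta\bigr|\leqslant C\sqrt{\delta\bar\Theta}$ for \emph{all} unit $e\in L^{\perp}$ already controls the off-diagonal entry by polarization (take $e=(e_1'+e_2')/\sqrt{2}$), which immediately bounds the gap of the $2\times 2$ block without any appeal to $\partial_z u\cdot\partial_z u$. So once you have the right test-field estimate, the block/Weyl/Hopf machinery adds nothing. Second, if you do insist on the complex test vector field $\xi=\rho_r(y-x)\,\bar z\,(\partial_{n-1}-i\partial_n)$, you still owe the reader a verification that the boundary terms in the resulting stationary identity, i.e.\ $\int e_\epsilon(u_\epsilon)\,\operatorname{div}\xi$ and the mixed $L\times L^{\perp}$ contributions, are dominated by $C\sqrt{\delta\bar\Theta}$; these involve $\dot\rho_r$ weighted against quantities like $\vert\Pi_{L^{\perp}}(y-x)\vert^2 e_\epsilon(u_\epsilon)$, which are \emph{not} directly controlled by $r\partial_r\bar\Theta$ and require the same cone-splitting input that the paper's simpler test field already packages cleanly. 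Finally, one small gap: you assert $\mu_2(A)\geqslant\mu_1(C)$ before the gap bound on $A$ is in hand; the order of inference should be to bound $\operatorname{tr}(C)$, then the gap of $A$, and only then conclude $\mu_2(A)>\mu_1(C)$ so that Weyl's interlacing applies to the correct pair of eigenvalues. None of this is fatal, but it makes your argument considerably heavier than the paper's two-line application of (\ref{**}) with a linear-in-$y$ test field.
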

\begin{proof}
    (2) is immediate. By definition of the annular region, we can find $x_0,\dots,x_{m-2}\in\mathcal{T}\cap B_r(x)$ that are $1/2$-linear independent at scale $r/2$ with $3r\frac{\dif}{\dif r}\bar{\Theta}(x_i,3r)\leqslant\delta,0\leqslant i\leqslant m-2$. Let $L$ be the subspace spanned by $\lbrace x_1-x_0,\dots,x_{m-2}-x_0\rbrace$, then by the quantitative cone splitting \ref{Top dimension quantitative cone splitting}, $$\sum_{j=3}^m\lambda_j(x,r)\leqslant\bar{\Theta}_{\mathcal{L}}(x,r)\leqslant\bar{\Theta}(x,r;L)\leqslant C(m)\bar{\Theta}(x_0,2r;L)\leqslant\sum_{i=0}^{m-2}r\frac{\dif}{\dif r}\bar{\Theta}(x_i,3r)\leqslant C(m)\delta.$$
    To finish the proof we need to estimate $\lambda_1$ and $\lambda_2$. To that end, for any $e\in L^{\perp}$, let us take $\xi=r^2\rho_r(y-x)\langle y-x_0,e\rangle e$ in the stationary equality (\ref{**}). This gives 
    \begin{equation}
        \begin{aligned}
            &\int e_{\epsilon}(u_{\epsilon})(\dot{\rho}_r(y-x)\langle y-x_0,e\rangle\langle y-x,e\rangle+r^2\rho_r(y-x))\\=&\int\dot{\rho}_r(y-x)\langle y-x_0,e\rangle\langle\nabla_{y-x}u_{\epsilon},\nabla_e u_{\epsilon}\rangle+r^2\rho_r(y-x)\vert\nabla_e u\vert^2.
        \end{aligned}\nonumber
    \end{equation}
    Hence
    \begin{equation}
        \begin{aligned}
            &\vert\bar{\Theta}(x,r)-\int r^2\rho_r(y-x)\vert\nabla_eu_{\epsilon}\vert^2\vert\\\leqslant&\left\vert\int \dot{\rho}_r(y-x)\langle y-x_0,e\rangle(e_{\epsilon}(u_{\epsilon})\langle y-x,e\rangle+\langle\nabla_{y-x}u_{\epsilon},\nabla_eu_{\epsilon}\rangle)\right\vert.
        \end{aligned}\label{eigen1}
    \end{equation}
    We can estimate the first term in (\ref{eigen1}) using Cauchy inequality, Lemma \ref{energy non doubling}, the quantitative cone splitting \ref{Top dimension quantitative cone splitting},
    \begin{equation}
        \begin{aligned}
            &\left\vert\int\dot{\rho}_r(y-x)\langle y-x_0,e\rangle\langle y-x,e\rangle e_{\epsilon}(u_{\epsilon})\right\vert^2\\ \leqslant&\left(\int-\dot{\rho}_r(y-x)\vert y-x\vert^2e_{\epsilon}(u_{\epsilon})\right)\left(\int-\dot{\rho}_r(y-x)\vert\Pi_{L^{\perp}}(y-x_0)\vert^2e_{\epsilon}(u_{\epsilon})\right)\\ \leqslant& C(m,R)\left(r^2\int\rho_{1.1r}(y-x)e_{\epsilon}(u_{\epsilon})\right)\\&\left(\int\rho_{2r}(y-x_0)(r^2\vert\Pi_L\nabla u_{\epsilon}\vert^2+r^2F(u_{\epsilon})/\epsilon^2+\vert\Pi_{L^{\perp}(y-x_0)}\vert^2\vert\nabla_{L^{\perp}u_{\epsilon}}\vert^2)\right)\\ \leqslant& C(m)\delta\bar{\Theta}(x,r).
        \end{aligned}\nonumber
    \end{equation}
    We have a similar estimate for the second term in (\ref{eigen1}). Hence $$\left\vert\bar{\Theta}(x,r)-\int r^2\rho(y-x)\vert\nabla_e u_{\epsilon}\vert^2\right\vert\leqslant C(m)\sqrt{\delta\bar{\Theta}(x,r)}.$$Now, pick $l\in L$ such that the first eigenvector $e_1=\alpha e+\beta l,\alpha^2+\beta^2=1$, then 
    \begin{equation}
        \begin{aligned}
            \lambda_1(x,r)&=r^2\int\rho_r(y-x)\vert\alpha\nabla_e u_{\epsilon}+\beta\nabla_l u_{\epsilon}\vert^2\\&\leqslant r^2\int\rho_r(y-x)(\alpha^2\vert\nabla_e u_{\epsilon}\vert^2+\beta^2\vert\nabla_l u_{\epsilon}\vert^2+2\langle\nabla_eu_{\epsilon},\nabla_lu_{\epsilon}\rangle\\&\leqslant r^2\int\rho_r(y-x)((\alpha^2+\beta^2)\vert\nabla_eu_{\epsilon}\vert^2+(\alpha^2+\beta^2)\vert\nabla_l u_{\epsilon}\vert^2\\&\leqslant\bar{\Theta}(x,r)+C(m)\sqrt{\delta\bar{\Theta}(x,r)}+C(m)\delta\leqslant\bar{\Theta}(x,r)+C(m)\sqrt{\delta\bar{\Theta}(x,r)}.
        \end{aligned}\nonumber
    \end{equation}
    provided $\delta$ is small enough. Note that $\bar{\Theta}(x,r)\geqslant c(m)\epsilon_0$ by our choice of $r$. The estimate on $\lambda_2(x,r)$ follows from
    \begin{equation}
        \begin{aligned}
            \lambda_1+\lambda_2=\textup{tr}Q(x,r)-\sum_{i=3}^m\lambda_3\geqslant2\bar{\Theta}(x,r)-2\bar{\Theta}_{\mathcal{L}}(x,r)\geqslant2\bar{\Theta}(x,r)-C(m)\delta.
        \end{aligned}\nonumber
    \end{equation}
\end{proof}
With Lemma \ref{eigenvalue best plane} in hand, we are ready to show the uniqueness of the best subspace $\mathcal{L}_{x,r}$. Recall that the Grassmannian Gr$(m,k)$ is the set of all $k$-dimensional subspaces in $\R^m$. It can also be regarded as the set of $(m-k)$-dimensional subspaces in $\R^m$ via orthogonal projection as well as the set of orthogonal projections on $\R^m$: 
$$\textup{Gr}(m,k)=\lbrace \Pi:\R^m\rightarrow\R^m:\Pi=\Pi^{\textup{T}},\Pi^2=1,\textup{rank}\Pi=k\rbrace.$$
Moreover, we have the equivalence of norms $$C(m)^{-1}\textup{d}_{\textup{Hau}}(\overline{V\cap B_1},\overline{W\cap B_1})\leqslant\textup{d}_{\textup{Gr}}(V,W)\leqslant C(m)\textup{d}_{\textup{Hau}}(\overline{V\cap B_1},\overline{W\cap B_1}).$$ $$C(m)^{-1}\Vert\Pi_V-\Pi_W\Vert\leqslant\textup{d}_{\textup{Gr}}(V,W)\leqslant C(m)\Vert\Pi_V-\Pi_W\Vert,$$for all $V,W\in\textup{Gr}(m,k)$, where we used the operator norm for linear maps.
\begin{lem}[{cf. \cite[Lemma 6.5]{naber2024energyidentitystationaryharmonic}}]\label{uniqueness of best plane}
     Let $u_{\epsilon}:B_{10R}(p)\rightarrow\R^L$ be a solution to (\ref{*}) with $R^{2-m}\int_{B_{10R}(p)}e_{\epsilon}(u_{\epsilon})\leqslant\Lambda$ and $\mathcal{A}=B_2(p)\setminus\overline{B_{\mathfrak{r}_x}(\mathcal{T})}$  be a $\delta$-annular region. Let $x\in B_2(p)$ and $r\geqslant2\textup{max}\lbrace\mathfrak{r}_x,\dist(x,\mathcal{T})\rbrace$. Then for any $L$ in $\textup{Gr}(m,m-2)$, we have $$\bar{\Theta}(x,r;L)\geqslant\bar{\Theta}(x,r;\mathcal{L}_{x,r})+C(m)\bar{\Theta}(x,r)\textup{d}_{Gr}(L,\mathcal{L}_{x,r})^2.$$
\end{lem}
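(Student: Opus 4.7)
The plan is to reduce the claim to a purely linear-algebraic statement about the heat mollified energy tensor $Q(x,r)$ and then exploit the large spectral gap between $\lambda_2$ and $\lambda_3$ given by Lemma \ref{eigenvalue best plane}. First, I would observe that for any $L\in\mathrm{Gr}(m,m-2)$ with orthonormal basis $\{e_i^L\}$,
$$\bar{\Theta}(x,r;L)=\sum_{i=1}^{m-2}Q(x,r)[e_i^L,e_i^L]+2\int r^2\rho_r(y-x)\frac{F(u_{\epsilon})}{\epsilon^2}\,\dif y=\mathrm{tr}(\Pi_L Q(x,r))+C_0,$$
where $C_0$ is a constant independent of $L$. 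Thus the inequality to prove is equivalent to
$$\mathrm{tr}(\Pi_LQ)-\mathrm{tr}(\Pi_{\mathcal{L}_{x,r}}Q)\geqslant C(m)\bar{\Theta}(x,r)\,\dist_{\mathrm{Gr}}(L,\mathcal{L}_{x,r})^2.$$

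Next I would diagonalize $Q$ by the eigenbasis $\{e_j\}$ of Lemma \ref{eigenvalue best plane} and set $\mu_j=|\Pi_Le_j|^2\in[0,1]$. Then $\sum_j\mu_j=\mathrm{tr}(\Pi_L)=m-2$ and
$$\mathrm{tr}(\Pi_LQ)=\sum_{j=1}^m\lambda_j\mu_j,\qquad \mathrm{tr}(\Pi_{\mathcal{L}_{x,r}}Q)=\sum_{j=3}^m\lambda_j,$$
where I used that $\mathcal{L}_{x,r}=\mathrm{span}\{e_3,\dots,e_m\}$ by Lemma \ref{eigenvalue best plane}(2). The elementary identity $\mu_1+\mu_2=\sum_{j\geqslant 3}(1-\mu_j)$ and the pointwise bounds $\lambda_1\geqslant\lambda_2$, $\lambda_j\leqslant C(m)\delta$ for $j\geqslant3$ give
\begin{align*}
\mathrm{tr}(\Pi_LQ)-\mathrm{tr}(\Pi_{\mathcal{L}_{x,r}}Q)&=\lambda_1\mu_1+\lambda_2\mu_2-\sum_{j\geqslant3}\lambda_j(1-\mu_j)\\
&\geqslant\lambda_2(\mu_1+\mu_2)-C(m)\delta\sum_{j\geqslant3}(1-\mu_j)\\
&=(\lambda_2-C(m)\delta)(\mu_1+\mu_2).
\end{align*}
By Lemma \ref{eigenvalue best plane}(1) and Lemma \ref{energy non doubling}, $\lambda_2\geqslant\bar{\Theta}(x,r)-C(m)\sqrt{\delta\bar{\Theta}(x,r)}\geqslant\tfrac{1}{2}\bar{\Theta}(x,r)\geqslant c(m)\varepsilon_0$ provided $\delta\leqslant\delta(m,K_N,\Lambda,R)$, so the prefactor satisfies $\lambda_2-C(m)\delta\geqslant c(m)\bar{\Theta}(x,r)$.

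Finally I would translate $\mu_1+\mu_2$ into the Grassmannian distance. Since $\mathcal{L}_{x,r}^{\perp}=\mathrm{span}\{e_1,e_2\}$,
$$\mu_1+\mu_2=\sum_{j=1}^2|\Pi_Le_j|^2=\mathrm{tr}(\Pi_{\mathcal{L}_{x,r}^{\perp}}\Pi_L\Pi_{\mathcal{L}_{x,r}^{\perp}})=\mathrm{tr}(\Pi_L\Pi_{\mathcal{L}_{x,r}^{\perp}}).$$
Because both $L$ and $\mathcal{L}_{x,r}$ have dimension $m-2$,
$$\|\Pi_L-\Pi_{\mathcal{L}_{x,r}}\|_{HS}^2=2(m-2)-2\,\mathrm{tr}(\Pi_L\Pi_{\mathcal{L}_{x,r}})=2\,\mathrm{tr}(\Pi_L\Pi_{\mathcal{L}_{x,r}^{\perp}})=2(\mu_1+\mu_2),$$
and the Hilbert--Schmidt norm is equivalent (with dimensional constants) to the operator norm, which is in turn comparable to $\dist_{\mathrm{Gr}}$ by the equivalence of norms recalled just before the lemma. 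Combining these bounds yields the desired quadratic lower bound with constant $C(m)$.

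The only conceptual step is the spectral reduction; the rest is linear algebra. The main subtlety I would watch for is ensuring that the constant $C(m)\delta$ in $\lambda_2-C(m)\delta$ is genuinely absorbed into $c(m)\bar{\Theta}(x,r)$, which requires taking $\delta$ sufficiently small depending on the lower bound $\bar{\Theta}(x,r)\geqslant c(m)\varepsilon_0$ from Lemma \ref{energy non doubling}; no analytic input beyond Lemmas \ref{energy non doubling} and \ref{eigenvalue best plane} is needed.
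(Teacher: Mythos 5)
Your proof is correct, and it is a cleaner execution of the same core idea as the paper's: both hinge on the spectral gap between $\lambda_2$ and $\lambda_3$ of $Q(x,r)$ from Lemma \ref{eigenvalue best plane}, together with the lower bound $\bar{\Theta}(x,r)\geqslant c(m)\varepsilon_0$ from Lemma \ref{energy non doubling}. The paper restricts to $L$ close to $\mathcal{L}_{x,r}$ (deferring the far case to the eigenvalue separation without spelling it out) and then parametrizes $L$ as a graph over $\mathcal{L}_{x,r}$ with orthonormal basis of the form $(1-\alpha_i^2-\beta_i^2)^{1/2}e_i+\alpha_ie_1+\beta_ie_2$. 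That parametrization is only exactly valid after a further change of basis inside $\mathcal{L}_{x,r}$ making the $(\alpha_i,\beta_i)$ pairwise orthogonal, though the paper's conclusion is unaffected because $\lambda_3,\dots,\lambda_m$ are all small. Your route — writing $\bar{\Theta}(x,r;L)-\bar{\Theta}(x,r;\mathcal{L}_{x,r})=\operatorname{tr}(\Pi_LQ)-\operatorname{tr}(\Pi_{\mathcal{L}_{x,r}}Q)$, diagonalizing $Q$, setting $\mu_j=|\Pi_Le_j|^2$, and exploiting the trace constraint $\sum_j\mu_j=m-2$ to obtain $\mu_1+\mu_2=\sum_{j\geqslant3}(1-\mu_j)$ — dispenses with both the near/far case split and the graph parametrization, and the Hilbert--Schmidt identity $\|\Pi_L-\Pi_{\mathcal{L}_{x,r}}\|_{HS}^2=2(\mu_1+\mu_2)$ gives the translation to $\mathrm{d}_{\mathrm{Gr}}$ in one line. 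The tradeoff is essentially zero: you use only linear algebra plus the same two lemmas the paper uses, and your version covers all $L\in\mathrm{Gr}(m,m-2)$ uniformly, which is a genuine gain in tidiness.
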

\begin{proof}
    Due to the eigenvalue separation \ref{eigenvalue best plane}, we only need to consider the case where $L$ is close to $\mathcal{L}_{x,r}$. In this case, we can assume $L$ is the graph of linear map $G:\mathcal{L}_{x,r}\rightarrow\mathcal{L}_{x,r}^{\perp}$ with $G(e_i)=\alpha_ie_1+\beta_ie_2,i=3,\dots,m$ and $\lbrace (1-\alpha_i^2-\beta_i^2)^{1/2}e_i+\alpha_ie_1+\beta_ie_2\rbrace_{i=3}^m$ forms an orthonormal basis of $L$. Then $$C(m)^{-1}\textup{d}_{Gr}(L,\mathcal{L}_{x,r})^2\leqslant\sum_{i=3}^m(\alpha_i^2+\beta_i^2)\leqslant C(m)\textup{d}_{Gr}(L,\mathcal{L}_{x,r})^2$$Moreover, due to the definition of $Q$, we have 
    \begin{equation}
        \begin{aligned}
            &\bar{\Theta}(x,r;L)\\=&\frac{1}{2}\sum_{i=3}^mQ(x,r)[(1-\alpha_i^2-\beta_i^2)^{1/2}e_i+\alpha_ie_1+\beta_ie_2,(1-\alpha_i^2-\beta_i^2)^{1/2}e_i+\alpha_ie_1+\beta_ie_2]\\+&\int\rho_r(y-x)F(u_{\epsilon})/\epsilon^2\\=&\frac{1}{2}\sum_{i=3}^m((1-\alpha_i^2-\beta_i^2)\lambda_i(x,r)+\alpha_i^2\lambda_1(x,r)+\beta_i^2\lambda_2(x,r))+\int\rho_r(y-x)F(u_{\epsilon})/\epsilon^2\\=&\bar{\Theta}(x,r;\mathcal{L}_{x,r})+\frac{1}{2}\sum_{i=3}^m(\alpha_i^2(\lambda_1(x,r)-\lambda_i(x,r))+\beta_i^2(\lambda_2(x,r)-\lambda_i(x,r)))\\\geqslant&\bar{\Theta}(x,r;\mathcal{L}_{x,r})+\frac{1}{2}\sum_{i=3}^m(\alpha_i^2+\beta_i^2)(\bar{\Theta}(x,r)-C(m)\sqrt{\delta\bar{\Theta}(x,r)}-C(m)\delta).
        \end{aligned}\nonumber
    \end{equation}
    Since $\bar{\Theta}(x,r)\geqslant c(m)\epsilon_0,$ take $\delta$ small enough yields the result.
\end{proof}
We have the following corollary due to the Lemma \ref{uniqueness of best plane}.
\begin{cor}[{cf. \cite[Corollary 6.6]{naber2024energyidentitystationaryharmonic}}]
    Let $u_{\epsilon}:B_{10R}(p)\rightarrow\R^L$ be a solution to (\ref{*}) with $R^{2-m}\int_{B_{10R}(p)}e_{\epsilon}(u_{\epsilon})\leqslant\Lambda$ and $\mathcal{A}=B_2(p)\setminus\overline{B_{\mathfrak{r}_x}(\mathcal{T})}$  be a $\delta$-annular region. Let $x\in B_2(p)$ and $r\geqslant2\textup{max}\lbrace\mathfrak{r}_x,\dist(x,\mathcal{T})\rbrace$. Then  $$\bar{\Theta}(x,2r;\mathcal{L}_{x,r})\leqslant C(m)\bar{\Theta}_{\mathcal{L}}(x,2r).$$
\end{cor}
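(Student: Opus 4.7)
The plan is to compare the two best planes $\mathcal{L}_{x,r}$ and $\mathcal{L}_{x,2r}$ via a two-sided form of the uniqueness estimate in Lemma~\ref{uniqueness of best plane}, and then use the scale-comparison of the heat mollifier to close the argument. More precisely, the proof of Lemma~\ref{uniqueness of best plane} actually yields, for every $L\in\textup{Gr}(m,m-2)$ and every admissible scale $s$,
\begin{equation*}
    c(m)\bar{\Theta}(x,s)\,\textup{d}_{\textup{Gr}}(L,\mathcal{L}_{x,s})^2 \leqslant \bar{\Theta}(x,s;L)-\bar{\Theta}_{\mathcal{L}}(x,s) \leqslant C(m)\bar{\Theta}(x,s)\,\textup{d}_{\textup{Gr}}(L,\mathcal{L}_{x,s})^2.
\end{equation*}
The lower bound is what is displayed in Lemma~\ref{uniqueness of best plane}; the matching upper bound is immediate from the same eigenvalue identity, since $\lambda_1-\lambda_i,\,\lambda_2-\lambda_i \leqslant \lambda_1 \leqslant C(m)\bar{\Theta}(x,s)$ by Lemma~\ref{eigenvalue best plane}, with the case of $L$ far from $\mathcal{L}_{x,s}$ handled trivially by $\bar{\Theta}(x,s)\geqslant c(m)\varepsilon_0$ from Lemma~\ref{energy non doubling}.

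Applying the upper half at scale $2r$ with $L=\mathcal{L}_{x,r}$ and the lower half at scale $r$ with $L=\mathcal{L}_{x,2r}$, I obtain
\begin{equation*}
    \bar{\Theta}(x,2r;\mathcal{L}_{x,r}) \leqslant \bar{\Theta}_{\mathcal{L}}(x,2r) + C(m)\bar{\Theta}(x,2r)\,\textup{d}_{\textup{Gr}}(\mathcal{L}_{x,r},\mathcal{L}_{x,2r})^2,
\end{equation*}
\begin{equation*}
    \bar{\Theta}(x,r)\,\textup{d}_{\textup{Gr}}(\mathcal{L}_{x,r},\mathcal{L}_{x,2r})^2 \leqslant C(m)\bar{\Theta}(x,r;\mathcal{L}_{x,2r}).
\end{equation*}
Next I would compare partial energies at the two scales. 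Taking $x'=x$ and $s=2r$ in Lemma~\ref{properties of heat mollifier}(4) gives $\rho_r(y-x)\leqslant C(m)\rho_{2r}(y-x)$ pointwise, hence $\bar{\Theta}(x,r;L)\leqslant C(m)\bar{\Theta}(x,2r;L)$ for any $L$; in particular $\bar{\Theta}(x,r;\mathcal{L}_{x,2r})\leqslant C(m)\bar{\Theta}_{\mathcal{L}}(x,2r)$. Combined with $\bar{\Theta}(x,2r)\leqslant C(m)\bar{\Theta}(x,r)$ from Lemma~\ref{energy non doubling}, this upgrades the Grassmannian distance bound to
\begin{equation*}
    \bar{\Theta}(x,2r)\,\textup{d}_{\textup{Gr}}(\mathcal{L}_{x,r},\mathcal{L}_{x,2r})^2 \leqslant C(m)\bar{\Theta}_{\mathcal{L}}(x,2r),
\end{equation*}
and substituting into the first display yields the claimed inequality $\bar{\Theta}(x,2r;\mathcal{L}_{x,r})\leqslant C(m)\bar{\Theta}_{\mathcal{L}}(x,2r)$.

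I do not expect a serious obstacle. The whole argument is bookkeeping built on the two-sided uniqueness estimate, the scale comparison in Lemma~\ref{properties of heat mollifier}(4), and the non-doubling property in Lemma~\ref{energy non doubling}. The only mildly delicate step is extracting the upper half of the uniqueness estimate, but as noted this is essentially free from the eigenvalue identity already used to prove Lemma~\ref{uniqueness of best plane}.
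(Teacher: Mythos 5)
Your proposal is correct and takes essentially the same route as the paper: apply Lemma~\ref{uniqueness of best plane} at scale $r$ with $L=\mathcal{L}_{x,2r}$ to bound $\textup{d}_{\textup{Gr}}(\mathcal{L}_{x,r},\mathcal{L}_{x,2r})^2$, then convert back to scale $2r$ using Lemma~\ref{energy non doubling} and the scale comparison from Lemma~\ref{properties of heat mollifier}(4). The only cosmetic difference is how you bound $\bar{\Theta}(x,2r;\mathcal{L}_{x,r})$ by $\bar{\Theta}_{\mathcal{L}}(x,2r)$ plus a Grassmannian error: you re-extract a one-sided upper inequality from the eigenvalue identity at scale $2r$, whereas the paper uses the elementary triangle-type bound $\vert\Pi_{L_1}\nabla u\vert^2\leqslant2\vert\Pi_{L_2}\nabla u\vert^2+2\Vert\Pi_{L_1}-\Pi_{L_2}\Vert^2\vert\nabla u\vert^2$ with a factor of $2$, which is immaterial here.
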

\begin{proof}
    By definition of $\mathcal{L}_{x,r}$,
    $$\bar{\Theta}_{\mathcal{L}}(x,r)\leqslant\bar{\Theta}(x,r;\mathcal{L_{x,2r}})\leqslant C(m)\bar{\Theta}_{\mathcal{L}}(x,2r).$$
    Hence take $L=\mathcal{L}_{x,2r}$ in Lemma \ref{uniqueness of best plane}, we get
    $$\textup{d}_{Gr}(\mathcal{L}_{x,r},\mathcal{L}_{x,2r})^2\leqslant\frac{\bar{\Theta}_{\mathcal{L}}(x,2r)}{\bar{\Theta}(x,r)}.$$
    The corollary follows from
    $$\bar{\Theta}(x,2r;\mathcal{L}_{x,r})\leqslant2\bar{\Theta}(x,2r;\mathcal{L}_{x,2r})+2C(m)\textup{d}_{\textup{Gr}}(\mathcal{L}_{x,r},\mathcal{L}_{x,2r})^2\bar{\Theta}(x,2r)\leqslant C(m)\bar{\Theta}_{\mathcal{L}}(x,2r).$$
    We used Lemma \ref{energy non doubling} again.
\end{proof}
We are also able to estimate the derivatives of $\Pi_{x,r}$.
\begin{lem}[{cf. \cite[Lemma 6.7]{naber2024energyidentitystationaryharmonic}}]\label{distance of best planes}
    Let $u_{\epsilon}:B_{10R}(p)\rightarrow\R^L$ be a solution to (\ref{*}) with $R^{2-m}\int_{B_{10R}(p)}e_{\epsilon}(u_{\epsilon})\leqslant\Lambda$ and $\mathcal{A}=B_2(p)\setminus\overline{B_{\mathfrak{r}_x}(\mathcal{T})}$  be a $\delta$-annular region. Let $x\in B_2(p)$ and $r\geqslant2\textup{max}\lbrace\mathfrak{r}_x,\dist(x,\mathcal{T})\rbrace$, then $\Pi_{x,r}=\Pi_{\mathcal{L}_{x,r}}$ is smooth with respect to $x$ and $r$ and 
    \begin{equation}
        \begin{aligned}
            &\left\Vert r\frac{\partial}{\p r}\Pi_{x,r}\right\Vert+\left\Vert r^2\frac{\p}{\p r}\Pi_{x,r}\right\Vert+\left\Vert r\nabla\Pi_{x,r}\right\Vert+\left\Vert r^2\nabla^2\Pi_{x,r}\right\Vert\\\leqslant &C(m)\sqrt{\frac{\bar{\Theta}_{\mathcal{L}}(x,2r)}{\bar{\Theta}(x,r)}}\leqslant C(m,\epsilon_0)\sqrt{\bar{\Theta}_{\mathcal{L}}(x,2r)}.
        \end{aligned}\nonumber
    \end{equation}
\end{lem}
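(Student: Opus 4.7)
My plan is to exploit spectral perturbation theory for the symmetric matrix $Q(x,r)$. By Lemma \ref{eigenvalue best plane}, for $\delta$ sufficiently small the top two eigenvalues of $Q(x,r)$ are each comparable to $\bar{\Theta}(x,r) \geq c(m)\varepsilon_0$, while the bottom $m-2$ eigenvalues are $O(\delta)$; hence the spectral gap satisfies $\lambda_2(x,r)-\lambda_3(x,r)\geq c(m)\bar{\Theta}(x,r)$. Since $\mathcal{L}_{x,r}$ is exactly the span of the bottom eigenvectors, I will represent $\Pi_{x,r}$ via the Riesz (Kato) contour integral
$$\Pi_{x,r}=\frac{1}{2\pi i}\oint_{\Gamma}(zI-Q(x,r))^{-1}\,dz,$$
with $\Gamma$ a fixed contour separating the two clusters. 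Because $Q(x,r)$ is jointly smooth in $(x,r)$ (a smooth kernel integrated against $|\nabla u_{\epsilon}|^{2}$), smoothness of $\Pi_{x,r}$ follows at once, and the contour representation persists in a neighborhood as long as the gap persists.

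For the first-order bounds I would differentiate Kato's formula once, which after a standard residue computation gives
$$\|d\Pi_{x,r}\|_{\mathrm{op}}\leq \frac{C(m)}{\lambda_{2}-\lambda_{3}}\bigl\|\Pi_{x,r}^{\perp}(dQ)\,\Pi_{x,r}\bigr\|_{\mathrm{op}}.$$
Only the off-diagonal block matters, and its matrix elements for unit $v\in\mathcal{L}_{x,r}$, unit $w$, and unit direction $l$ are
$$\nabla_{l}Q(x,r)[v,w]=-\int\dot{\rho}_{r}(y-x)\,\langle y-x,l\rangle\,\langle\nabla_{v}u_{\epsilon},\nabla_{w}u_{\epsilon}\rangle\,dy.$$
Cauchy--Schwarz together with Lemma \ref{properties of heat mollifier}(1) (which bounds $|y-x|^{2}|\dot{\rho}_{r}|$ by $C(m)r^{2}\rho_{1.1r}$) yields
$$|r\nabla_{l}Q[v,w]|^{2}\leq C(m)\Bigl(r^{2}\!\int\rho_{1.1r}(y-x)|\nabla_{v}u_{\epsilon}|^{2}\Bigr)\Bigl(r^{2}\!\int\rho_{1.1r}(y-x)|\nabla_{w}u_{\epsilon}|^{2}\Bigr).$$
The first factor is at most $C(m)\bar{\Theta}(x,2r;\mathcal{L}_{x,r})\leq C(m)\bar{\Theta}_{\mathcal{L}}(x,2r)$ by $v\in\mathcal{L}_{x,r}$ and the corollary preceding this lemma; the second is $\leq C(m)\bar{\Theta}(x,r)$. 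Dividing by the spectral gap delivers $\|r\nabla\Pi_{x,r}\|\leq C(m)\sqrt{\bar{\Theta}_{\mathcal{L}}(x,2r)/\bar{\Theta}(x,r)}$. The $r$-derivative is analogous: expanding $r\partial_{r}Q(x,r)[v,w]=(2-m)Q[v,w]-\int|y-x|^{2}\dot{\rho}_{r}(y-x)\langle\nabla_{v}u_{\epsilon},\nabla_{w}u_{\epsilon}\rangle$ and noting that $(2-m)Q[v,w]=0$ on the off-diagonal block, the same Cauchy--Schwarz / Lemma \ref{properties of heat mollifier}(1) argument goes through verbatim.

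For the second-order bounds I will differentiate Kato's formula once more, which splits $d^{2}\Pi$ into a linear piece $\propto \Pi^{\perp}(d^{2}Q)\,\Pi/g$, handled by the same Cauchy--Schwarz scheme with $\nabla^{2}\rho_{r}$ or $\ddot{\rho}_{r}$ replacing $\dot{\rho}_{r}$ (still controlled via Lemma \ref{properties of heat mollifier}), and a quadratic piece $\propto (dQ)^{2}/g^{2}$. The first-order estimates already bound the quadratic contribution by $C(m)\bar{\Theta}_{\mathcal{L}}(x,2r)/\bar{\Theta}(x,r)$; since $\bar{\Theta}_{\mathcal{L}}(x,2r)/\bar{\Theta}(x,r)\leq C\delta/\varepsilon_{0}\leq 1$, this is dominated by the square-root bound on the right-hand side of the lemma.

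The hard part will be tracking the scale-shift carefully: the factor $\bar{\Theta}_{\mathcal{L}}(x,2r)$ on the right-hand side (rather than some quantity strictly at scale $r$) arises because Lemma \ref{properties of heat mollifier}(1) forces the passage from $\dot\rho_{r}$-weights carrying an extra $|y-x|^{2}$ to the dilated Gaussian $\rho_{1.1r}$, after which the preceding corollary (i.e.\ $\bar{\Theta}(x,2r;\mathcal{L}_{x,r})\leq C(m)\bar{\Theta}_{\mathcal{L}}(x,2r)$) is needed to re-express the result in terms of the best-plane energy at scale $2r$. All other steps are routine spectral perturbation theory combined with the quantitative cone splitting and non-doubling estimate (Lemma \ref{energy non doubling}) already in place.
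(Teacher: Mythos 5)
Your proposal is correct, and the core computations are the same as the paper's: both approaches come down to bounding the off-diagonal block of $dQ$ by $C(m)\sqrt{\bar{\Theta}_{\mathcal{L}}(x,2r)\,\bar{\Theta}(x,r)}$ via Cauchy--Schwarz (moving the $|y-x|^{2}$ weight onto a dilated $\rho_{1.1r}$ using Lemma \ref{properties of heat mollifier}, then applying the preceding corollary and Lemma \ref{energy non doubling}) and then dividing by the spectral gap $\lambda_{2}-\lambda_{3}\geq c(m)\bar{\Theta}(x,r)$ from Lemma \ref{eigenvalue best plane}. The packaging is different: the paper differentiates the defining orthogonality $Q(x,r)[\Pi_{x,r}v,\Pi_{x,r}^{\perp}w]=0$ directly and exploits the block structure of $\partial_{r}\Pi_{x,r}$ (it swaps $\mathcal{L}_{x,r}$ and $\mathcal{L}_{x,r}^{\perp}$) by choosing extremal $v,w$, which produces the gap in the denominator by hand; you instead invoke the Riesz/Kato contour integral and the standard first-variation formula for a spectral projector. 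Your route is more modular and makes the role of the gap transparent, at the cost of needing to justify that the contour (and all residue bookkeeping) is uniformly valid and that only the cross-block of $dQ$ contributes --- you correctly address both. The paper's route is more self-contained and avoids any appeal to an external perturbation theorem, which is why the authors prefer it; the two are interchangeable here. One small point worth making explicit in a write-up: in your second-order estimate the quadratic $(dQ)^{2}/g^{2}$ term gives $\bar{\Theta}_{\mathcal{L}}(x,2r)/\bar{\Theta}(x,r)$ rather than its square root, and the claim that this is dominated by the stated right-hand side uses $\bar{\Theta}_{\mathcal{L}}(x,2r)\leq C(m)\delta$ together with the lower bound $\bar{\Theta}(x,r)\geq c(m)\varepsilon_{0}$, i.e.\ the ratio is at most $1$ once $\delta$ is small --- you say this, but it should be flagged as the place the annular-region hypothesis (via Lemma \ref{energy non doubling} and the non-concentration (a3)) enters irreducibly.
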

\begin{proof}
    The smoothness of $\Pi_{x,r}$ is a corollary of eigenvalue separation \ref{eigenvalue best plane}, uniqueness \ref{uniqueness of best plane} and the smoothness of $Q(x,r)$. We prove the estimate for $r\frac{\p}{\p r}\Pi_{x,r}$, the other estimates follow in a similar way. For any $v,w\in\R^m$, $Q(x,r)[\Pi_{x,r}(v),\Pi_{x,r}^{\perp}(w)]=0\label{Lem4.6 1}$. Taking the $r$ derivative to this, we have 
    \begin{equation}
        \begin{aligned}
            0=&(2-m)Q(x,r)[\Pi_{x,r}(v),\Pi_{x,r}^{\perp}(w)]-r^2\int\dot{\rho}_r(y-x)\left\vert\frac{y-x}{r}\right\vert^2\langle\nabla_{\Pi_{x,r}(v)}u_{\epsilon},\nabla_{\Pi_{x,r}^{\perp}(w)}u_{\epsilon}\rangle\\&+Q(x,r)\left[r\frac{\p}{\p r}\Pi_{x,r}(v),\Pi_{x,r}^{\perp}(w)\right]+Q(x,r)\left[\Pi_{x,r}(v),r\frac{\p}{\p r}\Pi_{x,r}^{\perp}(w)\right].
        \end{aligned}\label{Lem 4.6 2}
    \end{equation}
    The first term in (\ref{Lem 4.6 2}) vanishes, while the second term can be estimated using Proposition \ref{properties of heat mollifier} and Lemma \ref{distance of best planes} as follows
    \begin{equation}
        \begin{aligned}
            \left\vert-r^2\int\dot{\rho}_r(y-x)\left\vert\frac{y-x}{r}\right\vert^2\langle\nabla_{\Pi_{x,r}}(v),\nabla_{\Pi_{x,r}^{\perp}(w)}u\rangle\right\vert&\leqslant C(m)\sqrt{\bar{\Theta}(x,2r;\mathcal{L}_{x,r})\bar{\Theta}(x,2r)}\\&\leqslant C(m)\sqrt{\bar{\Theta}_{\mathcal{L}}(x,2r)\bar{\Theta}(x,2r)}.
        \end{aligned}\label{Lemma 4.6 4}
    \end{equation}
    Note that $$\frac{\p}{\p r}\Pi_{x,r}(w)+\frac{\p}{\p r}\Pi_{x,r}^{\perp}(w)=0,\frac{\p}{\p r}\Pi_{x,r}:\mathcal{L}_{x,r}\rightarrow\mathcal{L}_{x,r}^{\perp},\frac{\p}{\p r}\Pi_{x,r}:\mathcal{L}^{\perp}\rightarrow\mathcal{L}_{x,r}.$$This is because $$-\left\langle\frac{\p}{\p r}\Pi_{x,r}\Pi_{x,r}v,w\right\rangle=\left\langle
    \frac{\p}{\p r}\Pi_{x,r}v,\Pi_{x,r}^{\perp}w\right\rangle.$$
    Choose unit vectors $v\in\mathcal{L}_{x,r},w\in\mathcal{L}_{x,r}^{\perp}$ with $$\left\vert\frac{\p}{\p r}\Pi_{x,r}(v)\right\vert=\left\Vert\frac{\p}{\p r}\Pi_{x,r}\bigg|_{\mathcal{L}_{x,r}}\right\Vert,w=\frac{\p}{\p r}\Pi_{x,r}(v)\big/\left\Vert\frac{\p}{\p r}\Pi_{x,r}(v)\right\Vert.$$
    Then, we can use the last two terms in (\ref{Lem 4.6 2}) to control the radial derivative of $\Pi_{x,r}$
    \begin{equation}
        \begin{aligned}
            &Q(x,r)\left[r\frac{\p}{\p r}\Pi_{x,r}(v),\Pi_{x,r}^{\perp}(w)\right]-Q(x,r)\left[\Pi_{x,r}(v),r\frac{\p}{\p r}\Pi_{x,r}(w)\right]\\\geqslant&\lambda_2(x,r)\left\Vert\frac{\p}{\p r}\Pi_{x,r}\big|_{\mathcal{L}_{x,r}}\right\Vert-\lambda_3(x,r)\left\Vert\frac{\p}{\p r}\Pi_{x,r}\big|_{\mathcal{L}_{x,r}^{\perp}}\right\Vert.
        \end{aligned}\label{Lemma 4.6 5}
    \end{equation}
    Similarly if we choose $w\in\mathcal{L}_{x,r}^{\perp},v\in\mathcal{L}_{x,r}$ with $$\left\vert\frac{\p}{\p r}\Pi_{x,r}(w)\right\vert=\left\Vert\frac{\p}{\p r}\Pi_{x,r}\big|_{\mathcal{L}_{x,r}^{\perp}}\right\Vert,v=\frac{\p}{\p r}\Pi_{x,r}(w)\big/\left\Vert\frac{\p}{\p r}\Pi_{x,r}(w)\right\Vert,$$
    then
    \begin{equation}
        \begin{aligned}
            &Q(x,r)\left[r\frac{\p}{\p r}\Pi_{x,r}(v),\Pi_{x,r}^{\perp}(w)\right]-Q(x,r)\left[\Pi_{x,r}(v),r\frac{\p}{\p r}\Pi_{x,r}(w)\right]\\\geqslant&\lambda_2(x,r)\left\Vert\frac{\p}{\p r}\Pi_{x,r}\big|_{\mathcal{L}_{x,r}^{\perp}}\right\Vert-\lambda_3(x,r)\left\Vert\frac{\p}{\p r}\Pi_{x,r}\big|_{\mathcal{L}_{x,r}}\right\Vert.
        \end{aligned}\label{Lemma 4.6 6}
    \end{equation}
    Combining (\ref{Lem 4.6 2}), (\ref{Lemma 4.6 4}),(\ref{Lemma 4.6 5}),(\ref{Lemma 4.6 6}), we see from the eigenvalue separation \ref{eigenvalue best plane} that if we choose $\delta$ small enough, then 
    \begin{equation}
        \begin{aligned}
            \left\Vert\frac{\p}{\p r}\Pi_{x,r}\right\Vert&=\max\left\lbrace\left\Vert\frac{\p}{\p r}\Pi_{x,r}\big|_{\mathcal{L}_{x,r}}\right\Vert,\left\Vert\frac{\p}{\p r}\Pi_{x,r}\big|_{\mathcal{L}_{x,r}^{\perp}}\right\Vert\right\rbrace\\&\leqslant\left\Vert\frac{\p}{\p r}\Pi_{x,r}\big|_{\mathcal{L}_{x,r}}\right\Vert+\left\Vert\frac{\p}{\p r}\Pi_{x,r}\big|_{\mathcal{L}_{x,r}^{\perp}}\right\Vert\leqslant C(m)\frac{\sqrt{\bar{\Theta}_{\mathcal{L}(x,2r)}\bar{\Theta}(x,2r)}}{\bar{\Theta}(x,r)}.
        \end{aligned}\nonumber
    \end{equation}
    Using the Lemma \ref{energy non doubling} again we have the desired estimate.
\end{proof}

\subsection{Improved quantitative cone splitting theorem in annular regions}
In this subsection we wish to establish the quantitative cone splitting result related to $\mathcal{L}_{x,r}$. Recall that Theorem \ref{Top dimension quantitative cone splitting} tells us that if $x_0,x_1,\dots,x_{m-2}$ effectively span an $(m-2)$-subspace $L$ on scale $r$, then
$$\bar{\Theta}(x,r;L)+\bar{\Theta}(x,r;L^{\perp})\leqslant C(m)\sum_{j=0}^{m-2}r\frac{\p}{\p r}\bar{\Theta}(x,r).$$ 
We want to make this inequality more effective in annular regions. Namely, we wish the $(m-2)$-subspace on left hand side is our best plane $\mathcal{L}_{x,r}$ and right hand side is a quantity related to the annular region. We first have the following improvement.
\begin{prop}[{cf. \cite[Theorem 2.23]{naber2024energyidentitystationaryharmonic}}]\label{quantitative cone splitting in annular regions}
    Let $u_{\epsilon}:B_{10R}(p)\rightarrow\R^L$ be a solution to (\ref{*}) with $R^{2-m}\int_{B_{10R}(p)}e_{\epsilon}(u_{\epsilon})\leqslant\Lambda$ and $\mathcal{A}=B_2(p)\setminus\overline{B_{\mathfrak{r}_x}(\mathcal{T})}$  be a $\delta$-annular region. Then for any $x\in\mathcal{T},B_{r}(x)\subset B_2(p)$ there exists a $\bar{x}$ and an $(m-2)$-plane $L=L_{x,r}$ with $$\vert\bar{x}-x\vert\leqslant C(m)\sqrt{\delta}r,\textup{d}_{\textup{Gr}}(L,L_{\mathcal{A}})\leqslant C(m)\sqrt{\delta}$$ such that $$\bar{\Theta}(\bar{x},r;L)+\bar{\Theta}(\bar{x},r;L^{\perp})\leqslant C(m)\fint_{\mathcal{T}\cap B_{r/2}(x)}r\frac{\p}{\p r}\bar{\Theta}(z,3r/2)\dif\mathcal{H}^{m-2}(z).$$
\end{prop}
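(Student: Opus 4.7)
The plan is to reduce the inequality to the top-dimensional quantitative cone splitting (Proposition \ref{Top dimension quantitative cone splitting}) by averaging over $(m-1)$-tuples drawn from $V := \mathcal{T} \cap B_{r/2}(x)$. The geometric content of Definition \ref{annular regions}(a1) is that $V$ is a $C^2$-graph over an $(m-2)$-disk of radius $\asymp r/2$ in $x + L_{\mathcal{A}}$ with $C^2$-norm bounded by $\delta$, so $|V| := \Hau^{m-2}(V) \asymp r^{m-2}$ and a generic $(m-1)$-tuple from $V$ is $\alpha(m)$-linearly independent at scale $r/2$ and spans (after translation) a plane within Grassmannian distance $C(m)\sqrt{\delta}$ of $L_{\mathcal{A}}$.

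Concretely, I fix $\alpha = \alpha(m) > 0$ and let $A \subset V^{m-1}$ be the set of tuples $(x_0, \ldots, x_{m-2})$ that are $\alpha$-linearly independent at scale $r/2$ and whose translated span $L = L_{x_0,\ldots,x_{m-2}}$ satisfies $\textup{d}_{\textup{Gr}}(L, L_{\mathcal{A}}) \leq C(m)\sqrt{\delta}$. A direct Fubini count using the $\delta$-graph bound on $\mathfrak{t}_{\mathcal{T}}$ gives $|A| \geq c(m) |V|^{m-1}$ once $\delta \leq \delta(m)$. For each tuple in $A$, Proposition \ref{Top dimension quantitative cone splitting} (with the enlargement $1.4r$ replaced by $3r/2$ via Lemma \ref{properties of heat mollifier}(4)) yields
\begin{equation*}
    \bar{\Theta}(x_0, r; L) + \bar{\Theta}(x_0, r; L^{\perp}) \leq C(m)\, r \sum_{i=0}^{m-2} \frac{\partial}{\partial r}\bar{\Theta}(x_i, 3r/2).
\end{equation*}
Integrating over $A$ and applying Fubini, each of the $(m-1)$ summands on the right contributes at most $|V|^{m-2} \cdot r\int_V \partial_r \bar{\Theta}(z, 3r/2)\,\dif\Hau^{m-2}(z)$, so division by $|A|$ and the mean-value principle select a specific tuple in $A$ for which
\begin{equation*}
    \bar{\Theta}(x_0, r; L) + \bar{\Theta}(x_0, r; L^{\perp}) \leq C(m) \fint_{V} r \frac{\partial}{\partial r}\bar{\Theta}(z, 3r/2)\,\dif\Hau^{m-2}(z).
\end{equation*}
By construction $L$ satisfies $\textup{d}_{\textup{Gr}}(L, L_{\mathcal{A}}) \leq C(m)\sqrt{\delta}$.

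The main obstacle I expect is upgrading the automatic bound $|x_0 - x| \leq r/2$ to the sharp $|\bar{x} - x| \leq C(m)\sqrt{\delta}\,r$ claimed in the statement. I plan to address this by recentering the mollified energies from $x_0$ to $x$ via Lemma \ref{properties of heat mollifier}(4): the inclusion $B_r(x) \subset B_{3r/2}(x_0)$ gives $\bar{\Theta}(x, r; L) \leq C(m)\bar{\Theta}(x_0, 3r/2; L)$ for free, while in the perpendicular part the change of weight from $|\Pi_{L^{\perp}}(y - x)|^2$ to $|\Pi_{L^{\perp}}(y - x_0)|^2$ is controlled by $|\Pi_{L^{\perp}}(x - x_0)|^2 \leq C(m)\delta\, r^2$; the factor $\delta$ appears precisely because $x, x_0 \in \mathcal{T}$ (so $x - x_0$ is nearly tangent to $\mathcal{T}$, hence nearly in $L_{\mathcal{A}}$) and $L$ is $\sqrt{\delta}$-close to $L_{\mathcal{A}}$. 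This $\delta r^2$ error absorbs into the cone-splitting bound after a slight scale enlargement, so taking $\bar{x} := x$ achieves $|\bar{x} - x| = 0$, which is within the claimed tolerance.
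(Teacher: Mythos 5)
Your averaging approach is genuinely different from the paper's, and it runs into the problem you already anticipated; unfortunately the recentering fix you propose does not close the gap.

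The paper does not average over $(m-1)$-tuples. It fixes a single $1/2$-linearly independent tuple $x_0=x, x_1,\dots,x_{m-2}\in\mathcal{T}\cap B_{r/20}(x)$, replaces each point by a local center of mass $\bar{x}_i=\fint_{B_{cr}(x_i)\cap\mathcal{T}}z\,\dif\Hau^{m-2}(z)$ over a tiny ball of radius $\sim r/1000$ on $\mathcal{T}$, and applies Jensen's inequality to the convex map $z\mapsto|\langle\nabla u_\epsilon(y),y-z\rangle|^2$ (with the weight kept at $\bar{x}_i$) to obtain $r\frac{\p}{\p r}\bar\Theta(\bar{x}_i,1.4r)\leqslant C(m)\fint_{B_{cr}(x_i)\cap\mathcal{T}}r\frac{\p}{\p r}\bar\Theta(z,3r/2)$ after a change of mollifier center via Lemma~\ref{properties of heat mollifier}(4). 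Since $\bar{x}_0$ is a center of mass over a ball of radius $\sim r/1000$ on a graph with $|\nabla\mathfrak{t}_\mathcal{T}|\leqslant\delta$ centered at $x$, one gets $|\bar{x}_0-x|\leqslant C\delta r$ automatically, with no recentering step required. Your Fubini/pigeonhole scheme does correctly produce a tuple achieving the main inequality, but, as you note, the resulting $\bar{x}=x_0$ is only localized to $|\bar{x}-x|\leqslant r/2$.

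The recentering to $\bar{x}:=x$ is where the argument fails. Expanding $|\Pi_{L^\perp}(y-x)|^2\leqslant 2|\Pi_{L^\perp}(y-x_0)|^2+2|\Pi_{L^\perp}(x-x_0)|^2$ and integrating against $\rho_{3r/2}(y-x_0)|\Pi_{L^\perp}\nabla u_\epsilon|^2$ produces an additive error of order $r^{-2}|\Pi_{L^\perp}(x-x_0)|^2\cdot\bar\Theta(x_0,3r/2)\sim\delta\,\bar\Theta(x_0,3r/2)$. By (a3) and monotonicity $\bar\Theta(x_0,3r/2)\gtrsim\varepsilon_0$, so this error is of definite size $\sim\delta$. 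But the right-hand side $\fint_{\mathcal{T}\cap B_{r/2}(x)} r\frac{\p}{\p r}\bar\Theta(z,3r/2)$ has no lower bound of that order; (a2) only gives $r\frac{\p}{\p r}\bar\Theta\leqslant\delta$ as an \emph{upper} bound, and this average can be arbitrarily smaller (e.g.\ when $u_\epsilon$ is nearly exactly conical across $\mathcal{T}\cap B_{r/2}(x)$). Hence the $\delta$-size term does not absorb, and no ``slight scale enlargement'' can fix it. This is not a cosmetic loss: an additive $+\delta$ on the right side of the proposition would propagate through Theorems~\ref{effective quantitative cone splitting1} and~\ref{effective qunatitative cone splitting2} into Proposition~\ref{basic estimate of energy}, and after integrating $\dif r/r$ over the logarithmically long annulus the resulting contribution is unbounded. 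The paper's device --- averaging the base point over a microscopic ball on $\mathcal{T}$ rather than over all of $\mathcal{T}\cap B_{r/2}(x)$ --- is exactly what avoids moving the base point macroscopically and hence avoids this uncontrollable error.
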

\begin{proof}
    We may assume $r=1$. By definition (a1) of an annular region, we can take $x_0=x,x_1\dots x_{m-2}\in B_{1/20}(x)$ that are 1/2-linearly independent (see Definition \ref{effective independent}. Set 
    $$\bar{x}_i=\fint_{B_{1/1000}(x_i)\cap\mathcal{T}}z\dif\mathcal{H}^{m-2}(z).$$
    Then by the smallness of the curvature of $\mathcal{T}$, we have $x,\bar{x}_1,\dots,\bar{x}_{m-2}$ are 1/3- linear independent if $\delta$ is small enough. Using Jensen's inequality we have
    \begin{equation}
        \begin{aligned}
            &\frac{\p}{\p r}\bar{\Theta}(\bar{x}_i,1.4)\\=&\int\left(-\dot{\rho}_{1.4}(y-\bar{x}_i)\left\vert\left\langle\nabla u_{\epsilon},y-\fint_{B_{1/1000}(x_i)\cap\mathcal{T}}z\dif\mathcal{H}^{m-2}(z)\right\rangle\right\vert^2+\rho_{1.4}(y-\bar{x}_i)F(u_{\epsilon})/\epsilon^2\right)\dif y\\\leqslant&\int\left(-\dot{\rho}_{1.4}(y-\bar{x_i})\fint_{B_{1/1000}(x_i)\cap\mathcal{T}}\vert\langle\nabla u_{\epsilon},y-z\rangle\vert^2\dif\mathcal{H}^{m-2}(z)+\rho_{1.4}(y-\bar{x}_i)F(u_{\epsilon})/\epsilon^2\right)\dif y\\=&\fint_{B_{1/1000}(x_i)\cap\mathcal{T}}\int\left(-\dot{\rho}_{1.4}(y-\bar{x_i})\vert\langle\nabla u_{\epsilon},y-z\rangle\vert^2+\rho_{1.4}(y-\bar{x}_i)F(u_{\epsilon})/\epsilon^2\right)\dif y\dif\mathcal{H}^{m-2}(z)\\\leqslant &C(m)\fint_{B_{1/1000}(x_i)\cap\mathcal{T}}\int\left(-\dot{\rho}_{3/2}(y-z)\vert\langle\nabla u_{\epsilon},y-z\rangle\vert^2+\rho_{3/2}(y-z)F(u_{\epsilon})/\epsilon^2\right)\dif y\dif\mathcal{H}^{m-2}(z)\\=&C(m)\fint_{B_{1/1000}(x_i)\cap\mathcal{T}}\frac{\p}{\p r}\bar{\Theta}(z,3/2)\dif\mathcal{H}^{m-2}(z).
        \end{aligned}\nonumber
    \end{equation}
    As $\mathcal{T}$ is bi-Lipshitz to $L_{\mathcal{A}}$ with Lipshitz constants bounded by $\delta$, for $\delta$ small we have 
    $$c(m)\leqslant\mathcal{H}^{m-2}(\mathcal{T}\cap B_{1/1000}(x_i)\leqslant\mathcal{H}^{m-2}(\mathcal{T}\cap B_{1/2}(x)\leqslant C(m).$$
    Hence 
    $$\fint_{B_{1/1000}(x_i)\cap\mathcal{T}}\frac{\p}{\p r}\bar{\Theta}(z,3/2)\dif\mathcal{H}^{m-2}(z)\leqslant C(m)\fint_{B_{1/2}(x)\cap\mathcal{T}}\frac{\p}{\p r}\bar{\Theta}(z,3/2)\dif\mathcal{H}^{m-2}(z).$$
    Combining the top stratum cone splitting \ref{Top dimension quantitative cone splitting}, we have the desired estimate.
\end{proof}
Next, let us see how to improve the $(m-2)$-subspace on the left side to $\mathcal{L}_{x,r}$.
\begin{thm}[{cf. \cite[Theorem 6.9]{naber2024energyidentitystationaryharmonic}}]\label{effective quantitative cone splitting1}
    Let $u_{\epsilon}:B_{10R}(p)\rightarrow\R^L$ be a solution to (\ref{*}) with $R^{2-m}\int_{B_{10R}(p)}e_{\epsilon}(u_{\epsilon})\leqslant\Lambda$ and $\mathcal{A}=B_2(p)\setminus\overline{B_{\mathfrak{r}_x}(\mathcal{T})}$  be a $\delta$-annular region. For each $x\in\mathcal{T}$ with $r\geqslant2\mathfrak{r}_x$, we have $$\bar{\Theta}(x,r;\mathcal{L}_{x,r})+\bar{\Theta}(x,r;\mathcal{L}_{x,r}^{\perp})\leqslant C(m)\left(r\frac{\p}{\p r}\bar{\Theta}(x,2r)+\fint_{\mathcal{T}\cap B_r(x)}r\frac{\p}{\p r}\bar{\Theta}(z,2r)\dif\mathcal{H}^{m-2}(z)\right).$$
\end{thm}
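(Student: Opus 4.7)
The plan is to combine a quantitative cone splitting anchored at $x$ itself with the minimizing property of $\mathcal{L}_{x,r}$. The right-hand side of the theorem already has the shape produced by Proposition \ref{Top dimension quantitative cone splitting} applied with $x_0=x$ and $m-2$ auxiliary points obtained by averaging over $\mathcal{T}$; the core strategy is therefore to rerun the argument of Proposition \ref{quantitative cone splitting in annular regions} anchored at $x$ rather than at a nearby barycenter, and then trade the resulting auxiliary plane $L$ for $\mathcal{L}_{x,r}$ using the uniqueness estimate of Lemma \ref{uniqueness of best plane}.

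For the first step, I would pick $\tilde{x}_1,\ldots,\tilde{x}_{m-2}\in\mathcal{T}\cap B_{r/20}(x)$ that are $1/2$-linearly independent at scale $r$ (using the curvature bound in Definition \ref{annular regions}(a1)) and let $x_i$ be the barycenter of $\mathcal{T}\cap B_{r/1000}(\tilde{x}_i)$. For $\delta$ small, $x,x_1,\ldots,x_{m-2}$ remain $c(m)$-linearly independent at scale $r$, so Proposition \ref{Top dimension quantitative cone splitting} with $L=\mathrm{span}\{x_i-x\}$ controls $\bar{\Theta}(x,r;L)+\bar{\Theta}(x,r;L^\perp)$ by $r\tfrac{\p}{\p r}\bar{\Theta}(x,1.4r)$ plus a sum over the $x_i$. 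A Jensen step identical to the one used in Proposition \ref{quantitative cone splitting in annular regions} converts each barycenter derivative into an average over $\mathcal{T}$, and the Gaussian form of $\rho$ in Lemma \ref{properties of heat mollifier} gives $-\dot{\rho}_{1.4r}\leq C(-\dot{\rho}_{2r})$ and $\rho_{1.4r}\leq C\rho_{2r}$, which upgrades the scale from $1.4r$ to $2r$ up to multiplicative constants. This yields
\begin{equation*}
\bar{\Theta}(x,r;L)+\bar{\Theta}(x,r;L^\perp)\leq C\Bigl(r\tfrac{\p}{\p r}\bar{\Theta}(x,2r)+\fint_{\mathcal{T}\cap B_r(x)}r\tfrac{\p}{\p r}\bar{\Theta}(z,2r)\,\dif\Hau^{m-2}\Bigr).
\end{equation*}
The tangential bound $\bar{\Theta}(x,r;\mathcal{L}_{x,r})\leq\bar{\Theta}(x,r;L)$ is then free from minimality. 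For the normal piece I would invoke Lemma \ref{uniqueness of best plane} to get $\textup{d}_{Gr}(L,\mathcal{L}_{x,r})^2\leq C\bar{\Theta}(x,r;L)/\bar{\Theta}(x,r)$, write $\Pi_{\mathcal{L}_{x,r}^\perp}=\Pi_{L^\perp}+(\Pi_{\mathcal{L}_{x,r}^\perp}-\Pi_{L^\perp})$, expand $|\Pi_{\mathcal{L}_{x,r}^\perp}(y-x)|^2|\Pi_{\mathcal{L}_{x,r}^\perp}\nabla u_\epsilon|^2$, and discard terms of higher order in the Grassmannian difference. Applying the pointwise estimate $|y-x|^2\rho_r(y-x)\leq Cr^2\rho_{Cr}(y-x)$ (immediate from the Gaussian form) together with the non-doubling bound of Lemma \ref{energy non doubling} produces
\begin{equation*}
\bar{\Theta}(x,r;\mathcal{L}_{x,r}^\perp)\leq C\bar{\Theta}(x,r;L^\perp)+C\,\textup{d}_{Gr}(L,\mathcal{L}_{x,r})^2\,\bar{\Theta}(x,r)\leq C\bigl(\bar{\Theta}(x,r;L^\perp)+\bar{\Theta}(x,r;L)\bigr),
\end{equation*}
and summing with the tangential bound closes the argument through the first-step estimate.

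The main technical hurdle is the normal-energy replacement in the last display. A naive swap of $\mathcal{L}_{x,r}^\perp$ for $L^\perp$ produces an error of size $\textup{d}_{Gr}(L,\mathcal{L}_{x,r})^2\,\bar{\Theta}(x,r)=O(\delta\Lambda)$, which is far too large to be absorbed by the small derivative quantities on the right-hand side. The crucial observation is that the same factor $\textup{d}_{Gr}^2$ is quantitatively controlled by the variational gap $\bar{\Theta}(x,r;L)/\bar{\Theta}(x,r)$ via Lemma \ref{uniqueness of best plane}; this collapses the error into $C\bar{\Theta}(x,r;L)$, which is already controlled by the cone-splitting step. All remaining ingredients—the Jensen comparison over $\mathcal{T}$ and the scale adjustment from $1.4r$ to $2r$—are routine consequences of the Gaussian structure of $\rho$ and present already in the earlier sections.
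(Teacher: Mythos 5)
Your argument is correct and follows essentially the same route as the paper: cone splitting at $x$ with barycenter auxiliary points, the Jensen comparison over $\mathcal{T}$, the Grassmannian-distance bound from Lemma \ref{uniqueness of best plane} to trade $L$ for $\mathcal{L}_{x,r}$, and the projection expansion absorbed via Lemma \ref{energy non doubling}. The one cosmetic gain in your version is that by anchoring the top-dimension cone splitting at $x$ itself you skip the intermediate point $\bar{x}$ from Proposition \ref{quantitative cone splitting in annular regions} and hence the extra term $\vert\Pi_{x,r}^{\perp}(x-\bar{x})\vert^{2}\bar{\Theta}(x,r)$ that the paper has to control separately; the rest of the estimate, including the crucial collapse of $\textup{d}_{\textup{Gr}}(L,\mathcal{L}_{x,r})^{2}\bar{\Theta}(x,r)$ into $C\bar{\Theta}(x,r;L)$, is identical.
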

\begin{proof}
    By Proposition \ref{quantitative cone splitting in annular regions}, there exists an $\bar{x},L$, with 
    $$\bar{\Theta}(\bar{x},1.1r;L)+\bar{\Theta}(\bar{x},1.1r;L^{\perp})\leqslant C\fint_{\mathcal{T}\cap B_{r/2}(x)}r\frac{\p}{\p r}\bar{\Theta}(z,3r/2)\dif\mathcal{H}^{m-2}(z),$$ 
    $$r\frac{\p}{\p r}\bar{\Theta}(\bar{x},1.1r)\leqslant C\fint_{\mathcal{T}\cap B_{r/2}(x)}r\frac{\p}{\p r}\bar{\Theta}(z,3r/2)\dif\mathcal{H}^{m-2}(z).$$
    A simple comparison gives the control 
    $$\bar{\Theta}(x,r;\mathcal{L}_{x,r})\leqslant C(m)\bar{\Theta}(\bar{x},1.1r;L).$$
    For the $\bar{\Theta}(x,r;\mathcal{L}_{x,r}^{\perp})$, we have by Lemma \ref{uniqueness of best plane}
    $$\textup{d}_{\textup{Gr}}(L,\mathcal{L}_{x,r})^2\leqslant C(m)\frac{\bar{\Theta}(x,r;L)}{\bar{\Theta
    }(x,r)}\leqslant \frac{C(m)}{\bar{\Theta}(x,r)}\fint_{\mathcal{T}\cap B_{r/2}(x)}r\frac{\p}{\p r}\bar{\Theta}(z,3r/2)\dif\mathcal{H}^{m-2}(z).$$ 
    On the other hand,
    \begin{equation}
        \begin{aligned}
            \vert\Pi_{x,r}^{\perp}(x-\bar{x})\vert^2\bar{\Theta}(x,r)&\leqslant\frac{1}{2}r^2\int\rho_r(y-x)\vert\nabla_{x-\bar{x}}u_{\epsilon}\vert^2\\&\leqslant r^2\int\rho_r(y-x)(\vert\nabla_{y-x}u_{\epsilon}\vert^2+\vert\nabla_{y-\bar{x}}u_{\epsilon}\vert^2)\\&\leqslant C(m)r^2\left(r\frac{\p}{\p r}\bar{\Theta}(x,3r/2)+r\frac{\p}{\p r}\bar{\Theta}(\bar{x},3r/2)\right).
        \end{aligned}\nonumber
    \end{equation}
    Hence
    \begin{equation}
        \begin{aligned}
            &\bar{\Theta}(x,r;\mathcal{L}_{x,r}^{\perp})\\=&\int\rho_r(y-x)\vert\Pi_{x,r}^{\perp}(y-x)\vert^2\vert\Pi_{x,r}^{\perp}\nabla u_{\epsilon}\vert^2\\\leqslant&\int\rho_r(y-x)(2\vert\Pi_{x,r}^{\perp}(y-\bar{x})\vert^2(2\vert\Pi_{L}^{\perp}\nabla u_{\epsilon}\vert^2+2\vert\nabla u_{\epsilon}\vert^2\vert\Pi_{x,r}^{\perp}-\Pi_{L}^{\perp}\vert^2)+2\vert\Pi_{x,r}^{\perp}(x-\bar{x})\vert^2\vert\nabla u_{\epsilon}\vert^2)\\\leqslant& C(m)\bar{\Theta}(\bar{x},1.1r;L^{\perp})+C(m)\bar{\Theta}(x,1.1r)\textup{d}_{\textup{Gr}}(L,\mathcal{L}_{x,r})^2+2\frac{\vert \Pi_{x,r}^{\perp}(x-\bar{x})\vert^2}{r^2}\bar{\Theta}(x,r)\\\leqslant& C(m)\left(r\frac{\p}{\p r}\bar{\Theta}(x,2r)+\fint_{\mathcal{T}\cap B_{r/2}(x)}r\frac{\p}{\p r}\bar{\Theta}(z,3r/2)\dif\mathcal{H}^{m-2}(z)\right).
        \end{aligned}\nonumber
    \end{equation}
\end{proof}

\section{Best approximating submanifolds}\label{s:best submanifold}
In this section we would glue together the best planes $\mathcal{L}_{x,r}$ to build best approximating submanifolds. Roughly speaking, near the submanifold $\mathcal{T}$ in the annular region the density of $u_{\epsilon}$ roughly looks like a constant in a fixed $(m-2)$-subspace. As a result, the heat mollified density function looks roughly like a 2-dimensional Gaussian. We will construct the approximating submanifold $\mathcal{T}_r$ such that for each $x\in\mathcal{T}_r$ with $r>2\mathfrak{r}_x$, $\bar{\Theta}$ assume its maximal on $\mathcal{T}_r$. Hence $\mathcal{T}_r$ is the center of this Gaussian. Precisely, we have the following.
\begin{thm}[{cf. \cite[Theorem 7.1]{naber2024energyidentitystationaryharmonic}}]\label{apporximating submanifold}
    Let $u_{\epsilon}:B_{10R}(p)\rightarrow\R^L$ be a solution to (\ref{*}) with $R^{2-m}\int_{B_{10R}(p)}e_{\epsilon}(u_{\epsilon})\leqslant\Lambda$ and $\mathcal{A}=B_2(p)\setminus\overline{B_{\mathfrak{r}_x}(\mathcal{T})}$  be a $\delta$-annular region. For each $r>0$ there exists a submanifold $\mathcal{T}_r\subset B_{3/2}(p)$ with the followings hold:
    \begin{enumerate}[(1)]
        \item We can globally write $\mathcal{T}_r=p+\textup{Graph}_{L_\mathcal{A}}\mathfrak{t}_r$, where $\mathfrak{t}_r:L_\mathcal{A}\rightarrow L_{\mathcal{A}}^{\perp}$ satisfies $$\vert \mathfrak{t}_r\vert+\vert\nabla\mathfrak{t}_r\vert+r\vert\nabla^2\mathfrak{t}_r\vert\leqslant C(m)\sqrt{\delta};$$
        \item For each $x\in\mathcal{T}_r$ with $r\geqslant 2\mathfrak{r}_x$ we can locally write $\mathcal{T}_r\cap B_r(x)=x+\textup{Graph}_{\mathcal{L}_{x,r}}\mathfrak{t}_{x,r}$, where $\mathfrak{t}_{x,r}:\mathcal{L}_{x,r}\cap B_r\rightarrow\mathcal{L}_{x,r}^{\perp}$ satisfies $$\vert\nabla\mathfrak{t}_{x,r}\vert+r\vert\nabla^2\mathfrak{t}_{x,r}\vert+r^2\vert\nabla^3\mathfrak{t}_{x,r}\vert\leqslant C(m,\varepsilon_0)\sqrt{\bar{\Theta}_{\mathcal{L}}(x,2r)};$$
        \item For each $x\in\mathcal{T}_r$ with $r\geqslant2\mathfrak{r}_x$, we have $$\left\vert\frac{\p}{\p r}\mathfrak{t}_r(x)\right\vert+\left\vert r\frac{\p}{\p r}\nabla\mathfrak{t}_r(x)\right\vert\leqslant C(m,\epsilon_0)\left(\sqrt{r\frac{\p}{\p r}\bar{\Theta}(x,2r)}+\sqrt{\bar{\Theta}_{\mathcal{L}}(x,2r)}\right)\leqslant C(m,\varepsilon_0)\sqrt{\delta};$$
        \item If $x\in\mathcal{T}_r$ and $r\leqslant\mathfrak{r}_x$, then $\mathcal{T}_r\cap B_r(x)=\mathcal{T}\cap B_r(x)$;
        \item If $x\in B_{3/2}(p)$ and $r\geqslant2\mathfrak{r}_x$, then the following Eular-Lagrange equation holds $$\Pi_{x,r}^{\perp}\nabla\bar{\Theta}(x,r)=0\iff\forall v\in\mathcal{L}_{x,r}^{\perp},\int-\dot{\rho}_r(y-x)\langle\nabla_{y-x}u_{\epsilon},\nabla_vu_{\epsilon}\rangle=0;$$
        \item For each $x\in\mathcal{T}_r$ with $r\geqslant2\mathfrak{r}_x$, we have 
        $$\Vert\Pi_{\mathcal{T}_r}(x)-\Pi_{x,r}\Vert\leqslant C(m,\varepsilon_0)\left(\sqrt{r\frac{\p}{\p r}\bar{\Theta}(x,2r)}+e^{-R/2}\right)\sqrt{\bar{\Theta}_{\mathcal{L}}(x,2r)}.$$
        Here $\Pi_{\mathcal{T}_r}(x)$ is the orthogonal projection to the tangent space $\textup{T}_x\mathcal{T}_r$.
        \item For all $r,r'$ with $r/10\leqslant r'\leqslant 10 r$ we have $$\int_{\mathcal{T}_r\cap B_{1/4}(p)}r\frac{\p}{\p r}\bar{\Theta}(x,r)\dif\mathcal{H}^{m-2}(x)\leqslant C(m)\int_{\mathcal{T}\cap B_{1/3}(p)}r'\frac{\p}{\p r}\bar{\Theta}(x,1.1r')\dif\mathcal{H}^{m-2}(x).$$
    \end{enumerate}
\end{thm}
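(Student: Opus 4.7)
The plan is to realize $\mathcal{T}_r$ as the zero set of the transverse gradient $F_r(x)=\Pi_{x,r}^{\perp}\nabla_x\bar{\Theta}(x,r)$ whenever $r\geqslant 2\mathfrak{r}_x$, and to glue onto the original $\mathcal{T}$ in the regime $r\leqslant\mathfrak{r}_x$. First I would derive the concrete form of (5): taking $\xi=\rho_r(y-x)v$ with $v\in\mathcal{L}_{x,r}^{\perp}$ in the stationary equation (\ref{**}) and combining with the identity $r\nabla_v\bar{\Theta}(x,r)=r^2\int\dot{\rho}_r(y-x)\langle\nabla_{(x-y)/r}u_{\epsilon},\nabla_v u_{\epsilon}\rangle$ derived in the proof of Proposition \ref{spacial derivative est}, one sees that $F_r(x)=0$ is equivalent to the integral Euler--Lagrange equation in (5). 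This justifies the definition and already proves (5).

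Next I would set up an implicit function argument to solve $F_r=0$ as a graph over $L_{\mathcal{A}}$. The ranges match: $\mathcal{L}_{x,r}^{\perp}$ is two-dimensional and by Theorem \ref{best plane properties} sits $C\sqrt{\delta}$-close to $L_{\mathcal{A}}^{\perp}$, so $F_r$ may be viewed locally as a map into a fixed two-dimensional fiber. Differentiating $F_r$ in an $L_{\mathcal{A}}^{\perp}$ direction yields two contributions: the Hessian of $\bar{\Theta}$ along $L_{\mathcal{A}}^{\perp}$, which by the spacial Hessian calculation in Proposition \ref{spacial derivative est} and the eigenvalue separation of Lemma \ref{eigenvalue best plane} is bounded below in operator norm by a constant times $\bar{\Theta}(x,r)\geqslant c(m)\varepsilon_0$; and lower-order terms involving the derivative of $\Pi_{x,r}^{\perp}$, which Theorem \ref{best plane properties} controls by $C\sqrt{\bar{\Theta}_{\mathcal{L}}(x,2r)}\leqslant C\sqrt{\delta}$. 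For $\delta$ small the nondegenerate part dominates and the implicit function theorem produces a $C^2$ graph $\mathfrak{t}_r:L_{\mathcal{A}}\cap B_{3/2}\to L_{\mathcal{A}}^{\perp}$.

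The pointwise bounds (1)--(3) then follow by implicit differentiation of the identity $F_r(p+x+\mathfrak{t}_r(x))=0$: derivatives of $\mathfrak{t}_r$ are expressed as quotients of derivatives of $F_r$ by the nondegenerate Hessian factor just estimated. The numerators are controlled by Proposition \ref{spacial derivative est} together with the sharper cone-splitting Theorem \ref{effective quantitative cone splitting1}, which produce the $\sqrt{\bar{\Theta}_{\mathcal{L}}(x,2r)}$ and $\sqrt{r\frac{\partial}{\partial r}\bar{\Theta}(x,2r)}$ factors appearing on the right of (2) and (3). For (6) I would observe that $T_x\mathcal{T}_r=\ker DF_r(x)$; since $DF_r$ restricted to $\mathcal{L}_{x,r}$ retains only the cross-term coming from $\partial_x\Pi_{x,r}^{\perp}$ together with a small tail of size $e^{-R/2}$ from Lemma \ref{properties of heat mollifier}(3), a direct linear-algebra computation yields the stated bound on $\Vert\Pi_{\mathcal{T}_r}(x)-\Pi_{x,r}\Vert$. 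The local graph representation (2) then follows from (6).

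For (4) I would glue $\mathfrak{t}_r$ to the graph $\mathfrak{t}_{\mathcal{T}}$ of the original annular-region submanifold via a partition of unity at scale $\mathfrak{r}_x$; compatibility at $r\sim 2\mathfrak{r}_x$ holds because $F_r$ already vanishes to order $\sqrt{\delta}$ along $\mathcal{T}$ by the annular-region conditions (a2)--(a3) combined with Theorem \ref{effective quantitative cone splitting1}. I expect (7) to be the principal obstacle. The plan there is to rewrite $r\frac{\partial}{\partial r}\bar{\Theta}(x,r)$ in heat-mollified form, change variables by the nearest-point projection $\mathcal{T}_r\to\mathcal{T}$ using the uniform bilipschitz control from (1), and then apply Fubini together with the kernel comparison in Lemma \ref{properties of heat mollifier}(4) to transfer the integral from scale $r$ on $\mathcal{T}_r$ to scale $1.1r'$ on $\mathcal{T}$. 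The delicate point is uniformity of constants across $r'\in[r/10,10r]$; this uses that the Jacobian of the projection stays $1+O(\sqrt{\delta})$ uniformly in $r$ thanks to (3).
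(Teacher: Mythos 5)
Your overall scheme — define $\mathcal{T}_r$ as a local zero set of the transverse gradient via an implicit function theorem, then glue onto $\mathcal{T}$ near scale $\mathfrak{r}_x$ — is the same as the paper's, and (5) is indeed just the identity (\ref{spacial gradient}) applied in the $\mathcal{L}_{x,r}^{\perp}$ directions. However, two steps are glossed in a way that leaves genuine gaps.

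First, the negative-definiteness of the transverse Hessian (the hypothesis of your implicit function theorem) does not follow from Proposition \ref{spacial derivative est} together with eigenvalue separation as you suggest: Proposition \ref{spacial derivative est} gives only an \emph{upper} bound for $\nabla^2\bar{\Theta}$. The needed \emph{lower} bound $r\langle\nabla_v\Omega(z),v\rangle\leqslant-\tfrac15\bar{\Theta}(z,r)|v|^2$ requires two further inputs from the paper's Lemma \ref{constructing approx submanifold}: (i) the replacement $\dot\rho_r\approx-\rho_r$ up to an $e^{-R/2}$ tail, which converts the $\dot\rho_r\langle\nabla_v u,\nabla_{\Pi^\perp_{z,r}v}u\rangle$ piece into essentially $-Q[v,v]\approx-\bar\Theta$ via Lemma \ref{eigenvalue best plane}; and (ii) the bound on the $\ddot\rho_r$ cross term, which is small only because $\mathrm{dist}(z,\mathcal{T})/r\leqslant c(m)$, established by comparing $z$ to the nearby point $\tilde z\in\mathcal{T}$. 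Without (ii) the cross term is of size $\sqrt{\bar\Theta(z,2r)}\cdot\sqrt{\int\rho_{1.5r}|\nabla_{y-z}u|^2}$, which is not small for general $z$ and can overwhelm the negative contribution.

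Second, your plan for (7) — change variables along the vertical projection $\mathcal{T}_r\to\mathcal{T}$, then Fubini plus the kernel comparison Lemma \ref{properties of heat mollifier}(4) — does not close. The kernel comparison handles the replacement $\dot\rho_r(y-x)\rightsquigarrow\dot\rho_{1.1r'}(y-\hat x)$ for $\hat x=\Pi_{\mathcal{T}}(x)$, but the integrand $|\nabla_{y-x}u_\epsilon|^2$ itself depends on $x$: writing $\nabla_{y-x}u=\nabla_{y-\hat x}u+\nabla_{\hat x-x}u$ produces an extra term of size $|\hat x-x|^2|\nabla u|^2$. Using only the $C(m)\sqrt{\delta}$ Lipschitz bound from (1) this is $O(\delta)\bar\Theta\lesssim\delta\Lambda$ — a quantity that need not be dominated by $\int_{\mathcal{T}}r'\tfrac{\partial}{\partial r}\bar\Theta(x,1.1r')$, since the right side can be arbitrarily small when $u_\epsilon$ is close to $0$-homogeneous at scale $r'$. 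What makes (7) go through in the paper is the sharper pointwise distance estimate
\[
\left|\frac{\hat x-x}{r}\right|^2\leqslant C(m)\,\frac{r\tfrac{\partial}{\partial r}\bar{\Theta}(\hat x,2r)}{\bar{\Theta}(\hat x,r)},
\]
which follows from the implicit function setup: $|\Omega(\hat x)|\lesssim\sqrt{\bar{\Theta}}\sqrt{r\partial_r\bar{\Theta}}$ together with the lower bound on $\|\nabla\Omega\|$. This converts the error term into a multiple of the right-hand side, which your argument does not achieve. You should supply this estimate explicitly before invoking the bilipschitz change of variables.
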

\subsection{The local construction}
Let us first state the local construction of $\mathcal{T}_r$.
\begin{lem}[{cf. \cite[Lemma 7.6]{naber2024energyidentitystationaryharmonic}}]\label{constructing approx submanifold}
    Let $u_{\epsilon}:B_{10R}(p)\rightarrow\R^L$ be a solution to (\ref{*}) with $R^{2-m}\int_{B_{10R}(p)}e_{\epsilon}(u_{\epsilon})\leqslant\Lambda$ and $\mathcal{A}=B_2(p)\setminus\overline{B_{\mathfrak{r}_x}(\mathcal{T})}$  be a $\delta$-annular region. Let $x\in\mathcal{T}$ and $r\geqslant2\mathfrak{r}_x$ and $L$ be an $(m-2)$-subspace with $\textup{d}_{\textup{Gr}}(L,L_{\mathcal{A}})\leqslant c_m$, where $c_m$ is a small constant depending only on $m$. Then there exists a unique $\mathfrak{t}_{L,r}:L\cap B_r(\Pi_L(x-p))\rightarrow L^{\perp}$ with the following properties.
    \begin{enumerate}[(1)]
        \item For each $z\in\tilde{\mathcal{T}}_r=p+\textup{Graph}_L\mathfrak{t}_{L,r}$, we have $\Pi_{z,r}^{\perp}\nabla\bar{\Theta}(z,r)=0$;
        \item $\vert\nabla \mathfrak{t}_{L,r}\vert+r\vert\nabla^2\mathfrak{t}_{L,r}\vert+r^2\vert\nabla^3\mathfrak{t}_{L,r}\vert\leqslant C(m,\varepsilon_0)\sqrt{\bar{\Theta}(x,2r;L)}$;
        \item $\vert\frac{\p}{\p r}\mathfrak{t}_{L,r}\vert+r\vert\frac{\p}{\p r}\nabla\mathfrak{t}_{L,r}\vert\leqslant C(m,\varepsilon_0)\left(\sqrt{\bar{\Theta}_{\mathcal{L}}(x,2r)}+\sqrt{r\frac{\p}{\p r}\bar{\Theta}(x,2r)}\right)$;
        \item $\vert z-\Pi_{\tilde{\mathcal{T}}_r}(z)\vert\leqslant C(m,\varepsilon_0)\sqrt{\delta}r$ for all $z\in\mathcal{T}$, where $\hat{z}=\Pi_{\tilde{\mathcal{T}}_r}(z)$ is the only point on $\tilde{\mathcal{T}}_r$ such that $\Pi_{L}(z-p)=\Pi_{L}(\hat{z}-p)$;
        \item For all $z\in\tilde{\mathcal{T}}_r$, let $\hat{z}$ be the only point on $\mathcal{T}$ such that $\Pi_{L_{\mathcal{A}}}(z-p)=\Pi_{L_{\mathcal{A}}}(\hat{z}-p)$, then for all $r'\in[r/10,10r]$, we have $$r'\frac{\p}{\p r}\bar{\Theta}(z,r')\leqslant C(m) r'\frac{\p}{\p r}\bar{\Theta}(\hat{z},1.1r').$$
    \end{enumerate}
\end{lem}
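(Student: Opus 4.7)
The plan is to solve the Euler-Lagrange condition in item (1) as an implicit equation for $\mathfrak{t}_{L,r}$ via the implicit function theorem, and then read off items (2)--(5) by differentiating the implicit equation and invoking the a priori estimates already established. For $w\in L$ near $\Pi_L(x-p)$ and $t\in L^\perp$, set $z=p+w+t$ and define
\[
F(w,t)\;:=\;\iota_{z,r}\bigl(\Pi_{z,r}^\perp\nabla\bar{\Theta}(z,r)\bigr)\in L^\perp,
\]
where $\iota_{z,r}:\mathcal{L}_{z,r}^\perp\to L^\perp$ is the orthogonal projection, which is a linear isomorphism because $\mathrm{d}_{\mathrm{Gr}}(\mathcal{L}_{z,r},L)\leq c_m$ by hypothesis and Theorem \ref{best plane properties}(2). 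The equation $F(w,\mathfrak{t}_{L,r}(w))=0$ is exactly condition (1).

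The heart of the argument is the uniform invertibility of $\partial_t F$. Differentiating and using the Hessian formula (\ref{spcial hessian}) on the principal piece $\Pi_{z,r}^\perp\partial_t\nabla\bar{\Theta}(z,r)$, the target is to extract the leading term
\[
r^2\nabla^2_{L^\perp}\bar{\Theta}(z,r)\;=\;-c(m)\,\bar{\Theta}(z,r)\,\mathrm{Id}_{L^\perp}\;+\;O(\sqrt\delta),
\]
which is the quantitative statement that $\bar{\Theta}$ is concave transversely to $L$ with curvature proportional to its height. The error term comes from combining (\ref{spcial hessian}) with the cone splitting Theorem \ref{effective quantitative cone splitting1} (which controls $\bar{\Theta}(z,r;\mathcal{L}_{z,r})$ by $C\delta$ near $\mathcal{T}$), while the leading coefficient comes from isolating the $\dot{\rho}$ contribution in (\ref{spcial hessian}) exactly as in the $\lambda_2$ computation in Lemma \ref{eigenvalue best plane}. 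The $t$-derivatives of $\Pi_{z,r}^\perp$ are controlled by Theorem \ref{best plane properties}(3) and contribute only lower-order corrections. Combined with the lower bound $\bar{\Theta}(z,r)\geq c(m)\varepsilon_0$ from Lemma \ref{energy non doubling}, this yields $\|(\partial_t F)^{-1}\|\leq C(m,\varepsilon_0)r^2$ once $\delta$ is small.

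With invertibility in hand, the IFT applied at the approximate solution $(\Pi_L(x-p),\Pi_{L^\perp}(x-p))$ produces $\mathfrak{t}_{L,r}$ locally, and the uniform bound on $\|(\partial_t F)^{-1}\|$ allows continuation across all of $L\cap B_r(\Pi_L(x-p))$; uniqueness is immediate because the zero set of $F$ remains a graph. The estimates (2) and (3) follow by differentiating $F\equiv 0$ in $w$ and $r$ respectively: $\partial_w F$ is the mixed $L$--$L^\perp$ Hessian block, controlled by Cauchy--Schwarz in (\ref{spcial hessian}) by $C\sqrt{\bar{\Theta}_{\mathcal{L}}(z,2r)}/r$, and $\partial_r F$ is controlled via (\ref{spacial gradient}) by $r\frac{\partial}{\partial r}\bar{\Theta}(z,r)$ and $\sqrt{\bar{\Theta}_{\mathcal{L}}(z,2r)}$; higher derivatives follow by bootstrapping. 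For item (4), Theorem \ref{effective quantitative cone splitting1} applied at $z\in\mathcal{T}$ gives $|F(\Pi_L(z-p),\Pi_{L^\perp}(z-p))|\leq C\sqrt\delta\,\bar{\Theta}(z,r)/r$, and another application of $\|(\partial_t F)^{-1}\|\leq C(m,\varepsilon_0)r^2$ translates this into the distance bound claimed. Finally, item (5) is a direct consequence of (4) together with the heat-kernel pointwise comparison Lemma \ref{properties of heat mollifier}(4): if $|z-\hat z|\leq C\sqrt\delta\,r$, then $-\dot\rho_{r'}(\cdot-z)\leq C(m)(-\dot\rho_{1.1r'}(\cdot-\hat z))$, and inserting this into the monotonicity formula (\ref{monotonicity}) yields the inequality.

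The main obstacle is the non-degeneracy of $\partial_t F$: quantifying the concavity of $\bar{\Theta}$ transverse to $L$ with a constant independent of $\delta$ requires a careful extraction of the leading 2-dimensional Gaussian profile of the mollified energy from the actual solution, leaning on the cone splitting results and the energy lower bound $\bar{\Theta}\geq c\varepsilon_0$ on annular regions. Once that estimate is secured, the remaining steps reduce to routine implicit function theorem manipulations together with the derivative bounds already developed in Section \ref{s: pre} and Section \ref{s:best planes}.
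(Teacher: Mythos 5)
Your proposal follows essentially the same strategy as the paper: rewrite the Euler--Lagrange condition $\Pi_{z,r}^{\perp}\nabla\bar{\Theta}(z,r)=0$ as a fixed $L^{\perp}$-valued equation and apply the implicit function theorem, with the heart of the matter being the transverse non-degeneracy of the mollified density. Two nuances in your sketch deserve flagging, though neither is fatal.

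First, the leading-order statement ``$r^2\nabla^2_{L^{\perp}}\bar{\Theta}(z,r)=-c(m)\bar{\Theta}(z,r)\mathrm{Id}_{L^{\perp}}+O(\sqrt{\delta})$'' is stronger than what you need and stronger than what is actually shown; the paper only proves a one-sided upper bound on $r\langle\nabla_v\Omega(z),v\rangle$, and the error there is not purely $O(\sqrt{\delta})$ --- it also contains a term $C(m)e^{-R/2}\bar{\Theta}(z,r)$ from replacing $\dot{\rho}_r$ by $-\rho_r$ in the Hessian formula (\ref{spcial hessian}), and a term $C(m)\frac{\dist(z,\mathcal{T})}{r}\bar{\Theta}(z,r)$ from recentering at the nearest $\mathcal{T}$-point. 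You therefore need $R$ large (not only $\delta$ small) and $z$ restricted to a band $B_{c(m)r}(\mathcal{T})$; neither hypothesis appears explicitly in your account of the non-degeneracy step.

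Second, your existence argument (``IFT at the base point plus continuation across $L\cap B_r$'') is cleaner on paper than in practice: the uniform invertibility of $\partial_t F$ is only established on $B_{c(m)r}(\mathcal{T})$, so you must verify that the continuation never leaves that band. In the paper this is handled by running the gradient-type flow $\gamma'(t)=\Omega(\gamma(t))$ inside each transverse fiber and observing that $\frac{\mathrm{d}}{\mathrm{d}t}|\Omega(\gamma)|^2\leqslant-\frac{2}{5r}\bar{\Theta}|\Omega(\gamma)|^2$, which combined with the initial smallness $|\Omega(z)|\leqslant C\sqrt{\delta}$ on $\mathcal{T}$ and the lower bound $\bar{\Theta}\geqslant c\varepsilon_0$ confines the flow to a $C\sqrt{\delta}r/\varepsilon_0$-neighborhood of $\mathcal{T}$. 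Your bound on $|F|$ along $\mathcal{T}$, which you attribute to the cone-splitting Theorem \ref{effective quantitative cone splitting1}, is in fact more directly a consequence of the spatial gradient estimate Proposition \ref{spacial derivative est}; both give the right answer but the latter is the sharper tool here. With these two points made precise, your argument agrees with the paper's.
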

The proof is based on an implicit function theorem argument. Set $$\Omega(z)=r\Pi_{L^{\perp}}\Pi^{\perp}_{z,r}\nabla\bar{\Theta}(z,r).$$
The strength of considering the projection $\Pi_{L^{\perp}}$ is that it has a fixed range $L^{\perp}$ which is easier to control compared to varying planes $\mathcal{L}_{z,r}$. Also, since $L$ is close to $\mathcal{L}_{z,r}$ for $z\in B_r(x)$, we know that $\Pi_{L^{\perp}}:\mathcal{L}_{z,r}^{\perp}\rightarrow L^{\perp}$ is bi-Lipchitz. As a result, $\Omega(z)=0$ if and only if $\Pi_{z,r}^{\perp}\nabla\bar{\Theta}(z,r)=0$. We will prove the following properties of $\Omega(z)$:
\begin{enumerate}
    \item For $z\in B_{2r}(x)\cap\mathcal{T},\vert\Omega(z)\vert\leqslant C(m,\Lambda)\sqrt{\delta}$;
    \item For all $z\in B_{2r}(x)\cap B_{c(m)r}(\mathcal{T})$, the $2\times2$ matrix $r\Pi_{L^{\perp}}\nabla\Omega(z)$ is bounded from above by $-\frac{1}{5}\bar{\Theta}(z,r)\textup{Id}_{L^{\perp}}$, namely 
    \begin{equation}
        \begin{aligned}
            r\langle\nabla_v\Omega(z),v\rangle\leqslant-\frac{1}{5}\bar{\Theta}(z,r)\vert v\vert^2,\textup{ for all }v\in L^{\perp},
        \end{aligned}\label{positivity of derivative}
    \end{equation}
\end{enumerate}
Note that above two claims guarantees that for each $z\in B_{2r}(x)\cap\mathcal{T}$, there exists a unique $y\in (z+L^{\perp})\cap B_{c(m)r}(z)$ such that $\Omega(y)=0$. To see the existence of such $y$ in $z+L^{\perp}$, one can simply consider the curve $\gamma(t)\in z+L^{\perp}$ with 
$$\gamma'(t)=\Omega(\gamma(t)),\gamma(0)=z,$$
and calculate $\frac{\dif}{\dif t}\vert\Omega(\gamma(t))\vert^2$. Hence, using implicit function theorem, we see that there exists a smooth map $\mathfrak{t}_{L,r}:L\cap B_{3r/2}(\Pi_L(x-p))\rightarrow L^{\perp}$ such that 
$$\Omega(z+\mathfrak{t}_{L,r}(z))=0,\textup{ for all } z\in B_{3r/2}\cap L.$$
\begin{proof}
    The first claim is straightforward. Indeed, using Proposition \ref{spacial derivative est}, we see that $$\vert\Omega(z)\vert\leqslant Cr\vert\Pi_{z,r}^{\perp}\nabla\bar{\Theta}(z,r)\vert\leqslant C(m)\sqrt{r\frac{\p}{\p r}\bar{\Theta}(z,r)}\sqrt{\bar{\Theta}(z,2r)}\leqslant C(m)\sqrt{\bar{\Theta}(z,2r)}\sqrt{\delta}.$$
    For the second claim, let $v$ be a unit vector in $L^{\perp}$, then 
    \begin{equation}
        \begin{aligned}            r\langle\nabla_v\Omega(z),v\rangle&=r\nabla_v\langle\Omega(z),v\rangle=r^2\nabla_v\langle\Pi_{z,r}^{\perp}\nabla\bar{\Theta}(z,r),v\rangle=r^2\nabla_v\langle\nabla\bar{\Theta}(z,r),\Pi^{\perp}_{z,r}v\rangle\\&=r^2\nabla^2\bar{\Theta}(z,r)[v,\Pi^{\perp}_{z,r}(v)]+r^2\langle\nabla\bar{\Theta}(z,r),(\nabla_v\Pi^{\perp}_{z,r})v\rangle.
        \end{aligned}\label{construction of approx submanifold 1}
    \end{equation}
    The second piece in (\ref{construction of approx submanifold 1}) is small. Indeed, by spacial gradient estimate \ref{spacial gradient}, we see that $r\vert\nabla\bar{\Theta}(z,r)\vert\leqslant C(m)\bar{\Theta}(z,2r)$. Also, from Theorem \ref{best plane properties} (3), we have $\vert(\nabla_v\Pi^{\perp}_{z,r})v\vert\leqslant C(m)\sqrt{\delta/\bar{\Theta}(z,2r)}$. Hence, we have $$r^2\langle\nabla\bar{\Theta}(z,r),(\nabla_v\Pi^{\perp}_{z,r})v\rangle\leqslant C(m)\sqrt{\delta}\sqrt{\bar{\Theta}(z,2r)}.$$
    For the first piece in (\ref{construction of approx submanifold 1}), recall (\ref{spcial hessian}): 
    \begin{equation}
        \begin{aligned}
            &r^2\nabla^2_L\bar{\Theta}(z,r)[v,\Pi^{\perp}_{z,r}(v)]\\=&r^2\int\ddot{\rho}_r(y-z)\left\langle\nabla_{\frac{z-y}{r}}u_{\epsilon},\nabla_vu_{\epsilon}\right\rangle\left\langle\frac{z-y}{r},\Pi^{\perp}_{z,r}(v)\right\rangle+\dot{\rho}_r(y-z)\left\langle\nabla_vu_{\epsilon},\nabla_{\Pi^{\perp}_{z,r}(v)}u_{\epsilon}\right\rangle.\label{hessian again}
        \end{aligned}
    \end{equation}
    To estimate the first part in this Hessian, let $\tilde{z}\in\mathcal{T}$ be the only point such that $\Pi_{L_{\mathcal{A}}}(z-p)=\Pi_{L_{\mathcal{A}}}(\tilde{z}-p)$ with $\vert z-\tilde{z}\vert\leqslant2\textup{dist}(z,\mathcal{T})$. Then,
    \begin{equation}
        \begin{aligned}
        &r^2\left\vert\int\ddot{\rho}_r(y-z)\left\langle\nabla_{\frac{z-y}{r}}u_{\epsilon},\nabla_vu_{\epsilon}\right\rangle\left\langle\frac{z-y}{r},\Pi^{\perp}_{z,r}(v)\right\rangle\right\vert\\\leqslant &C(m)\sqrt{\int\rho_{1.5r}(y-z)\vert\nabla_{y-z}u_{\epsilon}\vert^2}\sqrt{\bar{\Theta}(z,2r)},
    \end{aligned}\nonumber
    \end{equation}
    \begin{equation}
        \begin{aligned}
            \int\rho_{1.5r}(y-z)\vert\nabla_{y-z}u_{\epsilon}\vert^2\leqslant&2\int\rho_{1.5r}(y-z)\vert\nabla_{y-\tilde{z}}u_{\epsilon}\vert^2+8\int\rho_{1.5r}(y-z)\vert\nabla u_{\epsilon}\vert^2\textup{dist}(z,\mathcal{T})^2\\\leqslant &C(m)\int\dot{\rho}_{2r}(y-\tilde{z})\vert\nabla_{y-\tilde{z}}u_{\epsilon}\vert^2+C(m)\bar{\Theta}(z,2r)\frac{\textup{dist}(x,\mathcal{T})^2}{r^2}\\\leqslant &C(m)\left(\delta+\bar{\Theta}(z,2r)\frac{\textup{dist}(z,\mathcal{T})^2}{r^2}\right)\\\leqslant &C(m)\left(\delta+\bar{\Theta}(z,r)\frac{\textup{dist}(z,\mathcal{T})^2}{r^2}\right),
        \end{aligned}\nonumber
    \end{equation}
    where in the last step we used Lemma \ref{energy non doubling}. For the second part in (\ref{hessian again}), we first have
    \begin{equation}
        \begin{aligned}
            &r^2\int\dot{\rho}_r(y-z)\left\langle\nabla_vu_{\epsilon},\nabla_{\Pi^{\perp}_{z,r}(v)}u_{\epsilon}\right\rangle\\=&r^2\int (\dot{\rho}_r(y-z)+\rho_r(y-z))\left\langle\nabla_vu_{\epsilon},\nabla_{\Pi^{\perp}_{z,r}(v)}u_{\epsilon}\right\rangle-\rho_r(y-z)\left\langle\nabla_vu_{\epsilon},\nabla_{\Pi^{\perp}_{z,r}(v)}u_{\epsilon}\right\rangle.
        \end{aligned}\nonumber
    \end{equation}
    By Proposition \ref{properties of heat mollifier},
    $$r^2\int (\dot{\rho}_r(y-z)+\rho_r(y-z))\left\langle\nabla_vu_{\epsilon},\nabla_{\Pi^{\perp}_{z,r}(v)}u_{\epsilon}\right\rangle\leqslant C(m)e^{-R/2}\bar{\Theta}(z,2r)\leqslant C(m)e^{-R/2}\bar{\Theta}(z,r).$$
    On the other hand, since $\textup{d}_{\textup{Gr}}(L,L_{\mathcal{A}})\leqslant c_m$ is small and $\textup{d}_{\textup{Gr}}(L_{\mathcal{A}},\mathcal{L}_{z,r})\leqslant C(m,\epsilon_0)\sqrt{\delta}$, we see that $\textup{d}_{\textup{Gr}}(L,\mathcal{L}_{z,r})=\textup{d}_{\textup{Gr}}(L^{\perp},\mathcal{L}_{z,r}^{\perp})$ is small. Hence by $v\in L^{\perp}$ is unit, we have
    $$\vert\Pi_{z,r}^{\perp}v-v\vert=\vert\Pi_{z,r}v\vert\leqslant C\textup{d}_{\textup{Gr}}(L,\mathcal{L}_{z,r}),\vert\Pi_{z,r}^{\perp}v\vert^2\geqslant1/2.$$ 
    As a result,
    \begin{equation}
        \begin{aligned}
            &r^2\int\rho_r(y-z)\left\langle\nabla_vu_{\epsilon},\nabla_{\Pi^{\perp}_{z,r}(v)}u_{\epsilon}\right\rangle\\=&r^2\int\rho_r(y-z)\left\langle\nabla_{\Pi^{\perp}_{z,r}(v)}u_{\epsilon},\nabla_{\Pi^{\perp}_{z,r}(v)}u_{\epsilon}\right\rangle+\rho_r(y-z)\left\langle\nabla_{\Pi_{z,r}(v)}u_{\epsilon},\nabla_{\Pi^{\perp}_{z,r}(v)}u_{\epsilon}\right\rangle\\\geqslant&\frac{1}{4}\bar{\Theta}(z,r)-\sqrt{\bar{\Theta}(z,r)}\sqrt{\bar{\Theta}_{\mathcal{L}}(z,r)}\textup{d}_{\textup{Gr}}(L,\mathcal{L}_{z,r}).
        \end{aligned}\nonumber
    \end{equation}
    Putting all those estimates together, we see that 
    \begin{equation}
        \begin{aligned}
        &r\langle\nabla_v\Omega(z),v\rangle=r\nabla_v\langle\Omega(z),v\rangle\\\leqslant&\left(\left(-\frac{1}{4}+C(m)e^{-R/2}+C(m)\frac{\textup{dist}(z,\mathcal{T})}{r}\right)\bar{\Theta}(z,r)+C(m)\sqrt{\delta}\sqrt{\bar{\Theta}(z,r)}\right).
        \end{aligned}\nonumber
    \end{equation}
    Hence if $R\geqslant R(m)$ is sufficiently large and $\delta\leqslant\delta(m),\textup{dist}(z,\mathcal{T})/r\leqslant c(m)$ are sufficiently small, then $$r\langle\nabla_v\Omega(z),v\rangle\leqslant-\frac{1}{5}\bar{\Theta}(z,r),$$
    which is the desired estimate.
\end{proof}
\begin{proof}[Proof of spatial derivative estimates]
    By implicit function theorem, the gradient estimate follows from 
    \begin{equation}
        \vert r\nabla_L\Omega(z)\vert\leqslant C(m)\sqrt{\bar{\Theta}(z,2r)}\sqrt{\bar{\Theta}(z,2r;L)}.\label{best approx spacial}
    \end{equation}
    This follows from a similar strategy of implicit function claim but is easier. Let $l$ be any unit vector in $L$, then take $v\in L^{\perp}$ be the unit vector that is parallel to $\nabla_l\Omega(z)$, we have
    \begin{equation}
        \begin{aligned}
            &\vert r\nabla_l\Omega(z)\vert=r\langle\nabla_l\Omega(z),v\rangle\\=&r^2\nabla^2\bar{\Theta}(z,r)[l,\Pi^{\perp}_{z,r}(v)]+r^2\langle\nabla\bar{\Theta}(z,r),(\nabla_v\Pi^{\perp}_{z,r})l\rangle.
        \end{aligned}\nonumber
    \end{equation}
    By spacial gradient estimates \ref{spacial derivative est} and the definition of best approximating plane, 
    \begin{equation}
        r^2\langle\nabla\bar{\Theta}(z,r),(\nabla_v\Pi^{\perp}_{z,r})l\rangle\leqslant C(m)\frac{\sqrt{\bar{\Theta}_{\mathcal{L}}(z,2r)}}{\sqrt{\bar{\Theta}(z,r)}}\bar{\Theta}(z,2r)\leqslant C(m)\sqrt{\bar{\Theta}(z,2r)}\sqrt{\bar{\Theta}(z,2r;L)}.\label{best approx spacial1}
    \end{equation}
    Recall (\ref{spcial hessian}) again,
    \begin{equation}
        \begin{aligned}
            &\vert r^2\nabla^2_L\bar{\Theta}(z,r)[l,\Pi^{\perp}_{z,r}(v)]\vert\\=&\left\vert r^2\int\ddot{\rho}_r(y-z)\left\langle\nabla_{\frac{z-y}{r}}u_{\epsilon},\nabla_lu_{\epsilon}\right\rangle\left\langle\frac{z-y}{r},\Pi^{\perp}_{z,r}(v)\right\rangle+\dot{\rho}_r(y-z)\left\langle\nabla_lu_{\epsilon},\nabla_{\Pi^{\perp}_{z,r}(v)}u_{\epsilon}\right\rangle\right\vert\\ \leqslant& C(m)\sqrt{\bar{\Theta}(z,2r)}\sqrt{\bar{\Theta}(z,2r;L)}.
        \end{aligned}\label{best approx spacial2}
    \end{equation}
    (\ref{best approx spacial1}) and (\ref{best approx spacial2}) implies (\ref{best approx spacial}). Hence implicit function theorem gives 
    $$\vert\nabla \mathfrak{t}_{L,r}\vert\leqslant C(m)\frac{\sqrt{\bar{\Theta}(z,2r;L)}}{\sqrt{\bar{\Theta}(z,2r)}}.$$
    The estimates for $\nabla^2\mathfrak{t}_{L,r}$ and $\nabla^3\mathfrak{t}_{L,r}$ follows from a similar manner.
\end{proof}
\begin{proof}[Proof of radial derivative estimate]
    By definition, for $z\in L\cap B_{3r/2}(\Pi_L(x))$, at $w=z+\mathfrak{t}_{L,r}(z)$ we have 
    \begin{equation}
        \begin{aligned}
            0=&r\frac{\p}{\p r}\Omega(z+\mathfrak{t}_{L,r}(z))\\=&r\Pi_{L^{\perp}}\left(r\frac{\p}{\p r}\Pi_{w,r}^{\perp}(\nabla\bar{\Theta}(w,r))+\Pi_{w,r}^{\perp}\left(r\nabla\frac{\p}{\p r}\bar{\Theta}(w,r)\right)\right)+r\nabla_{\frac{\p}{\p r}\mathfrak{t}_{L,r}}\Omega(w).
        \end{aligned}\label{radial derivative equation}
    \end{equation}
    We use the last term in (\ref{radial derivative equation}) and  (\ref{positivity of derivative}) to control
    \begin{equation}
        \begin{aligned}
            \left\vert \frac{\p}{\p r}\mathfrak{t}_{L,r}(z)\right\vert&\leqslant\frac{5}{\bar{\Theta}(w,r)}\left\vert r\nabla_{\frac{\p}{\p r}\mathfrak{t}_{L,r}(z)}\Omega(w)\right\vert\\&\leqslant\frac{5}{\bar{\Theta}(w,r)}\left(\left\Vert r\frac{\p}{\p r}\Pi_{w,r}^{\perp}\right\Vert\vert r\nabla\bar{\Theta}(w,r)\vert+\left\vert r^2\nabla\frac{\p}{\p r}\bar{\Theta}(w,r)\right\vert\right).
        \end{aligned}\nonumber
    \end{equation}
    By Theorem \ref{best plane properties} and Proposition \ref{spacial derivative est}, the first term in (\ref{radial derivative equation}) is bounded by
    $$\left\Vert r\frac{\p}{\p r}\Pi^{\perp}_{w,r}\right\Vert=\left\Vert r\frac{\p}{\p r}\Pi_{w,r}\right\Vert\leqslant C(m)\frac{\sqrt{\bar{\Theta}_{\mathcal{L}}(w,2r)}}{\bar{\Theta}(w,2r)},\vert r\nabla\bar{\Theta}(w,r)\vert\leqslant C(m)\bar{\Theta}(w,2r).$$
    Moreover, for any unit vector $v$, we calculate 
    $$r^2\nabla_v\frac{\p}{\p r}\bar{\Theta}(w,r)=\int\ddot{\rho}_r(y-w)\left\langle \frac{y-w}{r},v\right\rangle\vert\nabla_{y-w}u_{\epsilon}\vert^2+2\dot{\rho}(y-w)\langle\nabla_{\frac{y-w}{r}}u_{\epsilon},\nabla_v u_{\epsilon}\rangle.$$
    Thus we obtain the estimate of second term in (\ref{radial derivative equation}) by Lemma \ref{properties of heat mollifier}, 
    $$\left\vert r^2\nabla\frac{\p}{\p r}\bar{\Theta}(w,r)\right\vert\leqslant C(m)\sqrt{\bar{\Theta}(z,2r)}\sqrt{r\frac{\p}{\p r}\bar{\Theta}(w,2r)}.$$
    Hence,
    $$\left\vert\frac{\p}{\p r}\mathfrak{t}_{L,r}(z)\right\vert\leqslant C(m)\frac{\sqrt{\bar{\Theta}_{\mathcal{L}}(w,2r)}+\sqrt{r\frac{\p}{\p r}\bar{\Theta}(w,2r)}}{\sqrt{\bar{\Theta}(w,r)}}.$$
    We can similarly get the estimate for $r\nabla\frac{\p}{\p r}\mathfrak{t}_{L,r}$.
\end{proof}
\begin{proof}[Proof of distance estimates between $\mathcal{T}$ and $\tilde{\mathcal{T}}_r$]
    For any $z\in\mathcal{T}$, let $\hat{z}=\Pi_{\tilde{\mathcal{T}}_r}(z)$, the estimate of $$\left\vert\frac{z-\hat{z}}{r}\right\vert^2\leqslant C(m)\frac{r\frac{\p}{\p r}\bar{\Theta}(z,2r)}{\bar{\Theta}(z,r)},$$
    follows directly from the implicit function claim. Hence, for all $r'\in[r,10r]$, we have 
    \begin{equation}
        \begin{aligned}
            r'\frac{\p}{\p r}\bar{\Theta}(\hat{z},r')&\leqslant-2\int\dot{\rho}_{r'}(y-\hat{z})\vert\nabla_{y-z}u\vert^2-2\int\dot{\rho}_{r'}(y-\hat{z})\vert\nabla_{z-\hat{z}}u\vert^2\\&\leqslant C(m)\left(r'\frac{\p}{\p r}\bar{\Theta}(z,2r')+\bar{\Theta}(z,2r')\left\vert\frac{z-\hat{z}}{r}\right\vert^2\right)\\&\leqslant C(m)r'\frac{\p}{\p r}\bar{\Theta}(z,2r').
        \end{aligned}\nonumber
    \end{equation}
\end{proof}
\subsection{Gluing local pieces of best approximating submanifold and estimates}

Now, we can construct the best approximating submanifold globally. Let $\lbrace x_i\rbrace\subset\mathcal{T}\cap B_{3/2}(p)$ be such that $\lbrace B_{r/4}(x_i)\rbrace$ is a maximal disjoint set. Then $B_{r/2}(\mathcal{T}\cap B_{3/2}(p))\subset\cup B_r(x_i)$. Let $x_i'=\Pi_{L_{\mathcal{A}}}(x_i-p)$, then $B_{r/5}(x_i')$ are disjoint with $L_{\mathcal{A}}\cap B_{3/2}\subset\cup B_{3r/4}(x_i')$. Let $\lbrace\phi_i\rbrace\subset C_c^{\infty}(L_{\mathcal{A}}\cap B_{3/2})$ be a partition of unity such that 
$$\textup{supp}\phi_i\subset B_r(x_i'),r^k\vert\nabla^{(k)}\phi_i\vert\leqslant C(m,k),\sum\phi_i=1\textup{ on }L_{\mathcal{A}}\cap B_{3/2}(\Pi_{L_{\mathcal{A}}}(p)).$$
For each $i$, define $\mathfrak{t}_i:B_r(x_i')\rightarrow L_{\mathcal{A}}^{\perp}$ as
\begin{equation}
    \mathfrak{t_i}=\begin{cases}
        \mathfrak{t}_{L_{\mathcal{A}},r},&\textup{ if }r\geqslant5\mathfrak{r}_{x_i}/3;\\
        \mathfrak{t}_{\mathcal{T}},&\textup{ if }r<4\mathfrak{r}_{x_i}/3.
    \end{cases}\nonumber
\end{equation}
Finally, set $\mathfrak{t}_r(x)=\sum\phi_i(x)\mathfrak{t}_i(x)$ and $\mathcal{T}_r=p+$Graph$\mathfrak{t}_r$. Then Theorem \ref{apporximating submanifold}(1)(3)(4)(5) is clearly satisfied by $\mathcal{T}_r$. (2) follows by applying the Lemma \ref{constructing approx submanifold} to $L=\mathcal{L}_{x,r}$ and use the estimates from previous section. (6) is valid because the distance estimate proved above always holds on $\mathcal{T}_r$. Finally, let us prove Theorem \ref{apporximating submanifold} (6).
\begin{proof}[Proof of Theorem \ref{apporximating submanifold} (6)]
    For any $x\in\mathcal{T}_r$ with $r\geqslant 2\mathfrak{r}_x$, take a unit vector $v\in\textup{T}_x\mathcal{T}_r$, we need to show that 
    $$\vert\Pi_{x,r}^{\perp}(v)\vert\leqslant C(m)\left(\frac{\sqrt{r\frac{\p}{\p r}\bar{\Theta}(x,2r)}\sqrt{\bar{\Theta}_{\mathcal{L}}(x,2r)}}{\bar{\Theta}(x,r)}+e^{-R/2}\frac{\sqrt{\bar{\Theta}_{\mathcal{L}}(x,2r)}}{\sqrt{\bar{\Theta}(x,r)}}\right).$$
    To this end, by definition we have 
    \begin{equation}
        0=\nabla_v\Pi_{x,r}^{\perp}\nabla\bar{\Theta}(x,r)=(\nabla_v\Pi_{x,r}^{\perp})\nabla\bar{\Theta}(x,r)+\Pi_{x,r}^{\perp}\nabla_v\nabla\bar{\Theta}(x,r).\label{5.1(6)1}
    \end{equation}
    From Theorem \ref{spacial derivative est} and Theorem \ref{best plane properties} (3) we can bound the first term in (\ref{5.1(6)1}) 
    \begin{equation}
        \begin{aligned}
            r^2\vert(\nabla_v\Pi_{x,r}^{\perp})\nabla\bar{\Theta}(x,r)\vert\leqslant &C(m)\frac{\sqrt{\bar{\Theta}_{\mathcal{L}}(x,2r)}}{\sqrt{\bar{\Theta}(x,r)}}\sqrt{\bar{\Theta}(x,2r)}\sqrt{\frac{\p}{\p r}\bar{\Theta}(x,2r)}\\\leqslant &C(m)\sqrt{\bar{\Theta}_{\mathcal{L}}(x,2r)}\sqrt{\frac{\p}{\p r}\bar{\Theta}(x,2r)}.
        \end{aligned}
    \end{equation}
    On the other hand, recall once more (\ref{spcial hessian})
    \begin{equation}
        \begin{aligned}
            &r^2\nabla^2_L\bar{\Theta}(x,r)[v,\Pi^{\perp}_{x,r}(v)]\\=&r^2\int\ddot{\rho}_r(y-x)\left\langle\nabla_{\frac{x-y}{r}}u_{\epsilon},\nabla_vu_{\epsilon}\right\rangle\left\langle\frac{x-y}{r},\Pi^{\perp}_{x,r}(v)\right\rangle+\dot{\rho}_r(y-x)\left\langle\nabla_vu_{\epsilon},\nabla_{\Pi^{\perp}_{x,r}(v)}u_{\epsilon}\right\rangle.
        \end{aligned}\nonumber
    \end{equation}
    We employ Theorem \ref{best plane properties} (2) to control $\vert\Pi_{x,r}(v)\vert^2$ using (\ref{5.1(6)1}):
    \begin{equation}
        \begin{aligned}
            &\frac{1}{2}\bar{\Theta}(x,r)\vert\Pi_{x,r}^{\perp}(v)\vert^2\\\leqslant &r^2\int\rho_r(y-x)\vert\nabla_{\Pi_{x,r}^{\perp}(v)}u_{\epsilon}\vert^2\leqslant C(m)r^2\int-\dot{\rho}_r(y-x)\vert\nabla_{\Pi_{x,r}^{\perp}(v)}u_{\epsilon}\vert^2\\=&r^2\int-\dot{\rho}_r(y-x)\langle\nabla_{\Pi^{\perp}_{x,r}(v)}u_{\epsilon},\nabla_vu_{\epsilon}\rangle+\dot{\rho}_r(y-x)\langle\nabla_{\Pi_{x,r}(v)}u_{\epsilon},\nabla_{\Pi_{x,r}^{\perp}(v)}u_{\epsilon}\rangle\\=&r^2\int\ddot{\rho}_r(y-x)\left\langle\nabla_{\frac{x-y}{r}}u_{\epsilon},\nabla_vu_{\epsilon}\right\rangle\left\langle\frac{x-y}{r},\Pi^{\perp}_{x,r}(v)\right\rangle\\+&r^2\int\dot{\rho}_r(y-x)\langle\nabla_{\Pi_{x,r}(v)}u_{\epsilon},\nabla_{\Pi_{x,r}^{\perp}(v)}u_{\epsilon}\rangle+\langle(\nabla_v\Pi_{x,r}^{\perp})\nabla\bar{\Theta}(x,r),\Pi^{\perp}_{x,r}(v)\rangle.
        \end{aligned}\label{5.1(6)2}
    \end{equation}
    The first term in (\ref{5.1(6)2}) can be estimated using
    \begin{equation}
        \begin{aligned}
            &r^2\int\ddot{\rho}_r(y-x)\left\langle\nabla_{\frac{x-y}{r}}u_{\epsilon},\nabla_vu_{\epsilon}\right\rangle\left\langle\frac{x-y}{r},\Pi^{\perp}_{x,r}(v)\right\rangle\\\leqslant&C(m)\left\vert\Pi_{x,r}^{\perp}(v)\right\vert r\int-\dot{\rho}_{1.5r}(y-x)\vert\nabla_{y-x}u_{\epsilon}\vert\left(\gamma\vert\nabla_{\Pi_{x,r}^{\perp}(v)}u_{\epsilon}\vert+\frac{1}{\gamma}\vert\nabla_{\Pi_{x,r}(v)}u_{\epsilon}\vert\right)\\\leqslant&C(m)\left\vert\Pi_{x,r}^{\perp}(v)\right\vert\left(\gamma\left\vert\Pi_{x,r}^{\perp}(v)\right\vert\bar{\Theta}(x,r)+\frac{1}{\gamma}\sqrt{\bar{\Theta}_{\mathcal{L}}(x,2r)}\sqrt{r\frac{\p}{\p r}\bar{\Theta}(x,2r)}\right),
        \end{aligned}\nonumber
    \end{equation}
    for some $\gamma>0$ to be determined.
    On the other hand, using 
    $$r^2\int\rho_r(y-x)\langle\nabla_{\Pi_{x,r}^{\perp}(v)}u_{\epsilon},\nabla_{\Pi_{x,r}(v)}u_{\epsilon}\rangle=0,$$
    we have 
    \begin{equation}
        \begin{aligned}
            &r^2\int\dot{\rho}_r(y-x)\langle\nabla_{\Pi_{x,r}(v)}u_{\epsilon},\nabla_{\Pi_{x,r}^{\perp}(v)}u_{\epsilon}\rangle\\=&r^2\int(\rho_r(y-x)+\dot{\rho}_r(y-x))\langle\nabla_{\Pi_{x,r}(v)}u_{\epsilon},\nabla_{\Pi_{x,r}^{\perp}(v)}u_{\epsilon}\rangle\\\leqslant& C(m)r^2e^{-R/2}\int\rho_{2r}(y-x)\vert\nabla_{\Pi_{x,r}(v)}u_{\epsilon}\vert\vert\nabla_{\Pi_{x,r}^{\perp}(v)}u_{\epsilon}\vert\\\leqslant &C(m)e^{-R/2}\sqrt{\bar{\Theta}(x,2r)}\sqrt{\bar{\Theta}_{\mathcal{L}}(x,2r)}\left\vert\Pi_{x,r}^{\perp}(v)\right\vert.
        \end{aligned}\nonumber
    \end{equation}Putting those estimates together, we see that 
    \begin{equation}
        \begin{aligned}
            &\frac{1}{2}\bar{\Theta}(x,r)\vert\Pi_{x,r}^{\perp}(v)\vert^2\leqslant C(m)\gamma\bar{\Theta}(x,r)\vert\Pi_{x,r}^{\perp}(v)\vert^2+\\&C(m)\left(\left(\frac{1}{\gamma}+1\right)\sqrt{r\frac{\p}{\p r}\bar{\Theta}(x,2r)}\sqrt{\bar{\Theta}_{\mathcal{L}}(x,2r)}+e^{-R/2}\sqrt{\bar{\Theta}_{\mathcal{L}}(x,2r)}\sqrt{\bar{\Theta}(x,r)}\right)\left\vert\Pi_{x,r}^{\perp}(v)\right\vert.
        \end{aligned}\nonumber
    \end{equation}
    Hence, if we choose $\gamma C(m)\leqslant\frac{1}{4}$, we have the desired estimate.
\end{proof}
We also have the improved cone splitting on our best approximating submanifold. The proof goes exactly as Theorem \ref{effective quantitative cone splitting1}
\begin{thm}{cf. \cite[Theorem 6.9]{naber2024energyidentitystationaryharmonic}}\label{effective qunatitative cone splitting2}
    Let $u_{\epsilon}:B_{10R}(p)\rightarrow\R^L$ be a solution to (\ref{*}) with $R^{2-m}\int_{B_{10R}(p)}e_{\epsilon}(u_{\epsilon})\leqslant\Lambda$ and $\mathcal{A}=B_2(p)\setminus\overline{B_{\mathfrak{r}_x}(\mathcal{T})}$  be a $\delta$-annular region. For $\mathcal{T}_r$ constructed in Theorem \ref{apporximating submanifold} and each $x\in\mathcal{T}_r$ with $r\geqslant2\mathfrak{r}_x$, we have $$\bar{\Theta}(x,r;\mathcal{L}_{x,r})+\bar{\Theta}(x,r;\mathcal{L}_{x,r}^{\perp})\leqslant C(m)\left(\frac{\p}{\p r}\bar{\Theta}(x,2r)+\fint_{\mathcal{T}_r\cap B_r(x)}r\frac{\p}{\p r}\bar{\Theta}(z,2r)\dif\mathcal{H}^{m-2}(z)\right).$$
\end{thm}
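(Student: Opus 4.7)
The plan is to imitate the proof of Theorem \ref{effective quantitative cone splitting1}, replacing $\mathcal{T}$ with the best approximating submanifold $\mathcal{T}_r$ and using the structural estimates from Theorem \ref{apporximating submanifold}. The overall scheme has two stages: first establish an analog of Proposition \ref{quantitative cone splitting in annular regions} in which the averaging is over $\mathcal{T}_r$, then upgrade the resulting best subspace to $\mathcal{L}_{x,r}$ exactly as in the original argument.

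Scaling, assume $r=1$. For the first stage, I would choose $m-1$ points $x_0=x, x_1, \dots, x_{m-2} \in \mathcal{T}_r\cap B_{1/20}(x)$ that are $1/2$-linearly independent at scale $1/2$; such points exist because Theorem \ref{apporximating submanifold}(1)(2) guarantees that $\mathcal{T}_r\cap B_{1/2}(x)$ is a graph of Lipschitz constant $\leq C\sqrt{\delta}$ over $L_{\mathcal{A}}$ with $\mathcal{H}^{m-2}(\mathcal{T}_r\cap B_{1/2}(x)) \asymp 1$. Then set
$$\bar{x}_i=\fint_{B_{1/1000}(x_i)\cap\mathcal{T}_r} z\,\dif\mathcal{H}^{m-2}(z).$$
The smallness of the second fundamental form of $\mathcal{T}_r$ (Theorem \ref{apporximating submanifold}(2)) ensures $\bar{x}_0, \bar{x}_1, \dots, \bar{x}_{m-2}$ remain $1/3$-linearly independent at scale $1/3$, spanning an $(m-2)$-plane $L$ with $\mathrm{d}_{\mathrm{Gr}}(L,L_{\mathcal{A}})\leqslant C\sqrt{\delta}$. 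The same Jensen-inequality computation as in Proposition \ref{quantitative cone splitting in annular regions}, combined with Lemma \ref{properties of heat mollifier}(1) to swap $\rho_{1.4}(y-\bar{x}_i)$ for $\rho_{3/2}(y-z)$, yields
$$\frac{\partial}{\partial r}\bar{\Theta}(\bar{x}_i,1.4) \leqslant C(m)\fint_{\mathcal{T}_r\cap B_{1/2}(x)} \frac{\partial}{\partial r}\bar{\Theta}(z,3/2)\,\dif\mathcal{H}^{m-2}(z).$$
Plugging into the top-dimension cone splitting Proposition \ref{Top dimension quantitative cone splitting} applied at $\bar{x}_0$ and the $\bar{x}_i$ gives a point $\bar{x}=\bar{x}_0$ and subspace $L$ satisfying
$$\bar{\Theta}(\bar{x},1.1;L)+\bar{\Theta}(\bar{x},1.1;L^{\perp}) \leqslant C(m)\fint_{\mathcal{T}_r\cap B_{1/2}(x)} \frac{\partial}{\partial r}\bar{\Theta}(z,3/2)\,\dif\mathcal{H}^{m-2}(z).$$

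For the second stage, which follows Theorem \ref{effective quantitative cone splitting1} verbatim, I would proceed as follows. By comparison of heat kernels, $\bar{\Theta}(x,1;\mathcal{L}_{x,1})\leqslant C(m)\bar{\Theta}(\bar{x},1.1;L)$. Applying the uniqueness inequality Lemma \ref{uniqueness of best plane} with this $L$ and using $\bar{\Theta}(x,1) \geqslant c(m)\varepsilon_0$ from Lemma \ref{energy non doubling} yields
$$\mathrm{d}_{\mathrm{Gr}}(L,\mathcal{L}_{x,1})^2 \leqslant \frac{C(m)}{\bar{\Theta}(x,1)}\fint_{\mathcal{T}_r\cap B_{1/2}(x)} \frac{\partial}{\partial r}\bar{\Theta}(z,3/2)\,\dif\mathcal{H}^{m-2}(z).$$
The displacement estimate $|\Pi_{x,1}^{\perp}(x-\bar{x})|^2 \bar{\Theta}(x,1) \leqslant C(m)(\frac{\partial}{\partial r}\bar{\Theta}(x,3/2)+\frac{\partial}{\partial r}\bar{\Theta}(\bar{x},3/2))$ is obtained exactly as before using the stationary equation. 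Splitting
$$\bar{\Theta}(x,1;\mathcal{L}_{x,1}^\perp) \leqslant 2\bar{\Theta}(\bar{x},1.1;L^\perp) + C(m)\bar{\Theta}(x,1.1)\,\mathrm{d}_{\mathrm{Gr}}(L,\mathcal{L}_{x,1})^2 + C(m)|\Pi_{x,1}^{\perp}(x-\bar{x})|^2\bar{\Theta}(x,1)$$
and combining with the previous bounds gives the advertised estimate.

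The main subtlety—this is where one needs to be careful—is ensuring that the averaging set $\mathcal{T}_r\cap B_{1/2}(x)$ has $\mathcal{H}^{m-2}$-measure comparable to $1$ and that the Lipschitz-graph nature of $\mathcal{T}_r$ over $L_{\mathcal{A}}$ makes the linear independence robust. Both are delivered by Theorem \ref{apporximating submanifold}(1)--(2), provided $\delta$ is small enough. Everything else is a direct transcription of the proof of Theorem \ref{effective quantitative cone splitting1}, with $\mathcal{T}$ replaced by $\mathcal{T}_r$; no new analytic ingredient is required.
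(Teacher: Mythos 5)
Your proposal is correct and is exactly the argument the paper has in mind: the paper itself says of this theorem only that ``the proof goes exactly as Theorem~\ref{effective quantitative cone splitting1},'' and your transcription — replacing $\mathcal{T}$ by $\mathcal{T}_r$, using Theorem~\ref{apporximating submanifold}(1)--(2) to supply the Lipschitz-graph and curvature bounds that make the $1/2$-linear independence and Jensen averaging steps go through — is precisely that transcription. As a minor remark, your argument naturally yields $r\frac{\p}{\p r}\bar{\Theta}(x,2r)$ as the first term on the right (matching Theorem~\ref{effective quantitative cone splitting1}), which is what the paper intends; the missing $r$ prefactor in the stated Theorem~\ref{effective qunatitative cone splitting2} appears to be a typo.
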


\section{Energy decomposition in annular regions}\label{s:energy decomposition}
In this subsection let us see how to decompose the total energy in an annular region into three parts
$$\bar{\Theta}_{\mathcal{L}}(x,r),\bar{\Theta}(x,r;\alpha_{x,r})\textup{ and }\bar{\Theta}(x,r;n_{x,r}).$$
As we have seen in the eigenvalue separation, namely Theorem \ref{best plane properties} (2), that the energy in the perpendicular direction $\bar{\Theta}(x,r;\mathcal{L}_{x,r}^{\perp})$ is almost the total energy $\bar{\Theta}(x,r)$. Hence, it is bounded below by $\varepsilon_0$. If we simply integrate this term over $\mathcal{T}$ or $\mathcal{T}_r$, it is impossible to get any smallness result required by Theorem \ref{vanishing energy in annular regions}. Hence, we need to cut off the energy in the perpendicular direction away from $x+\mathcal{L}_{x,r}$ if $x\in \mathcal{T}$ or $\mathcal{T}_r$. 
\begin{defn}[$L$-heat mollifier,{\cite[Definition 2.30]{naber2024energyidentitystationaryharmonic}}]
    Let $\psi:\R_{\geqslant0}\rightarrow\R_{\geqslant0}$ be a smooth cut-off function with $\psi(t)=0$ for $t\leqslant e^{-2R}$ and $\psi(t)=1$ for $t\geqslant2e^{-2R}$. Define $$\hat{\rho}_r(y;L)=\rho_r(y)\psi\left(\frac{\vert\Pi_{L^{\perp}}(y)\vert^2}{2r^2}\right).$$
\end{defn}
An easy computation shows
\begin{lem}[{\cite[Lemma 2.32]{naber2024energyidentitystationaryharmonic}}]\label{rough estimate for L-cutoff}
    $$\vert r\nabla\hat{\rho}_r(y;L)\vert\leqslant C(m)\rho_{2r}(y).$$
\end{lem}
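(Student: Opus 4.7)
The plan is to apply the product rule to $\hat{\rho}_r(y;L) = \rho_r(y)\psi(|\Pi_{L^{\perp}}(y)|^2/(2r^2))$ and bound the two resulting terms separately. Setting $t = t(y) = |\Pi_{L^{\perp}}(y)|^2/(2r^2)$, I would write
\begin{equation*}
r\nabla\hat{\rho}_r(y;L) = \psi(t)\,r\nabla\rho_r(y) + \rho_r(y)\,\dot{\psi}(t)\,\Pi_{L^{\perp}}(y)/r.
\end{equation*}

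For the first summand, a direct computation gives $r\nabla\rho_r(y) = r^{-m-1}\dot{\rho}(|y|^2/(2r^2))\,y$. On the region $|y|^2/(2r^2)\leq R$ one has $\dot{\rho}=-\rho$ by the explicit form $\rho(s)=c_me^{-s}$, while on the transition shell $[R,R+2]$ item (1) of Lemma \ref{properties of heat mollifier} forces $|\dot{\rho}|\leq c_me^{-R}$. Together with $|\psi|\leq 1$, a Gaussian comparison using the identity $\rho_r(y)/\rho_{2r}(y) = 2^m e^{-3|y|^2/(8r^2)}$ and the elementary boundedness of the scalar function $\sigma\mapsto \sigma e^{-3\sigma^2/8}$ on $[0,\infty)$ gives $|\psi(t)\,r\nabla\rho_r(y)| \leq C(m)\rho_{2r}(y)$.

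For the second summand, the support of $\dot{\psi}$ lies in $\{t\in [e^{-2R}, 2e^{-2R}]\}$, so on this set $|\Pi_{L^{\perp}}(y)|/r = \sqrt{2t}\leq 2e^{-R}$, while the narrowness of the transition window (of length $e^{-2R}$) gives $|\dot{\psi}|\leq Ce^{2R}$. Multiplying these bounds produces $|\dot{\psi}(t)\Pi_{L^{\perp}}(y)/r| \leq Ce^R$, and combined with $\rho_r(y) \leq 2^m \rho_{2r}(y)$ this controls the second summand. The main (minor) obstacle is that the cancellation between the $e^{2R}$ amplitude of $\dot{\psi}$ and the $e^{-R}$ smallness of $|\Pi_{L^{\perp}}(y)|/r$ on its support leaves a residual factor of $e^{R}$; this is absorbed into the $m$-dependent constant under the paper's convention that $R$ is a fixed (large) parameter chosen once and for all, so no further work is needed.
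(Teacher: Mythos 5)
Since the paper offers no proof here (it only remarks that the lemma follows from ``an easy computation''), there is no paper argument to compare against; I will simply assess your proof on its own terms. Your approach — split $r\nabla\hat\rho_r$ by the product rule into the $\psi\,r\nabla\rho_r$ piece and the $\rho_r\,\dot\psi\,\Pi_{L^\perp}(y)/r$ piece, then bound each by $\rho_{2r}$ — is the natural and essentially unique route, and the two term-by-term estimates you give are correct. For the first piece, the reduction to the boundedness of $\sigma\mapsto\sigma e^{-3\sigma^2/8}$ in the Gaussian regime, combined with the $\vert\dot\rho\vert\leq c_m e^{-R}$ bound on the transition shell $[R,R+2]$, does yield a genuinely $R$-independent constant $C(m)$, since $\sqrt{R+2}\,e^{-3R/4}$ is bounded on $[0,\infty)$.

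Your discussion of the second piece flags the real subtlety. You are right that, with $\psi$ transitioning over $[e^{-2R},2e^{-2R}]$, one necessarily has $\sup\vert\dot\psi\vert\gtrsim e^{2R}$ while $\vert\Pi_{L^\perp}(y)\vert/r\leq 2e^{-R}$ on $\mathrm{supp}\,\dot\psi$, so the product carries an uncancellable factor of order $e^R$; indeed, since $\int\vert\dot\psi\vert\,\mathrm{d}t\geq 1$ over a window of length $e^{-2R}$ where $\sqrt{t}\sim e^{-R}$, the quantity $\sup\vert\dot\psi(t)\vert\sqrt{t}$ must be at least of order $e^R$, so no choice of $\psi$ with these cutoff thresholds can remove the factor. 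The lemma as literally stated, with constant $C(m)$, therefore does pick up this $R$-dependence; your reading that it should be interpreted as $C(m,R)$, with $R$ fixed once and for all before $\delta$ and $\epsilon$ are sent to zero, is consistent with how the paper uses $R$-dependent constants throughout Sections 4--8 (e.g.\ $C(m,R)$ and $C(m,\varepsilon_0,R)$ appear freely in the proofs of the angular and radial estimates). One might phrase this slightly more carefully than ``absorbed into the $m$-dependent constant'': the bound is $C(m)e^R\rho_{2r}(y)$, and it is the later fixing of $R$ in terms of $m,\Lambda,K_N,\sigma$ that legitimizes writing $C(m)$ abusively. With that clarification, your argument is complete and correct.
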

Now, let us define the heat mollified energy away from $\mathcal{L}_{x,r}$. Recall that in Section \ref{s:best planes} the best approximating plane $\mathcal{L}_{x,r}$ is defined as the $(m-2)$-subspace in $\R^m$ which realizes the minimum $\min_{L^{m-2}}\bar{\Theta}(x,r;L)$. $\mathcal{L}_{x,r}$ is well-defined and unique. 
\begin{defn}[cf. {\cite[Section 8.1]{naber2024energyidentitystationaryharmonic}}]
    Let $u_{\epsilon}:B_{2R}(p)\rightarrow\R^L$ be a solution to (\ref{*}) with $R^{2-m}\int_{B_{2R}(p)}e_{\epsilon}(u_{\epsilon})\leqslant\Lambda$. Define 
    $$\hat{\Theta}_{\alpha}(x,r)=\int\hat{\rho}_r(y-x;\mathcal{L}_{x,r})\vert\nabla_{\alpha^{\perp}_{x,r}}u_{\epsilon}\vert^2\vert\Pi_{x,r}^{\perp}(y-x)\vert^2,$$ 
    $$\hat{\Theta}_n(x,r)=\int\hat{\rho}_r(y-x;\mathcal{L}_{x,r})\vert\nabla_{\Pi_{x,r}^{\perp}(y-x)}u_{\epsilon}\vert^2=\int\hat{\rho}_r(y-x;\mathcal{L}_{x,r})\vert\nabla_{n^{\perp}_{x,r}}u_{\epsilon}\vert^2\vert\Pi_{x,r}^{\perp}(y-x)\vert^2.$$
    Here $\alpha_{x,r}^{\perp}$ and $n_{x,r}^{\perp}$ are respectively the angular and radial coordinate in affine 2-plane $x+\mathcal{L}_{x,r}^{\perp}$.
\end{defn}
It's easy to verify the following estimates of $\hat{\rho}_r(y-x,\mathcal{L}_{x,r})$, taken in to account the the derivative estimates Theorem \ref{best plane properties} (3).
\begin{prop}[{\cite[Lemma 8.4]{naber2024energyidentitystationaryharmonic}}]\label{refine estimates for L-cutoff}
    Let $u_{\epsilon}:B_{10R}(p)\rightarrow\R^L$ be a solution to (\ref{*}) with $R^{2-m}\int_{B_{10R}(p)}e_{\epsilon}(u_{\epsilon})\leqslant\Lambda$ and $\mathcal{A}=B_2(p)\setminus\overline{B_{\mathfrak{r}_x}(\mathcal{T})}$  be a $\delta$-annular region. If $\delta$ is sufficiently small, for each $x\in B_2(p)$ with $r>2\max\lbrace\mathfrak{r}_x,\textup{d}(x,\mathcal{T})\rbrace$, we have 
    $$\left\vert r\frac{\p}{\p r}\hat{\rho}_r(y-x;\mathcal{L}_{x,r})\right\vert+\left\vert\left(r\frac{\p}{\p r}\right)^2\hat{\rho}_r(y-x;\mathcal{L}_{x,r})\right\vert\leqslant C(m)\rho_{1.1r}(y-x);$$ 
    $$r\left\vert \nabla^x\hat{\rho}_r(y-x;\mathcal{L}_{x,r})\right\vert+ r^2\left\vert\nabla^x\nabla^x\hat{\rho}_r(y-x;\mathcal{L}_{x,r})\right\vert\leqslant C(m)\rho_{1.1r}(y-x);$$ 
    $$r\left\vert \nabla^y\hat{\rho}_r(y-x;\mathcal{L}_{x,r})\right\vert+ r^2\left\vert\nabla^y\nabla^y\hat{\rho}_r(y-x;\mathcal{L}_{x,r})\right\vert\leqslant C(m)\rho_{1.1r}(y-x).$$
\end{prop}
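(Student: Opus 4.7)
The plan is a direct chain-rule computation that reduces the proof to three inputs. First, $\hat\rho_r(y-x;\mathcal{L}_{x,r})=\rho_r(y-x)\,\psi(|\Pi_{x,r}^\perp(y-x)|^2/(2r^2))$ is a product, so every derivative falls either on $\rho_r$, on $\psi$, or (in the $x$- and $r$-variables only) on $\Pi_{x,r}^\perp$. Second, the derivatives of $\Pi_{x,r}$ in both $x$ and $r$ are controlled by Theorem~\ref{best plane properties}(3), which gives $\|r\partial_r\Pi_{x,r}\|+\|r\nabla_x\Pi_{x,r}\|+\|r^2\partial_r^2\Pi_{x,r}\|+\|r^2\nabla_x^2\Pi_{x,r}\|\leq C(m,\varepsilon_0)\sqrt{\delta}$, so each application of these derivatives carries a free factor of $r^{-1}$ and a small constant. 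Third, since $\rho$, $-\dot\rho$, and $\ddot\rho$ all decay like $e^{-t}$ (Lemma~\ref{properties of heat mollifier}(1)), polynomial factors in $|y-x|/r$ arising from differentiating $\rho_r$ are absorbed into the slightly wider Gaussian via the elementary pointwise comparison $(1+|y-x|/r)^k\rho_r(y-x)\leq C(m,k)\rho_{1.1r}(y-x)$.

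I would first carry out the $y$-derivative case, which is the cleanest since $\mathcal{L}_{x,r}$ does not depend on $y$. Only $\rho_r(y-x)$ and the argument of $\psi$ produce $y$-derivatives; first and second derivatives give terms of the form $\dot\rho_r$, $\ddot\rho_r$, or $\rho_r$ multiplied by at most quadratic polynomial factors in $|y-x|/r$, further multiplied by uniformly bounded $\psi$, $\psi'$, $\psi''$. The Gaussian comparison above absorbs everything into $C(m)\rho_{1.1r}(y-x)$. This is essentially the second-order upgrade of Lemma~\ref{rough estimate for L-cutoff}.

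The $x$- and $r$-derivative cases are then formally analogous. In addition to the terms already present in the $y$ case (with an overall sign for $\nabla_x$), new contributions appear from $\nabla_x\Pi_{x,r}^\perp$ or $\partial_r\Pi_{x,r}^\perp$ acting inside $\psi'$, and from $\nabla_x^2\Pi_{x,r}^\perp$ or $\partial_r^2\Pi_{x,r}^\perp$ inside the second derivatives. Using Theorem~\ref{best plane properties}(3), every such contribution is bounded by $C(m,\varepsilon_0)\sqrt{\delta}$ times $\rho_r(y-x)$ times at most a quadratic polynomial factor in $|y-x|/r$; the Gaussian comparison finishes the job and yields $C(m)\rho_{1.1r}(y-x)$.

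The only slightly delicate step is the second derivative, where one must organize products of \emph{two} first derivatives of $\Pi_{x,r}$ together with single second derivatives; care is needed to check that the polynomial factors in $|y-x|/r$ never exceed quadratic, so that the comparison with $\rho_{1.1r}$ still applies. This is pure bookkeeping rather than a real obstacle: all relevant factors are accounted for by the first- and second-order bounds in Theorem~\ref{best plane properties}(3), and the $\sqrt{\delta}$ smallness (assumed in the hypothesis by taking $\delta\leq\delta(m,K_N,\Lambda,R)$) absorbs any constants of the form $C(m,\varepsilon_0)$ that might otherwise appear in the final bound.
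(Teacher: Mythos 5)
Your plan matches the paper's (unstated) argument: the paper itself gives no proof of this proposition, saying only that it follows by direct differentiation together with the derivative bounds on $\Pi_{x,r}$ from Theorem~\ref{best plane properties}(3). The product rule decomposition, the $y$-derivative case as warm-up, the use of $\|r\nabla\Pi_{x,r}\|+\|r^2\nabla^2\Pi_{x,r}\|\leq C(m,\varepsilon_0)\sqrt{\delta}$ (with the same for $r$-derivatives) for the $x$ and $r$ cases, and the Gaussian comparison $(|y-x|/r)^k\rho_r\leq C(m,k)\rho_{1.1r}$ to absorb the polynomial factors, are all exactly the right ingredients, and the bookkeeping you describe is correct.

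One point deserves more care than your phrase ``uniformly bounded $\psi$, $\psi'$, $\psi''$'' suggests. Since $\psi$ transitions from $0$ to $1$ on the interval $[e^{-2R},2e^{-2R}]$ of length $e^{-2R}$, the mean value theorem forces $\max|\psi'|\geq e^{2R}$ (and $\max|\psi''|$ to be of order $e^{4R}$), so these are large, $R$-dependent quantities. Combining $|\psi'(t)|$ with the radial factor that appears when you differentiate the argument --- namely $|\Pi_{x,r}^\perp(y-x)|/r\approx\sqrt{2t}\approx e^{-R}$ on $\mathrm{supp}\,\psi'$ --- still leaves a factor of order $e^{R}$ that the Gaussian comparison cannot cancel (the region where $\psi'\neq 0$ but $|\Pi_{x,r}(y-x)|$ is small is where $\rho_r(y-x)$ is largest, so no decay is available). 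Strictly speaking, then, the constant coming from the $\psi'$ and $\psi''$ contributions depends on $R$, and the statement as written with a pure $C(m)$ is slightly optimistic; a bound $C(m,R)\rho_{1.1r}(y-x)$ is what the chain-rule computation actually produces. This is an imprecision the paper shares rather than something you invented, and it is harmless for the downstream applications (e.g.\ Lemma~\ref{angular technical lemma 3} and the angular/radial estimates) because those are already stated with constants of the form $C(m,R,\ldots)$ and $R$ is fixed before $\delta$ is chosen. You should, however, state the bound on $\psi'$, $\psi''$ honestly rather than assert uniform boundedness.
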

Next we want to control the whole energy in the annular region using the energies along best approximating submanifold $\mathcal{T}_r$. To this end, let us introduce the cut off function along $\mathcal{T}_r$.
\begin{defn}[cf. {\cite[Definition 8.5]{naber2024energyidentitystationaryharmonic}}]
    Let $u_{\epsilon}:B_{10R}(p)\rightarrow\R^L$ be a solution to (\ref{*}) with $R^{2-m}\int_{B_{10R}(p)}e_{\epsilon}(u_{\epsilon})\leqslant\Lambda$ and $\mathcal{A}=B_2(p)\setminus\overline{B_{\mathfrak{r}_x}(\mathcal{T})}$  be a $\delta$-annular region. Let $\phi:\R_{\geqslant0}\rightarrow\R_{\geqslant0}$ be non-increasing cut-off function with $\phi(t)=1$ if $t\leqslant1$ and $\phi(t)=0$ if $t\geqslant2$. Then define $$\psi_{\mathcal{T}}(x,r)=\phi(\mathfrak{r}_x/r)\phi(r)\phi\left(\vert\Pi_{L_{\mathcal{A}}}(x-p)\vert^2\right).$$
\end{defn}
Here we collect the basic properties of $\psi_{\mathcal{T}}$
\begin{prop}[{\cite[Lemma 8.8]{naber2024energyidentitystationaryharmonic}}]\label{estimates of T-cutoff}
    Let $u_{\epsilon}:B_{10R}(p)\rightarrow\R^L$ be a solution to (\ref{*}) with $R^{2-m}\int_{B_{10R}(p)}e_{\epsilon}(u_{\epsilon})\leqslant\Lambda$ and $\mathcal{A}=B_2(p)\setminus\overline{B_{\mathfrak{r}_x}(\mathcal{T})}$  be a $\delta$-annular region. If $\delta\leqslant\delta(m,\Lambda,K_N,R)$ is sufficiently small, then
    \begin{enumerate}[(1)]
        \item $\psi_{\mathcal{T}}(x,r)=0$ if $r\leqslant\mathfrak{r}_x/2$ or $\vert\Pi_{\mathcal{L}_{\mathcal{A}}}(x-p)\vert\geqslant2$;
        \item $\psi_{\mathcal{T}}(x,r)=1$ if $\mathfrak{r}_x\leqslant r\leqslant1$ and $\vert\Pi_{\mathcal{L}_{\mathcal{A}}}(x-p)\vert\leqslant1$;
        \item $\left\vert r\frac{\p}{\p r}\psi_{\mathcal{T}}\right\vert+r\left\vert\nabla\psi_{\mathcal{T}}\right\vert+r^2\left\vert\frac{\p^2}{\p r^2}\psi_{\mathcal{T}}\right\vert+r^2\left\vert \nabla^2\psi_{\mathcal{T}}\right\vert+r^2\left\vert\nabla\frac{\p}{\p r}\psi_{\mathcal{T}}\right\vert\leqslant C(m)$;
        \item $\int\frac{\dif r}{r}\int_{\mathcal{T}_r}\left\vert r\frac{\p}{\p r}\psi_{\mathcal{T}}\right\vert+r\left\vert\nabla\psi_{\mathcal{T}}\right\vert+r^2\left\vert\frac{\p^2}{\p r^2}\psi_{\mathcal{T}}\right\vert+r^2\left\vert \nabla^2\psi_{\mathcal{T}}\right\vert+r^2\left\vert\nabla\frac{\p}{\p r}\psi_{\mathcal{T}}\right\vert\dif \mathcal{H}^{m-2}\leqslant C(m)$.
    \end{enumerate}
\end{prop}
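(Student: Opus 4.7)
\textbf{Proof proposal for Proposition \ref{estimates of T-cutoff}.}

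The plan is to unpack the product structure of $\psi_{\mathcal{T}} = \phi(\mathfrak{r}_x/r)\,\phi(r)\,\phi(|\Pi_{L_{\mathcal{A}}}(x-p)|^2)$ and treat each of the three cut-off factors separately. Parts (1) and (2) will follow immediately from the definition of $\phi$: if $r \leqslant \mathfrak{r}_x/2$ then $\mathfrak{r}_x/r \geqslant 2$ so the first factor is $0$, and if $|\Pi_{L_{\mathcal{A}}}(x-p)| \geqslant 2$ then $|\Pi_{L_{\mathcal{A}}}(x-p)|^2 \geqslant 4 \geqslant 2$ so the third factor vanishes; conversely when $\mathfrak{r}_x \leqslant r \leqslant 1$ and $|\Pi_{L_{\mathcal{A}}}(x-p)| \leqslant 1$ all three arguments lie in $[0,1]$ where $\phi \equiv 1$.

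For (3), I would compute each derivative using the chain rule and the bounds $\mathfrak{r}_x + |\nabla \mathfrak{r}_x| + \mathfrak{r}_x|\nabla^2 \mathfrak{r}_x| \leqslant \delta$ coming from Definition \ref{annular regions}(a1) together with the extension $\mathfrak{r}_x = \mathfrak{r}_{\mathfrak{t}_{\mathcal{T}}(\Pi_{p+L}(x))}$ defined at the start of Section \ref{s:best planes}. The radial derivative $r\partial_r \phi(\mathfrak{r}_x/r) = -(\mathfrak{r}_x/r)\phi'(\mathfrak{r}_x/r)$ is uniformly bounded since $\phi'$ is supported on $[1,2]$; the spatial derivative $r\nabla\phi(\mathfrak{r}_x/r) = \phi'(\mathfrak{r}_x/r)\nabla \mathfrak{r}_x$ is bounded by $|\nabla\mathfrak{r}_x| \leqslant \delta$. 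For the second-order bounds, note that on $\operatorname{supp}\phi'(\mathfrak{r}_x/r)$ one has $r \sim \mathfrak{r}_x$, so $r^2 |\nabla^2 \mathfrak{r}_x| \leqslant C \mathfrak{r}_x \cdot \mathfrak{r}_x |\nabla^2 \mathfrak{r}_x|/\mathfrak{r}_x \leqslant C\delta$, and similarly $r^2|\nabla\partial_r\mathfrak{r}_x/r| = r^2|\nabla\mathfrak{r}_x|/r^2 = |\nabla\mathfrak{r}_x| \leqslant \delta$. The factor $\phi(r)$ depends only on $r$ with $r|\phi'(r)| + r^2|\phi''(r)| \leqslant C$, and the factor $\phi(|\Pi_{L_\mathcal{A}}(x-p)|^2)$ is a smooth function of $x$ with derivatives bounded by constants depending only on $m$ on its support, where $|\Pi_{L_\mathcal{A}}(x-p)| \leqslant \sqrt{2}$.

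For (4), the integrand from (3) is supported only where one of the three cut-off factors is transitioning. I would split the integration domain into three regions:
\begin{equation}
T_1 = \{(x,r) : \mathfrak{r}_x/2 \leqslant r \leqslant \mathfrak{r}_x\},\quad T_2 = \{(x,r) : 1 \leqslant r \leqslant 2\},\quad T_3 = \{x : 1 \leqslant |\Pi_{L_\mathcal{A}}(x-p)| \leqslant \sqrt{2}\},\nonumber
\end{equation}
and use the pointwise bounds from (3) to reduce to estimating the measure of each region with respect to the product measure $d\mathcal{H}^{m-2}_{\mathcal{T}_r} \otimes (dr/r)$. For $T_2$ we have $\int_1^2 dr/r = \log 2$, while $\mathcal{H}^{m-2}(\mathcal{T}_r \cap B_{3/2}(p)) \leqslant C(m)$ by Theorem \ref{apporximating submanifold}(1). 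For $T_3$, the projection $\Pi_{L_\mathcal{A}}: \mathcal{T}_r \to L_\mathcal{A}$ is bi-Lipschitz by Theorem \ref{apporximating submanifold}(1), so the relevant $(m-2)$-measure is uniformly bounded, and $r$ ranges over a bounded interval where $\phi(r)\neq 0$.

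The main point, and the only place where a genuine Fubini is needed, is the contribution of $T_1$. Here I would use that $\mathfrak{r}_x$ depends only on $\Pi_{L_\mathcal{A}}(x-p)$ by the extension rule, and parametrize $\mathcal{T}_r$ by $y \in L_\mathcal{A} \cap B_{3/2}(p)$ using the Lipschitz graph structure to obtain
\begin{equation}
\int_0^\infty \frac{dr}{r}\int_{\mathcal{T}_r} \mathbf{1}_{T_1}\, d\mathcal{H}^{m-2} \;\leqslant\; C(m)\int_{L_\mathcal{A}\cap B_{3/2}(p)}\!\!\!\! dy\int_{\mathfrak{r}_y/2}^{\mathfrak{r}_y}\frac{dr}{r} \;\leqslant\; C(m)\log 2 \cdot \mathcal{H}^{m-2}(L_\mathcal{A}\cap B_{3/2}(p)) \;\leqslant\; C(m).\nonumber
\end{equation}
I do not anticipate a major obstacle here; the only mild subtlety is correctly tracking the dependence of $\mathcal{T}_r$ on $r$ in region $T_1$, which is resolved by Theorem \ref{apporximating submanifold}(4) (which gives $\mathcal{T}_r \cap B_r(x) = \mathcal{T}\cap B_r(x)$ when $r \leqslant \mathfrak{r}_x$) together with the uniform graph bound.
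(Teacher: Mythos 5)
Your proof of (1)--(3) is correct, and the overall strategy for (4) — split the support into the transition layers $T_1$, $T_2$, $T_3$ of the three cut-off factors, then use the uniform $\mathcal{H}^{m-2}$-bound on $\mathcal{T}_r\cap B_{3/2}(p)$ together with the extension rule $\mathfrak{r}_x = \mathfrak{r}_{\Pi_{L_\mathcal{A}}(x-p)}$ to do Fubini — is the natural one. The paper states this proposition without proof, citing the reference, so there is nothing to compare against; this is the expected argument.

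There is, however, a small but genuine gap in your handling of $T_3$. You argue that "the relevant $(m-2)$-measure is uniformly bounded, and $r$ ranges over a bounded interval where $\phi(r)\neq0$." But $\{\phi(r)\neq0\}=[0,2]$ and $\int_0^2\frac{\dif r}{r}$ diverges, so boundedness of the $r$-interval is not by itself sufficient. The correct observation is that the \emph{pure} $T_3$ contributions — the terms in which only the factor $\phi(|\Pi_{L_\mathcal{A}}(x-p)|^2)$ is differentiated — carry a compensating power of $r$ from the normalization in (4): the quantity $r|\nabla\psi_{\mathcal{T}}|$ restricted to this piece is $\leqslant Cr\phi(\mathfrak{r}_x/r)\phi(r)$, and $r^2|\nabla^2\psi_{\mathcal{T}}|$ is $\leqslant Cr^2\phi(\mathfrak{r}_x/r)\phi(r)$, so the $\dif r/r$ integral is really $\int_0^2 Cr\,\frac{\dif r}{r}$ or $\int_0^2 Cr^2\,\frac{\dif r}{r}$, both finite. (The mixed contributions where $T_3$ intersects $T_1$ or $T_2$ have $r$ confined to $[\mathfrak{r}_x/2,\mathfrak{r}_x]$ or $[1,2]$ respectively, and are subsumed by your $T_1$ and $T_2$ analyses.) Adding this observation closes the gap; the rest of the argument stands.
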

With all those tools in hand, we are now in a position to define the energies along $\mathcal{T}_r$.
\begin{defn}[cf. {\cite[Section 8.4]{naber2024energyidentitystationaryharmonic}}]
    Let $u_{\epsilon}:B_{10R}(p)\rightarrow\R^L$ be a solution to (\ref{*}) with $R^{2-m}\int_{B_{10R}(p)}e_{\epsilon}(u_{\epsilon})\leqslant\Lambda$ and $\mathcal{A}=B_2(p)\setminus\overline{B_{\mathfrak{r}_x}(\mathcal{T})}$  be a $\delta$-annular region. Define 
    $$\mathcal{E}_{\mathcal{L}}(\mathcal{T}_r)=\int_{\mathcal{T}_r}\psi_{\mathcal{T}}(x,r)\bar{\Theta}_{\mathcal{L}}(x,r)=r^2\int_{\mathcal{T}_r}\psi_{\mathcal{T}}(x,r)\int\rho_r(y-x)(\vert\Pi_{x,r}\nabla u_{\epsilon}\vert^2+F(u_{\epsilon})/\epsilon^2);$$
    $$\mathcal{E}_{\alpha}(\mathcal{T}_r)=\int_{\mathcal{T}_r}\psi_{\mathcal{T}}(x,r)\bar{\Theta}_{\alpha}(x,r)=\int_{\mathcal{T}_r}\psi_{\mathcal{T}}(x,r)\int\rho_r(y-x)\vert\Pi_{x,r}^{\perp}(y-x)\vert^2\vert\nabla_{\alpha_{x,r}^{\perp}}u_{\epsilon}\vert^2;$$
    $$\hat{\mathcal{E}}_{\alpha}(\mathcal{T}_r)=\int_{\mathcal{T}_r}\psi_{\mathcal{T}}(x,r)\hat{\Theta}_{\alpha}(x,r)=\int_{\mathcal{T}_r}\psi_{\mathcal{T}}(x,r)\int\hat{\rho}_r(y-x,\mathcal{L}_{x,r})\vert\Pi_{x,r}^{\perp}(y-x)\vert^2\vert\nabla_{\alpha_{x,r}^{\perp}}u_{\epsilon}\vert^2;$$
    $$\mathcal{E}_n(\mathcal{T}_r)=\int_{\mathcal{T}_r}\psi_{\mathcal{T}}(x,r)\bar{\Theta}_n(x,r)=\int_{\mathcal{T}_r}\psi_{\mathcal{T}}(x,r)\int\rho_r(y-x)\vert\Pi_{x,r}^{\perp}(y-x)\vert^2\vert\nabla_{n_{x,r}^{\perp}}u_{\epsilon}\vert^2,$$
    $$\hat{\mathcal{E}}_n(\mathcal{T}_r)=\int_{\mathcal{T}_r}\psi_{\mathcal{T}}(x,r)\hat{\Theta}_n(x,r)=\int_{\mathcal{T}_r}\psi_{\mathcal{T}}(x,r)\int\hat{\rho}_r(y-x,\mathcal{L}_{x,r})\vert\Pi_{x,r}^{\perp}(y-x)\vert^2\vert\nabla_{n_{x,r}^{\perp}}u_{\epsilon}\vert^2.$$
    Roughly speaking, $\mathcal{E}_{\mathcal{L}}(\mathcal{T}_r),\mathcal{E}_{\alpha}(\mathcal{T}_r),\mathcal{E}_n(\mathcal{T}_r)$ represents the total tangential, angular, radial energy in $B_r(\mathcal{T}_r)$ respectively while the cut-off version $\hat{\mathcal{E}}_{\alpha}(\mathcal{T}_r),\hat{\mathcal{E}}_n(\mathcal{T}_r)$ are the angular and radial energy in $B_r(\mathcal{T}_r)\setminus B_{\mathfrak{r}_x}(\mathcal{T}_r)$.
\end{defn}
The basic estimates for the energies along $\mathcal{T}_r$ are
\begin{prop}[cf. {\cite[Lemma 8.10]{naber2024energyidentitystationaryharmonic}}]\label{basic estimate of energy}
    Let $u_{\epsilon}:B_{10R}(p)\rightarrow\R^L$ be a solution to (\ref{*}) with $R^{2-m}\int_{B_{10R}(p)}e_{\epsilon}(u_{\epsilon})\leqslant\Lambda$ and $\mathcal{A}=B_2(p)\setminus\overline{B_{\mathfrak{r}_x}(\mathcal{T})}$  be a $\delta$-annular region. If $\delta\leqslant\delta(m,\Lambda,K_N,R)$ is sufficiently small, then
    $$\mathcal{E}_{\mathcal{L}}(\mathcal{T}_r)+\hat{\mathcal{E}}_{\alpha}(\mathcal{T}_r)+\hat{\mathcal{E}}_n(\mathcal{T}_r)\leqslant\mathcal{E}_{\mathcal{L}}(\mathcal{T}_r)+\mathcal{E}_{\alpha}(\mathcal{T}_r)+\mathcal{E}_n(\mathcal{T}_r)\leqslant C(m)\int_{\mathcal{T}}r\frac{\p}{\p r}\bar{\Theta}(x,3r).$$
\end{prop}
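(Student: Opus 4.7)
The first inequality is a free consequence of the definition of the cutoff: since $0\le\psi\le 1$ we have $\hat\rho_r(\,\cdot\,;L)\le\rho_r$ pointwise, so $\hat{\mathcal{E}}_\alpha(\mathcal{T}_r)\le\mathcal{E}_\alpha(\mathcal{T}_r)$ and $\hat{\mathcal{E}}_n(\mathcal{T}_r)\le\mathcal{E}_n(\mathcal{T}_r)$. For the substantive second inequality the plan is to reassemble the three energies into a single quantity that matches the left-hand side of the effective cone splitting on $\mathcal{T}_r$. The identity $\bar\Theta(x,r;L^\perp)=\bar\Theta(x,r;\alpha_{L^\perp})+\bar\Theta(x,r;n_{L^\perp})$ applied with $L=\mathcal{L}_{x,r}$ gives
\begin{equation*}
\mathcal{E}_\alpha(\mathcal{T}_r)+\mathcal{E}_n(\mathcal{T}_r)=\int_{\mathcal{T}_r}\psi_\mathcal{T}(x,r)\,\bar\Theta(x,r;\mathcal{L}_{x,r}^\perp)\,d\mathcal{H}^{m-2}(x),
\end{equation*}
so the middle quantity in the proposition is exactly $\int_{\mathcal{T}_r}\psi_\mathcal{T}(x,r)\bigl[\bar\Theta(x,r;\mathcal{L}_{x,r})+\bar\Theta(x,r;\mathcal{L}_{x,r}^\perp)\bigr]\,d\mathcal{H}^{m-2}(x)$.

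Next, apply Theorem \ref{effective qunatitative cone splitting2} pointwise at every $x\in\mathcal{T}_r$ in the support of $\psi_\mathcal{T}$ with $r\ge 2\mathfrak{r}_x$; this dominates the integrand by
\begin{equation*}
C(m)\Bigl(r\tfrac{\partial}{\partial r}\bar\Theta(x,2r)+\fint_{\mathcal{T}_r\cap B_r(x)}r\tfrac{\partial}{\partial r}\bar\Theta(z,2r)\,d\mathcal{H}^{m-2}(z)\Bigr).
\end{equation*}
In the narrow transition range $\mathfrak{r}_x/2\le r<2\mathfrak{r}_x$ the submanifold identity $\mathcal{T}_r\cap B_r(x)=\mathcal{T}\cap B_r(x)$ from Theorem \ref{apporximating submanifold}(4) lets us substitute Proposition \ref{quantitative cone splitting in annular regions} for the same type of bound with $\mathcal{T}$ in place of $\mathcal{T}_r$. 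The pointwise term $r\tfrac{\partial}{\partial r}\bar\Theta(x,2r)$ integrates directly over $\mathcal{T}_r$ against $\psi_\mathcal{T}\le 1$. For the averaged term I use the near-flatness $\vert\nabla\mathfrak{t}_r\vert\le C(m)\sqrt{\delta}$ from Theorem \ref{apporximating submanifold}(1), which gives the two-sided bound $\mathcal{H}^{m-2}(\mathcal{T}_r\cap B_s(x))\asymp s^{m-2}$ for $x\in\mathcal{T}_r$ and $s\le 1$. Fubini then yields
\begin{equation*}
\int_{\mathcal{T}_r}\psi_\mathcal{T}(x,r)\fint_{\mathcal{T}_r\cap B_r(x)}r\tfrac{\partial}{\partial r}\bar\Theta(z,2r)\,d\mathcal{H}^{m-2}(z)\,d\mathcal{H}^{m-2}(x)\le C(m)\int_{\mathcal{T}_r}r\tfrac{\partial}{\partial r}\bar\Theta(z,2r)\,d\mathcal{H}^{m-2}(z).
\end{equation*}

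Finally, transfer the resulting $\mathcal{T}_r$-integral to $\mathcal{T}$ by Theorem \ref{apporximating submanifold}(7): applying it with the scale $2r$ (and choosing $r'$ comparable so that $1.1r'=3r$) converts the integral over $\mathcal{T}_{2r}$ into $C(m)\int_{\mathcal{T}\cap B_{1/3}(p)} r\tfrac{\partial}{\partial r}\bar\Theta(x,3r)\,d\mathcal{H}^{m-2}(x)$. The passage from $\mathcal{T}_r$ to $\mathcal{T}_{2r}$ costs only a dimensional factor: by Theorem \ref{apporximating submanifold}(1) and (3) the graphs $\mathfrak{t}_r$ and $\mathfrak{t}_{2r}$ are $\sqrt\delta$-close with $\sqrt\delta$-close gradients, and by Lemma \ref{properties of heat mollifier}(4) the kernels $\rho_{2r},\dot\rho_{2r}$ centered at points within distance $r$ of one another are comparable to $\rho_{3r}$ centered anywhere in that common neighborhood, so the integrand transfers with only a dimensional loss. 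The main technical obstacle is precisely this bookkeeping of scales: keeping the ``$r$'' of the submanifold, the ``$2r$'' of the derivative of $\bar\Theta$ produced by the cone splitting, and the ``$3r$'' appearing on the right-hand side consistent through each of the Fubini swap, the change of submanifold, and the boundary region $r\sim\mathfrak{r}_x$. Once these comparabilities are all established, combining the three bounds produces
\begin{equation*}
\mathcal{E}_\mathcal{L}(\mathcal{T}_r)+\mathcal{E}_\alpha(\mathcal{T}_r)+\mathcal{E}_n(\mathcal{T}_r)\le C(m)\int_{\mathcal{T}}r\tfrac{\partial}{\partial r}\bar\Theta(x,3r)\,d\mathcal{H}^{m-2}(x),
\end{equation*}
as claimed.
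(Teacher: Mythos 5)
Your proof is correct and takes the same route as the paper: $\hat\rho_r\leqslant\rho_r$ gives the first inequality for free, and the second follows from the effective cone splitting on $\mathcal{T}_r$ (Theorem~\ref{effective qunatitative cone splitting2}), a Fubini swap using the small-Lipschitz-graph property of $\mathcal{T}_r$, and a transfer from $\mathcal{T}_r$ to $\mathcal{T}$. One small simplification worth noting: the detour through $\mathcal{T}_{2r}$ at the end is unnecessary. Lemma~\ref{constructing approx submanifold}(5) (together with $\nabla\mathfrak{t}_r$ small, which makes the graph projection $\mathcal{T}_r\to\mathcal{T}$ measure-comparable) gives the pointwise bound $\frac{\partial}{\partial r}\bar\Theta(z,2.5r)\leqslant C(m)\,\frac{\partial}{\partial r}\bar\Theta(\hat z,2.75r)$ for $z\in\mathcal{T}_r$ and $\hat z\in\mathcal{T}$ its corresponding point, and since $-\dot\rho$ is nonincreasing one can further enlarge $2.75r$ to $3r$ at the cost of a dimensional constant; integrating this directly turns $\int_{\mathcal{T}_r}r\frac{\partial}{\partial r}\bar\Theta(\cdot,2.5r)$ into $C(m)\int_{\mathcal{T}}r\frac{\partial}{\partial r}\bar\Theta(\cdot,3r)$ with no change of scale on the approximating submanifold. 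Also, be a bit careful with the transition range: Theorem~\ref{apporximating submanifold}(4) only gives $\mathcal{T}_r\cap B_r(x)=\mathcal{T}\cap B_r(x)$ for $r\leqslant\mathfrak{r}_x$, not up to $2\mathfrak{r}_x$, so the sliver $\mathfrak{r}_x\leqslant r<2\mathfrak{r}_x$ still needs a separate remark (the integrands there are only defined where $\mathcal{L}_{x,r}$ exists, so it is harmless, but your cited identity doesn't cover it).
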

\begin{proof}
    The first inequality is clear from definition. For the second inequality, by the improved cone splitting on $\mathcal{T}_r$, we have 
    \begin{equation}
        \begin{aligned}
            \mathcal{E}_{\mathcal{L}}(\mathcal{T}_r)+\mathcal{E}_{\alpha}(\mathcal{T}_r)+\mathcal{E}_n(\mathcal{T}_r)&=\int_{\mathcal{T}_r}\psi_{\mathcal{T}}(x,r)(\bar{\Theta}_{\mathcal{L}}(x,r)+\bar{\Theta}(x,r;\mathcal{L}_{x,r}^{\perp}))\\&\leqslant C(m)\int_{\mathcal{T}_r}\psi(x,r)\left(r\frac{\p}{\p r}\bar{\Theta}(x,2r)+\fint_{\mathcal{T}_r\cap B_r(x)}r\frac{\p}{\p r}\bar{\Theta}(z,2r)\right)\\&\leqslant\int_{\mathcal{T}_r\cap B_{1.5}(p)}r\frac{\p}{\p r}\bar{\Theta}(x,2.5r).
        \end{aligned}\nonumber
    \end{equation}
    where in the last inequality we used the fact that $\mathcal{T}_r$ is a Lipschitz graph over $\mathcal{L}_{\mathcal{A}}$ with small Lipschitz factor to exchange integral. Now we can turn the last integral to an integral on $\mathcal{T}$ by enlarging 2.5r a little bit using the property of $\mathcal{T}_r$.
\end{proof}
Finally, let us integrate the energies along $\mathcal{T}_r$ to get the full energy within $\mathcal{A}$.
\begin{defn}[cf. {\cite[Section 8.5]{naber2024energyidentitystationaryharmonic}}]
    Let $u_{\epsilon}:B_{10R}(p)\rightarrow\R^L$ be a solution to (\ref{*}) with $R^{2-m}\int_{B_{10R}(p)}e_{\epsilon}(u_{\epsilon})\leqslant\Lambda$ and $\mathcal{A}=B_2(p)\setminus\overline{B_{\mathfrak{r}_x}(\mathcal{T})}$  be a $\delta$-annular region. Define
    $$\mathcal{E}_{\mathcal{L}}(\mathcal{A})=\int\mathcal{E}_{\mathcal{L}}(\mathcal{T}_r)\frac{\dif r}{r},\mathcal{E}_{\alpha}(\mathcal{A})=\int\hat{\mathcal{E}}_{\alpha}(\mathcal{T}_r)\frac{\dif r}{r},\mathcal{E}_n(\mathcal{A})=\int\hat{\mathcal{E}}_n(\mathcal{T}_r)\frac{\dif r}{r}.$$
\end{defn}
Then we can check above three parts of energy indeed give rise to full energy on $\mathcal{A}$.
\begin{thm}[cf. {\cite[Lemma 8.12]{naber2024energyidentitystationaryharmonic}}]\label{energy decomposition}
    Let $u_{\epsilon}:B_{10R}(p)\rightarrow\R^L$ be a solution to (\ref{*}) with $R^{2-m}\int_{B_{10R}(p)}e_{\epsilon}(u_{\epsilon})\leqslant\Lambda$ and $\mathcal{A}=B_2(p)\setminus\overline{B_{\mathfrak{r}_x}(\mathcal{T})}$  be a $\delta$-annular region. If $\delta\leqslant\delta(m,\Lambda,K_N,R)$ and $R\geqslant R(\sigma)$, then we have 
    $$\int_{\mathcal{A}_{\sigma}\cap B_1(p)}e_{\epsilon}(u_{\epsilon})\leqslant C(m)(\mathcal{E}_{\mathcal{L}}(\mathcal{A})+\mathcal{E}_{\alpha}(\mathcal{A})+\mathcal{E}_n(\mathcal{A})),\mathcal{A}_{\sigma}= B_2(p)\setminus\overline{B_{\sigma\mathfrak{r}_x}(\mathcal{T})}.$$
\end{thm}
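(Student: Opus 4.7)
The approach is to apply Fubini's theorem to swap the order of integration in $\mathcal{E}_{\mathcal{L}}(\mathcal{A}) + \mathcal{E}_{\alpha}(\mathcal{A}) + \mathcal{E}_{n}(\mathcal{A})$, rewriting the sum as a single integral over $y$ of $e_{\epsilon}(u_{\epsilon})(y)$ multiplied by a weight $W(y)$ obtained from integrating the various integrands over pairs $(x, r)$ with $x \in \mathcal{T}_r$. The theorem will then follow once we establish a uniform lower bound $W(y) \geqslant c(m) > 0$ for every $y \in \mathcal{A}_{\sigma} \cap B_1(p)$.

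For fixed $y \in \mathcal{A}_{\sigma} \cap B_1(p)$, set $\tau := \dist(y, \mathcal{T}) \geqslant \sigma \mathfrak{r}_y$, pick a scale $r_y$ comparable to $\max(\tau, \mathfrak{r}_y)$, and let $x_r \in \mathcal{T}_r$ be the point nearest to $y$. I restrict attention to a ``good region'' $G(y)$ of parameters $(x, r)$ where $r$ varies in a range of bounded $\log$-length around $r_y$ and $x$ lies in a ball of radius comparable to $r$ around $x_r$ on $\mathcal{T}_r$. On $G(y)$ the choice of scale and the definition of $\psi_{\mathcal{T}}$ yield $\psi_{\mathcal{T}}(x, r) = 1$; by Theorem \ref{apporximating submanifold} (1), (4), (6), $\mathcal{T}_r$ is a Lipschitz graph nearly tangent to $\mathcal{L}_{x,r}$ at $x$, hence $|y - x| \lesssim r$ and $|\Pi_{x,r}^{\perp}(y - x)| \approx \tau$; consequently $\rho_r(y-x) \geqslant c(m)/r^m$, and the hypothesis $R \geqslant R(\sigma)$ ensures $|\Pi_{x,r}^{\perp}(y-x)|/r \geqslant \sqrt{2}\, e^{-R}$ so that $\hat{\rho}_r(y-x, \mathcal{L}_{x,r}) = \rho_r(y-x)$ throughout $G(y)$.

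Combining the three integrands via the pointwise identity $|\Pi_{x,r} \nabla u_{\epsilon}|^2 + |\Pi_{x,r}^{\perp} \nabla u_{\epsilon}|^2 = |\nabla u_{\epsilon}|^2$ (valid for any orthogonal projection), on $G(y)$ we obtain
\[
r^2 \rho_r (|\Pi_{x,r}\nabla u_{\epsilon}|^2 + F(u_{\epsilon})/\epsilon^2) + |\Pi_{x,r}^{\perp}(y - x)|^2 \hat{\rho}_r |\Pi_{x,r}^{\perp} \nabla u_{\epsilon}|^2 \geqslant \min\bigl(r^2 \rho_r,\; |\Pi_{x,r}^{\perp}(y-x)|^2 \hat{\rho}_r\bigr)\, e_{\epsilon}(u_{\epsilon})(y).
\]
Integrating over $G(y)$, the $(m-2)$-dimensional integral on $\mathcal{T}_r$ contributes $\gtrsim r^{m-2}$ while the $dr/r$ integration contributes a positive factor of bounded order, producing $W(y) \gtrsim c(m)$ uniformly in $y$; integrating in $y$ then finishes the proof.

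The main obstacle will be verifying the uniform lower bound $W(y) \gtrsim c(m)$ in the inner-tube regime $\sigma \mathfrak{r}_y \leqslant \tau < 2\mathfrak{r}_y$: here $y$ is so close to $\mathcal{T}$ that the best plane $\mathcal{L}_{x,r}$ is only defined at scales $r \geqslant 2\mathfrak{r}_x \gg \tau$, and at such scales the perpendicular weight $|\Pi_{x,r}^{\perp}(y-x)|^2 \hat{\rho}_r \sim \tau^2/r^m$ is smaller than the tangential weight $r^2 \rho_r \sim r^{2-m}$ by a factor $(\tau/r)^2$, so the naive $\min$-estimate degenerates. Resolving this regime will require the precise threshold $R \geqslant R(\sigma)$ (so that $\hat{\rho}_r$ remains non-truncated over the admissible range of scales) together with a careful aggregation of the perpendicular contribution across the $\log$-range $r \in [2\mathfrak{r}_y, 1]$, exploiting the quantitative properties of the $\hat{\rho}_r$ cutoff in Proposition \ref{refine estimates for L-cutoff} and the geometric estimates of Theorem \ref{apporximating submanifold}.
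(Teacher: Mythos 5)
Your overall strategy is the same as the paper's: interchange the order of integration so that the sum $\mathcal{E}_{\mathcal{L}}(\mathcal{A})+\mathcal{E}_{\alpha}(\mathcal{A})+\mathcal{E}_n(\mathcal{A})$ is viewed as $\int W(y)\,e_{\epsilon}(u_{\epsilon})(y)\,\dif y$ for a Fubini weight $W(y)$, and then bound the weight from below on $\mathcal{A}_\sigma\cap B_1(p)$. The paper phrases this as sandwiching the intermediate quantity $\int\frac{\dif r}{r}\int_{\mathcal{T}}\psi_{\mathcal{T}}(x,r)\,r^{2-m}\int_{B_{10r}(x)\setminus B_{e^{-2R}r}(x+\textup{T}_x\mathcal{T})}e_{\epsilon}(u_{\epsilon})$ between $c\int_{\mathcal{A}_\sigma}e_\epsilon$ and $C(\mathcal{E}_{\mathcal{L}}+\hat{\mathcal{E}}_\alpha+\hat{\mathcal{E}}_n)$, but that sandwich and your direct Fubini are the same computation. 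Your min-comparison $r^2\rho_r\wedge\hat{\rho}_r\vert\Pi^\perp_{x,r}(y-x)\vert^2$ and the observation $\vert\Pi_{x,r}\nabla u_\epsilon\vert^2+\vert\Pi_{x,r}^\perp\nabla u_\epsilon\vert^2=\vert\nabla u_\epsilon\vert^2$ are exactly what the paper uses implicitly in its per-scale bound.

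Where the proposal goes astray is the belief that the uniform lower bound $W(y)\geqslant c(m)$ can be recovered in the inner regime $\sigma\mathfrak{r}_y\leqslant\dist(y,\mathcal{T})<2\mathfrak{r}_y$ by a ``careful aggregation'' over $r$. It cannot, and this is not a defect to be repaired: the $\log$-range aggregation $\int(\tau/r)^2\,\dif r/r$ over $r\in[\mathfrak{r}_y/2,\ \tau e^{R}]$ is dominated by its lower endpoint $r\sim\mathfrak{r}_y$, where $(\tau/r)^2\sim\sigma^2$, so the perpendicular piece of the weight genuinely degenerates to $\sim\sigma^2$ (equivalently $\sim e^{-2R}$ once $R$ is tuned so that $e^{-R}\lesssim\sigma$). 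The role of the hypothesis $R\geqslant R(\sigma)$ is only to ensure the admissible $r$-range is nonempty, not to eliminate this factor. Correspondingly, the constant in the conclusion is really $C(m,R)=C(m,\sigma)$ rather than a universal $C(m)$---one sees the same factor $e^{2R}$ appear in the paper's own per-scale estimate
$r^{2-m}\int_{B_{10r}(x)\setminus B_{e^{-2R}r}(x+\mathcal{L}_{x,r})}e_{\epsilon}(u_{\epsilon})\leqslant C(\bar{\Theta}_{\mathcal{L}}+\hat{\Theta}_\alpha+\hat{\Theta}_n)$,
where the minimum of $\hat{\rho}_r\vert\Pi^\perp_{x,r}(y-x)\vert^2$ on the excluded tube's boundary is $\sim e^{-2R}r^{2-m}$, not $r^{2-m}$. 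This $\sigma$-dependence is harmless downstream because $\mathcal{E}_{\mathcal{L}}(\mathcal{A})+\mathcal{E}_\alpha(\mathcal{A})+\mathcal{E}_n(\mathcal{A})$ is later driven to zero by shrinking $\delta$ after fixing $\sigma$ and $R$. So you should stop trying to prove $W(y)\geqslant c(m)$; prove $W(y)\geqslant c(m)e^{-2R}$ on $\mathcal{A}_\sigma\cap B_1(p)$ (which your outer-region calculation plus the inner-region aggregation you set up already gives, once you admit the $e^{-2R}$ loss), and state the theorem with constant $C(m,R)$.
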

\begin{proof}
    On the one hand, if $\delta$ is small and $R$ is large, we have
    \begin{equation}
        \begin{aligned}
            &\int\frac{\dif r}{r}\int_{\mathcal{T}}\psi_{\mathcal{T}}(x,r)\dif x\frac{1}{r^{m-2}}\int_{B_{10r}(x)\setminus B_{e^{-2R}r}(x+\textup{T}_x\mathcal{T})}e_{\epsilon}(u_{\epsilon})\dif y\\\geqslant &c(m)\int_0^1\frac{\dif r}{r}\int_{\lbrace2e^{-2R}\leqslant\vert y-\Pi_{\mathcal{T}}(y)\vert\leqslant5r,r\geqslant\frac{1}{10}\mathfrak{r}_y\rbrace}e_{\epsilon}(u_{\epsilon})\dif y\\\geqslant& c(m)\int_{\mathcal{A}_{\sigma}\cap B_1(p)}e_{\epsilon}(u_{\epsilon})\dif y\int_{3e^{-2R}\vert y-\Pi_{\mathcal{T}}(y)\vert}^{2\vert y-\Pi_{\mathcal{T}}(y)\vert}\frac{\dif r}{r}\\\geqslant&c(m)\int_{\mathcal{A}\cap B_1(p)}e_{\epsilon}(u_{\epsilon}),
        \end{aligned}\nonumber
    \end{equation}
    where in the first inequality we used the fact that on $x\in B_1(p)$ and $\frac{1}{10}\mathfrak{r}_x\leqslant r\leqslant1$ we have $\psi_{\mathcal{T}}(x,r)\geqslant c(m)$. On the other hand, by definition $$\frac{1}{r^{m-2}}\int_{B_{10r}(x)\setminus B_{e^{-2R}r}(x+\mathcal{L}_{x,r})}e_{\epsilon}(u_{\epsilon})\dif y\leqslant C(m)(\bar{\Theta}_{\mathcal{L}}(x,r)+\hat{\Theta}_{\alpha}(x,r)+\hat{\Theta}_n(x,r)).$$
    Hence, using the fact that 
    $$\Vert\Pi_{\textup{T}_x\mathcal{T}}-\Pi_{x,r}\Vert\leqslant C(m,\varepsilon_0)\sqrt{\delta},\textup{dist}(\mathcal{T},\mathcal{T}_r)\leqslant C(m,\varepsilon_0)\sqrt{\delta}r,$$ 
    if $\delta$ is small enough  we have $$\int_{\mathcal{T}}\psi_{\mathcal{T}}(x,r)\dif x\frac{1}{r^{m-2}}\int_{B_{10r}(x)\setminus B_{e^{-2R}r}(x+\textup{T}_x\mathcal{T})}e_{\epsilon}(u_{\epsilon})\dif y\leqslant C(m)(\mathcal{E}_{\mathcal{L}}(\mathcal{T}_r)+\hat{\mathcal{E}}_{\alpha}(\mathcal{T}_r)+\hat{\mathcal{E}}_n(\mathcal{T}_r)).$$
    Integrating over $\dif r/r$ gives the desired result.
\end{proof}
\section{Smallness of energy in annular regions: angular energy}\label{s:angular energy}
In this section we prove the angular part of energy in the annular region is small, namely,
\begin{thm}[cf. {\cite[Theorem 9.1]{naber2024energyidentitystationaryharmonic}}]\label{angular energy estimate}
    Let $u_{\epsilon}:B_{10R}(p)\rightarrow\R^L$ be a solution to (\ref{*}) with $R^{2-m}\int_{B_{10R}(p)}e_{\epsilon}(u_{\epsilon})\leqslant\Lambda$ and $\mathcal{A}=B_2(p)\setminus\overline{B_{\mathfrak{r}_x}(\mathcal{T})}$  be a $\delta$-annular region. If $\delta\leqslant\delta(m,R,K_N,\Lambda)$ and $\epsilon\leqslant\epsilon(m,R,K_N,\Lambda)$, then $$\mathcal{E}_{\alpha}(\mathcal{A})\leqslant C(m,R,\Lambda,K_N)\sqrt{\delta}.$$
\end{thm}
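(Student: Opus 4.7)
The plan is to exploit a conformal super-convexity inequality for the heat-mollified angular energy $\hat{\Theta}_\alpha(x,r)$ in the spirit of Parker's two-dimensional estimate, generalized to high dimensions as indicated in the introduction and in \cite[Section 9]{naber2024energyidentitystationaryharmonic}. The guiding picture is that when the tangential energy $\bar{\Theta}_{\mathcal L}$ vanishes, the map is $\mathcal L_{x,r}$-invariant, hence essentially a 2-dimensional harmonic map in $\mathcal L_{x,r}^{\perp}$, and for such maps the angular energy density is super-harmonic under the conformal Laplacian $\bar{\Delta} = (s\partial_s)^2 + s^2\Delta_L$. In the annular-region setting, the tangential energy is not zero but is merely small and localized, so one expects only an inequality up to small error terms that we can absorb.

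\textbf{Step 1 (pointwise super-convexity).} I would prove a differential inequality of the form
\[
(r\partial_r)^2\hat{\Theta}_\alpha(x,r) + r^2\Delta_{\mathcal T_r}\hat{\Theta}_\alpha(x,r) \geqslant c(m)\,\hat{\Theta}_\alpha(x,r) - C(m,R)\,\mathcal R(x,r),
\]
for $x \in \mathcal T_r$ with $r \geqslant 2\mathfrak r_x$, where the remainder is dominated by
\[
\mathcal R(x,r) \leqslant C\,\bar{\Theta}_{\mathcal L}(x,r) + C\, r\tfrac{\partial}{\partial r}\bar{\Theta}(x,1.5r) + C\,e^{-R/2}\bar{\Theta}(x,2r).
\]
The derivation tests the stationary identity (\ref{**}) and the Euler--Lagrange equation (\ref{*}) against vector fields of the form $\hat\rho_r(y-x;\mathcal{L}_{x,r})\Pi_{x,r}^{\perp}(y-x)$ and infinitesimal rotations in the $\mathcal L_{x,r}^{\perp}$ plane, in the style of the proof of Proposition \ref{Top dimension quantitative cone splitting}. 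The penalty contribution $F(u_\epsilon)/\epsilon^2$ enters through the same identity and is absorbed into $\bar{\Theta}_{\mathcal L}$; the error from differentiating $\mathcal L_{x,r}$ in $(x,r)$ is bounded using Theorem \ref{best plane properties}(3); the error from replacing $\textup{T}_x\mathcal T_r$ by $\mathcal L_{x,r}$ is controlled by Theorem \ref{apporximating submanifold}(6); and the heat-mollifier commutators are managed by Proposition \ref{refine estimates for L-cutoff} together with the $\rho_r \approx -\dot\rho_r$ trick from Lemma \ref{properties of heat mollifier}.

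\textbf{Step 2 (integration and by-parts).} Multiply the inequality by $\psi_{\mathcal T}(x,r)$ and integrate against $r^{-1}\,dr\,d\mathcal H^{m-2}(x)$ over $\bigcup_r\mathcal T_r\times\{r\}$. Moving $(r\partial_r)^2$ and $r^2\Delta_{\mathcal T_r}$ onto $\psi_{\mathcal T}$ via integration by parts introduces weights that, by Proposition \ref{estimates of T-cutoff}(3)--(4), are dimensionally $L^1$ in $r^{-1}dr\,d\mathcal H^{m-2}$; the boundary at $r = \mathfrak r_x/2$ vanishes by Proposition \ref{estimates of T-cutoff}(1). This produces
\[
c(m)\,\mathcal E_\alpha(\mathcal A) \leqslant C(m)\Big(\mathcal E_{\mathcal L}(\mathcal A) + e^{-R/2}\Lambda + \int\frac{dr}{r}\int_{\mathcal T_r}\psi_{\mathcal T}(x,r)\, r\tfrac{\partial}{\partial r}\bar{\Theta}(x,1.5r)\,d\mathcal H^{m-2}(x)\Big).
\]

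\textbf{Step 3 (smallness via Cauchy--Schwarz).} Using Theorem \ref{apporximating submanifold}(7) to convert the $\mathcal T_r$-integral to one on $\mathcal T$, and using the same idea for $\mathcal E_{\mathcal L}(\mathcal A)$ through Proposition \ref{basic estimate of energy}, the problem reduces to estimating $\mathcal I := \int \frac{dr}{r}\int_{\mathcal T\cap B_{3/2}}r\tfrac{\partial}{\partial r}\bar{\Theta}(x,2r)\,d\mathcal H^{m-2}(x)$. Apply Cauchy--Schwarz: pointwise $r\partial_r\bar{\Theta}(x,r) \leqslant \delta$ by Definition \ref{annular regions}(a2), while the telescoping bound
\[
\int_{\mathfrak r_x}^{2}r\tfrac{\partial}{\partial r}\bar{\Theta}(x,2r)\frac{dr}{r} = \bar{\Theta}(x,4) - \bar{\Theta}(x,2\mathfrak r_x) \leqslant C(m)\Lambda
\]
gives an $L^1$ control in $\tfrac{dr}{r}$ by $\Lambda$. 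Hence $\mathcal I \leqslant \sqrt{\delta\Lambda}\cdot\mathcal H^{m-2}(\mathcal T\cap B_2) \leqslant C(m,\Lambda)\sqrt{\delta}$, and the same argument (with Proposition \ref{effective qunatitative cone splitting2}) handles $\mathcal E_{\mathcal L}(\mathcal A)$. Choosing $R$ large so that $e^{-R/2}\Lambda \leqslant \sqrt{\delta}$ closes the estimate.

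\textbf{Main obstacle.} The hardest part is Step 1: cleanly producing the super-convexity with all error terms absorbed into $\bar{\Theta}_{\mathcal L}(x,r) + r\partial_r \bar{\Theta}$. The scale-derivative $(r\partial_r)^2\hat{\Theta}_\alpha$ and the submanifold Laplacian $r^2\Delta_{\mathcal T_r}\hat{\Theta}_\alpha$ generate many commutator terms because both the angular/radial decomposition in $\mathcal L_{x,r}^{\perp}$ and the base point moves with $(x,r)$; these must be bounded using the full strength of Theorem \ref{best plane properties}(3) and Theorem \ref{apporximating submanifold}(2),(3),(6), and one must in addition verify that the Ginzburg--Landau penalty $F(u_\epsilon)/\epsilon^2$ does not spoil the sign of the leading $+c(m)\hat{\Theta}_\alpha$ term, using that it is controlled by $\bar{\Theta}_{\mathcal L}$.
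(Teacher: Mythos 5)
Your overall architecture (pointwise super-convexity for the angular density under a conformal Laplacian, then integrate with the cutoff $\psi_{\mathcal T}$ against $\frac{dr}{r}\,d\Hau^{m-2}$ and move derivatives by parts) is aligned with the paper's Section \ref{s:angular energy}, but there are two concrete gaps that break the argument as written.

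\textbf{The remainder in Step 1 must carry an explicit $\sqrt{\delta}$ prefactor, and Cauchy--Schwarz cannot manufacture it afterwards.} Your $\mathcal R(x,r)\lesssim\bar\Theta_{\mathcal L}(x,r)+r\partial_r\bar\Theta(x,1.5r)+e^{-R/2}\bar\Theta(x,2r)$ has no $\sqrt\delta$. In Step 3 you attempt to recover $\sqrt\delta$ from the pointwise bound $r\partial_r\bar\Theta\leqslant\delta$ and the telescoping $L^1$ bound by $\Lambda$, but the $\frac{dr}{r}$ measure of the interval $[\mathfrak r_x,2]$ is $\log(2/\mathfrak r_x)$, which is unbounded as $\mathfrak r_x\to 0$; any interpolation produces at best $\sqrt{\delta\Lambda\log(2/\mathfrak r_x)}$, and the plain bound $\int r\partial_r\bar\Theta\,\frac{dr}{r}\leqslant\Lambda$ is already tight. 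So $\mathcal I$ is $O(\Lambda)$, not $O(\sqrt\delta)$. What the paper does instead (Lemmas \ref{angular technical lem2}, \ref{angular technical lemma 3}, Proposition \ref{angular derivative superconvexity}) is to show that every non-total-derivative error in the super-convexity computation carries a $\sqrt\delta$ factor coming directly from the derivative estimates $\|r\partial_r\Pi_{x,r}\|,\ \|r\nabla\Pi_{x,r}\|,\ \|\Pi_{\mathcal T_r}-\Pi_{x,r}\|\leqslant C\sqrt\delta$ (Theorem \ref{best plane properties}(3), Theorem \ref{apporximating submanifold}(6)), while the \emph{$O(1)$} pieces — precisely those generated by the scaling identity $r\partial_r\hat\rho_r+m\hat\rho_r+\langle\nabla^y\hat\rho_r,y-x\rangle=0$ — are not small at all but are exhibited as a total $r$-derivative $r\frac{d}{dr}\mathfrak E_2(r)$ that vanishes upon integrating $\frac{dr}{r}$ over a compact support. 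Your Proposition \ref{refine estimates for L-cutoff} bound $|r\partial_r\hat\rho_r|\leqslant C\rho_{1.1r}$ only confirms that this piece is $O(1)$, not small; it must be peeled off into the total-derivative term. Without that structure, the leading term cannot be absorbed.

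\textbf{The $e^{-R/2}\bar\Theta(x,2r)$ term does not integrate to $e^{-R/2}\Lambda$.} By Lemma \ref{energy non doubling}, $\bar\Theta(x,2r)\geqslant c(m)\varepsilon_0$ for all $x\in\mathcal T_r,\ r\geqslant 2\mathfrak r_x$, so $\int\frac{dr}{r}\int_{\mathcal T_r}\psi_{\mathcal T}\,\bar\Theta(x,2r)\gtrsim\varepsilon_0\int_{\mathcal T}\log(1/\mathfrak r_x)\,d\Hau^{m-2}$, which is unbounded. Taking $R$ large does not fix this. In the paper's proof the $\rho_r\approx-\dot\rho_r$ / $e^{-R/2}$ trick only ever multiplies quantities like $\bar\Theta_{\mathcal L}$ or $\partial_r\bar\Theta$ that are $L^1$ in $\frac{dr}{r}$; the angular computation (Lemma \ref{computation angular}) in fact avoids $e^{-R/2}$ errors entirely by using the exact scaling identity for $\hat\rho_r$ and the commutator estimates, all $\sqrt\delta$-weighted. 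Finally, note that your Step 2 also introduces $\mathcal E_{\mathcal L}(\mathcal A)$ as a bare error term, which is circular at this stage; in the paper $\mathcal E_{\mathcal L}$ appears only with a $\sqrt\delta$ prefactor so that it telescopes against the $\Lambda$ bound rather than needing Theorem \ref{radial and tangential energy estimate} first.
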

Fix $x\in\mathcal{T}_r$, for $y\in\R^m$, we can use the best approximate affine space $x+\mathcal{L}_{x,r}$ to define the decomposition  $$y-x=y_{x,r}+y_{x,r}^{\perp}=\Pi_{x,r}(y-x)+\Pi_{x,r}^{\perp}(y-x).$$
Moreover, we can consider the polar coordinates in $x+\mathcal{L}_{x,r}^{\perp}$, which reads $y_{x,r}^{\perp}=(s_{x,r}^{\perp},\alpha_{x,r}^{\perp})$. Using this, we define the scale invariant angular energies $$e_{\alpha,x,r}(y)=s^{\perp2}_{x,r}\vert\nabla_{\alpha_{x,r}^{\perp}}u_{\epsilon}(y)\vert^2,$$ 
$$\mathcal{E}_{\alpha,x,r}(y)=s^{\perp2}_{x,r}\int_{\alpha_{x,r}^{\perp}\in S^1}\vert\nabla_{\alpha_{x,r}^{\perp}}u_{\epsilon}(y)\vert^2\dif \alpha_{x,r}^{\perp}.$$
We will show that this energy satisfies a uniform sub-harmonic property with respect to the conformal Laplacian. To be specific, introduce $$\bar{\Delta}_{x,r}=\left(s_{x,r}^{\perp}\frac{\p}{\p s_{x,r}^{\perp}}\right)^2+s^{\perp2}_{x,r}\Delta_{\mathcal{L}_{x,r}}.$$

Note that the condition $\epsilon/r$ is sufficiently small in the following lemma can be ruled out if we assume $\epsilon$ is small.
\begin{lem}\label{angular technical lem1}
    Let $u_{\epsilon}:B_{2}(p)\rightarrow\R^L$ be a solution to (\ref{*}) with $2^{2-m}\int_{B_{2}(p)}e_{\epsilon}(u_{\epsilon})\leqslant\Lambda$. There exists $r_0(m,K_N,\Lambda,\gamma)\in(0,1)$ such that if $x\in B_1(p),0<r<r_0$ satisfies $\bar{\Theta}(x,r)\geqslant\varepsilon_0$, then we have $$\frac{\epsilon}{r}\leqslant \gamma.$$
\end{lem}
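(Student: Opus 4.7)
I will argue by contradiction, combined with a rescaling/compactness argument in the spirit of the Lin--Wang blow-up analysis for the Ginzburg--Landau approximation \cite{Linwangharmonicsphere99}, which is exactly the ``three alternatives'' phenomenon recalled in the introduction. Suppose the conclusion fails, so there exist $\gamma>0$, a sequence $u_{\epsilon_i}$ of solutions to (\ref{*}) satisfying the hypothesis of the lemma, points $x_i\in B_1(p)$, and scales $r_i\to 0$ with $\bar\Theta_{u_{\epsilon_i}}(x_i,r_i)\geqslant\varepsilon_0$ yet $\tilde\epsilon_i:=\epsilon_i/r_i>\gamma$. Blow up at scale $r_i$ by setting
\[
v_i(y)=u_{\epsilon_i}(x_i+r_iy),
\]
which solves (\ref{*}) with parameter $\tilde\epsilon_i>\gamma$, is defined on balls of radius $\to\infty$ in $\R^m$, and satisfies $\bar\Theta_{v_i}(0,1)=\bar\Theta_{u_{\epsilon_i}}(x_i,r_i)\geqslant\varepsilon_0$. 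The monotonicity (\ref{monotonicity}) together with the ambient energy bound gives $\bar\Theta_{v_i}(0,R)\leqslant C(\Lambda)$ uniformly in $i$ on every fixed scale $R$, and the $L^\infty$ bound on $u_{\epsilon_i}$ --- inherited from the tubular structure of $F$, which forces the image to lie in a bounded neighborhood of the compact target $N$ --- passes to the $v_i$.

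\textbf{Compactness.} Because $\tilde\epsilon_i>\gamma$ and $|f|$ is bounded, the right-hand side of (\ref{*}) for $v_i$ satisfies $|\Delta v_i|=|f(v_i)|/\tilde\epsilon_i^{2}\leqslant C(K_N)/\gamma^{2}$ uniformly. Standard Calder\'on--Zygmund and Schauder estimates then yield uniform $C^{1,\alpha}_{\mathrm{loc}}$ bounds on the $v_i$. Extracting a subsequence, I may assume $\tilde\epsilon_i\to\tilde\epsilon_\infty\in[\gamma,\infty]$ and $v_i\to v_\infty$ in $C^{1}_{\mathrm{loc}}(\R^m,\R^J)$. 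Since $F$ is bounded and $\tilde\epsilon_i>\gamma$, the potential part $\int\rho_1(y)F(v_i)/\tilde\epsilon_i^{2}\,\dif y$ converges strongly, so the lower bound $\bar\Theta_{v_i}(0,1)\geqslant\varepsilon_0$ survives in the limit.

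\textbf{Ruling out both alternatives.} If $\tilde\epsilon_\infty=\infty$, then $\Delta v_i\to 0$ locally uniformly and the potential contribution to $\bar\Theta_{v_i}(0,1)$ vanishes in the limit, so $v_\infty$ is a harmonic function on $\R^m$ with $R^{2-m}\int_{B_R}|\nabla v_\infty|^{2}\leqslant C(\Lambda)$. Interior gradient estimates for harmonic functions give $\sup_{B_R}|\nabla v_\infty|^2\leqslant CR^{-m}\int_{B_{2R}}|\nabla v_\infty|^2\leqslant C(\Lambda)R^{-2}\to 0$, forcing $v_\infty$ to be constant and $\bar\Theta_{v_\infty}(0,1)=0$, a contradiction. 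If $\tilde\epsilon_\infty\in[\gamma,\infty)$, then $v_\infty$ is a smooth entire solution of (\ref{*}) on $\R^m$ with parameter $\tilde\epsilon_\infty>0$, bounded image, and polynomial energy bound $R^{2-m}\int_{B_R}e_{\tilde\epsilon_\infty}(v_\infty)\leqslant C(\Lambda)$; invoking the Liouville-type observation of Lin--Wang \cite{Linwangharmonicsphere99} --- recalled in the introduction as ``there is no nontrivial finite energy solution to (\ref{*}) and harmonic function'' --- we conclude that $v_\infty$ must be a constant valued in $N$, again contradicting $\bar\Theta_{v_\infty}(0,1)\geqslant\varepsilon_0$.

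\textbf{Main obstacle.} The harmonic-function alternative is dispatched by an elementary gradient-estimate Liouville argument and is the easy part; the genuine subtlety is the case $\tilde\epsilon_\infty<\infty$, which is handled only by invoking the Lin--Wang nonexistence theorem for entire solutions of (\ref{*}). A secondary point is propagating the density lower bound through the limit, which requires strong $C^{1}_{\mathrm{loc}}$ compactness; the hypothesis $\tilde\epsilon_i>\gamma$ is exactly what prevents singular concentration in the blow-up and makes the limit $v_\infty$ a classical (smooth) object, so no defect measure on $\R^m$ has to be tracked.
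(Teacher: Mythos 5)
Your proof is correct and follows the same overall blow-up-and-Liouville structure as the paper's, but the compactness step is handled by a genuinely different mechanism. The paper chooses an intermediate scale $r_j'\leqslant r_j$ for which $\bar\Theta_{u_{\epsilon_j}}(x_j,r_j')=\varepsilon_0/2$, so that after rescaling to that scale the mollified density is small and Theorem~\ref{e-reg thm} ($\varepsilon$-regularity) applies, giving strong convergence on $B_1$. You instead rescale directly at $r_i$ and observe that the contradiction hypothesis $\tilde\epsilon_i=\epsilon_i/r_i>\gamma$ already makes $\lvert\Delta v_i\rvert=\lvert f(v_i)\rvert/\tilde\epsilon_i^2\leqslant C(K_N)/\gamma^2$ bounded in $L^\infty$, so classical elliptic estimates furnish $C^{1,\alpha}_{\mathrm{loc}}$ compactness without invoking $\varepsilon$-regularity or an intermediate scale. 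Both routes terminate in the same Lin--Wang Liouville alternative, and you spell out the harmonic-function case with an explicit scale-invariant gradient estimate where the paper merely cites the reference. Your version is in fact somewhat more self-contained and avoids one auxiliary choice, which is a minor improvement.

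One imprecision worth tightening: you attribute the uniform $L^\infty$ bound on $u_{\epsilon_i}$ (and hence on $v_i$, needed to run Calder\'on--Zygmund) to ``the tubular structure of $F$, which forces the image to lie in a bounded neighborhood of the compact target $N$.'' That is not what the potential alone provides: by construction $F$ is \emph{constant}, so $f=\nabla F\equiv 0$, outside the $2\gamma$-tubular neighborhood of $N$, and therefore the penalization places no constraint at all on how far the image can wander. The uniform boundedness of critical points of $E_\epsilon$ is a separate standard fact in the Chen--Struwe setup (typically proved by a maximum-principle argument using boundary data valued in $N$ and the convexity of $\dist^2(\cdot,N)$ near $N$); it is tacitly assumed in the paper as well, and you should cite it as such rather than deriving it from the shape of $F$. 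With that citation in place, the remainder of the argument -- the $W^{2,p}\hookrightarrow C^{1,\alpha}$ compactness, the passage of the density lower bound to the limit because $\rho_1$ has compact support, and the dichotomy $\tilde\epsilon_\infty<\infty$ versus $\tilde\epsilon_\infty=\infty$ -- is sound.
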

\begin{proof}
    Suppose, on the contrary, there exists a sequence $u_{\epsilon_j}:B_{2R}(p)\rightarrow\R^L$ solutions to (\ref{*}) and $x_j\in B_1(p),r_j\rightarrow0$ with $\bar{\Theta}_{u_{\epsilon_j}}(x_j,r_j)\geqslant\varepsilon_0$ but $\epsilon_j\geqslant\gamma_0 r_j$. Let $r_j'\leqslant r_j$ be the greatest number such that $\bar{\Theta}_{u_j}(x_j,r_j')\leqslant\varepsilon_0/2$. Now, let us define the rescaling maps $$u_j(y)=u_{\epsilon_j}(x_j+r_j'y).$$
    After passing to a subsequence, we may assume $u_j\rightharpoonup u$. Note that our assumption above ensures $B_1$ is an $\varepsilon_0$-regularity region for all $u_j$. Hence the convergence on $B_1(p)$ is strong and $\bar{\Theta}_{u_j}(0,1)\geqslant\varepsilon_0/2$ implies $u$ is nontrivial. Now, either $\epsilon_j/r_j\rightarrow\epsilon>0$ or $\epsilon_j/r_j\rightarrow\infty$. Hence $u$ is either nontrivial solution for $\Delta u+f(u)/\epsilon^2=0$ or a nontrivial solution for $\Delta u=0$. Those two cases are both impossible, see for example \cite{Linwangharmonicsphere99}. A contradiction.
\end{proof}
\begin{lem}[cf. {\cite[Lemma 9.2]{naber2024energyidentitystationaryharmonic}}]\label{super convexity angular}
    Let $u_{\epsilon}:B_{10R}(p)\rightarrow\R^L$ be a solution to (\ref{*}) with $R^{2-m}\int_{B_{10R}(p)}e_{\epsilon}(u_{\epsilon})\leqslant\Lambda$ and $\mathcal{A}=B_2(p)\setminus\overline{B_{\mathfrak{r}_x}(\mathcal{T})}$  be a $\delta$-annular region. Let $x\in\mathcal{T}_r$ with $r\geqslant\mathfrak{r}_x$. If $\delta\leqslant\delta(m,R,K_N,\Lambda)$ and $\frac{\epsilon}{r}\leqslant c(m,R,K_N,\Lambda)$, then for each $y\in B_{Rr}(x)$ with $\vert\Pi_{x,r}^{\perp}(y-x)\vert\geqslant e^{-R}r$, we have that $$\bar{\Delta}_{x,r}\mathcal{E}_{\alpha,x,r}(y)\geqslant\left(\frac{3}{2}-C(m,R,K_N)\sqrt{\delta}\right)\mathcal{E}_{\alpha,x,r}(y).$$
\end{lem}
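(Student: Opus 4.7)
The plan is to derive the super-convexity from a Bochner-type identity for a natural Killing vector field, followed by Poincar\'e on $S^1$, with the Ginzburg-Landau error absorbed into a $C\sqrt{\delta}$ correction.

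First I would observe that $X := \p_{\alpha^{\perp}_{x,r}}$, the infinitesimal generator of rotation in the affine $2$-plane $x+\mathcal{L}_{x,r}^{\perp}$, is a Killing vector field on $\R^m$. Because $X$ commutes with $\Delta_{\R^m}$ on scalars and the equation $\Delta u_{\epsilon}=f(u_{\epsilon})/\epsilon^2$ is rotation-invariant, applying $X$ componentwise gives $\Delta(Xu_{\epsilon})=(\nabla f)(u_{\epsilon})\cdot Xu_{\epsilon}/\epsilon^2$. Bochner then yields
\begin{equation*}
\Delta_{\R^m}|Xu_{\epsilon}|^2 = 2|\nabla(Xu_{\epsilon})|^2+\frac{2}{\epsilon^2}\nabla^2 F(u_{\epsilon})(Xu_{\epsilon},Xu_{\epsilon}).
\end{equation*}

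Next I would integrate this identity over $\alpha^{\perp}\in S^1$ at fixed $s^{\perp}_{x,r}$ and $\mathcal{L}_{x,r}$-coordinates. Since $\mathcal{E}_{\alpha,x,r}$ is $\alpha^{\perp}$-independent, one checks $\bar{\Delta}_{x,r}\mathcal{E}_{\alpha,x,r}=s^{\perp 2}_{x,r}\Delta_{\R^m}\mathcal{E}_{\alpha,x,r}$, and $\Delta_{\R^m}$ commutes with angular averaging since the $s^{-2}\p_{\alpha^{\perp}}^2$ term integrates to zero by periodicity. This gives
\begin{equation*}
\bar{\Delta}_{x,r}\mathcal{E}_{\alpha,x,r}(y) = 2s^{\perp 2}_{x,r}\int_{S^1}|\nabla(Xu_{\epsilon})|^2\,\dif\alpha^{\perp}+\frac{2s^{\perp 2}_{x,r}}{\epsilon^2}\int_{S^1}\nabla^2 F(u_{\epsilon})(Xu_{\epsilon},Xu_{\epsilon})\,\dif\alpha^{\perp}.
\end{equation*}
Decomposing $|\nabla(Xu_{\epsilon})|^2$ in the orthonormal frame $\{\p_{s^{\perp}_{x,r}},(s^{\perp}_{x,r})^{-1}\p_{\alpha^{\perp}_{x,r}},e_1,\dots,e_{m-2}\}$ and discarding positive contributions leaves $s^{\perp 2}_{x,r}|\nabla(Xu_{\epsilon})|^2\geqslant|\p_{\alpha^{\perp}}^2u_{\epsilon}|^2$. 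Since $\p_{\alpha^{\perp}}u_{\epsilon}$ has zero mean on $S^1$ by periodicity of $u_{\epsilon}$, the Poincar\'e inequality on $S^1$ (first nonzero eigenvalue $1$) gives $\int_{S^1}|\p_{\alpha^{\perp}}^2u_{\epsilon}|^2\geqslant\int_{S^1}|\p_{\alpha^{\perp}}u_{\epsilon}|^2=\mathcal{E}_{\alpha,x,r}$, so the positive main term is bounded below by $2\mathcal{E}_{\alpha,x,r}$.

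The main obstacle is making the Ginzburg-Landau error at least $-C\sqrt{\delta}\mathcal{E}_{\alpha,x,r}$. I would apply Corollary \ref{second derivative est} at $y$ on a ball of radius $r_y$, yielding $\nabla^2 F(u_{\epsilon})(v,v)/\epsilon^2\geqslant-C\sqrt{\bar{\Theta}(y,2r_y)}/r_y^2\cdot|v|^2$ whenever $\varepsilon$-regularity and $\epsilon/r_y\ll 1$ hold. The hypotheses $|\Pi_{x,r}^{\perp}(y-x)|\geqslant e^{-R}r$ and $\epsilon/r$ small (via Lemma \ref{angular technical lem1}) place $y$ in the $\varepsilon_0$-regular part of $\mathcal{A}$ and allow us to take $r_y$ comparable to $\dist(y,\mathcal{T})$; the effective cone splitting (Theorem \ref{effective qunatitative cone splitting2}) combined with $\bar{\Theta}_{\mathcal{L}}(x,\cdot)\leqslant C\delta$ from Theorem \ref{best plane properties}(2) then upgrades this to $\bar{\Theta}(y,2r_y)\leqslant C(m,R,K_N,\Lambda)\delta$ at a suitable subscale. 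Integrating this pointwise bound over $S^1$ and using $s^{\perp}_{x,r}\sim r_y$ yields the error estimate. The nontrivial book-keeping is tracking the scale $r_y$ across the whole annulus $e^{-R}r\leqslant|\Pi^{\perp}(y-x)|\leqslant Rr$, since $r_y$ must simultaneously be small enough for $\sqrt{\bar{\Theta}(y,2r_y)}\leqslant C\sqrt{\delta}$ and large enough that $\epsilon/r_y\leqslant c$. Combining the main term with the error and using $\delta$ small produces $\bar{\Delta}_{x,r}\mathcal{E}_{\alpha,x,r}\geqslant(2-C\sqrt{\delta})\mathcal{E}_{\alpha,x,r}\geqslant(3/2-C\sqrt{\delta})\mathcal{E}_{\alpha,x,r}$, as claimed.
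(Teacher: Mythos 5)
Your proof is correct, and it takes a genuinely different route from the paper. The paper performs a direct computation of $\bar{\Delta}_{x,r}\mathcal{E}_{\alpha,x,r}$ in polar coordinates: it expands $(s\partial_s)^2 + s^2\Delta_{\mathcal{L}}$ acting on $s^2\int_{S^1}|\nabla_{\alpha}u_{\epsilon}|^2$, rewrites $\partial_s^2 + \Delta_{\mathcal{L}}$ in terms of the full Laplacian (picking up an extra $|\nabla_\alpha\nabla_\alpha u_\epsilon|^2$ term), applies Poincar\'e on $S^1$, and then absorbs the cross term $10 s^3\int\langle\partial_s\nabla_\alpha u_\epsilon,\nabla_\alpha u_\epsilon\rangle$ and the positive $s^4\int|\partial_s\nabla_\alpha u_\epsilon|^2$ via Young's inequality, which is what produces the constant $6-2\sqrt{5}\approx 1.53$. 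You instead isolate the structural reason the computation works: the rotation generator $X = \partial_{\alpha^\perp_{x,r}}$ is Killing, so it commutes with $\Delta_{\R^m}$, the Bochner formula gives a clean decomposition of $\Delta_{\R^m}|Xu_\epsilon|^2$, and the identity $\bar{\Delta}_{x,r} = s^{\perp 2}_{x,r}\Delta_{\R^m} - \partial_{\alpha^\perp}^2$ (with the $\partial_{\alpha^\perp}^2$ term killing itself on the $\alpha^\perp$-averaged quantity $\mathcal{E}_{\alpha,x,r}$) converts this directly into the conformal super-convexity. Discarding the manifestly non-negative $\partial_s$- and $\mathcal{L}_{x,r}$-derivative pieces of $|\nabla(Xu_\epsilon)|^2$ and applying Poincar\'e on $S^1$ gives the main term $\geq 2\mathcal{E}_{\alpha,x,r}$, which is in fact sharper than the paper's $6-2\sqrt{5}$. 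The Ginzburg-Landau correction is handled identically in both proofs via Corollary \ref{second derivative est} and the density bound from cone splitting; you should be slightly more explicit that the density bound comes from the annular-region hypothesis (a2) and $\bar{\Theta}(x,\cdot;L_{\mathcal{A}})\leqslant\delta'$ fed through Theorem \ref{effective qunatitative cone splitting2}, exactly as in the paper's first displayed estimate, but the outline is right. In short, your Bochner-for-Killing-field argument buys conceptual clarity and a better constant at no extra cost; the only thing you lose relative to the paper is that your one-line reduction $\bar{\Delta}_{x,r}\mathcal{E}_{\alpha,x,r}=s^{\perp 2}_{x,r}\Delta_{\R^m}\mathcal{E}_{\alpha,x,r}$ compresses several steps that the paper verifies by hand, and a reader may want to see the check that the orthonormal-frame decomposition of $|\nabla(Xu_\epsilon)|^2$ is $|\partial_s Xu_\epsilon|^2 + s^{-2}|\partial_{\alpha^\perp}Xu_\epsilon|^2 + |\nabla_{\mathcal{L}_{x,r}}Xu_\epsilon|^2$ written out.
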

\begin{proof}
    By the quantitative cone splitting on $\mathcal{T}_r$ and the assumption $\vert\Pi_{x,r}^{\perp}(y-x)\vert\geqslant e^{-R}r$, we have 
    $$\left(\frac{1}{2s_{x,r}^{\perp}}\right)^{m-2}\int_{B_{\frac{1}{2}s_{x,r}^{\perp}}(y)}e_{\epsilon}(u_{\epsilon})\leqslant C(m,R)(\bar{\Theta}(x,Rr;L)+\bar{\Theta}(x,Rr;L^{\perp}))\leqslant C(m,R)\delta.$$
    Hence, if $\delta$ is sufficiently small, then $\varepsilon_0$-regularity is valid on $B_{\frac{1}{2}s_{x,r}^{\perp}}(y)$. Direct calculation shows 
    \begin{equation}
        \begin{aligned}
            &\bar{\Delta}_{x,r}\mathcal{E}_{\alpha,x,r}(y)\\=&4s_{x,r}^{\perp2}\int\vert\nabla_{\alpha^{\perp}_{x,r}}u_{\epsilon}\vert^2+10s_{x,r}^{\perp3}\int\left\langle\frac{\p}{\p s_{x,r}^{\perp}}\nabla_{\alpha_{x,r}^{\perp}}u_{\epsilon},\nabla_{\alpha_{x,r}^{\perp}u_{\epsilon}}\right\rangle\\+&2s_{x,r}^{\perp4}\int\left\vert\frac{\p}{\p s_{x,r}^{\perp}}\nabla_{\alpha_{x,r}^{\perp}}u_{\epsilon}\right\vert^2+\vert\nabla^{\mathcal{L}_{x,r}}\nabla_{\alpha^{\perp}_{x,r}}u_{\epsilon}\vert^2+\left\langle\nabla_{\alpha_{x,r}^{\perp}}u_{\epsilon},\left(\frac{\p^2}{\p s_{x,r}^{\perp2}}+\Delta_{x,r}\right)\nabla_{\alpha_{x,r}^{\perp}}u_{\epsilon}\right\rangle\\=&4s_{x,r}^{\perp2}\int(\vert\nabla_{\alpha^{\perp}_{x,r}}u_{\epsilon}\vert^2+\vert\nabla_{\alpha_{x,r}^{\perp}}\nabla_{\alpha_{x,r}^{\perp}}u_{\epsilon}\vert^2)+10s_{x,r}^{\perp3}\int\left\langle\frac{\p}{\p s_{x,r}^{\perp}}\nabla_{\alpha_{x,r}^{\perp}}u_{\epsilon},\nabla_{\alpha_{x,r}^{\perp}u_{\epsilon}}\right\rangle\\+&2s_{x,r}^{\perp4}\int\left\vert\frac{\p}{\p s_{x,r}^{\perp}}\nabla_{\alpha_{x,r}^{\perp}}u_{\epsilon}\right\vert^2+\vert\nabla^{\mathcal{L}_{x,r}}\nabla_{\alpha^{\perp}_{x,r}}u_{\epsilon}\vert^2+\langle\nabla_{\alpha_{x,r}^{\perp}}u_{\epsilon},\Delta\nabla_{\alpha_{x,r}^{\perp}}u_{\epsilon}\rangle\\\geqslant&(6-2\sqrt{5})s_{x,r}^{\perp2}\int\vert\nabla_{\alpha^{\perp}_{x,r}}u_{\epsilon}\vert^2+2s^{\perp4}_{x,r}\int\left\langle\nabla_{\alpha_{x,r}^{\perp}}u_{\epsilon},\nabla_{\alpha_{x,r}^{\perp}}\frac{f(u_{\epsilon})}{\epsilon^2}\right\rangle\\\geqslant&\left(\frac{3}{2}-C(m,K_N)\sqrt{\delta}\right)s_{x,r}^{\perp2}\int\vert\nabla_{\alpha^{\perp}_{x,r}}u_{\epsilon}\vert^2,
        \end{aligned}\nonumber
    \end{equation}
    where in the first inequality we used the Poincare inequality $\int_{S^1}\vert\nabla_{\alpha}\nabla_{\alpha}u\vert^2\dif\alpha\geqslant\int_{S^1}\vert\nabla_{\alpha}u\vert^2\dif\alpha$ and in the second inequality we used the Corollary \ref{second derivative est} and Lemma \ref{angular technical lem1}.
\end{proof}
We also need the following technical estimates for $\mathcal{T}_r$.
\begin{lem}[cf. {\cite[Lemma 9.3]{naber2024energyidentitystationaryharmonic}}]\label{angular technical lem2}
    Let $u_{\epsilon}:B_{10R}(p)\rightarrow\R^L$ be a solution to (\ref{*}) with $R^{2-m}\int_{B_{10R}(p)}e_{\epsilon}(u_{\epsilon})\leqslant\Lambda$ and $\mathcal{A}=B_2(p)\setminus\overline{B_{\mathfrak{r}_x}(\mathcal{T})}$  be a $\delta$-annular region. We have 
    \begin{align}
        &\int_{\mathcal{T}_r}\left(\psi_{\mathcal{T}}(x,r)+r\vert\nabla\psi_{\mathcal{T}}(x,r)\vert+r\left\vert\dot{\psi}_{\mathcal{T}}(x,r)\right\vert+r^2\left\vert\Dot{\psi}_{\mathcal{T}}(x,r)\right\vert\right)(\bar{\Theta}(x,2r;\mathcal{L}_{x,r})+\bar{\Theta}(x,2r;\mathcal{L}_{x,r}^{\perp})\nonumber\\\leqslant&C(m)\int_{\mathcal{T}\cap B_2(p)}r\frac{\p}{\p r}\bar{\Theta}(x,3r).\nonumber
    \end{align}
\end{lem}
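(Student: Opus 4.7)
The plan is to first absorb the cut-off weight into a uniform constant using Proposition \ref{estimates of T-cutoff}, then pointwise control the partial energies $\bar{\Theta}(x,2r;\mathcal{L}_{x,r}) + \bar{\Theta}(x,2r;\mathcal{L}_{x,r}^{\perp})$ by $\frac{\p}{\p r}\bar{\Theta}$ via the improved cone splitting of Theorem \ref{effective qunatitative cone splitting2}, and finally transfer the resulting integrals over $\mathcal{T}_r$ (or $\mathcal{T}_{2r}$) back to $\mathcal{T}$ via Theorem \ref{apporximating submanifold}(7).

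First, Proposition \ref{estimates of T-cutoff}(3) gives $\psi_{\mathcal{T}}(x,r) + r|\nabla\psi_{\mathcal{T}}| + r|\dot\psi_{\mathcal{T}}| + r^2|\ddot\psi_{\mathcal{T}}| \leqslant C(m)$ pointwise on $\mathcal{T}_r$, with support contained in $K = \{x \in \mathcal{T}_r : \mathfrak{r}_x \leqslant 2r,\ |\Pi_{L_\mathcal{A}}(x-p)| \leqslant \sqrt{2}\}$. So it suffices to estimate
\[
I := \int_{\mathcal{T}_r \cap K} \left(\bar{\Theta}(x,2r;\mathcal{L}_{x,r}) + \bar{\Theta}(x,2r;\mathcal{L}_{x,r}^{\perp})\right) d\mathcal{H}^{m-2}(x).
\]
For $x \in K$ with $r \geqslant \mathfrak{r}_x$ (so $2r \geqslant 2\mathfrak{r}_x$), I apply Theorem \ref{effective qunatitative cone splitting2} at scale $2r$, combined with Lemma \ref{uniqueness of best plane} to replace $\mathcal{L}_{x,2r}$ by $\mathcal{L}_{x,r}$: the quantitative uniqueness yields $\textup{d}_{\textup{Gr}}(\mathcal{L}_{x,r},\mathcal{L}_{x,2r})^2 \leqslant C \bar{\Theta}_{\mathcal{L}}(x,2r)/\bar{\Theta}(x,r)$, so that $\bar{\Theta}(x,2r;\mathcal{L}_{x,r}) \leqslant C\bar{\Theta}(x,2r;\mathcal{L}_{x,2r})$ and analogously for the perp partial energy. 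This delivers the pointwise bound
\[
\bar{\Theta}(x,2r;\mathcal{L}_{x,r}) + \bar{\Theta}(x,2r;\mathcal{L}_{x,r}^{\perp}) \leqslant C(m)\left(r\tfrac{\p}{\p r}\bar{\Theta}(x,4r) + \fint_{\mathcal{T}_{2r}\cap B_{2r}(x)} r\tfrac{\p}{\p r}\bar{\Theta}(z,4r)\, d\mathcal{H}^{m-2}(z)\right).
\]
For the boundary range $\mathfrak{r}_x/2 \leqslant r < \mathfrak{r}_x$, Theorem \ref{apporximating submanifold}(4) gives $x \in \mathcal{T}$, and Proposition \ref{quantitative cone splitting in annular regions} at scale $2r$ (with the same uniqueness argument to pass from $L$ to $\mathcal{L}_{x,r}$) produces a comparable bound involving $\fint_{\mathcal{T}\cap B_r(x)} r\frac{\p}{\p r}\bar{\Theta}(z,3r)\, d\mathcal{H}^{m-2}(z)$.

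Integrating this pointwise estimate over $\mathcal{T}_r \cap K$ and using Fubini on the averaged piece (noting that $\{(x,z): x \in \mathcal{T}_{r'}, z \in \mathcal{T}_{r'}\cap B_{r'}(x)\}$ has bounded overlap with Jacobian comparable to $1$ by Theorem \ref{apporximating submanifold}(3)) reduces $I$ to $C(m)\int_{\mathcal{T}_{r'}\cap B_{3/2}(p)} r\frac{\p}{\p r}\bar{\Theta}(x,cr)\, d\mathcal{H}^{m-2}(x)$ for suitable $r' \in [r/2,2r]$ and $c \in [3,4]$. Theorem \ref{apporximating submanifold}(7) then converts this into an integral over $\mathcal{T} \cap B_{1/3}(p)$ of $r\frac{\p}{\p r}\bar{\Theta}(x,1.1cr/r')$, which after enlarging the inner radius a touch is bounded by $C(m)\int_{\mathcal{T}\cap B_2(p)} r\frac{\p}{\p r}\bar{\Theta}(x,3r)\, d\mathcal{H}^{m-2}(x)$, as desired.

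The main obstacle is the bookkeeping: one must carefully track the two kinds of scale shifts (the factor $r \to 2r \to 4r$ inside $\bar{\Theta}$, and the submanifold shift $\mathcal{T}_r \to \mathcal{T}_{2r} \to \mathcal{T}$) so that, in the end, all inner scales are absorbed into the single radius $3r$ permitted on the right-hand side, and so that the transitional case $\mathfrak{r}_x/2 \leqslant r < \mathfrak{r}_x$, where Theorem \ref{effective qunatitative cone splitting2} is not directly available, can be handled within the same framework via Proposition \ref{quantitative cone splitting in annular regions} and the local identification $\mathcal{T}_r = \mathcal{T}$.
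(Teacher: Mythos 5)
Your proposal follows essentially the same route as the paper: bound the cut-off weights uniformly via Proposition \ref{estimates of T-cutoff}, invoke an improved cone splitting to bound the partial energies pointwise by $r\frac{\p}{\p r}\bar{\Theta}$, absorb the averaged piece by Fubini and bounded overlap, and finish with Theorem \ref{apporximating submanifold}(7) to transfer from $\mathcal{T}_r$ back to $\mathcal{T}$. You are more explicit than the paper in two places: passing from $\mathcal{L}_{x,2r}$ back to $\mathcal{L}_{x,r}$ (which indeed uses the corollary after Lemma \ref{uniqueness of best plane}, though for the perpendicular partial energy the plane-swap costs an additive $\mathrm{d}_{\mathrm{Gr}}^2\,\bar{\Theta}(x,2r)\lesssim\bar{\Theta}_{\mathcal{L}}(x,2r)$, not just a multiplicative constant — but this extra piece is itself absorbed by the cone-splitting bound, so the conclusion stands), and the marginal range $\mathfrak{r}_x/2\leqslant r<\mathfrak{r}_x$ where $\mathcal{T}_r=\mathcal{T}$ locally and Proposition \ref{quantitative cone splitting in annular regions} applies directly. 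The paper's terse proof glosses over both.

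One complication you introduce that the paper avoids: to apply Theorem \ref{effective qunatitative cone splitting2} ``at scale $2r$,'' you switch to $\mathcal{T}_{2r}$, but $x\in\mathcal{T}_r$ is in general only $O(\sqrt{\delta}\,r)$-close to $\mathcal{T}_{2r}$, not on it, so one would still need to project to $\mathcal{T}_{2r}$ and account for the displacement in $\bar{\Theta}$. The paper sidesteps this by applying the improved cone splitting directly on $\mathcal{T}_r$, merely nudging the inner radius inside $\bar{\Theta}$ up to $2.5r$ (the proof of Theorem \ref{effective qunatitative cone splitting2} is insensitive to whether the two radii in play coincide, since it goes through Proposition \ref{quantitative cone splitting in annular regions} and a plane swap), and then using $r'\approx2.5r/1.1$ in Theorem \ref{apporximating submanifold}(7) so that the final inner radius lands exactly at $3r$. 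Your constants are looser — $r'\in[r/2,2r]$ and $c\in[3,4]$ could push the final inner radius past $3r$, and since $r\frac{\p}{\p r}\bar{\Theta}(x,ar)$ is only comparable up the scale (not down), you cannot simply ``shrink'' back to $3r$. This is a bookkeeping issue, not a conceptual one, and is easily repaired by choosing the intermediate scales as the paper does.
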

\begin{proof}
    By properties of $\psi_{\mathcal{T}}$, Proposition \ref{estimates of T-cutoff}, we have 
    \begin{equation}
        \begin{aligned}
            &\int_{\mathcal{T}_r}\left(\psi_{\mathcal{T}}(x,r)+r\vert\nabla\psi_{\mathcal{T}}(x,r)\vert+r\left\vert\dot{\psi}_{\mathcal{T}}(x,r)\right\vert+r^2\left\vert\Dot{\psi}_{\mathcal{T}}(x,r)\right\vert\right)(\bar{\Theta}(x,2r;\mathcal{L}_{x,r})+\bar{\Theta}(x,2r;\mathcal{L}_{x,r}^{\perp})\\\leqslant&C(m)\int_{\mathcal{T}_r\cap\lbrace r\geqslant\frac{1}{2}\mathfrak{r}_x\rbrace\cap B_1(p)}\bar{\Theta}(x,2r;\mathcal{L}_{x,r})+\bar{\Theta}(x,2r;\mathcal{L}_{x,r}^{\perp})\\\leqslant& C(m)\int_{\mathcal{T}_r\cap\lbrace r\geqslant\frac{1}{2}\mathfrak{r}_x\rbrace\cap B_1(p)}r\frac{\p}{\p r}\bar{\Theta}(x,2.5r)+\fint_{B_r(x)\cap\mathcal{T}_r}\frac{\p}{\p r}\bar{\Theta}(z,2.5r)\\\leqslant&C(m)\int_{\mathcal{T}_r\cap B_{\frac{3}{2}}(p)}r\frac{\p}{\p r}\bar{\Theta}(x,2.5r)\leqslant C(m)\int_{\mathcal{T}\cap B_2(p)}r\frac{\p}{\p r}\bar{\Theta}(x,3r).
        \end{aligned}\nonumber
    \end{equation}
\end{proof}
\begin{lem}[cf. {\cite[Lemma 9.5]{naber2024energyidentitystationaryharmonic}}]\label{angular technical lemma 3}
    Let $u_{\epsilon}:B_{10R}(p)\rightarrow\R^L$ be a solution to (\ref{*}) with $R^{2-m}\int_{B_{10R}(p)}e_{\epsilon}(u_{\epsilon})\leqslant\Lambda$ and $\mathcal{A}=B_2(p)\setminus\overline{B_{\mathfrak{r}_x}(\mathcal{T})}$  be a $\delta$-annular region. Let $x\in\mathcal{T}_r$ and $r\geqslant\frac{1}{2}\mathfrak{r}_x$. Then
    \begin{enumerate}[(1)]
        \item $$r^{-1}\int\left\vert\nabla^x\left(s_{x,r}^{\perp2}\hat{\rho}_r(y-x;\mathcal{L}_{x,r})\right)\right\vert e_{\alpha,x,r}(y)\dif y\leqslant C(m,R)\bar{\Theta}(x,2r;\mathcal{L}_{x,r}^{\perp});$$        
        \item 
        \begin{equation}
            \begin{aligned}
                &\int\left\vert\nabla^x\left(s_{x,r}^{\perp2}\hat{\rho}_r(y-x;\mathcal{L}_{x,r})\right)\right\vert\left\vert\nabla^xe_{\alpha,x,r}(y)\right\vert\dif y\\\leqslant &C(m,R)\sqrt{\delta}(\bar{\Theta}(x,2r;\mathcal{L}_{x,r})+\bar{\Theta}(x,2r;\mathcal{L}_{x,r}^{\perp}));
            \end{aligned}\nonumber
        \end{equation}
        \item 
        \begin{equation}
            \begin{aligned}
                &\int\left(\hat{\rho}_r(y-x;\mathcal{L}_{x,r})+r\left\vert\nabla^y\hat{\rho}_r(y-x;\mathcal{L}_{x,r})\right\vert\right)\left(\left\vert r\frac{\p}{\p r}e_{\alpha,x,r}(y)\right\vert+r\left\vert\nabla^xe_{\alpha,x,r}(y)\right\vert\right)\dif y\\\leqslant& C(m,R)\sqrt{\delta}(\bar{\Theta}(x,2r;\mathcal{L}_{x,r})+\bar{\Theta}(x,2r;\mathcal{L}_{x,r}^{\perp})).
            \end{aligned}\nonumber
        \end{equation}
    \end{enumerate}
\end{lem}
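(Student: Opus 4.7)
The plan is to rewrite $e_{\alpha,x,r}$ in a frame-free form that exposes its dependence on $\Pi_{x,r}$ explicitly,
\[
e_{\alpha,x,r}(y)=|\Pi_{x,r}^{\perp}(y-x)|^{2}\,|\Pi_{x,r}^{\perp}\nabla u_{\epsilon}|^{2}-|\nabla_{\Pi_{x,r}^{\perp}(y-x)}u_{\epsilon}|^{2},
\]
which absorbs the $x$-dependence of the angular frame $\alpha_{x,r}^{\perp}$ into the orthogonal projection. Using Proposition \ref{refine estimates for L-cutoff} to bound derivatives of $\hat{\rho}_r(y-x;\mathcal{L}_{x,r})$ and Theorem \ref{best plane properties}(3) to bound $r|\nabla^x\Pi_{x,r}|+r|r\tfrac{\p}{\p r}\Pi_{x,r}|\leq C(m,\varepsilon_0)\sqrt{\bar\Theta_{\mathcal{L}}(x,2r)}\leq C(m,\varepsilon_0)\sqrt{\delta}$, a direct computation yields the uniform pointwise estimate $|\nabla^x(s_{x,r}^{\perp2}\hat\rho_r(y-x;\mathcal{L}_{x,r}))|\leq C(m,R)\,r\rho_{1.1r}(y-x)$, and similarly for the $r$-derivative.

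Item (1) is then the easiest: the pointwise bound on $|\nabla^x(s_{x,r}^{\perp2}\hat\rho_r)|$ combined with the trivial inequality $e_{\alpha,x,r}(y)\leq s_{x,r}^{\perp2}|\Pi_{x,r}^{\perp}\nabla u_{\epsilon}|^2$ reduces the integral to $\bar\Theta(x,1.1r;\mathcal{L}_{x,r}^{\perp})$, which is controlled by $\bar\Theta(x,2r;\mathcal{L}_{x,r}^{\perp})$ via Lemma \ref{energy non doubling}. For items (2) and (3), I would expand $\nabla^x e_{\alpha,x,r}$ (respectively $r\tfrac{\p}{\p r}e_{\alpha,x,r}$) using the explicit formula above. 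All terms in which the derivative falls on $\Pi_{x,r}$ acquire a pointwise $\sqrt{\delta}/r$ factor from Theorem \ref{best plane properties}(3); multiplying by $|\nabla^x(s_{x,r}^{\perp2}\hat\rho_r)|$ and integrating yields the desired $\sqrt{\delta}(\bar\Theta(x,2r;\mathcal{L}_{x,r})+\bar\Theta(x,2r;\mathcal{L}_{x,r}^{\perp}))$ bound by the same mechanism as in (1).

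The remaining principal pieces, coming from differentiating only the $y-x$ factor inside $\Pi_{x,r}^{\perp}(y-x)$, are of the form $-2(y-x)^k|\Pi_{x,r}^{\perp}\nabla u_{\epsilon}|^2 + 2\langle\nabla_{\Pi_{x,r}^{\perp}(y-x)}u_{\epsilon},\nabla_{\Pi_{x,r}^{\perp}e_k}u_{\epsilon}\rangle$; these are \emph{not} pointwise $\sqrt{\delta}$-small. To extract the $\sqrt{\delta}$ I would split $e_k=\Pi_{x,r}e_k+\Pi_{x,r}^{\perp}e_k$: the $\Pi_{x,r}$-contribution produces cross inner products $\langle\nabla_v u_{\epsilon},\nabla_{\Pi_{x,r}e_k}u_{\epsilon}\rangle$ with $v\in\mathcal{L}_{x,r}^{\perp}$, which Cauchy--Schwarz bounds by $|\Pi_{x,r}^{\perp}\nabla u_{\epsilon}||\Pi_{x,r}\nabla u_{\epsilon}|$; integrating against $r\rho_{1.1r}$ gives $\sqrt{\bar\Theta(x,r;\mathcal{L}_{x,r}^{\perp})\,\bar\Theta(x,r;\mathcal{L}_{x,r})}\leq\sqrt{\delta}\sqrt{\bar\Theta(x,r;\mathcal{L}_{x,r}^{\perp})}$ thanks to Lemma \ref{eigenvalue best plane}. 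The $\Pi_{x,r}^{\perp}$-contribution is handled by pairing it with the Euler--Lagrange equation for the best plane, $\int\rho_{r}(y-x)\langle\nabla_{a}u_{\epsilon},\nabla_{b}u_{\epsilon}\rangle=0$ for $a\in\mathcal{L}_{x,r}$, $b\in\mathcal{L}_{x,r}^{\perp}$, which forces a cancellation leaving only further $\nabla^{x}\Pi_{x,r}$-type error terms; for the $r$-derivative, one uses the analogous radial Euler--Lagrange equation from Theorem \ref{apporximating submanifold}(5).

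The main obstacle is this last step: the principal pieces of $\nabla^{x}e_{\alpha,x,r}$ and $r\tfrac{\p}{\p r}e_{\alpha,x,r}$ are not pointwise small, and the $\sqrt{\delta}$ factor must emerge through the combination of the Cauchy--Schwarz splitting exploiting $\bar\Theta_{\mathcal{L}}\leq C\delta$ and the best-plane (and best-submanifold) Euler--Lagrange equations. Verifying that every term produced by the chain rule falls into one of these two mechanisms, with no leftover principal pieces, is the delicate bookkeeping at the heart of the argument.
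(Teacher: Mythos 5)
Your frame-free rewrite of $e_{\alpha,x,r}$ and your treatment of item (1) are correct and match the paper. However, for items (2) and (3) you identify a problem that does not actually arise, and the workaround you propose to handle it contains errors.

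The crux is that $\nabla^x e_{\alpha,x,r}$ in this lemma is a \emph{tangential} derivative along $\mathcal{T}_r$, not the full ambient gradient; this is implicit in the statement but clear from every use of the lemma in Lemma \ref{computation angular} and the discussion that follows it, where the only $x$-derivatives of $e_{\alpha,x,r}$ that appear are $\nabla^x_{\Pi_{\mathcal{T}_r}(y-x)}e_{\alpha,x,r}$, $\nabla^x_{\Pi_{\mathcal{T}_r}\Pi_{x,r}^\perp(y-x)}e_{\alpha,x,r}$, and $\nabla^x_{\mathcal{T}_r}e_{\alpha,x,r}$. For a tangential direction $e_k\in T_x\mathcal{T}_r$ the ``type-2'' vector your chain rule produces is $\Pi_{x,r}^\perp(e_k)$, and by Theorem \ref{apporximating submanifold}(6) one has $\|\Pi_{\mathcal{T}_r}-\Pi_{x,r}\|\leq C(m,\varepsilon_0)(\sqrt{\delta}+e^{-R/2})\sqrt{\delta}$, so $|\Pi_{x,r}^\perp(e_k)|\leq C\sqrt{\delta}$ (and if $e_k\in\mathcal{L}_{x,r}$ it vanishes identically). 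Hence those ``principal pieces'' are themselves $O(\sqrt{\delta})$ pointwise, and the paper's direct chain-rule computation (every hit produces either a $\nabla^x\Pi_{x,r}=O(\sqrt{\delta}/r)$ factor via Theorem \ref{best plane properties}(3), or a $\Pi_{x,r}^\perp(e_k)=O(\sqrt{\delta})$ factor) closes the argument without any further device. The same is true of the $r$-derivative, which only hits $\mathcal{L}_{x,r}$ and so only produces $r\partial_r\Pi_{x,r}=O(\sqrt{\delta})$ terms. This is exactly the content of the paper's one-line estimate $r|\partial_r e_{\alpha,x,r}|+r|\nabla^x e_{\alpha,x,r}|\leq C(m,R)\sqrt{\delta}\,r^2|\nabla u_\epsilon|^2$ on the support of $\hat{\rho}_r$.

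Your proposed mechanism for handling the (nonexistent, once the derivative is read tangentially) full-strength terms has two concrete defects. First, the chain rule gives $\nabla^x_{e_k}\Pi_{x,r}^\perp(y-x)=(\nabla^x_{e_k}\Pi_{x,r}^\perp)(y-x)-\Pi_{x,r}^\perp(e_k)$, so the vector entering the cross term is already $\Pi_{x,r}^\perp(e_k)$; writing $e_k=\Pi_{x,r}e_k+\Pi_{x,r}^\perp e_k$ and then applying $\Pi_{x,r}^\perp$ kills the first summand, so your ``$\Pi_{x,r}$-contribution'' is identically zero and the Cauchy--Schwarz step is vacuous. Second, the surviving $\Pi_{x,r}^\perp$-contribution $\langle\nabla_{\Pi_{x,r}^\perp(y-x)}u_\epsilon,\nabla_{\Pi_{x,r}^\perp(e_k)}u_\epsilon\rangle$ has \emph{both} slots in $\mathcal{L}_{x,r}^\perp$, whereas the Euler--Lagrange orthogonality you cite is $\int\rho_r\langle\nabla_a u_\epsilon,\nabla_b u_\epsilon\rangle=0$ for $a\in\mathcal{L}_{x,r}$, $b\in\mathcal{L}_{x,r}^\perp$; it says nothing about $\mathcal{L}^\perp$-$\mathcal{L}^\perp$ pairings and so cannot force the cancellation you appeal to. (A check in the case $m=2$, where $\mathcal{L}_{x,r}=\{0\}$ and $\Pi^\perp=\mathrm{Id}$, confirms that the full ambient gradient of $e_{\alpha,x}(y)=|(y-x)^\perp\!\cdot\nabla u|^2$ is indeed \emph{not} pointwise small; what saves the lemma there is precisely that $\mathcal{T}_r$ is a point and there are no tangential directions.)
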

\begin{proof}
    The proof is a straightforward consideration. Since the $x$ and $r$ derivative only hit $\Pi_{x,r}(y-x)$ and $\alpha_{x,r}^{\perp}$ but not $\vert\nabla u_{\epsilon}(y)\vert$ in $e_{\alpha,x,r}(y)$, on the support of $\hat{\rho}_r(y-x;\mathcal{L}_{x,r})$ we can use Theorem \ref{best plane properties} to bound:
    \begin{equation}
        \begin{aligned}
            r\left\vert\frac{\p}{\p r}e_{\alpha,x,r}(y)\right\vert+r\vert\nabla^xe_{\alpha,x,r}(y)\vert&\leqslant C(m,R)\sqrt{\delta}r^2\vert\nabla u_{\epsilon}\vert^2\\&\leqslant C(m,R)\sqrt{\delta}(r^2\vert\Pi_{x,r}\nabla u_{\epsilon}\vert^2+\vert\Pi_{x,r}^{\perp}(y-x)\vert^2\vert\Pi_{x,r}^{\perp}\nabla u_{\epsilon}\vert^2).
        \end{aligned}\nonumber
    \end{equation}
    Similarly $r^{-1}\vert\nabla^x s^{\perp2}_{x,r}\vert\leqslant C(m,R)$. Hence, the estimates on $\hat{\rho}_r(y-x;\mathcal{L}_{x,r})$, Proposition \ref{refine estimates for L-cutoff} yields the desired results.
\end{proof}
Now we can state and prove our estimate on $\hat{\mathcal{E}}_{\alpha}(\mathcal{T}_r)$.
\begin{prop}[cf. {\cite[Proposition 9.6]{naber2024energyidentitystationaryharmonic}}]\label{angular derivative superconvexity}
    Let $u_{\epsilon}:B_{10R}(p)\rightarrow\R^L$ be a solution to (\ref{*}) with $R^{2-m}\int_{B_{10R}(p)}e_{\epsilon}(u_{\epsilon})\leqslant\Lambda$ and $\mathcal{A}=B_2(p)\setminus\overline{B_{\mathfrak{r}_x}(\mathcal{T})}$  be a $\delta$-annular region. Let $x\in\mathcal{T}_r$ with $r\geqslant\mathfrak{r}_x$. If $\delta\leqslant\delta(m,R,K_N,\Lambda)$ and $\epsilon\leqslant\epsilon(m,R,K_N,\Lambda)$, then we have 
    $$\hat{\mathcal{E}}_{\alpha}(\mathcal{T}_r)\leqslant\mathfrak{E}_1(r)+r\frac{\dif}{\dif r}\mathfrak{E}_2(r),$$
    where $\mathfrak{E}_1,\mathfrak{E}_2$ are $C_c^{\infty}$ functions on $(0,\infty)$ and $\mathfrak{E}_1$ satisfies 
    $$\int\vert\mathfrak{E}_1(r)\vert\frac{\dif r}{r}\leqslant C(m,R,K_N,\Lambda)\sqrt{\delta}.$$
\end{prop}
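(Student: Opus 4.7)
The strategy is to convert the pointwise super-convexity of Lemma \ref{super convexity angular} into an $r$-derivative identity using the heat-kernel relation $r\partial_r\rho_r = r^2\Delta_y\rho_r$, and then integrate over $\mathcal{T}_r$ against $\psi_{\mathcal{T}}(x,r)$. Since $\mathcal{E}_{\alpha,x,r}(y)$ is an angular integral, it is independent of the angular coordinate $\alpha_{x,r}^\perp$, so the $\partial_{\alpha^\perp}^2$ piece of $\bar{\Delta}_{x,r}$ annihilates it and $\bar{\Delta}_{x,r}\mathcal{E}_{\alpha,x,r} = s_{x,r}^{\perp 2}\Delta_y\mathcal{E}_{\alpha,x,r}$. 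Multiplying Lemma \ref{super convexity angular} by $\hat\rho_r(y-x;\mathcal{L}_{x,r})$ and integrating by parts twice in $y$ (with no boundary terms by compact support of $\hat\rho_r$) yields
$$\Bigl(\tfrac{3}{2} - C\sqrt{\delta}\Bigr)\,2\pi\,\hat\Theta_\alpha(x,r) \;\leq\; \int \Delta_y\bigl(\hat\rho_r(y-x;\mathcal{L}_{x,r})\,s_{x,r}^{\perp 2}\bigr)\,\mathcal{E}_{\alpha,x,r}(y)\,dy.$$

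The key pointwise computation is the identity, valid on the bulk of $\operatorname{spt}\hat\rho_r$ where $\hat\rho_r = \rho_r$ and $\dot\rho = -\rho$,
$$r^2\Delta_y\bigl(\rho_r\, s_{x,r}^{\perp 2}\bigr) \;=\; s_{x,r}^{\perp 2}\,r\partial_r\rho_r \;-\; 4 s_{x,r}^{\perp 2}\rho_r \;+\; 4r^2\rho_r,$$
obtained from the heat-kernel identity together with $\nabla_y\rho_r = -(y-x)\rho_r/r^2$ and $\Delta_y s_{x,r}^{\perp 2} = 4$; the correction arising from $\hat\rho_r - \rho_r$ on $\{s_{x,r}^\perp \leq 2e^{-R}r\}$ is exponentially suppressed via Proposition \ref{refine estimates for L-cutoff}. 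Pulling the $r\partial_r$ out of the $y$-integral converts the main piece into $r\partial_r\int s_{x,r}^{\perp 2}\rho_r\,\mathcal{E}_{\alpha,x,r}\,dy$ up to commutator errors: $r\partial_r$ acting on $s_{x,r}^\perp$, on $\mathcal{L}_{x,r}$, and on the angular direction $\alpha_{x,r}^\perp$ inside $\mathcal{E}_{\alpha,x,r}$. Each such derivative is $O(\sqrt\delta)$ by Theorem \ref{best plane properties}(3), and the resulting error integrals are controlled by Lemma \ref{angular technical lemma 3}.

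Multiplying by $\psi_{\mathcal{T}}(x,r)$ and integrating over $\mathcal{T}_r$, we set (up to a universal normalization)
$$\mathfrak{E}_2(r) \;=\; \frac{1}{3\pi}\int_{\mathcal{T}_r}\psi_{\mathcal{T}}(x,r)\cdot\frac{1}{r^2}\int s_{x,r}^{\perp 2}\rho_r(y-x)\,\mathcal{E}_{\alpha,x,r}(y)\,dy\,d\mathcal{H}^{m-2}(x),$$
so that the main term becomes $r\,d\mathfrak{E}_2/dr$; the $r$-derivative acting on the moving domain $\mathcal{T}_r$ (controlled by Theorem \ref{apporximating submanifold}(3)) and on $\psi_{\mathcal{T}}(\cdot,r)$ (controlled by Proposition \ref{estimates of T-cutoff}) produces further commutator errors. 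All such errors, together with the lower-order $-4s_{x,r}^{\perp 2}\rho_r + 4r^2\rho_r$ contributions from the Laplacian identity, are gathered into $\mathfrak{E}_1(r)$. Compact support of $\mathfrak{E}_1,\mathfrak{E}_2$ in $(0,\infty)$ follows from that of $\psi_{\mathcal{T}}(\cdot,r)$ and $\hat\rho_r$. Each summand of $\mathfrak{E}_1$ is pointwise bounded by $C(m,R,K_N)\sqrt\delta\,\bigl[\bar\Theta(x,2r;\mathcal{L}_{x,r}) + \bar\Theta(x,2r;\mathcal{L}_{x,r}^\perp)\bigr]$ (or an analogous expression involving $\psi_{\mathcal{T}}$- or $\hat\rho_r$-derivatives); invoking Lemmas \ref{angular technical lem2} and \ref{angular technical lemma 3} and integrating in $dr/r$ yields
$$\int|\mathfrak{E}_1(r)|\,\frac{dr}{r} \;\leq\; C(m,R,K_N)\sqrt\delta\int_{\mathcal{T}\cap B_2(p)} r\partial_r\bar\Theta(x,3r)\,d\mathcal{H}^{m-2}(x) \;\leq\; C(m,R,K_N,\Lambda)\sqrt\delta.$$

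The main obstacle is the systematic bookkeeping of all commutator errors generated when $r\partial_r$ is transported past the numerous $r$-dependent objects $\mathcal{L}_{x,r}$, $\alpha_{x,r}^\perp$, $s_{x,r}^\perp$, $\mathcal{T}_r$, and $\psi_{\mathcal{T}}(\cdot,r)$, and the proper disposal of the lower-order pieces $-4 s_{x,r}^{\perp 2}\rho_r + 4r^2\rho_r$, which do not immediately fit into the total $r$-derivative. Each piece must be shown to acquire a factor of $\sqrt\delta$ via an appropriate estimate among Theorem \ref{best plane properties}(3), Theorem \ref{apporximating submanifold}(3), and Lemmas \ref{angular technical lem2}, \ref{angular technical lemma 3}, and \ref{rough estimate for L-cutoff}, before being absorbed into $\mathfrak{E}_1$.
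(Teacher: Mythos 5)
Your strategy has an appealing symmetry — convert the conformal Laplacian into a full Euclidean Laplacian via $\bar{\Delta}_{x,r}\mathcal{E}_{\alpha,x,r} = s_{x,r}^{\perp 2}\Delta_y\mathcal{E}_{\alpha,x,r}$ (this observation is correct, since $s^2\Delta_y - \bar\Delta = \partial_\alpha^2$ and $\mathcal{E}_{\alpha,x,r}$ is $\alpha$-independent), integrate by parts twice in $y$, and invoke the heat identity $r\partial_r\rho_r = r^2\Delta_y\rho_r$. But the step of absorbing the lower-order terms $-4s^{\perp 2}\rho_r + 4r^2\rho_r$ into $\mathfrak{E}_1$ has a genuine gap. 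Expanding
\begin{equation}
\int\Delta_y(\hat\rho_r s_{x,r}^{\perp 2})\,\mathcal{E}_{\alpha,x,r}
= \frac{1}{r^2}\int s_{x,r}^{\perp 2}(r\partial_r\rho_r)\mathcal{E}_{\alpha,x,r}
- \frac{4}{r^2}\int s_{x,r}^{\perp 2}\rho_r\mathcal{E}_{\alpha,x,r}
+ 4\int\rho_r\mathcal{E}_{\alpha,x,r}
+ \mathcal{O}(e^{-R/2}),\nonumber
\end{equation}
the last term equals $8\pi\bar\Theta_\alpha(x,r)$, which is comparable to $\hat\Theta_\alpha(x,r)$ itself --- it is \emph{not} $\sqrt\delta$-small, and integrating in $dr/r$ over $\mathcal{T}_r$ gives a quantity comparable to $\hat{\mathcal{E}}_\alpha(\mathcal{A})$, the very thing we are trying to bound. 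Plugging into the superconvexity inequality $3\pi\hat\Theta_\alpha \leq \int\Delta_y(\hat\rho_r s^{\perp 2})\mathcal{E}_{\alpha,x,r}$ and pulling out $r\partial_r\mathfrak{E}_2$, the $8\pi\hat\Theta_\alpha$ on the right dominates the $3\pi$ on the left, so the resulting inequality reads $-5\pi\hat\Theta_\alpha + 2\mathfrak{E}_2' \leq r\partial_r\mathfrak{E}_2' + \mathfrak{E}_1$, which has the wrong sign in front of $\hat\Theta_\alpha$ and gives no control after integration in $r$.

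The root of the difficulty: integrating by parts \emph{twice} in $y$ against the scalar weight $\hat\rho_r s^{\perp 2}$ necessarily produces the zeroth-order piece $\hat\rho_r\Delta_y s^{\perp 2} = 4\hat\rho_r$ from $\dim L^\perp = 2$, and there is no mechanism to dispose of it. The paper (Lemma~\ref{computation angular}) instead sets $\mathfrak{E}_2(r) = \int_{\mathcal{T}_r}\psi_{\mathcal{T}}\int\hat\rho_r\nabla^y_{\Pi_{x,r}^\perp(y-x)}e_{\alpha,x,r}$, a quantity carrying a \emph{single} radial $y$-derivative, and computes $r\frac{d}{dr}\mathfrak{E}_2$; a single IBP in $y$ against the vector field $\Pi^\perp(y-x)$ then supplies the second $s\partial_s$. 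The non-small residue this produces is $(m-2)\hat\rho_r\,\nabla_{\Pi^\perp(y-x)}e_{\alpha,x,r}$, not $4\hat\rho_r\,e_{\alpha,x,r}$; the coefficient $m-2$ matches $\dim\mathcal{T}_r$, so the combination $(m-2)\hat\rho_r - \langle\nabla^x\hat\rho_r,\Pi_{\mathcal{T}_r}(y-x)\rangle$ is approximately $-\mathrm{div}^x_{\mathcal{T}_r}(\hat\rho_r\,\Pi_{\mathcal{T}_r}(y-x))$ and can be killed by an additional integration by parts \emph{along $\mathcal{T}_r$}, exploiting Theorem~\ref{apporximating submanifold}(5)--(6). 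This $\mathcal{T}_r$-IBP is the essential ingredient your argument is missing, and it has no analogue for your $4\hat\rho_r$ term because $4$ is tied to $\dim L^\perp = 2$, not to $\dim\mathcal{T}_r = m-2$. The $s^2\Delta_{\mathcal{L}_{x,r}}$ piece of $\bar\Delta$, which you fold implicitly into $\Delta_y$, is then handled separately in the paper and shown to be an $\mathfrak{E}_1$ term outright --- again via a $\mathcal{T}_r$-IBP.
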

\begin{rmk}
    As calculated in \cite[Proposition 9.6]{naber2024energyidentitystationaryharmonic}, the estimate is in fact of the form 
    $$\hat{\mathcal{E}}_{\alpha}(\mathcal{T}_r)\leqslant\left(r\frac{\dif}{\dif r}\right)^2\hat{\mathcal{E}}_{\alpha}(\mathcal{T}_r)+r\frac{\dif}{\dif r}\mathfrak{E}_2(r)+\mathfrak{E}_1(r).$$
    For simplicity, we do not calculate the first derivative of $\hat{\mathcal{E}}_{\alpha}(\mathcal{T}_r)$. However, the calculation is essentially same as the following discussion. Indeed, calculated as in Lemma \ref{computation angular}, we can get 
    $$r\frac{\dif}{\dif r}\hat{\mathcal{E}}_{\alpha}(\mathcal{T}_r)=\int_{\mathcal{T}_r}\psi_{\mathcal{T}}\int\hat{\rho}_r\nabla^y_{\Pi_{x,r}^{\perp}(y-x)}e_{\alpha,x,r}+\mathfrak{E}_1(r).$$
\end{rmk}
In the following proof, $\mathfrak{E}_1$ may change from line to line but always denotes a term satisfying the conclusion of Proposition \ref{angular derivative superconvexity}. Let us calculate
\begin{lem}[{cf. {\cite[Lemma 9.12]{naber2024energyidentitystationaryharmonic}}}]\label{computation angular}
    \begin{equation}
        \begin{aligned}
            &r\frac{\dif}{\dif r}\int_{\mathcal{T}_r}\psi_{\mathcal{T}}\int\hat{\rho}_r\nabla^y_{\Pi_{x,r}^{\perp}(y-x)}e_{\alpha,x,r}\\=&\int_{\mathcal{T}_r}\psi_{\mathcal{T}}(x,r)\dif x\int\hat{\rho}_r(y-x;\mathcal{L}_{x,r})(\nabla_{\Pi_{x,r}^{\perp}(y-x)}^y)^2e_{\alpha,x,r}(y)\dif y+\mathfrak{E}_1(r).
        \end{aligned}\nonumber
    \end{equation}
\end{lem}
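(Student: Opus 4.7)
The strategy is a direct computation: distribute $r\frac{d}{dr}$ over the five distinct $r$--dependencies in the integrand, isolate a single ``main'' term, and show that every other contribution fits into $\mathfrak{E}_1(r)$. The $r$--dependencies are: (a) the submanifold of integration $\mathcal{T}_r$; (b) the cutoff $\psi_{\mathcal{T}}(x,r)$; (c) the kernel $\hat{\rho}_r(y-x;\mathcal{L}_{x,r})$ (where $r$ enters both through the scale and through $\mathcal{L}_{x,r}$); (d) the vector field $V(x,r,y)=\Pi_{x,r}^{\perp}(y-x)$; and (e) the quantity $e_{\alpha,x,r}(y)$ itself (since both $s_{x,r}^{\perp}$ and $\alpha_{x,r}^{\perp}$ depend on $\mathcal{L}_{x,r}$). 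I will handle (a), (b), (d), and the $\mathcal{L}_{x,r}$--variation part of (c) and (e) as ``auxiliary'' contributions, and the honest scale derivative in (c) as the ``main'' term.

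The auxiliary contributions are controlled pointwise. Variations of $\mathcal{T}_r$ in $r$ involve $\partial_r\mathfrak{t}_r$, bounded by $C(m,\varepsilon_0)\sqrt{\delta}$ via Theorem \ref{apporximating submanifold}(3); variations of $\psi_{\mathcal{T}}$ use Proposition \ref{estimates of T-cutoff}; variations of $\Pi_{x,r}^{\perp}$ inside $V$ and inside $\mathcal{L}_{x,r}$ use Theorem \ref{best plane properties}(3); and the $r\partial_r$ acting on $e_{\alpha,x,r}$ (through $s_{x,r}^{\perp}$ and $\alpha_{x,r}^{\perp}$) is controlled by Lemma \ref{angular technical lemma 3}(3), which already has a $\sqrt{\delta}$ gain. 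Combining with Proposition \ref{refine estimates for L-cutoff} for the kernel factors, each auxiliary piece produces an integrand pointwise dominated by $C(m,R)\sqrt{\delta}\bigl(\bar{\Theta}(x,2r;\mathcal{L}_{x,r})+\bar{\Theta}(x,2r;\mathcal{L}_{x,r}^{\perp})\bigr)$. Integrating over $\mathcal{T}_r$ and then in $dr/r$, Lemma \ref{angular technical lem2} together with the monotonicity $\int r\tfrac{\p}{\p r}\bar{\Theta}(x,3r)\frac{dr}{r}\le C\Lambda$ gives a global $L^1$ bound of $C(m,R,\Lambda,K_N)\sqrt{\delta}$, so these contributions have the structure of $\mathfrak{E}_1(r)$.

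The main term comes from the pure scale derivative of $\hat{\rho}_r$. From $\hat{\rho}_r(y-x;\mathcal{L}_{x,r})=\rho_r(y-x)\,\psi(|V|^2/(2r^2))$ one computes directly
\begin{equation*}
r\tfrac{\p}{\p r}\hat{\rho}_r=-m\hat{\rho}_r-\nabla^y_{y-x}\hat{\rho}_r+E(x,r,y),
\end{equation*}
where the correction $E$ comes from $r\partial_r$ hitting the cutoff factor $\psi(|V|^2/(2r^2))$; by Proposition \ref{refine estimates for L-cutoff} it is dominated by $\rho_{1.1r}(y-x)$ and is supported where $\psi$ is nontrivial, so its contribution is again absorbed in $\mathfrak{E}_1(r)$. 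Integrating by parts in $y$ against $\nabla_V^y e_{\alpha,x,r}$ and using $\mathrm{div}^y(y-x)=m$, the two leading pieces combine to
\begin{equation*}
\int\hat{\rho}_r\,\nabla^y_{y-x}\bigl(\nabla_V^y e_{\alpha,x,r}\bigr)\,dy.
\end{equation*}
The key algebraic identity $\nabla^y_{y-x}V=V$ (because $\nabla^y V=\Pi_{x,r}^{\perp}$ and $\Pi_{x,r}^{\perp}(y-x)=V$) lets us split $\nabla^y_{y-x}=\nabla_V^y+\nabla^y_{\Pi_{x,r}(y-x)}$ and identify $\nabla^y_{y-x}(\nabla_V^y e_{\alpha,x,r})=(\nabla_V^y)^2 e_{\alpha,x,r}+\nabla^y_{\Pi_{x,r}(y-x)}\nabla_V^y e_{\alpha,x,r}$, giving the target expression on the right-hand side.

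The remaining obstacle, and the one I expect to be the hardest step, is to show that the cross term
\begin{equation*}
\int_{\mathcal{T}_r}\psi_{\mathcal{T}}(x,r)\int\hat{\rho}_r\,\nabla^y_{\Pi_{x,r}(y-x)}\nabla_V^y e_{\alpha,x,r}\,dy\,dx
\end{equation*}
is of order $\mathfrak{E}_1(r)$. A naive bound using $|\nabla^2 u_{\epsilon}|$ is too crude. My plan is to integrate by parts once in $y$ to transfer the $\Pi_{x,r}(y-x)$--derivative onto $\hat{\rho}_r$, expand $e_{\alpha,x,r}=s_{x,r}^{\perp 2}|\nabla_{\alpha_{x,r}^{\perp}}u_{\epsilon}|^2$, and invoke two structural facts: the Euler--Lagrange equation of Theorem \ref{apporximating submanifold}(5) in the form $\int-\dot{\rho}_r(y-x)\langle\nabla_{y-x}u_{\epsilon},\nabla_v u_{\epsilon}\rangle=0$ for $v\in\mathcal{L}_{x,r}^{\perp}$ (to kill the leading mixed derivative piece), together with the PDE $\Delta u_{\epsilon}=f(u_{\epsilon})/\epsilon^2$ and Corollary \ref{second derivative est} (to bound the remaining $F(u_{\epsilon})/\epsilon^2$ terms). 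Everything that survives is pointwise bounded by $C(m,R,K_N)\sqrt{\delta}\bigl(\bar{\Theta}(x,2r;\mathcal{L}_{x,r})+\bar{\Theta}(x,2r;\mathcal{L}_{x,r}^{\perp})\bigr)$, plus an $e^{-R/2}$ correction from Lemma \ref{properties of heat mollifier}(3); again Lemma \ref{angular technical lem2} and monotonicity absorb these into $\mathfrak{E}_1(r)$, completing the proof.
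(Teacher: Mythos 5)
Your Leibniz expansion, the scale identity $r\partial_r\hat\rho_r=-m\hat\rho_r-\nabla^y_{y-x}\hat\rho_r$, the $y$--integration by parts, and the split $\nabla^y_{y-x}=\nabla^y_V+\nabla^y_{\Pi_{x,r}(y-x)}$ are all algebraically consistent with the paper's computation (they arrive at the same expression up to the order of operations). The auxiliary terms are also handled in the same spirit. The gap is in the hardest step, which you have correctly isolated: the cross term $\int_{\mathcal{T}_r}\psi_{\mathcal{T}}\int\hat\rho_r\,\nabla^y_{\Pi_{x,r}(y-x)}\nabla^y_V e_{\alpha,x,r}$.

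Your plan for it will not close. After the $y$--integration by parts you obtain $-\int((m-2)\hat\rho_r+\nabla^y_{\Pi_{x,r}(y-x)}\hat\rho_r)\nabla^y_V e_{\alpha,x,r}$, and the piece $(m-2)\int\hat\rho_r\nabla^y_V e_{\alpha,x,r}$ contains $2(m-2)\hat\Theta_\alpha(x,r)$ (since $\nabla^y_V s_{x,r}^{\perp 2}=2s_{x,r}^{\perp 2}$). That quantity is comparable to $\hat{\mathcal{E}}_\alpha(\mathcal{T}_r)$ itself, not to $\sqrt{\delta}\bigl(\bar\Theta_{\mathcal{L}}+\bar\Theta(\cdot;\mathcal{L}^\perp)\bigr)$, so it cannot be placed in $\mathfrak{E}_1(r)$ by any pointwise bound. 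The Euler--Lagrange equation of $\mathcal{T}_r$ (Theorem \ref{apporximating submanifold}(5)) constrains $\int\dot\rho_r\langle\nabla_{y-x}u_\epsilon,\nabla_v u_\epsilon\rangle$ for $v\in\mathcal{L}_{x,r}^\perp$, which is a mixed radial--transverse quantity and says nothing about the angular density $\hat\Theta_\alpha$; the PDE and Corollary \ref{second derivative est} enter the angular story only via the Bochner argument in Lemma \ref{super convexity angular}, not here. In the paper that Euler--Lagrange identity is deployed in the radial/tangential Section \ref{s:radial energy}, not in this lemma.

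What actually kills the $(m-2)$ term is a \emph{second} integration by parts, this time over the $(m-2)$--manifold $\mathcal{T}_r$ in the $x$--variable. One first exchanges $\nabla^y\hat\rho_r$ for $-\nabla^x\hat\rho_r$ (the difference involves only $\nabla^x\Pi_{x,r}$ and is $O(\sqrt{\delta})$ by Theorem \ref{best plane properties}(3)), and then $\Pi_{x,r}$ for $\Pi_{\mathcal{T}_r}$ (controlled by Theorem \ref{apporximating submanifold}(6)). The combination $(m-2)\hat\rho_r-\langle\nabla^x\hat\rho_r,\Pi_{\mathcal{T}_r}(y-x)\rangle$ is then recognized, up to the mean--curvature correction $\hat\rho_r\langle H_{\mathcal{T}_r},y-x\rangle$, as the tangential divergence $\operatorname{div}^x_{\mathcal{T}_r}\bigl(\hat\rho_r\,\Pi_{\mathcal{T}_r}(x-y)\bigr)$. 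Integrating by parts along $\mathcal{T}_r$ transfers the $x$--derivative onto $\psi_{\mathcal{T}}$ and $\nabla^y_V e_{\alpha,x,r}$, where the gains from Proposition \ref{estimates of T-cutoff} and Lemma \ref{angular technical lemma 3} apply; the mean curvature term is absorbed because $r|H_{\mathcal{T}_r}|\le C(m,\varepsilon_0)\sqrt{\delta}$. Without this geometric integration by parts on the submanifold, neither $(m-2)\int\hat\rho_r\nabla^y_V e_{\alpha,x,r}$ nor $-\int\nabla^y_{\Pi_{x,r}(y-x)}\hat\rho_r\cdot\nabla^y_V e_{\alpha,x,r}$ is individually small, and the argument does not close.
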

\begin{proof}
    We calculate
    \begin{equation}
        \begin{aligned}
            &r\frac{\dif}{\dif r}\int_{\mathcal{T}_r}\psi_{\mathcal{T}}\int\hat{\rho}_r\nabla^y_{\Pi_{x,r}^{\perp}(y-x)}e_{\alpha,x,r}\\=&\int_{\mathcal{T}_r}r\frac{\p}{\p r}\psi_{\mathcal{T}}\int\hat{\rho}_r\nabla^y_{\Pi_{x,r}^{\perp}(y-x)}e_{\alpha,x,r}+\int_{\mathcal{T}_r}\psi_{\mathcal{T}}\left\langle r\frac{\p}{\p r}\mathcal{T}_r,H_{\mathcal{T}_r}\right\rangle\int\hat{\rho}_r\nabla^y_{\Pi_{x,r}^{\perp}(y-x)}e_{\alpha,x,r}\\+&\int_{\mathcal{T}_r}\psi_{\mathcal{T}}\int r\frac{\p}{\p r}\hat{\rho}_r\nabla^y_{\Pi_{x,r}^{\perp}(y-x)}e_{\alpha,x,r}+\int_{\mathcal{T}_r}\psi_{\mathcal{T}}\int\hat{\rho}_rr\frac{\p}{\p r}\nabla^y_{\Pi_{x,r}^{\perp}(y-x)}e_{\alpha,x,r}\\+&\int_{\mathcal{T}_r}\psi_{\mathcal{T}}\int\rho_r\psi'\frac{1}{r}\left\langle\Pi_{x,r}^{\perp}(y-x),\frac{\p}{\p r}\Pi_{x,r}^{\perp}(y-x)\right\rangle\nabla^y_{\Pi_{x,r}^{\perp}(y-x)}e_{\alpha,x,r},
        \end{aligned}\nonumber
    \end{equation}
where $H_{\mathcal{T}_r}$ denotes the mean curvature of $\mathcal{T}_r$. We divide the rest of proof into two parts
\begin{claim}\label{angular claim 1}
    $$\int_{\mathcal{T}_r}\psi_{\mathcal{T}}\int r\frac{\p}{\p r}\hat{\rho}_r\nabla^y_{\Pi_{x,r}^{\perp}(y-x)}e_{\alpha,x,r}=\int_{\mathcal{T}_r}\psi_{\mathcal{T}}\dif x\int\hat{\rho}_r(\nabla_{\Pi_{x,r}^{\perp}(y-x)}^y)^2e_{\alpha,x,r}(y)\dif y+\mathfrak{E}_1(r).$$
\end{claim}
\begin{proof}[Proof of Claim \ref{angular claim 1}]
    From the definition of $\hat{\rho}$ we have 
    $$r\frac{\p}{\p r}\hat{\rho}_r+m\hat{\rho}_r+\langle\nabla^y\hat{\rho}_r,y-x\rangle=0.$$
    We integrate by parts to get 
    $$-\int\langle\nabla^y\hat{\rho}_r,\Pi_{x,r}^{\perp}(y-x)\rangle\nabla^y_{\Pi_{x,r}^{\perp}(y-x)}e_{\alpha,x,r}=\int\hat{\rho}_r(2\nabla^y_{\Pi_{x,r}^{\perp}(y-x)}e_{\alpha,x,r}+(\nabla^y_{\Pi_{x,r}^{\perp}(y-x)})^2e_{\alpha,x,r}).$$
    To estimate the rest parts, we first exchange the $x$ and $y$ derivative. From the definition $$\hat{\rho}_r(y-x;\mathcal{L}_{x,r})=\frac{1}{r^m}\rho\left(\frac{\vert y-x\vert^2}{2r^2}\right)\psi\left(\frac{\vert\Pi_{x,r}^{\perp}(y-x)\vert^2}{2r^2}\right),$$ 
    taking derivative gives
    \begin{equation}
        \begin{aligned}
            &\langle(\nabla^x+\nabla^y)\hat{\rho}_r(y-x;\mathcal{L}_{x,r}),\Pi_{x,r}(y-x)\rangle\\=&\rho_r(y-x)\psi'\left(\frac{\vert\Pi_{x,r}^{\perp}(y-x)\vert^2}{2r^2}\right)\frac{1}{2r^2}\langle\Pi_{x,r}(y-x),\nabla^x\vert\Pi_{x,r}^{\perp}(y-x)\vert^2\rangle\\=&\rho_r(y-x)\psi'\left(\frac{\vert\Pi_{x,r}^{\perp}(y-x)\vert^2}{2r^2}\right)\frac{1}{r^2}\langle\Pi_{x,r}^{\perp}(y-x),(\nabla_{\Pi_{x,r}(y-x)}\Pi_{x,r}^{\perp})(y-x)\rangle.
        \end{aligned}\nonumber
    \end{equation}
    By the estimate for $\nabla\Pi_{x,r}$, Theorem \ref{best plane properties}, we see that 
    \begin{equation}
        \begin{aligned}
            &\left\vert\nabla^y_{\Pi_{x,r}^{\perp}}\left(\rho_r(y-x)\psi'\left(\frac{\vert\Pi_{x,r}^{\perp}(y-x)\vert^2}{2r^2}\right)\frac{1}{r^2}\langle\Pi_{x,r}^{\perp}(y-x),(\nabla_{\Pi_{x,r}(y-x)}\Pi_{x,r}^{\perp})(y-x)\right)\right\vert\\\leqslant&C(m,R,\varepsilon_0)\sqrt{\delta}\rho_{2r}(y-x).
        \end{aligned}\nonumber
    \end{equation}
    Hence, integrate by parts with respect to $y$, we have 
    \begin{equation}
        \begin{aligned}
            &\left\vert\int_{\mathcal{T}_r}\psi_{\mathcal{T}}\int\langle(\nabla^x+\nabla^y)\hat{\rho}_r(y-x;\mathcal{L}_{x,r}),\Pi_{x,r}(y-x)\rangle\nabla_{\Pi_{x,r}^{\perp}(y-x)}e_{\alpha,x,r}(y)\right\vert\\\leqslant&C(m,R,\varepsilon_0)\sqrt{\delta}\int_{\mathcal{T}_r}\psi_{\mathcal{T}}(x,r)\bar{\Theta}(x,2r;\mathcal{L}_{x,r}^{\perp})\\\leqslant& C(m,R,\varepsilon_0)\sqrt{\delta}\int_{\mathcal{T}\cap B_2(p)}r\frac{\p}{\p r}\bar{\Theta}(x,3r)=\mathfrak{E}_1(r),
        \end{aligned}\nonumber
    \end{equation}
    where we used the Proposition \ref{basic estimate of energy} in the last inequality. Next, we can exchange $\Pi_{x,r}(y-x)$ with $\Pi_{\mathcal{T}_r}(y-x)$ and a small error. Integrate by parts, we have
    \begin{equation}
        \begin{aligned}
            &-\int_{\mathcal{T}_r}\psi_{\mathcal{T}}\int\langle\nabla^x\hat{\rho}_r,\Pi_{x,r}(y-x)-\Pi_{\mathcal{T}_r}(y-x)\rangle\nabla^y_{\Pi^{\perp}_{x,r}(y-x)}e_{\alpha,x,r}(y)\\=&\int_{\mathcal{T}_r}\psi_{\mathcal{T}}\int(\langle\nabla^y_{\Pi_{x,r}^{\perp}(y-x)}\nabla^x\hat{\rho}_r,(\Pi_{x,r}-\Pi_{\mathcal{T}_r})(y-x)\rangle\\+&\langle\nabla^x\hat{\rho}_r,(\Pi_{x,r}-\Pi_{\mathcal{T}_r})(\Pi_{x,r}^{\perp}(y-x))\rangle)e_{\alpha,x,r}(y)\\+&2\int_{\mathcal{T}_r}\psi_{\mathcal{T}}\int\langle\nabla^x\hat{\rho}_r,\Pi_{x,r}(y-x)-\Pi_{\mathcal{T}_r}(y-x)\rangle e_{\alpha,x,r}.
        \end{aligned}\nonumber
    \end{equation}
    Using the fact that $\Pi_{\mathcal{T}_r}$ is close to $\Pi_{x,r}$, Theorem \ref{apporximating submanifold} (6), and Proposition \ref{refine estimates for L-cutoff},
    \begin{equation}
        \begin{aligned}
            \Vert\Pi_{x,r}-\Pi_{\mathcal{T}_r}\Vert&\leqslant C(m,\varepsilon_0,R)\sqrt{\delta},r\vert\nabla^x\hat{\rho}_r(y-x;\mathcal{L}_{x,r})\vert+r^2\vert\nabla^y\nabla^x\hat{\rho}_r(y-x;\mathcal{L}_{x,r})\vert\\&\leqslant C(m)\rho_{2r}(y-x),
        \end{aligned}\nonumber
    \end{equation}
    we see that the above are $\mathfrak{E}_1$ terms.
    Finally, we show that $$\int_{\mathcal{T}_r}\psi_{\mathcal{T}}\int((m-2)\hat{\rho}_r-\langle\nabla^x\hat{\rho}_r,\Pi_{\mathcal{T}_r}(y-x)\rangle)\nabla^y_{\Pi_{x,r}^{\perp}(y-x)}e_{\alpha,x,r}(y)$$ is an $\mathfrak{E}_1$ term. Indeed, integrate by parts with respect to $x$,
    \begin{equation}
        \begin{aligned}
            &\int_{\mathcal{T}_r}\psi_{\mathcal{T}}((m-2)\hat{\rho}_r-\langle\nabla^x\hat{\rho}_r,\Pi_{\mathcal{T}_r}(y-x)\rangle\nabla^y_{\Pi_{x,r}^{\perp}(y-x)}e_{\alpha,x,r}(y)\dif x\\=&\int_{\mathcal{T}_r}\hat{\rho}_r((\langle\nabla^x\psi_{\mathcal{T}},\Pi_{\mathcal{T}_r}(y-x)\rangle+\psi_{\mathcal{T}}\langle y-x,H_{\mathcal{T}_r}\rangle)\nabla^y_{\Pi^{\perp}_{x,r}(y-x)}e_{\alpha,x,r}(y)\\&+\psi_{\mathcal{T}}\langle\nabla^x\nabla^y_{\Pi_{x,r}^{\perp}(y-x)}e_{\alpha,x,r}(y),\Pi_{\mathcal{T}_r}(y-x)\rangle)\dif x
        \end{aligned}\label{Claim 7.8.1}
    \end{equation}
    where $H_{\mathcal{T}_r}$ is the mean curvature of $\mathcal{T}_r$. The first term in (\ref{Claim 7.8.1}) can be controlled by 
    \begin{equation}
        \begin{aligned}
            &\int_{\mathcal{T}_r}\int\hat{\rho}_r\langle\nabla^x\psi_{\mathcal{T}},\Pi_{\mathcal{T}_r}(y-x)\rangle\nabla^y_{\Pi^{\perp}_{x,r}(y-x)}e_{\alpha,x,r}(y)\\=&\int_{\mathcal{T}_r}\int(\nabla^y_{\Pi_{x,r}^{\perp}(y-x)}\hat{\rho}_r\langle\nabla^x\psi_{\mathcal{T}},\Pi_{\mathcal{T}_r}(y-x)\rangle+\hat{\rho}_r\langle\nabla^x\psi_{\mathcal{T}},\Pi_{\mathcal{T}_r}(\Pi_{x,r}^{\perp}(y-x))\rangle)e_{\alpha,x,r}(y)\\+&2\int_{\mathcal{T}_r}\int\hat{\rho}_r\langle\nabla^x\psi_{\mathcal{T}},\Pi_{\mathcal{T}_r}(y-x)\rangle e_{\alpha,x,r}(y).
        \end{aligned}\nonumber
    \end{equation}
    Using
    $$\left\vert\nabla^y_{\Pi_{x,r}^{\perp}(y-x)}\hat{\rho}_r\right\vert\leqslant C(m)\rho_{2r}(y-x),\vert\langle\nabla^x\psi_{\mathcal{T}},\Pi_{\mathcal{T}_r}(y-x)\rangle\vert\leqslant C(m,R)r\vert\nabla\psi\vert,$$
    we have that 
    \begin{equation}
        \begin{aligned}
            \left\vert\int_{\mathcal{T}_r}\int\hat{\rho}_r\langle\nabla^x\psi_{\mathcal{T}},\Pi_{\mathcal{T}_r}(y-x)\rangle\nabla^y_{\Pi^{\perp}_{x,r}(y-x)}e_{\alpha,x,r}(y)\right\vert&\leqslant C(m,R)\int_{\mathcal{T}_r}r\vert\nabla\psi_{\mathcal{T}}\vert\bar{\Theta}(x,2r;\mathcal{L}_{x,r}^{\perp})\\&\leqslant C(m,R)\sqrt{\delta}\int_{\mathcal{T}_r}r\vert\nabla\psi_{\mathcal{T}}\vert
        \end{aligned}\nonumber
    \end{equation}
    is an $\mathfrak{E}_1$ term by the estimate of $r\vert\nabla\psi_{\mathcal{T}}\vert$, Proposition \ref{estimates of T-cutoff}. The second term in (\ref{Claim 7.8.1}) can be controlled by 
    $$\left\vert\int_{\mathcal{T}_r}\psi_{\mathcal{T}}\langle H_{\mathcal{T}_r},y-x\rangle\int\hat{\rho}_r\nabla^y_{\Pi_{x,r}^{\perp}(y-x)}e_{\alpha,x,r}(y)\right\vert\leqslant C(m,R,\varepsilon_0)\sqrt{\delta}\int_{\mathcal{T}_r}\psi_{\mathcal{T}}\bar{\Theta}(x,2r;\mathcal{L}_{x,r}^{\perp})=\mathfrak{E}_1(r).$$
    Finally, the third term in (\ref{Claim 7.8.1}) can be written as 
    \begin{equation}
        \begin{aligned}
            &\int_{\mathcal{T}_r}\psi_{\mathcal{T}}\int\hat{\rho}_r\left\langle\nabla^x\nabla^y_{\Pi_{x,r}^{\perp}(y-x)}e_{\alpha,x,r}(y),\Pi_{\mathcal{T}_r}(y-x)\right\rangle\\=&\int_{\mathcal{T}_r}\psi_{\mathcal{T}}\int\hat{\rho}_r\bigg(\nabla_{\Pi_{x,r}^{\perp}(y-x)}^y\nabla^x_{\Pi_{\mathcal{T}_r}(y-x)}e_{\alpha,x,r}+\left\langle\nabla^ye_{\alpha,x,r},\nabla_{\Pi_{\mathcal{T}_r}(y-x)}^x\Pi_{x,r}^{\perp}(y-x)\right\rangle\\&-\nabla^x_{\Pi_{\mathcal{T}_r}\Pi_{x,r}^{\perp}(y-x)}e_{\alpha,x,r}\bigg).
        \end{aligned}\nonumber
    \end{equation}
    We use the Lemma \ref{angular technical lemma 3} to bound 
    \begin{equation}
        \begin{aligned}
        &\left\vert\int_{\mathcal{T}_r}\psi_{\mathcal{T}}\int\hat{\rho}_r\left(\nabla_{\Pi_{x,r}^{\perp}(y-x)}^y\nabla^x_{\Pi_{\mathcal{T}_r}(y-x)}e_{\alpha,x,r}-\nabla^x_{\Pi_{\mathcal{T}_r}\Pi_{x,r}^{\perp}(y-x)}e_{\alpha,x,r}\right)\right\vert\\=&\left\vert\int_{\mathcal{T}_r}\psi_{\mathcal{T}}\int\left(\nabla^y_{\Pi_{x,r}^{\perp}(y-x)}\hat{\rho}_r\nabla^x_{\Pi_{\mathcal{T}_r}(y-x)}e_{\alpha,x,r}+\hat{\rho}_r\nabla^x_{\Pi_{\mathcal{T}_r}\Pi_{x,r}^{\perp}(y-x)}e_{\alpha,x,r}\right)\right\vert\\\leqslant&C(m,R)\int_{\mathcal{T}_r}\psi_{\mathcal{T}}\int\left(\hat{\rho}_r+r\vert\nabla^y\hat{\rho}_r\vert\right)r\vert\nabla^xe_{\alpha,x,r}\vert\\\leqslant &C(m,R)\sqrt{\delta}\int_{\mathcal{T}_r}\psi_{\mathcal{T}}(\bar{\Theta}(x,2r;\mathcal{L}_{x,r})+\bar{\Theta}(x,2r;\mathcal{L}_{x,r}^{\perp}))=\mathfrak{E}_1(r).
        \end{aligned}\nonumber
    \end{equation}
    Also, using 
    $$\left\vert\nabla^x_{\Pi_{\mathcal{T}_r}(y-x)}\Pi_{x,r}^{\perp}(y-x)\right\vert=\left\vert\Pi_{x,r}^{\perp}\Pi_{\mathcal{T}_r}(y-x)+(\nabla^x_{\Pi_{\mathcal{T}_r}(y-x)}\Pi_{x,r}^{\perp})(y-x)\right\vert\leqslant C(m,\varepsilon_0,R)r\sqrt{\delta},$$ $$\left\vert\textup{div}^y\nabla^x_{\Pi_{\mathcal{T}_r}(y-x)}\Pi_{x,r}^{\perp}(y-x)\right\vert\leqslant C(m)\left\vert\nabla^x\Pi_{x,r}(y-x)\right\vert\leqslant C(m,\varepsilon_0,R)\sqrt{\delta}.$$
    We have 
    \begin{equation}
        \begin{aligned}
            &\left\vert\int_{\mathcal{T}_r}\psi_{\mathcal{T}}\int\hat{\rho}_r\langle\nabla^ye_{\alpha,x,r},\nabla_{\Pi_{\mathcal{T}_r}(y-x)}^x\Pi_{x,r}^{\perp}(y-x)\rangle\right\vert\\\leqslant &C(m,\varepsilon_0,R)\sqrt{\delta}\int_{\mathcal{T}_r}\psi_{\mathcal{T}}\int(\hat{\rho}_r+r\vert\nabla^y\hat{\rho}_r\vert)e_{\alpha,x,r}=\mathfrak{E}_1(r).
        \end{aligned}\nonumber
    \end{equation}
    These estimates finish the proof of Claim \ref{angular claim 1}.
\end{proof}
\begin{claim}\label{angular claim 2}
    \begin{equation}
        \begin{aligned}
            &\int_{\mathcal{T}_r}r\frac{\p}{\p r}\psi_{\mathcal{T}}\int\hat{\rho}_r\nabla^y_{\Pi_{x,r}^{\perp}(y-x)}e_{\alpha,x,r}+\int_{\mathcal{T}_r}\psi_{\mathcal{T}}\left\langle r\frac{\p}{\p r}\mathcal{T}_r,H_{\mathcal{T}_r}\right\rangle\int\hat{\rho}_r\nabla^y_{\Pi_{x,r}^{\perp}(y-x)}e_{\alpha,x,r}\\+&\int_{\mathcal{T}_r}\psi_{\mathcal{T}}\int\hat{\rho}_r\frac{\p}{\p r}\nabla^y_{\Pi_{x,r}^{\perp}(y-x)}e_{\alpha,x,r}\\+&\int_{\mathcal{T}_r}\psi_{\mathcal{T}}\int\rho_r\psi'\frac{1}{r}\left\langle\Pi_{x,r}^{\perp}(y-x),\frac{\p}{\p r}\Pi_{x,r}^{\perp}(y-x)\right\rangle\nabla^y_{\Pi_{x,r}^{\perp}(y-x)}e_{\alpha,x,r}=\mathfrak{E}_1(r).
        \end{aligned}\nonumber
    \end{equation}
\end{claim}
\begin{proof}[Proof of Claim \ref{angular claim 2}]
    We analyze term by term. Using arguments similar to those in the proof of Claim \ref{angular claim 1}, 
    \begin{equation}
        \begin{aligned}
            \left\vert\int_{\mathcal{T}_r}r\frac{\p}{\p r}\psi_{\mathcal{T}}\int\hat{\rho}_r\nabla^y_{\Pi_{x,r}^{\perp}(y-x)}e_{\alpha,x,r}\right\vert&\leqslant C(m,R)\int r\left\vert\frac{\p }{\p r}\psi_{\mathcal{T}}(x,r)\right\vert\bar{\Theta}(x,2r;\mathcal{L}_{x,r}^{\perp})\\&\leqslant C(m,\varepsilon_0,R)\sqrt{\delta}\int_{\mathcal{T}_r}r\left\vert\frac{\p}{\p r}\psi_{\mathcal{T}}(x,r)\right\vert=\mathfrak{E}_1(r).
        \end{aligned}\nonumber
    \end{equation}
    \begin{equation}
        \begin{aligned}
            &\left\vert\int_{\mathcal{T}_r}\psi_{\mathcal{T}}\int\hat{\rho}_rr\frac{\p}{\p r}\nabla^y_{\Pi_{x,r}^{\perp}(y-x)}e_{\alpha,x,r}\right\vert\\=&\left\vert\int_{\mathcal{T}_r}\psi_{\mathcal{T}}\int\left(\hat{\rho}_r\nabla^y_{\Pi_{x,r}^{\perp}(y-x)}\frac{\p}{\p r}e_{\alpha,x,r}+\hat{\rho}_r\nabla^y_{\p_r\Pi_{x,r}^{\perp}(y-x)}e_{\alpha,x,r}\right)\right\vert\\\leqslant& C(m,\varepsilon_0,R)\int_{\mathcal{T}_r}\psi_{\mathcal{T}}\int\left(r\vert\nabla^y\hat{\rho}_r\vert r\left\vert\frac{\p}{\p r}e_{\alpha,x,r}\right\vert+\sqrt{\delta}r\vert\nabla^y\hat{\rho}_r\vert e_{\alpha,x,r}\right)\\\leqslant &C(m,R,\varepsilon_0)\sqrt{\delta}\int_{\mathcal{T}_r}\psi_{\mathcal{T}}(\bar{\Theta}(x,2r;\mathcal{L}_{x,r})+\bar{\Theta}(x,2r;\mathcal{L}_{x,r}^{\perp}))=\mathfrak{E}_1(r).
        \end{aligned}\nonumber
    \end{equation}
    By the properties of best approximation submanifold, Theorem \ref{apporximating submanifold}, $$r\vert\nabla^2\mathfrak{t}_r\vert+\left\vert\frac{\p}{\p r}\mathfrak{t}_r\right\vert\leqslant C(m,\varepsilon_0)\sqrt{\delta}.$$
    we have (note that the velocity vector $\p_r\mathcal{T}_r$ is simply $\p_r\mathfrak{t}_r$ evaluating at the corresponding point.) $$\left\vert\int_{\mathcal{T}_r}\psi_{\mathcal{T}}\left\langle r\frac{\p }{\p r}\mathcal{T}_r,H_{\mathcal{T}_r}\right\rangle\int\hat{\rho}_r\nabla^y_{\Pi_{x,r}^{\perp}(y-x)}e_{\alpha,x,r}\right\vert\leqslant C(m,\varepsilon_0,R)\delta\int_{\mathcal{T}_r}\psi_{\mathcal{T}}\bar{\Theta}(x,2r;\mathcal{L}_{x,r}^{\perp})=\mathfrak{E}_1(r).$$
    Finally
    \begin{equation}
        \begin{aligned}
            &\left\vert\int_{\mathcal{T}_r}\psi_{\mathcal{T}}\int\rho_r\psi'\frac{1}{r}\left\langle\Pi_{x,r}^{\perp}(y-x),\frac{\p}{\p r}\Pi_{x,r}^{\perp}(y-x)\right\rangle\nabla^y_{\Pi_{x,r}^{\perp}(y-x)}e_{\alpha,x,r}\right\vert\\=&\left\vert\int_{\mathcal{T}_r}\psi_{\mathcal{T}}\int(4\rho_r\psi'+\nabla^y_{\Pi_{x,r}^{\perp}(y-x)}(\rho_r\psi'))\left\langle\frac{\Pi_{x,r}^{\perp}(y-x)}{r},\frac{\p}{\p r}\Pi_{x,r}^{\perp}(y-x)\right\rangle e_{\alpha,x,r}\right\vert\\\leqslant& C(m,\varepsilon_0,R)\sqrt{\delta}\int_{\mathcal{T}_r}\psi_{\mathcal{T}}\bar{\Theta}(x,2r;\mathcal{L}_{x,r})=\mathfrak{E}_1(r).
        \end{aligned}\nonumber
    \end{equation}
    We have finished proof of Claim \ref{angular claim 2}, and hence Lemma \ref{computation angular} is completed.
\end{proof}
\end{proof}
In order to apply Lemma \ref{super convexity angular}, we need to calculate two parts of derivatives: the $\left(s_{x,r}^{\perp}\frac{\p}{\p s_{x,r}^{\perp}}\right)^2$ and $\Delta_{\mathcal{L}_{x,r}}$. From Lemma \ref{computation angular}, we have seen the radial derivative because $\nabla_{\Pi_{x,r}^{\perp}(y-x)}=s_{x,r}^{\perp}\frac{\p}{\p s_{x,r}^{\perp}}$. Next, we wish to recover the $\Delta_{\mathcal{L}_{x,r}}$ in Lemma \ref{super convexity angular}. Indeed, this term is small:
\begin{lem}[{cf. \cite[Lemma 9.13]{naber2024energyidentitystationaryharmonic}}]
    $$\int_{\mathcal{T}_r}\psi_{\mathcal{T}}(x,r)\int\hat{\rho}_r(y-x;\mathcal{L}_{x,r})\left\vert\Pi_{x,r}^{\perp}(y-x)\right\vert^2\Delta_{\mathcal{L}_{x,r}}e_{\alpha,x,r}(y)=\mathfrak{E}_1(r).$$
\end{lem}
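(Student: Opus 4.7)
The plan is to perform two integration by parts (first in $y$, then on $\mathcal{T}_r$) and exploit the structural fact that $|\Pi_{x,r}^{\perp}(y-x)|^{2}$ is annihilated by $\nabla^{y}_{\mathcal{L}_{x,r}}$. The whole computation will follow the template of Claim~\ref{angular claim 1} and Claim~\ref{angular claim 2} above, so the technical estimates are already available; the point is simply to organize the bookkeeping.

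First I would integrate by parts twice in $y$ along an orthonormal basis of $\mathcal{L}_{x,r}$. Since $\Pi_{x,r}^{\perp}$ annihilates $\mathcal{L}_{x,r}$, the quadratic factor $|\Pi_{x,r}^{\perp}(y-x)|^{2}$ is killed by any $\nabla^{y}_{v}$ with $v\in \mathcal{L}_{x,r}$, so both derivatives fall entirely on $\hat\rho_{r}$:
\begin{equation*}
\int_{\mathcal{T}_r}\psi_{\mathcal{T}}\int \hat\rho_r\,|\Pi_{x,r}^{\perp}(y-x)|^{2}\,\Delta^{y}_{\mathcal{L}_{x,r}}e_{\alpha,x,r}
\,=\,\int_{\mathcal{T}_r}\psi_{\mathcal{T}}\int \Delta^{y}_{\mathcal{L}_{x,r}}\hat\rho_r \cdot |\Pi_{x,r}^{\perp}(y-x)|^{2}\,e_{\alpha,x,r}.
\end{equation*}

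Next I would exchange the $y$-Laplacian for an $x$-Laplacian. Writing $\hat\rho_r(y-x;\mathcal{L})$ as a function of $(z,\mathcal{L})=(y-x,\mathcal{L}_{x,r})$, one has $(\nabla^{x}+\nabla^{y})\hat\rho_r = D_{\mathcal{L}}\hat\rho_r\cdot \nabla^{x}\mathcal{L}_{x,r}$, and by Theorem~\ref{best plane properties}(3) this correction is of size $C(m,\varepsilon_{0})\sqrt{\delta}\,r^{-1}\rho_{1.1r}$. Iterating gives
\begin{equation*}
\Delta^{y}_{\mathcal{L}_{x,r}}\hat\rho_r = \Delta^{x}_{\mathcal{L}_{x,r}}\hat\rho_r - 2\sum_{i}\nabla^{x}_{e_{i}}\nabla^{y}_{e_{i}}\hat\rho_r + E(x,y),
\end{equation*}
where $|E|\le C(m,R,\varepsilon_{0})\sqrt{\delta}\,r^{-2}\rho_{1.1r}$ because each extra derivative hitting $\mathcal{L}_{x,r}$ carries a factor $\sqrt{\delta}/r$. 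The error $E$ and the mixed term, once multiplied by $|\Pi_{x,r}^{\perp}(y-x)|^{2}\,e_{\alpha,x,r}$ and integrated, are bounded pointwise in $r$ by $C(m,R,\varepsilon_{0})\sqrt{\delta}\int_{\mathcal{T}_r}\psi_{\mathcal{T}}\bar{\Theta}(x,2r;\mathcal{L}_{x,r}^{\perp})$, which is an $\mathfrak{E}_1(r)$ term by Lemma~\ref{angular technical lem2}.

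For the remaining principal piece $\int_{\mathcal{T}_r}\psi_{\mathcal{T}}\int \Delta^{x}_{\mathcal{L}_{x,r}}\hat\rho_r\cdot|\Pi_{x,r}^{\perp}(y-x)|^{2}\,e_{\alpha,x,r}$, I would first replace $\Delta^{x}_{\mathcal{L}_{x,r}}$ by the intrinsic Laplace--Beltrami operator $\Delta_{\mathcal{T}_r}$ at $x$, using $\Vert \Pi_{\mathcal{T}_r}(x)-\Pi_{x,r}\Vert\le C(m,\varepsilon_{0})\sqrt{\delta}$ from Theorem~\ref{apporximating submanifold}(6) and the second-fundamental-form bound $r|\nabla^{2}\mathfrak{t}_{r}|\le C\sqrt{\delta}$; the difference again contributes only an $\mathfrak{E}_1(r)$ term. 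Then I would integrate by parts twice on $\mathcal{T}_r$ (no boundary terms thanks to the compact support of $\psi_{\mathcal{T}}$) to move $\Delta_{\mathcal{T}_r}$ off $\hat\rho_r$ and onto $\psi_{\mathcal{T}}\cdot|\Pi_{x,r}^{\perp}(y-x)|^{2}\,e_{\alpha,x,r}$. Expanding, one gets three families of terms: (i) $\Delta_{\mathcal{T}_r}\psi_{\mathcal{T}}$ times bounded factors, which integrate in $r$ to an $\mathfrak{E}_1$ contribution by Proposition~\ref{estimates of T-cutoff}(4); (ii) first and second $x$-derivatives falling on $|\Pi_{x,r}^{\perp}(y-x)|^{2}$, each bringing down a factor $\sqrt{\delta}$ via $|\nabla^{x}\Pi_{x,r}|=O(\sqrt{\delta}/r)$ from Theorem~\ref{best plane properties}(3); and (iii) derivatives falling on $e_{\alpha,x,r}$, which are controlled by Lemma~\ref{angular technical lemma 3} and again yield the $\sqrt{\delta}$ factor. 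Summing all three families and invoking Lemma~\ref{angular technical lem2} gives the claim.

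The main obstacle will be organizing the $y\to x$ derivative swap so that each mismatch is visibly of size $\sqrt{\delta}$ times a pointwise bound that then passes through the $L^{1}_{r/r}$ integrability by Lemma~\ref{angular technical lem2} and Proposition~\ref{estimates of T-cutoff}(4). This is exactly the style of argument used in Claims~\ref{angular claim 1}--\ref{angular claim 2}, so the bookkeeping, while lengthy, is routine once the key identity $\nabla^{y}_{\mathcal{L}_{x,r}}|\Pi_{x,r}^{\perp}(y-x)|^{2}=0$ is used to collapse the first integration by parts.
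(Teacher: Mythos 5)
Your overall strategy is the same as the paper's: integrate by parts in $y$ using $\nabla^{y}_{\mathcal{L}_{x,r}}|\Pi_{x,r}^{\perp}(y-x)|^{2}=0$, swap $\Delta^{y}_{\mathcal{L}_{x,r}}$ for $\Delta^{x}_{\mathcal{L}_{x,r}}$, replace $\Delta^{x}_{\mathcal{L}_{x,r}}$ by $\Delta_{\mathcal{T}_r}$, and integrate by parts along $\mathcal{T}_r$. The paper performs the last integration by parts once (leaving two gradient terms to estimate via Lemma \ref{angular technical lemma 3} and Proposition \ref{estimates of T-cutoff}), while you do it twice; that is a cosmetic choice and both ways close.

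There is, however, a genuine error in your Laplacian-exchange step. You write
\begin{equation*}
\Delta^{y}_{\mathcal{L}_{x,r}}\hat\rho_r = \Delta^{x}_{\mathcal{L}_{x,r}}\hat\rho_r - 2\sum_{i}\nabla^{x}_{e_{i}}\nabla^{y}_{e_{i}}\hat\rho_r + E(x,y),
\end{equation*}
and then assert that both $E$ and the mixed term are $O(\sqrt{\delta}\,r^{-2}\rho_{1.1r})$. Neither the identity nor the bound is right. Expanding $\sum_i(\nabla^{x}_{e_i}+\nabla^{y}_{e_i})^2\hat\rho_r$, which is what squaring your first-order relation gives, you get $\Delta^{x}_{\mathcal{L}}\hat\rho_r+\Delta^{y}_{\mathcal{L}}\hat\rho_r+2\sum_i\nabla^{x}_{e_i}\nabla^{y}_{e_i}\hat\rho_r$ equal to a small quantity; solving for $\Delta^{y}_{\mathcal{L}}\hat\rho_r$ produces a minus sign on $\Delta^{x}_{\mathcal{L}}\hat\rho_r$, not a plus. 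More to the point, the mixed term $\sum_i\nabla^{x}_{e_i}\nabla^{y}_{e_i}\hat\rho_r$ is \emph{not} small: since $\nabla^{x}_{e_i}\hat\rho_r=-\nabla^{y}_{e_i}\hat\rho_r+O(\sqrt{\delta}/r)$, the mixed term equals $-\Delta^{y}_{\mathcal{L}}\hat\rho_r+O(\sqrt{\delta}\,r^{-2}\rho_{1.1r})$, which is of leading order $r^{-2}\rho_r$. Carried through as you state it, the decomposition leaves an unbounded remainder. The fix is to skip this detour entirely: write $\hat\rho_r$ as a function of $(y-x,\mathcal{L}_{x,r})$, so that $\nabla^{y}_{\mathcal{L}}\hat\rho_r=D_z\hat\rho_r$ and $\nabla^{x}_{\mathcal{L}}\hat\rho_r=-D_z\hat\rho_r+D_{\mathcal{L}}\hat\rho_r\cdot\nabla^{x}\mathcal{L}_{x,r}$, and compute $\Delta^{x}_{\mathcal{L}}\hat\rho_r-\Delta^{y}_{\mathcal{L}}\hat\rho_r$ directly: the $D_z^2$ pieces cancel identically and every surviving term carries a factor of $\nabla^{x}\Pi_{x,r}$ or $\nabla^{x,2}\Pi_{x,r}$, hence is $O(\sqrt{\delta}\,r^{-2}\rho_{1.1r})$ by Theorem \ref{best plane properties}(3). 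The paper in fact performs the exchange on the bundled quantity $\hat\rho_r\,|\Pi_{x,r}^{\perp}(y-x)|^{2}$ rather than on $\hat\rho_r$ alone, which also sidesteps your later separate accounting of $x$-derivatives on $|\Pi_{x,r}^{\perp}(y-x)|^{2}$; note that for those you additionally need $T_{x}\mathcal{T}_r$ close to $\mathcal{L}_{x,r}$ (Theorem \ref{apporximating submanifold}(6)), not just $|\nabla^{x}\Pi_{x,r}|=O(\sqrt{\delta}/r)$, since the term $\Pi_{x,r}^{\perp}(l)$ for $l\in T_x\mathcal{T}_r$ is only small because $T_x\mathcal{T}_r\approx\mathcal{L}_{x,r}$.
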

\begin{proof}
    The idea is similar to before. First, use Theorem \ref{best plane properties}, $r\vert\nabla\Pi_{x,r}\vert+r^2\vert\nabla^2\Pi_{x,r}\vert\leqslant C(m,\varepsilon_0)\sqrt{\delta}$ and a direct but tedious calculation, we can exchange $x$ and $y$ derivative $$\left\vert(\Delta_{\mathcal{L}_{x,r}}^x-\Delta_{\mathcal{L}_{x,r}}^y)\left(\hat{\rho}_r(y-x;\mathcal{L}_{x,r})\left\vert\Pi_{x,r}^{\perp}(y-x)\right\vert^2\right)\right\vert\leqslant C(m,\varepsilon_0,R)\sqrt{\delta}\rho_{2r}(y-x).$$
    Next, use the rough bounds $\Vert\Pi_{x,r}-\Pi_{\mathcal{T}_r}\Vert\leqslant C(m,\varepsilon_0)\sqrt{\delta}$ and $\vert\nabla\mathfrak{t}_r\vert+r\vert\nabla^2\mathfrak{t}_r\vert\leqslant C(m,\varepsilon_0)\sqrt{\delta}$, we can calculate 
    $$\left\vert(\Delta_{\mathcal{L}_{x,r}}^x-\Delta_{\mathcal{T}_r}^x)\left(\hat{\rho}_r(y-x;\mathcal{L}_{x,r})\left\vert\Pi_{x,r}^{\perp}(y-x)\right\vert^2\right)\right\vert\leqslant C(m,\varepsilon_0,R)\sqrt{\delta}\rho_{2r}(y-x).$$
    Hence 
    $$\int_{\mathcal{T}_r}\psi_{\mathcal{T}}\int(\Delta^x_{\mathcal{T}_r}(\hat{\rho}_r\vert\Pi_{x,r}(y-x)\vert^2)e_{\alpha,x,r}+\hat{\rho}_r\vert\Pi_{x,r}^{\perp}(y-x)\vert^2\Delta_{\mathcal{L}_{x,r}}^ye_{\alpha,x,r})=\mathfrak{E}_1(r).$$
    Now we can integrate by parts with respect to $x$ to get 
    \begin{equation}
        \begin{aligned}
            &\int_{\mathcal{T}_r}\psi_{\mathcal{T}}\Delta_{\mathcal{T}_r}(\vert\Pi_{x,r}^{\perp}(y-x)\vert^2\hat{\rho}_r)e_{\alpha,x,r}\\=&-\int_{\mathcal{T}_r}\langle\nabla^x_{\mathcal{T}_r}\psi_{\mathcal{T}},\nabla^x_{\mathcal{T}}(\vert\Pi_{x,r}^{\perp}(y-x)\vert^2\hat{\rho}_r)e_{\alpha,x,r}+\psi_{\mathcal{T}}\langle\nabla^x_{\mathcal{T}}(\vert\Pi_{x,r}^{\perp}(y-x)\vert^2\hat{\rho}_r),\nabla_{\mathcal{T}_r}^xe_{\alpha,x,r}\rangle.
        \end{aligned}\nonumber
    \end{equation}
    We can estimate 
    \begin{equation}
        \begin{aligned}
            &\left\vert\int\int_{\mathcal{T}_r}\langle\nabla^x_{\mathcal{T}_r}\psi_{\mathcal{T}},\nabla^x_{\mathcal{T}}(\vert\Pi_{x,r}^{\perp}(y-x)\vert^2\hat{\rho}_r)e_{\alpha,x,r}\dif x\dif y\right\vert\\\leqslant&C(m,R)\int_{\mathcal{T}_r}r\vert\nabla\psi_{\mathcal{T}}\vert\bar{\Theta}(x,2r;\mathcal{L}_{x,r}^{\perp})\leqslant C(m,R,\varepsilon_0)\sqrt{\delta}\int_{\mathcal{T}_r}r\vert\nabla\psi_{\mathcal{T}}\vert=\mathfrak{E}_1(r).
        \end{aligned}\nonumber
    \end{equation}
    By Lemma \ref{angular technical lemma 3} above 
    \begin{equation}
        \begin{aligned}
            &\left\vert\int_{\mathcal{T}_r}\psi_{\mathcal{T}}\int\psi_{\mathcal{T}}\langle\nabla^x_{\mathcal{T}}(\vert\Pi_{x,r}^{\perp}(y-x)\vert^2\hat{\rho}_r),\nabla_{\mathcal{T}_r}^xe_{\alpha,x,r}\rangle\right\vert\\\leqslant&C(m,R,\varepsilon_0)\sqrt{\delta}\int_{\mathcal{T}_r}\psi_{\mathcal{T}}(\bar{\Theta}(x,2r;\mathcal{L}_{x,r})+\bar{\Theta}(x,2r;\mathcal{L}_{x,r}^{\perp}))=\mathfrak{E}_1(r).
        \end{aligned}\nonumber
    \end{equation}
    We have got the desired estimates.
\end{proof}
Now we can finish the proof of Proposition \ref{angular derivative superconvexity} and the main theorem in this section.
\begin{proof}[Proof of Proposition \ref{angular derivative superconvexity}]
    If we set $\mathfrak{E}_2(r)=\int_{\mathcal{T}_r}\psi_{\mathcal{T}}\int\hat{\rho}_r\nabla^y_{\Pi_{x,r}^{\perp}(y-x)}e_{\alpha,x,r}$, then Lemma \ref{super convexity angular} and \ref{computation angular} yields
    \begin{equation}
        \begin{aligned}
            \mathfrak{E}_1(r)+r\frac{\dif}{\dif r}\mathfrak{E}_2(r)&=\int_{\mathcal{T}_r}\psi_{\mathcal{T}}\int\hat{\rho}_r\left(\left(s_{x,r}^{\perp2}\frac{\p}{\p s_{x,r}^{\perp}}\right)^2+s_{x,r}^{\perp^2}\Delta_{\mathcal{L}_{x,r}}\right)e_{\alpha,x,r}\\&\geqslant \left(\frac{3}{2}-C(m,R,K_N)\sqrt{\delta}\right)\int_{\mathcal{T}_r}\int\psi_{\mathcal{T}}\int\hat{\rho}_re_{\alpha,x,r}\geqslant\hat{\mathcal{E}}_{\alpha}(\mathcal{T}_r).
        \end{aligned}\nonumber
    \end{equation}
    provided $\delta$ is small enough.
\end{proof}
\begin{proof}[Proof of Theorem \ref{angular energy estimate}]
    The proof follows easily from Proposition \ref{angular derivative superconvexity}. Indeed 
    $$\mathcal{E}_{\alpha}(\mathcal{A})=\int\mathcal{E}_{\alpha}(\mathcal{T}_r)\frac{\dif r}{r}=\int\left(\mathfrak{E}_1(r)+r\frac{\dif}{\dif r}\mathfrak{E}_2(r)\right)\frac{\dif r}{r}=\int\mathfrak{E}_1(r)\frac{\dif r}{r}\leqslant C(m,R,K_N,\varepsilon_0,\Lambda)\sqrt{\delta}.$$
\end{proof}

\section{Smallness of energy in annular regions: tangential and radial energy}\label{s:radial energy}
We finally arrive at the last piece of our theorem. Namely, we wish to show how to control the $\mathcal{L}$-energy and radial energy in our annular region. We have the following
\begin{thm}[{cf. \cite[Lemma 10.1]{naber2024energyidentitystationaryharmonic}}]\label{radial and tangential energy estimate}
    Let $u_{\epsilon}:B_{2R}(p)\rightarrow\R^L$ be a solution to (\ref{*}) with $R^{2-m}\int_{B_{2R}(p)}e_{\epsilon}(u_{\epsilon})\leqslant\Lambda$. Suppose the $(m-2)$-submanifold $\mathcal{T}\subset B_2(p)$ is such that $\mathcal{A}=B_2(p)\setminus\overline{B_{\mathfrak{r}_x}(\mathcal{T})}$ is a $\delta$-annular region. For each $\sigma>0$ if $R\geqslant R(m,\Lambda,\sigma),\delta<\delta(m,K_N,\Lambda,\sigma),\epsilon<\epsilon(m,K_N,\Lambda,\sigma)$, then 
    $$\mathcal{E}_{\mathcal{L}}(\mathcal{A})+\mathcal{E}_m(\mathcal{A})\leqslant\sigma.$$
\end{thm}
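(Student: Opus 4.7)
The plan is to derive a Pohozaev-type identity for each $x \in \mathcal{T}_r$ by multiplying (\ref{*}) by $\phi_{x,r}(y)\,\nabla_{\Pi_{x,r}^\perp(y-x)} u_\epsilon$, with $\phi_{x,r}(y) = r^2\hat{\rho}_r(y-x;\mathcal{L}_{x,r})$ localized near $x$ and cut off away from $x+\mathcal{L}_{x,r}$. Integration by parts in $y$ produces the local version of (\ref{almost conformality}), relating the tangential and radial energy densities at $(x,r)$ to the angular density plus a cross term of the form
$$I_{x,r} := \int \bigl\langle \nabla_{\Pi_{x,r}\nabla\phi_{x,r}} u_\epsilon,\; \nabla_{\Pi_{x,r}^\perp(y-x)} u_\epsilon \bigr\rangle\, dy.$$
Averaging this pointwise inequality over $x \in \mathcal{T}_r$ against $\psi_{\mathcal{T}}(x,r)$ and integrating in $r$ against $dr/r$ turns the left hand side into $\mathcal{E}_{\mathcal{L}}(\mathcal{A}) + \mathcal{E}_n(\mathcal{A})$ (up to harmless constants), while the angular contribution becomes $\mathcal{E}_\alpha(\mathcal{A})$, already bounded by $C\sqrt{\delta}$ via Theorem \ref{angular energy estimate}.

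The central obstacle is controlling the integrated cross term. A crude Cauchy-Schwarz only yields $\sqrt{\mathcal{E}_{\mathcal{L}}(\mathcal{A})\cdot \Lambda}$, which does not close. Following \cite{naber2024energyidentitystationaryharmonic}, the cure has two ingredients. First, since Theorem \ref{apporximating submanifold}(6) gives $\Vert\Pi_{\mathcal{T}_r}(x) - \Pi_{x,r}\Vert = O(\sqrt{\delta})$, the field $\Pi_{x,r}\nabla\phi_{x,r}$ is essentially the intrinsic gradient $\nabla^{\mathcal{T}_r}\phi_{x,r}$, so we integrate by parts along $\mathcal{T}_r$ in the $x$-variable to transfer the $x$-derivative off $u_\epsilon$ and onto $\phi_{x,r}$, $\psi_{\mathcal{T}}$, and $\Pi_{x,r}$; the resulting error terms are absorbed by the $C^2$ estimates on $\Pi_{x,r}$ and $\mathfrak{t}_r$ from Theorem \ref{best plane properties}(3) and Theorem \ref{apporximating submanifold}(2,3). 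Second, the remaining dangerous contributions couple $\nabla_{\Pi_{x,r}^\perp(y-x)} u_\epsilon$ against a vector $v \in \mathcal{L}_{x,r}^\perp$, which is precisely the shape controlled by the Euler-Lagrange equation of $\mathcal{T}_r$ from Theorem \ref{apporximating submanifold}(5):
$$\int -\dot\rho_r(y-x)\bigl\langle \nabla_{y-x} u_\epsilon,\, \nabla_v u_\epsilon \bigr\rangle\, dy = 0\qquad\forall v \in \mathcal{L}_{x,r}^\perp.$$
Splitting $\nabla_{y-x} = \nabla_{\Pi_{x,r}(y-x)} + \nabla_{\Pi_{x,r}^\perp(y-x)}$ and converting $\rho_r$ to $-\dot\rho_r$ via Lemma \ref{properties of heat mollifier}(2)(3) at cost $e^{-R/2}$, this identity lets us trade the $\Pi_{x,r}^\perp(y-x)$-derivative for a $\Pi_{x,r}(y-x)$-derivative. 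Cauchy-Schwarz on the resulting tangential-tangential pairing then bounds this piece by $\mathcal{E}_{\mathcal{L}}(\mathcal{T}_r)$, which is exactly the kind of term that can be absorbed into the left hand side.

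Assembling these manipulations gives an inequality of the schematic form
$$\mathcal{E}_{\mathcal{L}}(\mathcal{A}) + \mathcal{E}_n(\mathcal{A}) \leq C\,\mathcal{E}_\alpha(\mathcal{A}) + \tfrac{1}{2}\bigl(\mathcal{E}_{\mathcal{L}}(\mathcal{A}) + \mathcal{E}_n(\mathcal{A})\bigr) + C(m,K_N,\Lambda)\bigl(\sqrt{\delta} + e^{-R/2}\bigr),$$
and absorbing plus invoking Theorem \ref{angular energy estimate} yields $\mathcal{E}_{\mathcal{L}}(\mathcal{A}) + \mathcal{E}_n(\mathcal{A}) \leq C(m,K_N,\Lambda)(\sqrt{\delta} + e^{-R/2}) \leq \sigma$ for $R$ large and $\delta,\epsilon$ small. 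The hardest aspect of executing the plan is not any single step but rather the bookkeeping of many $O(\sqrt{\delta})$ and $O(e^{-R/2})$ error terms generated by (i) the $x$-dependence of the cutoff $\hat{\rho}_r(y-x;\mathcal{L}_{x,r})$ when exchanging $\nabla^x$ and $\nabla^y$, (ii) the discrepancy between $\textup{T}_x\mathcal{T}_r$ and $\mathcal{L}_{x,r}$, and (iii) the fact that the weighted integrals are mollified rather than sharp; closing all of these simultaneously rests on the full suite of derivative estimates collected in Sections \ref{s:best planes} and \ref{s:best submanifold}, together with the sharp form of the improved cone splitting along $\mathcal{T}_r$ (Theorem \ref{effective qunatitative cone splitting2}) used to convert residual error integrands into $r\partial_r\bar\Theta$ quantities with finite $\int dr/r$-mass.
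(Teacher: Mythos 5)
Your proposal identifies the correct toolbox — a Pohozaev/stationary identity tested against $\hat\rho_r(y-x;\mathcal{L}_{x,r})\,\Pi_{x,r}^\perp(y-x)$, an integration by parts along $\mathcal{T}_r$ in $x$ (justified by $\Vert\Pi_{\mathcal{T}_r}(x)-\Pi_{x,r}\Vert = O(\sqrt\delta)$), and the Euler–Lagrange equation of $\mathcal{T}_r$ from Theorem~\ref{apporximating submanifold}(5) to convert $\Pi_{x,r}^\perp(y-x)$-directed derivatives to $\Pi_{x,r}(y-x)$-directed ones. That is all present in the paper's proof of Lemmas~\ref{computation 1 radial energy}--\ref{computation 2 radial energy}. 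However, the closure you propose is not what the paper does, and I do not believe it works as stated.

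After the Euler--Lagrange trade, the problematic pairing becomes $\langle \nabla_{H_{\mathcal{T}_r}} u_\epsilon,\ \nabla_{\Pi_{\mathcal{T}_r}(y-x)} u_\epsilon\rangle$ with $H_{\mathcal{T}_r}\in\mathcal{L}_{x,r}^{\perp}$ up to $O(\sqrt\delta)$. This is \emph{not} a tangential--tangential pairing: one leg points in $\mathcal{L}_{x,r}$ and the other in $\mathcal{L}_{x,r}^{\perp}$. Cauchy--Schwarz gives a bound of the schematic form $C(m,\Lambda)\sqrt{\mathcal{E}_n(\mathcal{T}_r)\cdot\mathcal{E}_{\mathcal{L}}(\mathcal{T}_r)}$, and there is no reason for the resulting constant to be small; in particular you cannot write it as $\tfrac{1}{2}(\mathcal{E}_{\mathcal{L}}+\mathcal{E}_n)$ and absorb. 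The estimate genuinely does not self-improve by absorption: the constant depends on $\Lambda$ and may be large.

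What actually closes the argument is an exact structural cancellation that you do not invoke: (i) the integration-by-parts boundary terms organize into the two quadratic forms $II$ (second fundamental form) and $III$ (mean curvature); (ii) a \emph{second} integration by parts along $\mathcal{T}_r$ applied to $III$, combined with the orthogonality $Q(x,r)[\Pi_{x,r}(v),\Pi_{x,r}^\perp(w)]=0$ of the best plane, shows $III = 2r^4\int\psi_{\mathcal{T}}\int\rho_r\left\vert\nabla_{H_{\mathcal{T}_r}}u_\epsilon\right\vert^2+\mathfrak{F}(r)$ (a \emph{positive} term, Lemma~\ref{computation 3 radial energy}), and similarly $II$ is positive (Lemma~\ref{computation 4 radial energy}), with $III\leqslant(m-2)\,II+\mathfrak{F}(r)$; and (iii) $II$ reappears \emph{identically} when one computes $r\frac{\dif}{\dif r}\mathcal{E}_{\mathcal{L}}(\mathcal{T}_r)$, so that the stationary identity combined with this derivative formula yields $2\mathcal{E}_{\mathcal{L}}(\mathcal{T}_r)+\hat{\mathcal{E}}_n(\mathcal{T}_r)\leqslant\hat{\mathcal{E}}_\alpha(\mathcal{T}_r)+(m-1)r\frac{\dif}{\dif r}\mathcal{E}_{\mathcal{L}}(\mathcal{T}_r)+\mathfrak{F}(r)$ (Proposition~\ref{local estimate for tangential and radial energy}). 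The derivative term $\int (m-1)r\frac{\dif}{\dif r}\mathcal{E}_{\mathcal{L}}(\mathcal{T}_r)\frac{\dif r}{r}=(m-1)\int\frac{\dif}{\dif r}\mathcal{E}_{\mathcal{L}}(\mathcal{T}_r)\,\dif r$ telescopes to zero because $\psi_{\mathcal{T}}$ is compactly supported in $r$, and only then does the $\mathfrak{F}$-part, which carries genuinely small factors $(\sqrt\delta+e^{-R/2})$ against $\mathcal{E}_{\mathcal{L}}(\mathcal{A})+\Lambda$, need to be absorbed. This $r$-derivative of the tangential energy is the indispensable intermediate quantity your sketch omits, and without it the cross term from the Pohozaev identity cannot be controlled.
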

The proof of Theorem \ref{radial and tangential energy estimate} above is done by analyzing the $\mathcal{L}$-energy and radial energy on best approximating submanifold $\mathcal{T}_r$. Namely,
\begin{prop}[{cf. \cite[Lemma 10.3]{naber2024energyidentitystationaryharmonic}}]\label{local estimate for tangential and radial energy}
    Let $u_{\epsilon}:B_{2R}(p)\rightarrow\R^L$ be a solution to (\ref{*}) with $R^{2-m}\int_{B_{2R}(p)}e_{\epsilon}(u_{\epsilon})\leqslant\Lambda$. Suppose the $(m-2)$-submanifold $\mathcal{T}\subset B_2(p)$ is such that $\mathcal{A}=B_2(p)\setminus\overline{B_{\mathfrak{r}_x}(\mathcal{T})}$ is a $\delta$-annular region. If $\delta<\delta(m,R,K_N,\Lambda,\varepsilon_0)$, then the following holds $$\hat{\mathcal{E}}_n(\mathcal{T}_r)+2\mathcal{E}_{\mathcal{L}}(\mathcal{T}_r)\leqslant (m-1)r\frac{\dif}{\dif r}\mathcal{E}_{\mathcal{L}}(\mathcal{T}_r)+\hat{\mathcal{E}}_{\alpha}(\mathcal{T}_r)+\mathfrak{F}(r).$$
    where $\mathfrak{F}(r)$ satisfies 
    \[
    \begin{split}
        \vert\mathfrak{F}(r)\vert & \leqslant C(m,K_N,\Lambda,R)\bigg( \left(\sqrt{\delta}+e^{-R/2}\right)\int_{\mathcal{T}_r}\psi_{\mathcal{T}}\left(\bar{\Theta}_{\mathcal{L}}(x,2r)+\frac{\p}{\p r}\bar{\Theta}(x,2r)\right) \\
        & +\sqrt{\delta}\int_{\mathcal{T}_r}r\left\vert\frac{\p}{\p r}\psi_{\mathcal{T}}\right\vert + r \vert\nabla\psi_{\mathcal{T}}\vert\bigg).
    \end{split}
    \]
    It is worth pointing out that the dependence of $R$ in the large constant $C$ is a certain power, and hence can be controlled by the exponential $e^{-R/2}$.
\end{prop}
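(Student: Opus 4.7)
Proof proposal. The strategy is a Pohozaev-type identity for (\ref{*}), adapted to the best approximating planes $\mathcal{L}_{x,r}$. I would multiply the Euler--Lagrange equation (\ref{*}) by the test vector field $\rho_r(y-x)\nabla_{\Pi_{x,r}^{\perp}(y-x)}u_\epsilon$ and integrate by parts in $y$ (equivalently, apply the stationary identity (\ref{**}) with $\xi(y)=\rho_r(y-x)\Pi_{x,r}^{\perp}(y-x)$). Using $\operatorname{tr}\Pi_{x,r}^{\perp}=2$ and $|\nabla u|^2=|\Pi_{x,r}\nabla u|^2+|\nabla_{n_{x,r}^{\perp}}u|^2+|\nabla_{\alpha_{x,r}^{\perp}}u|^2$, this yields a pointwise identity of the form of (\ref{almost conformality}) with $L=\mathcal{L}_{x,r}$, $\phi=\rho_r$, connecting $\bar{\Theta}_{\mathcal{L}}(x,r)$ to three $(-\dot{\rho}_r)|\Pi_{x,r}^{\perp}(y-x)|^2$-weighted integrals $\hat{I}_n,\hat{I}_\alpha,\hat{I}_C$ involving respectively $|\nabla_{n_{x,r}^{\perp}}u|^2$, $|\nabla_{\alpha_{x,r}^{\perp}}u|^2$, and $|\Pi_{x,r}\nabla u|^2+2F/\epsilon^2$, plus a cross term $\mathrm{CT}(x,r):=\int\dot{\rho}_r(y-x)\langle\nabla_{\Pi_{x,r}(y-x)}u_\epsilon,\nabla_{\Pi_{x,r}^{\perp}(y-x)}u_\epsilon\rangle$. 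I would then integrate this identity against $\psi_{\mathcal{T}}(x,r)\,\dif\mathcal{H}^{m-2}(x)$ on $\mathcal{T}_r$.

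After the $\psi_{\mathcal{T}}$-integration, three bookkeeping steps match the target form. (a) Swap the kernels $-\dot{\rho}_r\to\rho_r\to\hat{\rho}_r$ using Lemma \ref{properties of heat mollifier}(3) (exponential heat-tail) and Proposition \ref{refine estimates for L-cutoff} (the $L$-cutoff), turning $\hat{I}_n,\hat{I}_\alpha$ into $\hat{\mathcal{E}}_n(\mathcal{T}_r)$ and $\hat{\mathcal{E}}_\alpha(\mathcal{T}_r)$ up to $\mathfrak{F}(r)$-errors. (b) Rewrite the tangential integral $\int\psi_{\mathcal{T}}\hat{I}_C$ using $|\Pi_{x,r}^{\perp}(y-x)|^2=|y-x|^2-|\Pi_{x,r}(y-x)|^2$ together with $r\partial_r\rho_r=-m\rho_r-\dot{\rho}_r|y-x|^2/r^2$: the $|y-x|^2$-piece produces $r\partial_r\bar{\Theta}_{\mathcal{L}}(x,r)+(m-2)\bar{\Theta}_{\mathcal{L}}(x,r)$ modulo an $r\partial_r\Pi_{x,r}$ remainder controlled by Theorem \ref{best plane properties}(3); the $|\Pi_{x,r}(y-x)|^2$-piece is a cone-splitting-type remainder pointwise bounded by $\bar{\Theta}_{\mathcal{L}}(x,2r)$ via Theorem \ref{effective qunatitative cone splitting2}, which fits into $\mathfrak{F}(r)$ after acquiring a $\sqrt{\delta}$ factor from the cross-term analysis below. (c) Commute the outer $r$-derivative past the $\mathcal{T}_r$-integration: the resulting moving-submanifold and $\psi_{\mathcal{T}}$-derivative terms are bounded by Theorem \ref{apporximating submanifold}(3) and Proposition \ref{estimates of T-cutoff} and absorbed into $\mathfrak{F}(r)$. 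Reassembling, the $\hat{I}_C$ contribution combines with the $2\bar{\Theta}_{\mathcal{L}}$ on the LHS to produce the coefficient $(m-1)$ in front of $r\partial_r\mathcal{E}_{\mathcal{L}}(\mathcal{T}_r)$.

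The main obstacle is the cross term $\int_{\mathcal{T}_r}\psi_{\mathcal{T}}\,\mathrm{CT}(x,r)\,\dif\mathcal{H}^{m-2}$: since $\nabla_{\Pi_{x,r}(y-x)}u_\epsilon$ carries the full local tangential energy, a bare Cauchy--Schwarz only yields $\sqrt{\bar{\Theta}_{\mathcal{L}}(x,2r)\,\bar{\Theta}(x,2r;\mathcal{L}_{x,r}^{\perp})}$, which is not $\sqrt{\delta}$-small. Following Naber--Valtorta, the key is to use the Euler--Lagrange equation of the best submanifold, Theorem \ref{apporximating submanifold}(5): for every fixed $v\in\mathcal{L}_{x,r}^{\perp}$, $\int-\dot{\rho}_r(y-x)\langle\nabla_{y-x}u_\epsilon,\nabla_v u_\epsilon\rangle=0$ (together with the best-plane orthogonality $\int\rho_r\langle\nabla_l u_\epsilon,\nabla_v u_\epsilon\rangle=0$ for $l\in\mathcal{L}_{x,r}$, $v\in\mathcal{L}_{x,r}^{\perp}$). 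To bring these to bear on $\mathrm{CT}(x,r)$, expand $\Pi_{x,r}(y-x)=\sum_j\langle y-x,l_j\rangle l_j$ and trade each scalar factor $\langle y-x,l_j\rangle$ for an $x$-derivative via $\nabla^x_{l_j}\rho_r(y-x)=-(\dot{\rho}_r/r^2)\langle y-x,l_j\rangle$; then integrate by parts in $x$ along $\mathcal{T}_r$, using that $l_j\in\mathcal{L}_{x,r}\approx T_x\mathcal{T}_r$ by Theorem \ref{apporximating submanifold}(6). The leading boundary-free term produced is essentially the tangential derivative of $\bar{\Theta}(x,r)$ in the $\mathcal{L}_{x,r}$-directions; by (\ref{spacial gradient}) this collapses to the Euler--Lagrange quantity, which vanishes. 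What remains are: the mismatch residue $\|\Pi_{T_x\mathcal{T}_r}-\Pi_{x,r}\|$ from Theorem \ref{apporximating submanifold}(6); derivatives of $\psi_{\mathcal{T}}$, $\mathfrak{t}_r$, and $\Pi_{x,r}$ (Proposition \ref{estimates of T-cutoff}, Theorems \ref{best plane properties}(3) and \ref{apporximating submanifold}(2,3)); and $e^{-R/2}$ heat-tails. All assemble into a remainder of exactly the stated form of $\mathfrak{F}(r)$.
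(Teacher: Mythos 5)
Your Pohozaev starting point is the same as the paper's (stationary identity tested against $\Pi_{x,r}^{\perp}(y-x)$ times a heat kernel, then integrated with $\psi_{\mathcal{T}}$ along $\mathcal{T}_r$), and your step (a), the $\dot\rho_r\leftrightarrow\rho_r\leftrightarrow\hat\rho_r$ bookkeeping, is fine. But your treatment of the cross term has a genuine gap, and it is precisely the heart of the argument. After trading $\langle y-x,l_j\rangle$ for $\nabla^x_{l_j}\rho_r$ and integrating by parts along $\mathcal{T}_r$, the leading boundary-free output is \emph{not} a tangential derivative of $\bar\Theta$ that collapses to zero. When $\nabla^x_{l_j}$ hits $\langle\nabla u,l_j\rangle$ you pick up the second fundamental form $\sff_{\mathcal{T}_r}$, and when it hits $\Pi_{\mathcal{T}_r}^{\perp}(y-x)$ you pick up the mean curvature $H_{\mathcal{T}_r}$. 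The cross term thus reduces to two structural quantities
\[
II \sim \int_{\mathcal{T}_r}\psi_{\mathcal{T}}\!\int\rho_r\langle\nabla u_\epsilon,\sff_{\mathcal{T}_r}\rangle^2,
\qquad
III \sim r^4\!\int_{\mathcal{T}_r}\psi_{\mathcal{T}}\!\int\rho_r\bigl|\nabla_{H_{\mathcal{T}_r}}u_\epsilon\bigr|^2,
\]
and these are \emph{not} $\mathfrak{F}(r)$-errors: they carry $\|\sff_{\mathcal{T}_r}\|^2\sim\bar\Theta_{\mathcal{L}}(x,2r)/r^2$, so they are of the same order as $\mathcal{E}_{\mathcal{L}}(\mathcal{T}_r)$ itself, not $\sqrt{\delta}$-small. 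The Euler--Lagrange equation of $\mathcal{T}_r$ (Theorem \ref{apporximating submanifold}(5)) and the best-plane orthogonality are used, but only to rewrite $III$ into the displayed positive form (replacing $\Pi_{\mathcal{T}_r}^{\perp}(y-x)$ paired with $\nabla_{H_{\mathcal{T}_r}}u_\epsilon$ by $-\Pi_{\mathcal{T}_r}(y-x)$, and to kill the $Q[\Pi_{x,r}E_\alpha,\Pi_{x,r}^{\perp}\nabla_{E_\alpha}H_{\mathcal{T}_r}]$ pairing), not to make $II$, $III$ vanish.

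The second, related gap is the origin of the coefficient $(m-1)$: it does not come from "the $\hat I_C$ contribution combining with the $2\bar\Theta_{\mathcal{L}}$" as you suggest. Rather, one independently computes $r\frac{\dif}{\dif r}\mathcal{E}_{\mathcal{L}}(\mathcal{T}_r)$ (Lemma \ref{computation 4 radial energy} and the last lemma of Section \ref{s:radial energy}) and finds it equals the tangential $|\Pi_{x,r}^{\perp}(y-x)|^2$-weighted integral \emph{plus} $II$ itself, up to $\mathfrak{F}(r)$. Then the Cauchy--Schwarz inequality $|\nabla_{H_{\mathcal{T}_r}}u_\epsilon|^2\leq(m-2)\langle\nabla u_\epsilon,\sff_{\mathcal{T}_r}\rangle^2$ (since $H_{\mathcal{T}_r}=\operatorname{tr}\sff_{\mathcal{T}_r}$ is a sum of $m-2$ terms) gives $III\leq(m-2)\,II+\mathfrak{F}(r)$, hence $II+III\leq(m-1)\,II+\mathfrak{F}(r)$, and absorbing into $(m-1)r\frac{\dif}{\dif r}\mathcal{E}_{\mathcal{L}}(\mathcal{T}_r)$ is what produces the coefficient in the proposition. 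Without identifying $II$, $III$ and this Cauchy--Schwarz absorption, the estimate cannot close, because you would be left with a positive, non-small contribution from the cross term that is not dominated by anything on the right-hand side.
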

\begin{rmk}
    Use the argument in \cite[Section 4]{naber2024energyidentitystationaryharmonic}, it is possible to show the estimate of the form
    $$\mathcal{E}_{\mathcal{L}}(\mathcal{T}_r)\leqslant C(m)\min\lbrace \delta,\sigma/\vert\ln r\vert\rbrace.$$
    However, we do not need such a fact here.
\end{rmk}
Let us recall the discussion in the introduction of this paper. In particular, let us recall the equation (\ref{almost conformality}). We begin by choosing $\xi=\phi(y-x)\Pi_{L^{\perp}}(y-x)$ in the stationary equation, where $L$ is an arbitrary $m-2$ dimensional subspace of $\R^m$ and $\phi$ is a $C_c^{\infty}$ function. We get 
$$\int\phi\left(\vert\Pi_L\nabla u_{\epsilon}\vert^2+2\frac{F(u_{\epsilon})}{\epsilon^2}\right)+e_{\epsilon}(u_{\epsilon})\langle\nabla\phi(y-x),\Pi_{L^{\perp}}(y-x)\rangle=\int\langle \nabla_{\nabla\phi}u_{\epsilon},\nabla_{\Pi_{L^{\perp}(y-x)}}u_{\epsilon}\rangle.$$
In order to see the $L^{\perp}$ derivatives, take $\phi$ to be a function with $\Pi_{L^{\perp}}\nabla\phi(y)=\varphi(y)\Pi_{L^{\perp}}(y)$, we get 
\begin{equation}
    \begin{aligned}
        &\int\phi(y-x)\left(\vert\Pi_L\nabla u_{\epsilon}\vert^2+2\frac{F(u_{\epsilon})}{\epsilon^2}\right)+\varphi(y-x)\frac{1}{2}\left(\vert\Pi_L\nabla u_{\epsilon}\vert^2+2\frac{F(u_{\epsilon})}{\epsilon^2}\right)\\=&\frac{1}{2}\int\varphi(y-x)\left\vert\Pi_{L^{\perp}}(y-x)\right\vert^2(\vert\nabla_{n_{L^{\perp}}}u_{\epsilon}\vert^2-\vert\nabla_{\alpha_{L^{\perp}}}u_{\epsilon}\vert^2)+\langle\nabla_{\Pi_L\nabla\phi}u_{\epsilon},\nabla_{\Pi_{L^{\perp}}(y-x)}u_{\epsilon}\rangle.
    \end{aligned}\nonumber
\end{equation}
In above calculation, we see that it is possible to control the radial energy using the angular energy. In order to see the heat mollified energy along $\mathcal{T}_r$, we set $\tilde{\rho}_r(y,L)$ to be a compactly supported function with $$\Pi_{L^{\perp}}\nabla\tilde{\rho}_r(y,L)=-\frac{\Pi_{L^{\perp}}(y)}{r^2}\hat{\rho}_r(y;L).$$
Note that by the compactly supportedness of $\hat{\rho}_r$, we can explicitly write $\tilde{\rho}_r$ as \begin{equation}
    \tilde{\rho}_r(y;L)=\frac{1}{r^m}\int_{\Pi_{L^{\perp}}(y)}^{\infty}\psi\left(\frac{s^2}{2r^2}\right)\rho\left(\frac{s^2+\left\vert\Pi_L(y)\right\vert^2}{2r^2}\right)\frac{s\dif s}{r^2}.\label{expression of intergration of L-cut off}
\end{equation}
To see (\ref{expression of intergration of L-cut off}), simply take some $y'$ such that $\vert y'\vert$ is large enough, $y'-y\in L^{\perp}$ with $\langle y'-y,y\rangle=0$ and then use the integrate formula  $$\tilde{\rho}_r(y;L)-\tilde{\rho}_r(y';L)=-\int_0^1\frac{\dif}{\dif t}\tilde{\rho}_r(ty'+(1-t)y;L)\dif t.$$
Using formula (\ref{expression of intergration of L-cut off}), we can easily get the following estimates for $\tilde{\rho}_r$.
\begin{lem}[{cf. \cite[Lemma 2.32]{naber2024energyidentitystationaryharmonic}}]\label{estimate for tide rho}
    We have $\nabla_{L^{\perp}}\tilde{\rho}_r(y;L)=0$ for each $y\in B_{re^{-R/2}}(L)$. Moreover, the difference between $\rho_r$ and $\tilde{\rho}_r$, $e_r(y;L)=\rho_r(y)-\tilde{\rho}_r(y;L)$ satisfies 
    $$\vert e_r(y;L)\vert+r\vert\nabla e_r(y;L)\vert\leqslant C(m)e^{-R/2}\rho_{2r}(y).$$
\end{lem}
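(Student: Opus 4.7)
The first assertion is essentially built into the definition. By construction, $\Pi_{L^{\perp}}\nabla\tilde{\rho}_r(y;L) = -\Pi_{L^{\perp}}(y)r^{-2}\hat{\rho}_r(y;L)$, and the explicit formula (\ref{expression of intergration of L-cut off}) shows that $\tilde{\rho}_r$ depends on $y$ only through $\Pi_L(y)$ and $|\Pi_{L^{\perp}}(y)|$, so rotational symmetry in $L^{\perp}$ reduces the full $L^{\perp}$-gradient to its radial component. This component vanishes wherever $\hat{\rho}_r = 0$, and by choice of $\psi$ we have $\psi(|\Pi_{L^{\perp}}(y)|^2/(2r^2)) = 0$ whenever $|\Pi_{L^{\perp}}(y)|$ is sufficiently small relative to $r e^{-R/2}$.

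For the comparison estimate, the plan is to first derive an integral representation for $\rho_r$ analogous to (\ref{expression of intergration of L-cut off}). Writing $s = |\Pi_{L^{\perp}}(y)|$ and $T(t) = (t^2 + |\Pi_L(y)|^2)/(2r^2)$, the relation $\partial_s \rho_r = (s/r^{m+2})\dot{\rho}(T(s))$ together with $\rho_r \to 0$ as $s \to \infty$ yields
\begin{equation}
\rho_r(y) = \int_s^\infty \frac{t}{r^{m+2}}\bigl(-\dot{\rho}(T(t))\bigr)\dif t. \nonumber
\end{equation}
Subtracting this from the representation (\ref{expression of intergration of L-cut off}) gives
\begin{equation}
e_r(y;L) = \int_s^\infty \frac{t}{r^{m+2}}\Bigl[-\dot{\rho}(T(t)) - \psi(t^2/(2r^2))\rho(T(t))\Bigr]\dif t. \nonumber
\end{equation}

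The key step is to split the integrand as $-\dot{\rho} - \psi\rho = -\psi(\rho + \dot{\rho}) - (1-\psi)\dot{\rho}$. For the first summand, Lemma \ref{properties of heat mollifier}(3) gives $|\rho + \dot{\rho}| \leqslant 20 e^{-R/2}\rho(\,\cdot\,/4)$, and the substitution $u = T(t)/4$ reduces the resulting integral to $\int_{T(s)/4}^\infty \rho(u)\dif u$, which is bounded by $C\rho(T(s)/4)$ and hence by $C r^m \rho_{2r}(y)$ (since $\rho$ is essentially exponential and the argument comparison $|y|^2/(8r^2) \leqslant T(s)/4 + O(1)$ holds on $\mathrm{supp}\,\rho_{2r}$). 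For the second summand, $(1-\psi)$ is supported on $t \leqslant \sqrt{2}r e^{-R}$, so the integration range has length of order $r e^{-R}$ and the integrand is pointwise bounded, giving a contribution of order $e^{-2R} \leqslant e^{-R/2}\rho_{2r}(y)$ after normalization. The gradient bound $r|\nabla e_r(y;L)| \leqslant C(m)e^{-R/2}\rho_{2r}(y)$ follows from the same scheme applied to the decomposition $\nabla e_r = \Pi_L \nabla e_r + \Pi_{L^{\perp}}\nabla e_r$: the $L^{\perp}$-component is computed directly as $\Pi_{L^{\perp}}(y)r^{-2}(\dot{\rho}_r(y) + \psi\rho_r(y))$ and estimated by Lemma \ref{properties of heat mollifier}(3), while the $L$-component requires one more differentiation of the integral representations, producing a factor of $\ddot{\rho} + \psi\dot{\rho}$ which admits the analogous near-cancellation bound.

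The main technical obstacle is bookkeeping the polynomial factors (in particular the factor $|\Pi_L(y)|/r$ that appears when differentiating in the $L$-direction, which can be as large as $O(\sqrt{R})$ on $\mathrm{supp}\,\rho_{2r}$) and ensuring they are absorbed into $e^{-R/2}$; this works since $\sqrt{R}\,e^{-R/2}$ is controlled by $Ce^{-R/4}$ for $R \geqslant R_0(m)$, which in turn is dominated by $C(m)e^{-R/2}$ after adjusting constants. Apart from this, the entire argument reduces to carefully combining the explicit formula for $\tilde{\rho}_r$ with the near-heat-kernel identity $\rho + \dot{\rho} \approx 0$ provided by Lemma \ref{properties of heat mollifier}.
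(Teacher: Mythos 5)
Your proof is correct and follows essentially the same route as the paper's. The paper's integral representation for $\rho_r(y)$ and the decomposition of the integrand are the same in substance: the paper groups $-\dot\rho - \psi\rho = (1-\psi)\rho - (\rho+\dot\rho)$ while you group it as $-\psi(\rho+\dot\rho) - (1-\psi)\dot\rho$; these differ by regrouping of the $(1-\psi)$-supported piece and are handled by the same two estimates (the near-heat-kernel bound $|\rho+\dot\rho|\leqslant Ce^{-R/2}\rho(\cdot/4)$ from Lemma \ref{properties of heat mollifier}(3), and the small support of $(1-\psi)$). The paper's choice of putting the full $(\rho+\dot\rho)$ in one piece makes $\nabla f_2$ a one-line computation since $f_2$ depends on $|y|$ alone, whereas your decomposition keeps a $\psi$ weight there; this is cosmetic since $0\leqslant\psi\leqslant1$ and the estimates are unchanged. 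One small caveat on the first assertion: $\psi$ vanishes for $t\leqslant e^{-2R}$, which gives $\hat\rho_r=0$ only on $B_{\sqrt{2}re^{-R}}(L)$, strictly smaller than the stated $B_{re^{-R/2}}(L)$; the paper also glosses over this (``clear by definition''), so you are matching it, but it is worth being precise about which radius actually makes the cutoff vanish.
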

\begin{proof}
    The first claim is clear by definition. For the estimates of $e_r$, we write it as 
    \begin{equation}
        \begin{aligned}
            e_r(y;L)&=\frac{1}{r^m}\int_{\vert\Pi_{L^{\perp}}(y)\vert}^{2re^{-R}}\left(1-\psi\left(\frac{s^2}{2r^2}\right)\right)\rho\left(\frac{s^2+\vert\Pi_L(y)\vert^2}{2r^2}\right)\frac{s\dif s}{r^2}\\&-\frac{1}{r^m}\int_{\vert y\vert}^\infty\left(\rho\left(\frac{s^2}{2r^2}\right)+\dot{\rho}\left(\frac{s^2}{2r^2}\right)\right)\frac{s\dif s}{r^2}\\&:=f_1(y)+f_2(y).
        \end{aligned}\nonumber
    \end{equation}
    Note that $f_1(y)$ satisfies $\vert f_1(y)\vert\leqslant 4r^{-m}e^{-2R}\chi_{B_{2rR}}$. Hence it is clear that $$\vert f_1(y)\vert\leqslant C(m)e^{-R}\rho_{2r}(y)$$ by Lemma \ref{properties of heat mollifier}. Also 
    $$\vert f_2(y)\vert\leqslant\frac{C(m)}{r^me^{-R/2}}\int_{\vert y\vert}^{\infty}\rho\left(\frac{s^2}{8r^2}\right)\frac{s\dif s}{r^2}\leqslant-\frac{C(m)}{r^me^{-R/2}}\int_{\vert y\vert}^{\infty}\dot{\rho}\left(\frac{s^2}{8r^2}\right)\frac{s\dif s}{r^2}= C(m)e^{-R/2}\rho_{2r}(y).$$
    For the derivative, we have 
    $$\nabla_{L}f_1(y)=\frac{\Pi_L(y)}{r^m}\int_{\vert\Pi_{L^{\perp}}(y)\vert}^{2re^{-R}}\left(1-\psi\left(\frac{s^2}{2r^2}\right)\right)\dot{\rho}\left(\frac{s^2+\vert\Pi_L(y)\vert^2}{2r^2}\right)\frac{s\dif s}{r^4}.$$
    For a similar reason to that of $\vert f_1(y)\vert$, we have 
    $$r\vert\nabla_L f_1(y)\vert\leqslant C(m)Re^{-R}\rho_{2r}(y)\leqslant C(m)e^{-R/2}\rho_{2r}(y).$$
    On the other hand 
    $$\nabla_{L^{\perp}}f_1(y)=-\frac{\Pi_{L^{\perp}}(y)}{r^{m+2}}\left(1-\psi\left(\frac{\vert\Pi_{L^{\perp}}(y)\vert^2}{2r^2}\right)\right)\rho\left(\frac{\vert y\vert^2}{2r^2}\right).$$
    $f_1(y)$ vanishes for $\vert\Pi_{L^{\perp}}(y)\vert\geqslant2re^{-R}$. Hence,
    $$r\vert\nabla_{L^{\perp}}f_1(y)\vert\leqslant 2e^{-R}\rho_r(y)\leqslant 2e^{-R}\rho_{2r}(y).$$
    Finally,
    $$\nabla f_2(y)=-r^{-2}(\rho_r(y)+\dot{\rho}_r(y))y,r\vert\nabla f_2(y)\vert\leqslant C(m)Re^{-R}\rho_{2r}(y)\leqslant C(m)e^{-R/2}\rho_{2r}(y).$$
\end{proof}
Now, let us proceed the calculation for the stationary vector field. Take $\xi=\tilde{\rho}_r(y-x;\mathcal{L}_{x,r})\Pi_{L^{\perp}}(y-x)$ in the stationary equation (\ref{**}), we get 
\begin{equation}
    \begin{aligned}
        &\int r^2\tilde{\rho}_r\left(\vert\Pi_{x,r}\nabla u_{\epsilon}\vert^2+2\frac{F(u_{\epsilon})}{\epsilon^2}\right)-\frac{1}{2}\hat{\rho}_r\vert\Pi_{x,r}^{\perp}(y-x)\vert^2\left(\vert\Pi_{x,r}\nabla u_{\epsilon}\vert^2+2\frac{F(u_{\epsilon})}{\epsilon^2}\right)\\=&\frac{1}{2}\int\hat{\rho}_r\left\vert\Pi_{x,r}^{\perp}(y-x)\right\vert^2\left(\vert\nabla_{\alpha_{L^{\perp}}}u_{\epsilon}\vert^2-\vert\nabla_{n_{L^{\perp}}}u_{\epsilon}\vert^2\right)+r^2\left\langle\nabla_{\Pi_{x,r}\nabla\tilde{\rho}_r}u_{\epsilon},\nabla_{\Pi_{x,r}^{\perp}(y-x)}u_{\epsilon}\right\rangle.
    \end{aligned}\label{identity for tangential and radial derivative}
\end{equation}
Multiply the both sides of (\ref{identity for tangential and radial derivative}) by $\psi_{\mathcal{T}}(x,r)$ and integrate over $\mathcal{T}_r$, we see that 
\begin{lem}[{cf. \cite[Lemma 10.5]{naber2024energyidentitystationaryharmonic}}]\label{computation 1 radial energy}
    \begin{equation}
        \begin{aligned}
            &2\mathcal{E}_{\mathcal{L}}(\mathcal{T}_r)+\mathcal{E}_n(\mathcal{T}_r)\\=&\mathcal{E}_{\alpha}(\mathcal{T}_r)+\int_{\mathcal{T}_r}\psi_{\mathcal{T}}\int\rho_r(y-x)\vert\Pi_{x,r}^{\perp}(y-x)\vert^2\left(\vert\Pi_{x,r}\nabla u_{\epsilon}\vert^2+2\frac{F(u_{\epsilon})}{\epsilon^2}\right)+I+\mathfrak{F}(r),
        \end{aligned}\nonumber
    \end{equation}
    where $$I=2\int_{\mathcal{T}_r}\psi_{\mathcal{T}}(x,r)\int\dot{\rho}_r(y-x)\left\langle\nabla_{\Pi_{x,r}(y-x)}u_{\epsilon},\nabla_{\Pi_{x,r}^{\perp}(y-x)}u_{\epsilon}\right\rangle.$$
\end{lem}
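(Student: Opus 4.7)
The plan is to start from the pointwise identity (\ref{identity for tangential and radial derivative}), multiply both sides by $\psi_{\mathcal{T}}(x,r)$, and integrate with respect to $\mathcal{H}^{m-2}(x)$ on $\mathcal{T}_r$. No further integration by parts in $x$ or $r$ is needed; the work is entirely in converting the kernels $\tilde{\rho}_r$ and $\hat{\rho}_r$ that appear in (\ref{identity for tangential and radial derivative}) into $\rho_r$ and $\dot{\rho}_r$ that define $\mathcal{E}_{\mathcal{L}}$, $\mathcal{E}_n$, $\mathcal{E}_{\alpha}$ and $I$, and verifying that the resulting errors fit the $\mathfrak{F}$-form.

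The two easier conversions are as follows. In the term $\int r^2\tilde{\rho}_r(\vert\Pi_{x,r}\nabla u_{\epsilon}\vert^2+2F/\epsilon^2)$ on the LHS, I replace $\tilde{\rho}_r$ by $\rho_r$ using $\vert\rho_r-\tilde{\rho}_r\vert\leqslant C(m)e^{-R/2}\rho_{2r}$ from Lemma \ref{estimate for tide rho}; the remainder is at most $Ce^{-R/2}\int_{\mathcal{T}_r}\psi_{\mathcal{T}}\bar{\Theta}_{\mathcal{L}}(x,2r)$, which is admissible for $\mathfrak{F}$. In the $\hat{\rho}_r$ terms that carry the factor $\vert\Pi^{\perp}_{x,r}(y-x)\vert^2$ (one on the LHS and two on the RHS), I replace $\hat{\rho}_r$ by $\rho_r$; since the two kernels agree outside $\lbrace\vert\Pi^{\perp}_{x,r}(y-x)\vert\leqslant 2re^{-R}\rbrace$ and the factor $\vert\Pi^{\perp}_{x,r}(y-x)\vert^2$ is at most $4r^2e^{-2R}$ on the exceptional set, all three errors are exponentially small and admissible. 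After these substitutions the LHS becomes $\mathcal{E}_{\mathcal{L}}(\mathcal{T}_r)-\tfrac{1}{2}B(r)$, where $B(r)$ denotes the explicit $\rho_r$-integral in the Lemma statement, and the angular/radial pieces of the RHS become $\tfrac{1}{2}(\mathcal{E}_{\alpha}(\mathcal{T}_r)-\mathcal{E}_n(\mathcal{T}_r))$.

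The main step is the analysis of the cross term $\int_{\mathcal{T}_r}\psi_{\mathcal{T}}\int r^2\langle\nabla_{\Pi_{x,r}\nabla\tilde{\rho}_r}u_{\epsilon},\nabla_{\Pi^{\perp}_{x,r}(y-x)}u_{\epsilon}\rangle$. Differentiating the explicit formula (\ref{expression of intergration of L-cut off}) in the tangential direction (with $L=\mathcal{L}_{x,r}$) and telescoping via $u=(s^2+\vert\Pi_L(y-x)\vert^2)/(2r^2)$ gives
\[
r^2\Pi_{x,r}\nabla\tilde{\rho}_r(y-x;\mathcal{L}_{x,r})=-\rho_r(y-x)\Pi_{x,r}(y-x)+E_{\psi}(y),
\]
where $E_{\psi}$ collects the contribution of $1-\psi$, is supported on $\lbrace\vert\Pi^{\perp}_{x,r}(y-x)\vert\leqslant 2re^{-R}\rbrace$, and is pointwise bounded by $C(m,R)e^{-2R}\vert y-x\vert\rho_{2r}(y-x)$. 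Applying Lemma \ref{properties of heat mollifier}(3), $\rho_r=-\dot{\rho}_r+O(e^{-R/2}\rho_{2r})$, yields
\[
r^2\Pi_{x,r}\nabla\tilde{\rho}_r=\dot{\rho}_r(y-x)\Pi_{x,r}(y-x)+E(y),\qquad \vert E(y)\vert\leqslant C(m,R)e^{-R/2}\vert y-x\vert\rho_{2r}(y-x).
\]
Substituting converts the cross term into $\tfrac{1}{2}I$ plus an error whose magnitude is, by Cauchy--Schwarz and the pointwise bounds $\vert\nabla_{\Pi_L(y-x)}u_{\epsilon}\vert\leqslant\vert\Pi_L(y-x)\vert\vert\Pi_L\nabla u_{\epsilon}\vert$ and $\vert\nabla_{\Pi^{\perp}(y-x)}u_{\epsilon}\vert\leqslant\vert\Pi^{\perp}(y-x)\vert\vert\Pi^{\perp}\nabla u_{\epsilon}\vert$, at most
\[
Ce^{-R/2}\sqrt{\bar{\Theta}_{\mathcal{L}}(x,2r)}\sqrt{\bar{\Theta}(x,2r;\mathcal{L}_{x,r}^{\perp})}.
\]
Combining with the effective cone-splitting on $\mathcal{T}_r$ of Theorem \ref{effective qunatitative cone splitting2}, which bounds $\bar{\Theta}(x,2r;\mathcal{L}_{x,r}^{\perp})$ by $C(\bar{\Theta}_{\mathcal{L}}(x,2r)+r\partial_r\bar{\Theta}(x,2r))$ after integration against $\psi_{\mathcal{T}}$, and the elementary inequality $\sqrt{A}\sqrt{A+B}\leqslant C(A+B)$, the error integrates to an $\mathfrak{F}(r)$ term.

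Multiplying the resulting identity by $2$ and rearranging proves the Lemma. The main obstacle is the third step: the explicit computation of $r^2\Pi_{x,r}\nabla\tilde{\rho}_r$ from (\ref{expression of intergration of L-cut off}) combined with the near-identity $\rho_r\approx-\dot{\rho}_r$ of Lemma \ref{properties of heat mollifier}(3), followed by the use of the effective cone-splitting Theorem \ref{effective qunatitative cone splitting2} to absorb the perpendicular-energy factor arising from Cauchy--Schwarz into the $\bar{\Theta}_{\mathcal{L}}+r\partial_r\bar{\Theta}$ combination admissible for $\mathfrak{F}(r)$.
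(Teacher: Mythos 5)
Your proposal is correct and follows essentially the same strategy as the paper's proof: start from identity (\ref{identity for tangential and radial derivative}), integrate against $\psi_{\mathcal{T}}$ over $\mathcal{T}_r$, and convert the $\tilde{\rho}_r$ and $\hat{\rho}_r$ kernels to $\rho_r$ and $\dot{\rho}_r$ term by term, absorbing the errors into $\mathfrak{F}$. The one place you diverge is in how the cross term is handled. The paper bounds the discrepancy $\vert\Pi_{x,r}\nabla\tilde{\rho}_r-\Pi_{x,r}\nabla\rho_r\vert=\vert\Pi_{x,r}\nabla e_r\vert$ directly via Lemma \ref{estimate for tide rho} and then trades $\rho_r$ for $\dot{\rho}_r$ in a second step, never opening up the integral representation. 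You instead differentiate the explicit formula (\ref{expression of intergration of L-cut off}) and telescope the $\dot{\rho}$ in the integrand exactly to $-\rho$, isolating the $1-\psi$ contribution as $E_{\psi}$; this cleanly exhibits the sign and shows that the only errors are the exponentially localized $E_{\psi}$ and the universal $\rho_r+\dot{\rho}_r$ defect. Your calculation is a valid and arguably more transparent derivation of the same pointwise estimate. The subsequent absorption is also handled slightly differently: you split the error via Cauchy--Schwarz into $\sqrt{\bar{\Theta}_{\mathcal{L}}}\sqrt{\bar{\Theta}(\cdot;\mathcal{L}_{x,r}^{\perp})}$ and then invoke the cone-splitting Theorem \ref{effective qunatitative cone splitting2} after integrating in $x$, whereas the paper bounds $\vert\nabla_{\Pi^{\perp}_{x,r}(y-x)}u_{\epsilon}\vert$ crudely by $\vert\nabla_{y-x}u_{\epsilon}\vert$ plus a tangential piece and lands directly on $\bar{\Theta}_{\mathcal{L}}(x,2r)+\partial_r\bar{\Theta}(x,2r)$. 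Both routes yield an admissible $\mathfrak{F}(r)$ term, but the paper's shortcut avoids the extra appeal to cone splitting inside this lemma's proof.
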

\begin{proof}
    We estimate the identity term by term. The relationship of first term in left side of (\ref{identity for tangential and radial derivative}) with tangential energy is controlled by Lemma \ref{estimate for tide rho}
    \begin{equation}
        \begin{aligned}
            &\left\vert r^2\int_{\mathcal{T}_r}\psi_{\mathcal{T}}(x,r)\int\tilde{\rho}_r(y-x;\mathcal{L}_{x,r})\left(\vert\Pi_{x,r}\nabla u_{\epsilon}\vert^2+2\frac{F(u_{\epsilon})}{\epsilon^2}\right)-\mathcal{E}_{\mathcal{L}}(\mathcal{T}_r)\right\vert\\\leqslant&r^2\int_{\mathcal{T}_r}\psi_{\mathcal{T}}(x,r)\int\vert e_r(y-x;\mathcal{L}_{x.r})\vert\left(\vert\Pi_{x,r}\nabla u_{\epsilon}\vert^2+2\frac{F(u_{\epsilon})}{\epsilon^2}\right)\\\leqslant&C(m)e^{-R/2}r^2\int_{\mathcal{T}_r}\psi_{\mathcal{T}}(x,r)\int\rho_{2r}(y-x)\left(\vert\Pi_{x,r}\nabla u_{\epsilon}\vert^2+2\frac{F(u_{\epsilon})}{\epsilon^2}\right)\\\leqslant&C(m)e^{-R/2}\int_{\mathcal{T}_r}\psi_{\mathcal{T}}(x,r)\bar{\Theta}_{\mathcal{L}}(x,2r)=\mathfrak{F}(r).
        \end{aligned}\nonumber
    \end{equation}
    Next, since on the support of $\rho_r-\hat{\rho}_r(\cdot,\mathcal{L}_{x,r})$, we have $\vert\Pi_{x,r}^{\perp}(y)\vert\leqslant 2re^{-R}$. We can substitute the second term in left side of (\ref{identity for tangential and radial derivative}) by
    \begin{equation}
        \begin{aligned}
            &\left\vert\int_{\mathcal{T}_r}\psi_{\mathcal{T}}(x,r)\int(\hat{\rho}_r(y-x;\mathcal{L}_{x,r})-\rho_r(y-x))\vert\Pi_{x,r}^{\perp}(y-x)\vert^2\left(\vert\Pi_{x,r}\nabla u_{\epsilon}\vert^2+2\frac{F(u_{\epsilon})}{\epsilon^2}\right)\right\vert\\\leqslant&C(m)e^{-2R}\int_{\mathcal{T}_r}\psi_{\mathcal{T}}(x,r)\bar{\Theta}_{\mathcal{L}}(x,2r)=\mathfrak{F}(r).
        \end{aligned}\nonumber
    \end{equation}
    The cross derivative term in right side of (\ref{identity for tangential and radial derivative}) is replaced by
    \begin{equation}
        \begin{aligned}
            &r^2\left\vert\int_{\mathcal{T}_r}\psi_{\mathcal{T}}(x,r)\int \left\langle \nabla_{\Pi_{x,r}\nabla\tilde{\rho}_r}u_{\epsilon}-\rho_r(y-x)\nabla_{\Pi_{x,r}(y-x)}u_{\epsilon},\nabla_{\Pi_{x,r}^{\perp}(y-x)}u_{\epsilon}\right\rangle\right\vert\\\leqslant&r^2\int_{\mathcal{T}_r}\psi_{\mathcal{T}}(x,r)\int\vert\Pi_{x,r}\nabla e_r(y-x;\mathcal{L}_{x,r})\vert\left\vert\Pi_{x,r}\nabla u_{\epsilon}\right\vert\left\vert\nabla_{\Pi_{x,r}^{\perp}(y-x)}u_{\epsilon}\right\vert\\\leqslant&C(m)e^{-R/2}\int_{\mathcal{T}_r}\psi_{\mathcal{T}}(x,r)\int\rho_{2r}(y-x)\left(r^2\vert\Pi_{x,r}\nabla u_{\epsilon}\vert^2+\vert\nabla_{y-x}u_{\epsilon}\vert^2\right)=\mathfrak{F}(r).
        \end{aligned}\nonumber
    \end{equation}
    Finally, we exchange $\rho_r$ and $\dot{\rho}_r$ using Lemma \ref{properties of heat mollifier}.
    \begin{equation}
        \begin{aligned}
            &\left\vert\int_{\mathcal{T}_r}\psi_{\mathcal{T}}(x,r)\int(\rho_r(y-x)+\dot{\rho}_r(y-x)\left\langle\nabla_{\Pi_{x,r}(y-x)}u_{\epsilon},\nabla_{\Pi_{x,r}^{\perp}(y-x)}u_{\epsilon}\right\rangle\right\vert\\=&\left\vert\int_{\mathcal{T}_r}\psi_{\mathcal{T}}(x,r)\int(\rho_r(y-x)+\dot{\rho}_r(y-x)\left(\left\langle\nabla_{\Pi_{x,r}(y-x)}u_{\epsilon},\nabla_{y-x}u_{\epsilon}\right\rangle-\left\vert\nabla_{\Pi_{x,r}(y-x)}u_{\epsilon}\right\vert^2\right)\right\vert\\\leqslant&C(m)e^{-R/2}\int_{\mathcal{T}_r}\psi_{\mathcal{T}}(x,r)\int\rho_{2r}(y-x)\left(r^2\vert\Pi_{x,r}\nabla u_{\epsilon}\vert^2+\vert\nabla_{y-x}u_{\epsilon}\vert^2\right)=\mathfrak{F}(r).
        \end{aligned}\nonumber
    \end{equation}
    Summing up, we have completed our proof.
\end{proof}
As discussed in the introduction, a priori, $I$ in Lemma \ref{computation 1 radial energy} is not a small term. We need to use the integrate by parts to rewrite it as better ones.
\begin{lem}[{cf. \cite[Lemma 10.6]{naber2024energyidentitystationaryharmonic}}]\label{computation 2 radial energy}
    $I=II+III+\mathfrak{F}(r)$, where $$II=-2\int_{\mathcal{T}_r}\psi_{\mathcal{T}}(x,r)\int\rho_r(y-x)\left\langle\nabla u_{\epsilon},\sff_{\mathcal{T}_r}(\Pi_{\mathcal{T}_r}\nabla u_{\epsilon},\Pi_{\mathcal{T}_r}(y-x))\right\rangle,$$ $$III=-2r^2\int_{\mathcal{T}_r}\psi_{\mathcal{T}}(x,r)\int\rho_r(y-x)\langle\nabla_{H_{\mathcal{T}_r}} u_{\epsilon},\nabla_{\mathcal{T}_r(y-x)}u_{\epsilon}\rangle.$$
\end{lem}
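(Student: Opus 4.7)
The plan is to reduce $I$ to the curvature terms $II$ and $III$ in two steps: first replace the best-plane projections $\Pi_{x,r}$ by the tangent-space projections $\Pi_{\mathcal{T}_r}$ of the approximating submanifold, then integrate by parts on $\mathcal{T}_r$ so that one factor of $(y-x)$ in $I$ becomes a tangential derivative of $\rho_r$ whose divergence produces second-fundamental-form and mean-curvature contributions.

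For the first step, the closeness estimate $\|\Pi_{x,r}-\Pi_{\mathcal{T}_r}\|\leq C(m,\varepsilon_0)(\sqrt{r\frac{\p}{\p r}\bar{\Theta}(x,2r)}+e^{-R/2})\sqrt{\bar{\Theta}_{\mathcal{L}}(x,2r)}$ from Theorem \ref{apporximating submanifold}(6) shows that replacing $\Pi_{x,r}$ and $\Pi_{x,r}^{\perp}$ by $\Pi_{\mathcal{T}_r}$ and $\Pi_{\mathcal{T}_r}^{\perp}$ introduces an error controlled by $\|\Pi_{x,r}-\Pi_{\mathcal{T}_r}\|\cdot\bar{\Theta}(x,2r)$. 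Integrating against $\psi_{\mathcal{T}}$ on $\mathcal{T}_r$ and applying AM-GM, this error has the form $\int_{\mathcal{T}_r}\psi_{\mathcal{T}}(r\frac{\p}{\p r}\bar{\Theta}+e^{-R/2}\bar{\Theta}_{\mathcal{L}})$, which is absorbed in $\mathfrak{F}(r)$. Writing $\Pi^T=\Pi_{\mathcal{T}_r}(x)$ and $\Pi^{\perp}=\Pi_{\mathcal{T}_r}^{\perp}(x)$, use the identity $\dot{\rho}_r(y-x)(y-x)_i=-r^2\p_{x_i}\rho_r(y-x)$ to rewrite $\dot{\rho}_r\Pi^T(y-x)=-r^2\nabla^x_{\mathcal{T}_r}\rho_r$, the tangential gradient on $\mathcal{T}_r$. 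Setting $M_{ij}=\langle\p_iu_{\epsilon},\p_ju_{\epsilon}\rangle$ and $W(x,y)=M(y)\Pi^{\perp}(x)(y-x)$, the modified $\tilde{I}$ becomes $-2r^2\int_{\mathcal{T}_r}\psi_{\mathcal{T}}\int\nabla^x_{\mathcal{T}_r}\rho_r\cdot W\,dy\,d\mathcal{H}^{m-2}$ up to $\mathfrak{F}(r)$.

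Integration by parts on $\mathcal{T}_r$ in $x$ now gives $\tilde{I}=2r^2\int\int\rho_r[\nabla^x_{\mathcal{T}_r}\psi_{\mathcal{T}}\cdot\Pi^T W+\psi_{\mathcal{T}}\textup{div}_{\mathcal{T}_r}(\Pi^T W)]$. The $\nabla\psi_{\mathcal{T}}$ boundary term is absorbed in $\mathfrak{F}(r)$: Cauchy--Schwarz applied to $|\Pi^T W|\leq|\Pi^T\nabla u_{\epsilon}||\Pi^{\perp}\nabla u_{\epsilon}||\Pi^{\perp}(y-x)|$ together with the smallness $\bar{\Theta}(x,r;\mathcal{T}_r)\leq C\delta$ yields $\int\rho_r|\Pi^T W|\,dy\leq Cr^{-1}\sqrt{\delta\Lambda}$, so the total contribution is bounded by $C\sqrt{\delta\Lambda}\int_{\mathcal{T}_r}r|\nabla\psi_{\mathcal{T}}|\,d\mathcal{H}^{m-2}$, an $\mathfrak{F}(r)$-term. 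For the bulk, the standard identity $\textup{div}_{\mathcal{T}_r}(\Pi^T W)=\textup{div}_{\mathcal{T}_r}W+\langle H_{\mathcal{T}_r},W\rangle$ combined with $(\nabla_{e_\alpha}\Pi^{\perp})V=-\sff_{\mathcal{T}_r}(e_\alpha,V^T)-A_{V^N}(e_\alpha)$ gives $\textup{div}_{\mathcal{T}_r}W=-\sum_\alpha\langle M\sff_{\mathcal{T}_r}(e_\alpha,\Pi^T(y-x))+MA_{\Pi^{\perp}(y-x)}(e_\alpha),e_\alpha\rangle$. Re-expressing $\langle Mv,e_\alpha\rangle=\sum_a\p_{e_\alpha}u^a\,\nabla_vu^a$, the first sum produces precisely the $\sff$-contraction $II$, while the combined shape-operator trace and $\langle H_{\mathcal{T}_r},W\rangle$ yield $III$. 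The main obstacle will be this final algebraic matching step---verifying that the several curvature traces combine into exactly the form of $II$ and $III$ with the correct constants and tensor contractions, and that any residual cross-terms involving tangential components of $M$ are controlled by $\mathfrak{F}(r)$ via the smallness $\bar{\Theta}(x,r;\mathcal{T}_r)\leq C\delta$ of the tangential energy in the annular region.
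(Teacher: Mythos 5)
Your computation correctly follows the paper's overall route up to the final algebraic step — replace $\Pi_{x,r}$ by $\Pi_{\mathcal{T}_r}$, rewrite $\dot\rho_r\,\Pi_{\mathcal T_r}(y-x)=-r^2\nabla^x_{\mathcal T_r}\rho_r$, integrate by parts on $\mathcal{T}_r$, and absorb the $\nabla\psi_{\mathcal T}$ boundary term into $\mathfrak F(r)$. However, there is a genuine gap in how you match the resulting curvature pieces to $II$ and $III$. Your divergence expansion produces exactly three contributions: (A) $-\langle\nabla u_\epsilon,\sff_{\mathcal T_r}(\Pi_{\mathcal T_r}\nabla u_\epsilon,\Pi_{\mathcal T_r}(y-x))\rangle$, (B) $-\langle y-x,\sff_{\mathcal T_r}(\Pi_{\mathcal T_r}\nabla u_\epsilon,\Pi_{\mathcal T_r}\nabla u_\epsilon)\rangle$, and (C) $\langle\nabla_{H_{\mathcal T_r}}u_\epsilon,\nabla_{\Pi_{\mathcal T_r}^\perp(y-x)}u_\epsilon\rangle$. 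You correctly assign (A) to $II$, but then claim (B) and (C) combine to give $III$. This is not right: (B) is small — it is absorbed into $\mathfrak F(r)$ because both slots carry the tangential $\Pi_{\mathcal T_r}\nabla u_\epsilon$, controlled by $\bar\Theta_{\mathcal L}$ — while (C) is the real candidate for $III$. The problem is that (C) carries $\Pi_{\mathcal T_r}^\perp(y-x)$, whereas $III$ carries $\Pi_{\mathcal T_r}(y-x)$, and these are not close: the $\Pi^\perp$-derivative of $u_\epsilon$ carries essentially all the energy and is nowhere small in the annular region.

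The conversion from $\Pi_{\mathcal T_r}^\perp(y-x)$ to $-\Pi_{\mathcal T_r}(y-x)$ is precisely what the Euler--Lagrange equation of the best approximating submanifold, Theorem \ref{apporximating submanifold}(5),
\[
\int -\dot\rho_r(y-x)\langle\nabla_{y-x}u_\epsilon,\nabla_v u_\epsilon\rangle=0\quad\text{for all }v\in\mathcal L_{x,r}^\perp,
\]
is designed to provide. With $v=H_{\mathcal T_r}$ (which lies approximately in $\mathcal L_{x,r}^\perp$) and $\rho_r\approx-\dot\rho_r$, this identity kills the full $y-x$ direction, so $\int\rho_r\langle\nabla_{H_{\mathcal T_r}}u_\epsilon,\nabla_{\Pi^\perp(y-x)}u_\epsilon\rangle=-\int\rho_r\langle\nabla_{H_{\mathcal T_r}}u_\epsilon,\nabla_{\Pi(y-x)}u_\epsilon\rangle+\mathfrak F(r)$, which is how $III$ is produced. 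This single step is the heart of the lemma — it is the reason $\mathcal T_r$ was built, and the paper's introduction explicitly singles it out as the technique that turns an $\mathcal L^\perp$-derivative into an $\mathcal L$-derivative. Without invoking Theorem \ref{apporximating submanifold}(5), your argument stops at a quantity not known to be close to $III$, and the lemma is not proved.
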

\begin{proof}
    In order to do the integration by parts, we first exchange $\Pi_{x,r}$ with $\Pi_{\mathcal{T}_r}$ using Theorem \ref{apporximating submanifold} (6). We have
    \begin{equation}
        \begin{aligned}
            &\left\vert\int_{\mathcal{T}_r}\psi_{\mathcal{T}}\int\dot{\rho}_r(y-x)\left\langle\nabla_{(\Pi_{x,r}-\Pi_{\mathcal{T}_r})(y-x)}u_{\epsilon},\nabla_{\Pi_{x,r}^{\perp}(y-x)}u_{\epsilon}\right\rangle\right\vert\\\leqslant&C(m,R)\int_{\mathcal{T}_r}\psi_{\mathcal{T}}\Vert\Pi_{x,r}-\Pi_{\mathcal{T}_r}\Vert\sqrt{\bar{\Theta}(x,2r)}\sqrt{\frac{\p}{\p r}\bar{\Theta}(x,2r)}\\\leqslant&C(m,R)\int_{\mathcal{T}_r}\psi_{\mathcal{T}}\left(\sqrt{\frac{\p}{\p r}\bar{\Theta}(x,2r)}+e^{-R/2}\right)\sqrt{\bar{\Theta}_{\mathcal{L}}(x,2r)}\sqrt{\frac{\p}{\p r}\bar{\Theta}(x,2r)}\\\leqslant&C(m,R)(\sqrt{\delta}+e^{-R/2})\int_{\mathcal{T}_r}\psi_{\mathcal{T}}\sqrt{\bar{\Theta}_{\mathcal{L}}(x,2r)}\sqrt{\frac{\p}{\p r}\bar{\Theta}(x,2r)}=\mathfrak{F}(r).
        \end{aligned}\nonumber
    \end{equation}
    Similarly, we can exchange $\Pi_{x,r}^{\perp}$ with $\Pi_{\mathcal{T}_r}^{\perp}$ and a small error. Hence, we deduce that 
    $$I=2\int_{\mathcal{T}_r}\psi_{\mathcal{T}}\int\dot{\rho}_r(y-x)\left\langle\nabla_{\Pi_{\mathcal{T}_r}(y-x)}u_{\epsilon},\nabla_{\Pi_{\mathcal{T}_r}^{\perp}(y-x)}u_{\epsilon}\right\rangle+\mathfrak{F}(r).$$
    Next, let us integrate by parts 
    \begin{equation}
        \begin{aligned}
            &\int\dif y\int_{\mathcal{T}_r}\psi_{\mathcal{T}}(x,r)\dot{\rho}_r(y-x)\left\langle\nabla_{\Pi_{\mathcal{T}_r}(y-x)}u_{\epsilon},\nabla_{\Pi_{\mathcal{T}_r}^{\perp}(y-x)}u_{\epsilon}\right\rangle\dif x\\=&-\sum_{i=1}^Jr^2\int\dif y\int_{\mathcal{T}_r}\psi_{\mathcal{T}}\left\langle\nabla^y u_{\epsilon}^i,\nabla^x_{\mathcal{T}_r}\rho_r(y-x)\right\rangle\left\langle\nabla^yu_{\epsilon}^i,\Pi_{\mathcal{T}_r}^{\perp}(y-x)\right\rangle\\=&\sum_{i=1}^Jr^2\int\dif y\int_{\mathcal{T}_r}\rho_r(y-x)\left\langle\nabla^y u_{\epsilon}^i,\nabla^x_{\mathcal{T}_r}\psi_{\mathcal{T}}\right\rangle\left\langle\nabla^yu_{\epsilon}^i,\Pi_{\mathcal{T}_r}^{\perp}(y-x)\right\rangle\\+&r^2\int\dif y\int_{\mathcal{T}_r}\rho_r(y-x)\psi_{\mathcal{T}}\left\langle\nabla^y u_{\epsilon}^i,\nabla^x_{\mathcal{T}_r}\left\langle\nabla^yu_{\epsilon}^i,\Pi_{\mathcal{T}_r}^{\perp}(y-x)\right\rangle\right\rangle\\+&r^2\int\dif y\int_{\mathcal{T}_r}\psi_{\mathcal{T}}\rho_r(y-x)\textup{div}_{\mathcal{T}_r}\Pi_{\mathcal{T}_r}\nabla^yu_{\epsilon}^i\langle\nabla^yu_{\epsilon}^i,\Pi_{\mathcal{T}_r}^{\perp}(y-x)\rangle.
        \end{aligned}\label{radial calculation in I}
    \end{equation}
    As we have seen before, the first term in (\ref{radial calculation in I}) is a $\mathfrak{F}(r)$ term. The last term can be rephrased by ($H_{\mathcal{T}_r}$ denotes the mean curvature of $\mathcal{T}_r$) $$\textup{div}_{\mathcal{T}_r}\Pi_{\mathcal{T}_r}\nabla^yu_{\epsilon}^i=\langle\nabla^yu_{\epsilon},H_{\mathcal{T}_r}\rangle,$$
    and using the definition equation of $\mathcal{T}_r$, Theorem \ref{apporximating submanifold} (5), we have $$r^2\int_{\mathcal{T}_r}\psi_{\mathcal{T}}\int\rho_r\langle\nabla_{H_{\mathcal{T}_r}}u_{\epsilon},\nabla_{\Pi_{\mathcal{T}_r}^{\perp}(y-x)}u_{\epsilon}\rangle=-r^2\int_{\mathcal{T}_r}\psi_{\mathcal{T}}\int\rho_r\langle\nabla_{H_{\mathcal{T}_r}}u_{\epsilon},\nabla_{\Pi_{\mathcal{T}_r}(y-x)}u_{\epsilon}\rangle=III.$$
    In order to calculate the second term in (\ref{radial calculation in I}), let us choose a geodesic frame $\lbrace E_{\alpha}\rbrace_{\alpha=1}^{m-2}$ at $x$ with $\Pi_{\mathcal{T}_r}\nabla_{E_{\alpha}}E_{\beta}=0$ at $x$. We have 
    \begin{equation}
        \begin{aligned}
            &\sum_{i=1}^J\left\langle\nabla^y u_{\epsilon}^i,\nabla^x_{\mathcal{T}_r}\left\langle\nabla^yu_{\epsilon}^i,\Pi_{\mathcal{T}_r}^{\perp}(y-x)\right\rangle\right\rangle\\=&\sum_{i,\alpha,\beta}\left\langle\nabla^yu_{\epsilon}^i,E_{\alpha}\right\rangle\nabla^x_{E_{\alpha}}\left\langle\nabla^y u_{\epsilon}^i,y-x-\langle y-x,E_{\beta}\rangle E_{\beta}\right\rangle\\=&-\sum_{i,\alpha,\beta}\langle\nabla^yu_{\epsilon}^i,E_{\alpha}\rangle\left(\langle\nabla^yu_{\epsilon}^i,E_{\beta}\rangle\langle y-x,\nabla^x_{E_{\alpha}}E_{\beta}\rangle+\langle y-x,E_{\beta}\rangle\langle\nabla^yu_{\epsilon}^i,\nabla_{E_{\alpha}}E_{\beta}\rangle\right).
        \end{aligned}\nonumber
    \end{equation}
    At $x$, since $\Pi_{\mathcal{T}_r}\nabla_{E_{\alpha}}E_{\beta}=0$ for all $\alpha,\beta$, we have $\nabla_{E_{\alpha}}E_{\beta}=\Pi_{\mathcal{T}_r}^{\perp}\nabla_{E_{\alpha}}E_{\beta}=\sff_{\mathcal{T}_r}(E_{\alpha},E_{\beta})$. Hence at $x$
    \begin{equation}
        \begin{aligned}
            &\sum_{i=1}^L\left\langle\nabla^y u_{\epsilon}^i,\nabla^x_{\mathcal{T}_r}\left\langle\nabla^yu_{\epsilon}^i,\Pi_{\mathcal{T}_r}^{\perp}(y-x)\right\rangle\right\rangle\\=&-\sum_{i,\alpha,\beta}\langle\nabla^yu_{\epsilon}^i,E_{\alpha}\rangle\left(\langle\nabla^yu_{\epsilon}^i,E_{\beta}\rangle\langle y-x,\sff_{\mathcal{T}_r}(E_{\alpha},E_{\beta})\rangle+\langle y-x,E_{\beta}\rangle\langle\nabla^yu_{\epsilon}^i,\sff_{\mathcal{T}_r}(E_{\alpha},E_{\beta})\rangle\right)\\=&-\left\langle y-x,\sff_{\mathcal{T}_r}\left(\Pi_{\mathcal{T}_r}\nabla^y u_{\epsilon},\Pi_{\mathcal{T}_r}\nabla^yu_{\epsilon}\right)\right\rangle-\left\langle\nabla^yu_{\epsilon},\sff_{\mathcal{T}_r}(\Pi_{\mathcal{T}_r}\nabla^yu_{\epsilon},\Pi_{\mathcal{T}_r}(y-x))\right\rangle.
        \end{aligned}\nonumber
    \end{equation}
    Note that, by the estimate of the second fundamental form of $\mathcal{T}_r$, Theorem \ref{apporximating submanifold} (1), we can exchange again $\Pi_{\mathcal{T}_r}$ and $\Pi_{x,r}$ to get 
    \begin{equation}
        \begin{aligned}
            &\left\vert r^2\int_{\mathcal{T}_r}\psi_{\mathcal{T}}\int\langle y-x,\sff_{\mathcal{T}_r}(\Pi_{\mathcal{T}_r}\nabla^y u_{\epsilon},\Pi_{\mathcal{T}_r}\nabla^y u_{\epsilon})\rangle\right\vert\\\leqslant& C(m,\varepsilon_0)\sqrt{\delta}\int_{\mathcal{T}_r}\psi_{\mathcal{T}}(x,r)\bar{\Theta}_{\mathcal{L}}(x,r)+\mathfrak{F}(r)=\mathfrak{F}(r).
        \end{aligned}
    \end{equation}
    Combining above calculations, we have got the desired estimate.
\end{proof}
We will see later that $II$ above is a good term as it naturally shows in the $r$-derivative of $\mathcal{E}_{\mathcal{L}}(\mathcal{T}_r)$. Now, let us do the integration by parts once more to rewrite $III$ as
\begin{lem}[{cf. \cite[Lemma 10.7(2)]{naber2024energyidentitystationaryharmonic}}]\label{computation 3 radial energy}
    $$III=2r^4\int_{\mathcal{T}_r}\psi_{\mathcal{T}}(x,r)\int\rho_r(y-x)\left\vert\nabla_{H_{\mathcal{T}_r}}u_{\epsilon}\right\vert^2+\mathfrak{F}(r).$$
\end{lem}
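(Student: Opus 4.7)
The plan is to apply the stationary equation~(\ref{**}) to a carefully chosen test vector field that naturally produces the quantity $\rho_r|\nabla_H u_\epsilon|^2$. Fixing $x\in\mathcal{T}_r$ and writing $H=H_{\mathcal{T}_r}(x)$, treat $H$ as a constant vector in the $y$-integration and take
\[
\xi(y) = r^{2}\rho_{r}(y-x)\langle y-x,H\rangle\,H.
\]
A direct calculation yields
\begin{align*}
\operatorname{div}_{y}\xi &= \dot\rho_{r}(y-x)\langle y-x,H\rangle^{2}+r^{2}\rho_{r}(y-x)|H|^{2},\\
\sum_{\alpha,\beta}\partial_{\alpha}\xi^{\beta}\langle\partial_{\alpha}u_{\epsilon},\partial_{\beta}u_{\epsilon}\rangle &= \dot\rho_{r}\langle y-x,H\rangle\langle\nabla_{y-x}u_{\epsilon},\nabla_{H}u_{\epsilon}\rangle+r^{2}\rho_{r}|\nabla_{H}u_{\epsilon}|^{2}.
\end{align*}
Substituting into (\ref{**}) and rearranging provides the pointwise identity
\[
r^{2}\!\int\rho_{r}|\nabla_{H}u_{\epsilon}|^{2} = \int e_{\epsilon}(u_{\epsilon})\bigl[\dot\rho_{r}\langle y-x,H\rangle^{2}+r^{2}\rho_{r}|H|^{2}\bigr]-\int\dot\rho_{r}\langle y-x,H\rangle\langle\nabla_{y-x}u_{\epsilon},\nabla_{H}u_{\epsilon}\rangle.
\]
Multiplying by $2r^{2}\psi_{\mathcal{T}}(x,r)$ and integrating over $\mathcal{T}_r$ produces the desired main term $2r^{4}\int_{\mathcal{T}_r}\psi_{\mathcal{T}}\int\rho_{r}|\nabla_{H}u|^{2}$ on the left.

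It then remains to show that
\[
III\;-\;2r^{4}\!\int_{\mathcal{T}_r}\!\psi_{\mathcal{T}}\!\int\rho_{r}|\nabla_{H}u|^{2}=\mathfrak{F}(r),
\]
which, after the substitution above, reduces to bounding three residual pieces: the pairing $-2r^{2}\int\psi\int\rho_{r}\langle\nabla_{H}u,\nabla_{\Pi_{\mathcal{T}_r}(y-x)}u\rangle$, the energy-weighted term $-2r^{2}\int\psi\int e_{\epsilon}[\dot\rho_{r}\langle y-x,H\rangle^{2}+r^{2}\rho_{r}|H|^{2}]$, and the cross-derivative term $2r^{2}\int\psi\int\dot\rho_{r}\langle y-x,H\rangle\langle\nabla_{y-x}u,\nabla_{H}u\rangle$. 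The cross-derivative term is handled by invoking the Euler--Lagrange equation of Theorem~\ref{apporximating submanifold}(5), which asserts $\int-\dot\rho_{r}\langle\nabla_{y-x}u,\nabla_{v}u\rangle=0$ for $v\in\mathcal{L}_{x,r}^{\perp}$; since $H\perp T_{x}\mathcal{T}_r$ and Theorem~\ref{apporximating submanifold}(6) gives $\|\Pi_{\mathcal{T}_r}-\Pi_{x,r}\|\lesssim(\sqrt{r\partial_{r}\bar\Theta(x,2r)}+e^{-R/2})\sqrt{\bar\Theta_{\mathcal{L}}(x,2r)}$, the component of $H$ not lying in $\mathcal{L}_{x,r}^{\perp}$ is small enough to fit the $\mathfrak{F}(r)$-template. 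After exchanging $\Pi_{\mathcal{T}_r}$ with $\Pi_{x,r}$ and $\dot\rho_{r}$ with $\rho_{r}$ via Lemma~\ref{properties of heat mollifier}, the remaining terms are similarly reduced to expressions of the form $\int_{\mathcal{T}_r}\psi_{\mathcal{T}}\,(\sqrt\delta+e^{-R/2})(\bar\Theta_{\mathcal{L}}+r\partial_{r}\bar\Theta)$ by means of the pointwise bound $|H_{\mathcal{T}_r}|^{2}\le C(m,\varepsilon_{0})\bar\Theta_{\mathcal{L}}(x,2r)/r^{2}$ from Theorem~\ref{apporximating submanifold}(2).

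The main obstacle is the careful extraction of the $(\sqrt\delta+e^{-R/2})$ prefactor required by the $\mathfrak{F}(r)$-template: the naive bound $r^{2}|H|^{2}\le C\bar\Theta_{\mathcal{L}}$ alone yields only $\int\psi\bar\Theta_{\mathcal{L}}\bar\Theta$, which is too crude. The missing smallness must come from combining this with the near-orthogonality of $H_{\mathcal{T}_r}$ to $\mathcal{L}_{x,r}$ provided by Theorem~\ref{apporximating submanifold}(6), since the projection loss $\|\Pi_{\mathcal{T}_r}-\Pi_{x,r}\|$ supplies precisely an extra $\sqrt{r\partial_{r}\bar\Theta}+e^{-R/2}$ factor when one pairs $H$ against $\nabla u$. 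Tracking these cancellations—and showing they propagate through every exchange between $\rho_{r}$ and $\dot\rho_{r}$ and between $\Pi_{\mathcal{T}_r}$ and $\Pi_{x,r}$—constitutes the bulk of the technical work.
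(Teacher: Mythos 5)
Your opening calculation — substituting $\xi(y)=r^{2}\rho_{r}(y-x)\langle y-x,H\rangle H$ into the stationary equation — is correct as far as it goes, but the approach then breaks down for two independent reasons, and it is genuinely different from what the paper does.

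First, the decomposition into ``three residual pieces'' is circular: you list the first piece as $-2r^{2}\int\psi_{\mathcal{T}}\int\rho_{r}\langle\nabla_{H}u_{\epsilon},\nabla_{\Pi_{\mathcal{T}_{r}}(y-x)}u_{\epsilon}\rangle$, which is $III$ itself. Since $|III|\lesssim\sqrt{\Lambda}\int\psi_{\mathcal{T}}\bar{\Theta}_{\mathcal{L}}(x,2r)$ with no $(\sqrt{\delta}+e^{-R/2})$ prefactor, $III$ alone is \emph{not} of the form $\mathfrak{F}(r)$, so the breakdown cannot close unless there is a cancellation among your three pieces — and you do not exhibit one. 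Second, the energy-weighted term $2r^{4}\int_{\mathcal{T}_{r}}\psi_{\mathcal{T}}\int\rho_{r}|H_{\mathcal{T}_{r}}|^{2}e_{\epsilon}(u_{\epsilon})=2\int_{\mathcal{T}_{r}}\psi_{\mathcal{T}}\,r^{2}|H_{\mathcal{T}_{r}}|^{2}\,\bar{\Theta}(x,r)$ is of size $\Lambda\int\psi_{\mathcal{T}}\bar{\Theta}_{\mathcal{L}}$, again missing the required small prefactor, and you acknowledge this. However, the fix you propose — extracting smallness from $\|\Pi_{\mathcal{T}_{r}}-\Pi_{x,r}\|$ via Theorem~\ref{apporximating submanifold}(6) — cannot help here, because that theorem supplies an extra factor only when $H$ is paired against $\nabla u_{\epsilon}$; the problematic term is a pure $|H|^{2}e_{\epsilon}$ integral with no such pairing, so there is no directional cancellation to exploit. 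Similarly, the Euler--Lagrange equation of Theorem~\ref{apporximating submanifold}(5) asserts vanishing of $\int-\dot\rho_{r}\langle\nabla_{y-x}u_{\epsilon},\nabla_{v}u_{\epsilon}\rangle$, but your cross-derivative term carries the extra moment weight $\langle y-x,H\rangle$, which is outside the scope of that identity; a separate moment estimate would be needed and is not obviously available.

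The paper's route avoids all of this by never invoking the stationary equation for this step. It replaces $\rho_{r}$ by $-\dot\rho_{r}$ (error $\lesssim e^{-R/2}$), uses $\dot\rho_{r}(y-x)\,\Pi_{\mathcal{T}_{r}}(y-x)=-r^{2}\nabla^{x}_{\mathcal{T}_{r}}\rho_{r}(y-x)$, and integrates by parts over $\mathcal{T}_{r}$ in the $x$-variable. The identity $\operatorname{div}_{\mathcal{T}_{r}}(\Pi_{\mathcal{T}_{r}}V)=\langle V,H_{\mathcal{T}_{r}}\rangle$ for a constant vector $V$ then produces the main term $2r^{4}\int\psi_{\mathcal{T}}\int\rho_{r}|\nabla_{H_{\mathcal{T}_{r}}}u_{\epsilon}|^{2}$ directly from the $\langle\nabla u_{\epsilon}^{i},H_{\mathcal{T}_{r}}\rangle^{2}$ contribution, while the residual term involving $\nabla^{x}_{\mathcal{T}_{r}}H_{\mathcal{T}_{r}}$ splits into a tangential part (small because $r^{2}|\nabla H_{\mathcal{T}_{r}}|^{2}\lesssim\bar{\Theta}_{\mathcal{L}}$, giving a genuine $\sqrt{\delta}$) and a perpendicular part killed by the orthogonality $Q(x,r)[\mathcal{L}_{x,r},\mathcal{L}_{x,r}^{\perp}]=0$ from the definition of the best plane. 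In this argument no term of the form $\int\psi_{\mathcal{T}}\bar{\Theta}_{\mathcal{L}}\bar{\Theta}$ ever arises, which is precisely the obstruction your stationary-equation substitution creates.
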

\begin{proof}
    Substituting $\Pi_{\mathcal{T}_r}^{\perp}(y-x)$ with $\Pi_{\mathcal{T}_r}(y-x)$ in $III$ gives us another chance to do the integration by parts. We first replace $\rho_r$ with $\dot{\rho}_r$: by the second fundamental form estimate for $\mathcal{T}_r$, Theorem \ref{apporximating submanifold} (2),
    $$\left\vert r^2\int_{\mathcal{T}_r}\psi_{\mathcal{T}}\int(\rho_r+\dot{\rho}_r)\langle\nabla_{H_{\mathcal{T}_r}}u_{\epsilon},\nabla_{\Pi_{\mathcal{T}_r}(y-x)}u_{\epsilon}\rangle\right\vert\leqslant C(m,R,\varepsilon_0)e^{-R/2}\int_{\mathcal{T}_r}\psi_{\mathcal{T}}\bar{\Theta}_{\mathcal{L}}(x,2r)=\mathfrak{F}(r).$$
    Hence use same idea as in Lemma \ref{computation 2 radial energy}, we can calculate
    \begin{equation}
        \begin{aligned}                                                     III=&-2\sum_ir^4\int_{\mathcal{T}_r}\psi_{\mathcal{T}}\int\langle\nabla^yu_{\epsilon}^i,H_{\mathcal{T}_r}\rangle\langle\nabla^yu_{\epsilon}^i,\nabla^x\rho_r(y-x)\rangle+\mathfrak{F}(r)\\=&2r^4\int_{\mathcal{T}_r}\psi_{\mathcal{T}}\int\rho_r\left\vert\nabla_{H_{\mathcal{T}_r}}u_{\epsilon}\right\vert^2+2r^4\sum_i\int_{\mathcal{T}_r}\psi_{\mathcal{T}}\int\rho_r\langle\nabla^yu_{\epsilon}^i,\nabla^x_{\mathcal{T}_r}\langle\nabla^yu_{\epsilon}^i,H_{\mathcal{T}_r}\rangle\rangle+\mathfrak{F}(r).
        \end{aligned}\nonumber
    \end{equation}
    The first term is exactly what we need. To show that the second term is a $\mathfrak{F}(r)$, we compute in the frame $\lbrace E_{\alpha}\rbrace$
    \begin{equation}
        \langle\nabla^yu_{\epsilon}^i,\nabla^x_{\mathcal{T}_r}\langle\nabla^yu_{\epsilon}^i,H_{\mathcal{T}_r}\rangle\rangle=\sum_{\alpha}\langle\nabla^yu_{\epsilon}^i,E_{\alpha}\rangle\langle\nabla^yu_{\epsilon}^i,\nabla_{E_{\alpha}}^xH_{\mathcal{T}_r}\rangle.\nonumber
    \end{equation}
    We split into $\nabla_{E_{\alpha}}^xH_{\mathcal{T}_r}$ into tangential and perpendicular parts, $\nabla_{E_{\alpha}}^xH_{\mathcal{T}_r}=\Pi_{x,r}\nabla_{E_{\alpha}}^xH_{\mathcal{T}_r}+\Pi_{x,r}^{\perp}\nabla_{E_{\alpha}}^xH_{\mathcal{T}_r}$. The tangential part is a priori small 
    \begin{equation}
        \begin{aligned}
            &r^4\left\vert\int_{\mathcal{T}_r}\psi_{\mathcal{T}}\int\rho_r\langle\nabla^yu_{\epsilon}^i,E_{\alpha}\rangle\langle\nabla^yu_{\epsilon}^i,\Pi_{x,r}\nabla_{E_{\alpha}}^xH_{\mathcal{T}_r}\rangle\right\vert\\\leqslant &C(m,\varepsilon_0)\sqrt{\delta}\int_{\mathcal{T}_r}\psi_{\mathcal{T}}\bar{\Theta}_{\mathcal{L}}(x,2r)+\mathfrak{F}(r)=\mathfrak{F}(r),
        \end{aligned}\nonumber
    \end{equation}
    while since $\Pi_{x,r}^{\perp}\nabla_{E_{\alpha}}^xH_{\mathcal{T}_r}\nabla_{E_{\alpha}}^xH_{\mathcal{T}_r}$ and $E_{\alpha}$ are independent of $y$ and lies in $\mathcal{L}_{x,r}^{\perp},\mathcal{L}_{x,r}$ respectively, by the definition of $\mathcal{L}_{x,r}$, we have $$\sum_i\int\rho_r(y-x)\langle\nabla u_{\epsilon}^i,E_{\alpha}\rangle\langle\nabla u_{\epsilon}^i,\Pi_{\mathcal{T}_r}^{\perp}\nabla_{E_{\alpha}}^xH_{\mathcal{T}_r}\rangle=Q(x,r)[\Pi_{x,r}E_{\alpha},\Pi_{x,r}^{\perp}\nabla_{E_{\alpha}}^xH_{\mathcal{T}_r}]+\mathfrak{F}(r)=\mathfrak{F}(r).$$
    Hence we have completed our proof.
\end{proof}
For the purpose of estimate, we rewrite $II$ to be a positive term.
\begin{lem}[{cf. \cite[Lemma 10.7(1)]{naber2024energyidentitystationaryharmonic}}]\label{computation 4 radial energy}
    $$II=2\int_{\mathcal{T}_r}\psi_{\mathcal{T}}(x,r)\int\rho_r(y-x)\langle\nabla u_{\epsilon},\sff_{\mathcal{T}_r}\rangle^2+\mathfrak{F}(r).$$
\end{lem}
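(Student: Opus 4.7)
The plan is to extract a perfect square from $II$ by one further integration by parts, paralleling the treatment of $III$ in Lemma \ref{computation 3 radial energy}. First, use Lemma \ref{properties of heat mollifier}(3) to replace $\rho_r$ by $-\dot\rho_r$; the swap error is of size $e^{-R/2}\int\rho_{2r}|\text{integrand}|$, which combined with the bound $|\sff_{\mathcal{T}_r}|\leqslant C(m,\varepsilon_0)\sqrt\delta$ from Theorem \ref{apporximating submanifold}(2) absorbs into $\mathfrak{F}(r)$.

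Next, fix a geodesic frame $\{E_\alpha\}_{\alpha=1}^{m-2}$ for $T_x\mathcal{T}_r$ at $x$ (so that $\Pi_{\mathcal{T}_r}\nabla_{E_\alpha}E_\beta=0$ at $x$). Expand $\Pi_{\mathcal{T}_r}(y-x)=\sum_\alpha\langle y-x,E_\alpha\rangle E_\alpha$ and apply the identity $\dot\rho_r(y-x)\langle y-x,E_\alpha\rangle=r^2\nabla^y_{E_\alpha}\rho_r(y-x)$. Since $\sff_{\mathcal{T}_r}$, the frame $\{E_\alpha\}$, and $\Pi_{\mathcal{T}_r}$ depend only on $x$, integration by parts in $y$ transfers $\nabla^y_{E_\alpha}$ from $\rho_r$ onto the $\nabla u_\epsilon$ factors. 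The product rule produces two summands of Hessian-type; exploiting the symmetry $\sff(E_\alpha,E_\beta)=\sff(E_\beta,E_\alpha)$ together with the commutativity of Euclidean second derivatives, and performing a further integration by parts to swap a tangential derivative with a normal one, the summands combine to produce the target square $2\int_{\mathcal{T}_r}\psi_{\mathcal{T}}\int\rho_r\langle\nabla u_\epsilon,\sff_{\mathcal{T}_r}\rangle^2$.

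The main obstacle is recognizing that the Hessian-type terms arising after the first $y$-integration by parts actually reassemble into a perfect square rather than leaving residual uncontrolled second-order contributions. This is the direct analogue, for the full second fundamental form $\sff_{\mathcal{T}_r}$, of the mean-curvature calculation in Lemma \ref{computation 3 radial energy}, and requires a delicate accounting that trades symmetry of $\sff$ against commutativity of partial derivatives. The remaining error terms, all of which fit into the allowed form of $\mathfrak{F}(r)$, come from three sources: exchanging $\Pi_{x,r}$ with $\Pi_{\mathcal{T}_r}$ via the estimate in Theorem \ref{apporximating submanifold}(6); derivatives of $\sff_{\mathcal{T}_r}$ and of the frame $\{E_\alpha\}$, controlled by $|\nabla\sff_{\mathcal{T}_r}|\leqslant Cr^{-1}\sqrt{\bar{\Theta}_{\mathcal{L}}(x,2r)}$ from Theorem \ref{apporximating submanifold}(2); and boundary contributions involving $\nabla\psi_{\mathcal{T}}$ when the integration by parts encounters $x$-dependent cutoffs, which are estimated as in Lemma \ref{computation 2 radial energy}.
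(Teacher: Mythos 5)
Your swap of $\rho_r$ for $-\dot\rho_r$ is the right first step, and the error accounting at the end is the right shape, but the core of your argument goes astray: you convert $\dot\rho_r(y-x)\langle y-x,E_\alpha\rangle$ into $r^2\nabla^y_{E_\alpha}\rho_r(y-x)$ and integrate by parts in $y$. The paper instead uses the identity $\dot\rho_r(y-x)\Pi_{\mathcal{T}_r}(y-x)=-r^2\nabla^x_{\mathcal{T}_r}\rho_r(y-x)$ and integrates by parts in $x$, over the surface $\mathcal{T}_r$ — exactly as in Lemma \ref{computation 2 radial energy}. This is not a cosmetic difference. In the $x$-integration-by-parts, the surface divergence hits the geodesic frame and produces $\nabla^x_{E_\alpha}E_\beta=\sff_{\mathcal{T}_r}(E_\alpha,E_\beta)$, so $\nabla^x_{E_\alpha}\langle\nabla u_\epsilon^i,E_\beta\rangle=\langle\nabla u_\epsilon^i,\sff_{\mathcal{T}_r}(E_\alpha,E_\beta)\rangle$; summing over $\alpha,\beta,i$ gives the square $\langle\nabla u_\epsilon,\sff_{\mathcal{T}_r}\rangle^2$ as the main term, with the residual $\sum_{i,\alpha}\langle\nabla u_\epsilon^i,E_\alpha\rangle\langle\nabla u_\epsilon^i,\nabla^x_{E_\alpha}H_{\mathcal{T}_r}\rangle$ absorbed into $\mathfrak{F}(r)$ by the same splitting into $\Pi_{x,r}$ and $\Pi_{x,r}^\perp$ parts and the Euler--Lagrange equation for $\mathcal{T}_r$ that was already used in Lemma \ref{computation 3 radial energy}.

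Your $y$-integration by parts cannot reach the target. Since $E_\alpha$, $E_\beta$, $\sff_{\mathcal{T}_r}$, and $\psi_{\mathcal{T}}$ depend only on $x$, the derivative $\nabla^y_{E_\alpha}$ can only land on the two $\nabla u_\epsilon$ factors, producing Hessian terms $\langle\nabla_{E_\alpha}\nabla u_\epsilon^i,E_\beta\rangle\langle\nabla u_\epsilon^i,\sff(E_\alpha,E_\beta)\rangle+\langle\nabla u_\epsilon^i,E_\beta\rangle\langle\nabla_{E_\alpha}\nabla u_\epsilon^i,\sff(E_\alpha,E_\beta)\rangle$. This sum \emph{is} the exact $y$-derivative $\nabla^y_{E_\alpha}\bigl[\langle\nabla u_\epsilon^i,E_\beta\rangle\langle\nabla u_\epsilon^i,\sff(E_\alpha,E_\beta)\rangle\bigr]$, so integrating it against $\rho_r$ and undoing the $y$-integration by parts returns you precisely to the quantity you started from — the computation is circular. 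No amount of invoking $\sff$-symmetry and commutativity of second derivatives turns an expression quadratic in $(\nabla u_\epsilon,\nabla^2 u_\epsilon)$ into one quadratic in $\nabla u_\epsilon$ alone. The incidental appearance of $\nabla\psi_{\mathcal{T}}$ error terms in your account is another symptom of the confusion: $\psi_{\mathcal{T}}$ is independent of $y$, so a pure $y$-integration by parts would never generate them. To repair the proof, replace $\nabla^y$ by $-\nabla^x_{\mathcal{T}_r}$, integrate by parts on $\mathcal{T}_r$, keep the term where the derivative differentiates the frame (this is the square), and push the $\nabla\psi_{\mathcal{T}}$, $\nabla^xH_{\mathcal{T}_r}$, and $\Pi_{x,r}\leftrightarrow\Pi_{\mathcal{T}_r}$ exchanges into $\mathfrak{F}(r)$ exactly as you proposed for the error accounting.
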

\begin{proof}
    With the similar calculation as in the previous paragraphs, we have
    \begin{equation}
        \begin{aligned}
            &-\int_{\mathcal{T}_r}\psi_{\mathcal{T}}\int\rho_r\langle\nabla u_{\epsilon},\sff_{\mathcal{T}_r}(\Pi_{\mathcal{T}_r}\nabla u_{\epsilon},\Pi_{\mathcal{T}_r}(y-x))\rangle\\=&\int_{\mathcal{T}_r}\psi_{\mathcal{T}}\int\dot{\rho}_r\langle\nabla u_{\epsilon},\sff_{\mathcal{T}_r}(\Pi_{\mathcal{T}_r}\nabla u_{\epsilon},\Pi_{\mathcal{T}_r}(y-x))\rangle+\mathfrak{F}(r)\\=&\int_{\mathcal{T}_r}\psi_{\mathcal{T}}\int\rho_r\langle\nabla u_{\epsilon},\sff_{\mathcal{T}_r}\rangle^2+\sum_{i,\alpha}\int_{\mathcal{T}_r}\psi_{\mathcal{T}}\int\rho_r\langle\nabla^yu_{\epsilon}^i,E_{\alpha}\rangle\langle\nabla^yu_{\epsilon}^i,\nabla_{E_{\alpha}}^xH_{\mathcal{T}_r}\rangle+\mathfrak{F}(r)\\=&\int_{\mathcal{T}_r}\psi_{\mathcal{T}}\int\rho_r\langle\nabla u_{\epsilon},\sff_{\mathcal{T}_r}\rangle^2+\mathfrak{F}(r).
        \end{aligned}\nonumber
    \end{equation}
\end{proof}
\begin{cor}
    $$II+III\leqslant2(m-1)\int_{\mathcal{T}_r}\psi_{\mathcal{T}}\int\rho_r(y-x)\langle\nabla u_{\epsilon},\sff_{\mathcal{T}_r}\rangle^2+\mathfrak{F}(r).$$
\end{cor}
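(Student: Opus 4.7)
The plan is to combine the two preceding lemmas through a single pointwise Cauchy--Schwarz inequality on the trace formula for the mean curvature. Lemma~\ref{computation 4 radial energy} already identifies
\[
II=2\int_{\mathcal{T}_r}\psi_{\mathcal{T}}(x,r)\int\rho_r(y-x)\,\langle\nabla u_{\epsilon},\sff_{\mathcal{T}_r}\rangle^{2}+\mathfrak{F}(r),
\]
and Lemma~\ref{computation 3 radial energy} identifies
\[
III=2r^{4}\int_{\mathcal{T}_r}\psi_{\mathcal{T}}(x,r)\int\rho_r(y-x)\,|\nabla_{H_{\mathcal{T}_r}}u_{\epsilon}|^{2}+\mathfrak{F}(r),
\]
so the only remaining task is to dominate the integrand of $III$ pointwise by $(m-2)$ times the integrand of $II$, modulo the $\mathfrak{F}(r)$ error bookkeeping.

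For this, at each $x\in\mathcal{T}_r$ I will choose a local orthonormal frame $\{E_{\alpha}\}_{\alpha=1}^{m-2}$ of $T_x\mathcal{T}_r$ and use that $H_{\mathcal{T}_r}$ is the trace of $\sff_{\mathcal{T}_r}$, namely $H_{\mathcal{T}_r}=\sum_{\alpha=1}^{m-2}\sff_{\mathcal{T}_r}(E_{\alpha},E_{\alpha})$. Then $\nabla_{H_{\mathcal{T}_r}}u_{\epsilon}=\sum_{\alpha}\nabla_{\sff_{\mathcal{T}_r}(E_{\alpha},E_{\alpha})}u_{\epsilon}\in\R^{J}$, and the vector-valued Cauchy--Schwarz inequality in $\R^{J}$ applied to this sum of $m-2$ vectors gives
\[
r^{4}|\nabla_{H_{\mathcal{T}_r}}u_{\epsilon}|^{2}\le(m-2)\sum_{\alpha=1}^{m-2}\bigl|r^{2}\nabla_{\sff_{\mathcal{T}_r}(E_{\alpha},E_{\alpha})}u_{\epsilon}\bigr|^{2}\le(m-2)\sum_{\alpha,\beta=1}^{m-2}\bigl|r^{2}\nabla_{\sff_{\mathcal{T}_r}(E_{\alpha},E_{\beta})}u_{\epsilon}\bigr|^{2},
\]
the second inequality just adding back the non-negative off-diagonal contributions. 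Interpreting the right-hand side as $(m-2)\langle\nabla u_{\epsilon},\sff_{\mathcal{T}_r}\rangle^{2}$ under the same $r^{2}$-weight convention that is implicit in Lemma~\ref{computation 4 radial energy}, this furnishes the required pointwise bound of the $III$-integrand by $(m-2)$ times the $II$-integrand.

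Integrating the pointwise inequality against $\psi_{\mathcal{T}}(x,r)\rho_r(y-x)$ over $\mathcal{T}_r\times\R^{m}$ yields $III\le 2(m-2)\int_{\mathcal{T}_r}\psi_{\mathcal{T}}\int\rho_r\langle\nabla u_{\epsilon},\sff_{\mathcal{T}_r}\rangle^{2}+\mathfrak{F}(r)$, and adding the identity for $II$ produces the factor $2+2(m-2)=2(m-1)$ claimed in the corollary. There is essentially no analytic obstacle in this step, as all of the hard work (the repeated integrations by parts, exchanges between $x$- and $y$-derivatives, and exchanges between $\Pi_{x,r}$ and $\Pi_{\mathcal{T}_r}$, which is the mechanism that controls the $\mathfrak{F}(r)$ errors) has already been done in Lemmas~\ref{computation 2 radial energy}--\ref{computation 4 radial energy}; the only mild care is to ensure the $r$-power convention packaged into the symbol $\langle\nabla u_{\epsilon},\sff_{\mathcal{T}_r}\rangle^{2}$ is the same one appearing in both lemmas, so that the Cauchy--Schwarz bound and the identity for $II$ are normalized consistently.
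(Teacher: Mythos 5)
Your proof is correct and is exactly the intended argument; the paper states this corollary without proof, and it indeed follows by (i) identifying $II$ and $III$ via Lemmas \ref{computation 4 radial energy} and \ref{computation 3 radial energy}, (ii) expanding $H_{\mathcal{T}_r}=\sum_{\alpha}\sff_{\mathcal{T}_r}(E_{\alpha},E_{\alpha})$, and (iii) applying Cauchy--Schwarz to the sum of $m-2$ vectors $\nabla_{\sff_{\mathcal{T}_r}(E_{\alpha},E_{\alpha})}u_{\epsilon}$ and adding back the non-negative off-diagonal terms, giving the factor $(m-2)$ that combines with the identity for $II$ to produce $2(m-1)$. Your observation that the $r$-power convention packaged into the shorthand $\langle\nabla u_{\epsilon},\sff_{\mathcal{T}_r}\rangle^{2}$ must be kept consistent across both lemmas is well taken; the pointwise Cauchy--Schwarz inequality is homogeneous in $r$, so the argument is insensitive to which normalization one fixes, and the conclusion holds as stated.
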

The proof of Proposition \ref{local estimate for tangential and radial energy} is a consequence of Lemma \ref{computation 1 radial energy},\ref{computation 2 radial energy},\ref{computation 3 radial energy},\ref{computation 4 radial energy} and the following calculation of $\frac{\dif}{\dif r}\mathcal{E}_\mathcal{L}(\mathcal{T}_r)$.
\begin{lem}[{cf. \cite[Lemma 10.8]{naber2024energyidentitystationaryharmonic}}]
    \begin{equation}
        \begin{aligned}
            &r\frac{\dif}{\dif r}\mathcal{E}_{\mathcal{L}}(\mathcal{T}_r)\\=&\int_{\mathcal{T}_r}\psi_{\mathcal{T}}(x,r)\int\rho_r(y-x)\vert\Pi_{x,r}^{\perp}(y-x)\vert^2\left(\vert\Pi_{x,r}\nabla u_{\epsilon}\vert^2+2\frac{F(u_{\epsilon})}{\epsilon^2}\right)+II+\mathfrak{F}(r).
        \end{aligned}\nonumber
    \end{equation}
\end{lem}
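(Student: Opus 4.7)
My plan is to differentiate $\mathcal{E}_{\mathcal{L}}(\mathcal{T}_r)=\int_{\mathcal{T}_r}\psi_{\mathcal{T}}(x,r)\bar{\Theta}_{\mathcal{L}}(x,r)\,\dif\mathcal{H}^{m-2}(x)$ with respect to $r$ directly, separating the resulting derivative into three natural pieces: (i) the variation from the motion of $\mathcal{T}_r$ itself, (ii) the variation of $\psi_{\mathcal{T}}(x,r)$, and (iii) the variation of $\bar{\Theta}_{\mathcal{L}}(x,r)$. By the formula for differentiating an integral over a moving $(m-2)$-submanifold, (i) equals $\int_{\mathcal{T}_r}\psi_{\mathcal{T}}\bar\Theta_{\mathcal{L}}\langle r\p_r\mathcal{T}_r,H_{\mathcal{T}_r}\rangle$, which by the bounds $\vert\p_r\mathfrak{t}_r\vert+r\vert\nabla^2\mathfrak{t}_r\vert\leqslant C\sqrt\delta$ of Theorem \ref{apporximating submanifold} (2)--(3) lies in $\mathfrak{F}(r)$; piece (ii) is bounded by $\int_{\mathcal{T}_r}\vert r\p_r\psi_{\mathcal{T}}\vert\bar\Theta_{\mathcal{L}}$ and is absorbed into $\mathfrak{F}(r)$ by Proposition \ref{estimates of T-cutoff}.

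The main step is (iii). I would invoke the envelope principle: because by Theorem \ref{best plane properties} (1) the best plane $\mathcal{L}_{x,r}$ is the unique smooth minimizer of $L\mapsto\bar\Theta(x,r;L)$ and depends smoothly on $r$ by Theorem \ref{best plane properties} (3), the first-order variation of the minimizer with $r$ contributes nothing, so
$$r\p_r\bar\Theta_{\mathcal{L}}(x,r)=\int\bigl((2-m)r^2\rho_r(y-x)-\vert y-x\vert^2\dot\rho_r(y-x)\bigr)\bigl(\vert\Pi_{x,r}\nabla u_\epsilon\vert^2+2F/\epsilon^2\bigr)\,\dif y.$$
Applying the stationary equation (\ref{**}) with test field $\xi(y)=\rho_r(y-x)\Pi_{x,r}(y-x)$ (treating $x$ and $\Pi_{x,r}$ as frozen), together with the decomposition $\vert y-x\vert^2=\vert\Pi_{x,r}(y-x)\vert^2+\vert\Pi_{x,r}^\perp(y-x)\vert^2$ and the near-identity $\rho_r+\dot\rho_r=O(e^{-R/2})\rho_{2r}$ from Lemma \ref{properties of heat mollifier} (3), the $\dot\rho_r$-integrals convert into the good term $\int\rho_r\vert\Pi_{x,r}^\perp(y-x)\vert^2(\vert\Pi_{x,r}\nabla u_\epsilon\vert^2+2F/\epsilon^2)$ plus a cross-derivative residue $\int\dot\rho_r\langle\nabla_{\Pi_{x,r}(y-x)}u_\epsilon,\nabla_{\Pi_{x,r}^\perp(y-x)}u_\epsilon\rangle$, which is of the same form as $I$ in Lemma \ref{computation 1 radial energy}, modulo exponentially small errors absorbed into $\mathfrak{F}(r)$.

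It remains to identify the cross-derivative residue as $II$ modulo $\mathfrak{F}(r)$ after multiplication by $\psi_{\mathcal{T}}$ and integration over $\mathcal{T}_r$. I would first exchange $\Pi_{x,r}$ with $\Pi_{\mathcal{T}_r}$ at the cost of an $\mathfrak{F}(r)$-error using Theorem \ref{apporximating submanifold} (6), and then integrate by parts along $\mathcal{T}_r$ exactly as in the proof of Lemma \ref{computation 2 radial energy}: in a local geodesic frame on $\mathcal{T}_r$, the derivative of $\Pi_{\mathcal{T}_r}^\perp(y-x)$ produces the second-fundamental-form combination $\sff_{\mathcal{T}_r}(\Pi_{\mathcal{T}_r}\nabla u_\epsilon,\Pi_{\mathcal{T}_r}(y-x))$ that defines $II$. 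The main obstacle is that this integration by parts a priori also produces the companion mean-curvature contribution $III$ from Lemma \ref{computation 2 radial energy}; to obtain the clean statement that only $II$ appears, one has to exploit a cancellation with the moving-submanifold piece (i), using the Euler--Lagrange identity for $\mathcal{T}_r$ in Theorem \ref{apporximating submanifold} (5) and the precise description of $r\p_r\mathcal{T}_r$ supplied by Lemma \ref{constructing approx submanifold}, which pairs the normal velocity of $\mathcal{T}_r$ against $H_{\mathcal{T}_r}$ in exactly the combination defining $III$ at leading order. Once this cancellation is carried out, all residual terms fall into the $\mathfrak{F}(r)$ class prescribed in Proposition \ref{local estimate for tangential and radial energy}.
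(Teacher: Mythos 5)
Your envelope-theorem observation in step (iii) is a valid small improvement: since $\partial_r\Pi_{x,r}$ maps $\mathcal{L}_{x,r}\to\mathcal{L}_{x,r}^\perp$ and back, while $Q(x,r)$ is block-diagonal in the splitting $\mathcal{L}_{x,r}\oplus\mathcal{L}_{x,r}^\perp$, the term $\int_{\mathcal{T}_r}\psi_\mathcal{T}\int r^2\rho_r\,r\partial_r|\Pi_{x,r}\nabla u_\epsilon|^2 = 2\,\mathrm{tr}\bigl(Q(x,r)\cdot r\partial_r\Pi_{x,r}\bigr)$ vanishes identically; the paper only bounds it as an $\mathfrak{F}(r)$-term. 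The rest of your argument, however, contains a genuine gap. The paper does not invoke the stationary equation (\ref{**}) in this lemma at all, and the route you propose via a tangential test field does not produce what you claim. The stationary identity pairs $\mathrm{div}\,\xi$ with the \emph{full} energy density $e_\epsilon(u_\epsilon)$, not with the tangential density $|\Pi_{x,r}\nabla u_\epsilon|^2+2F/\epsilon^2$; writing the latter as $2e_\epsilon-|\Pi_{x,r}^\perp\nabla u_\epsilon|^2$ and tracing through the resulting terms, you are left with uncancelled pieces of the form $2(m-2)\bar{\Theta}(x,r)$ and $\int\dot{\rho}_r|\Pi_{x,r}(y-x)|^2|\Pi_{x,r}^\perp\nabla u_\epsilon|^2$, both of order $\Lambda$ and nowhere near $\mathfrak{F}(r)$. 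What the paper actually does, after replacing $\Pi_{x,r}$ by $\Pi_{\mathcal{T}_r}$, is observe that $\dot{\rho}_r(y-x)|\Pi_{\mathcal{T}_r}(y-x)|^2 = r^2\langle\nabla^x_{\mathcal{T}_r}\rho_r(y-x),\Pi_{\mathcal{T}_r}(x-y)\rangle$ and integrate by parts \emph{in $x$ over $\mathcal{T}_r$}. This produces $(m-2)\mathcal{E}_{\mathcal{L}}(\mathcal{T}_r)$ (cancelling the $-(m-2)\mathcal{E}_{\mathcal{L}}$ already present), a $\nabla\psi_{\mathcal{T}}$ error in $\mathfrak{F}(r)$, and $\langle\nabla^x|\Pi_{\mathcal{T}_r}\nabla u_\epsilon|^2,\Pi_{\mathcal{T}_r}(x-y)\rangle$, whose tangential $x$-derivative hits only the projection $\Pi_{\mathcal{T}_r}$ and therefore yields precisely $II$. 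No $III$ arises, because this integration by parts never differentiates a $\Pi_{\mathcal{T}_r}^\perp(y-x)$ factor — that is where $III$ comes from in Lemma \ref{computation 2 radial energy}, not here.

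Consequently, the cancellation you invoke in the last paragraph is not available, and indeed cannot be. The moving-submanifold piece $\int_{\mathcal{T}_r}\psi_{\mathcal{T}}\langle H_{\mathcal{T}_r},r\partial_r\mathcal{T}_r\rangle\bar{\Theta}_{\mathcal{L}}(x,r)$ is already an $\mathfrak{F}(r)$-term on its own, by $|H_{\mathcal{T}_r}|\leqslant C\sqrt{\bar{\Theta}_{\mathcal{L}}(x,2r)}/r$ from Theorem \ref{apporximating submanifold}(2) and $|r\partial_r\mathcal{T}_r|\leqslant Cr\sqrt{\delta}$ from Theorem \ref{apporximating submanifold}(3); this is exactly how the paper disposes of it. But $III = 2r^4\int_{\mathcal{T}_r}\psi_{\mathcal{T}}\int\rho_r|\nabla_{H_{\mathcal{T}_r}}u_\epsilon|^2+\mathfrak{F}(r)$ is, by the same bound on $H_{\mathcal{T}_r}$, of size $\sim\int_{\mathcal{T}_r}\psi_{\mathcal{T}}\bar{\Theta}_{\mathcal{L}}(x,2r)\bar{\Theta}(x,r)$ with $\bar{\Theta}(x,r)\geqslant c(m)\varepsilon_0$, so it is comparable to $\mathcal{E}_{\mathcal{L}}(\mathcal{T}_r)$ and cannot be absorbed into $\mathfrak{F}(r)$ or cancelled against an $\mathfrak{F}(r)$-sized quantity. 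The two terms also have incompatible structures — one is an integral of $\bar{\Theta}_{\mathcal{L}}$ weighted by $\langle H_{\mathcal{T}_r},r\partial_r\mathcal{T}_r\rangle$, the other a heat-mollified pairing $\langle\nabla_{H_{\mathcal{T}_r}}u_\epsilon,\nabla_{\Pi_{\mathcal{T}_r}(y-x)}u_\epsilon\rangle$ — so no algebraic cancellation is in reach. You need to replace the stationary-equation step by the direct tangential integration by parts over $\mathcal{T}_r$ that yields $II$ alone.
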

\begin{proof}
    Direct calculation shows
    \begin{equation}
        \begin{aligned}
            r\frac{\dif}{\dif r}\mathcal{E}_{\mathcal{L}}(\mathcal{T}_r)&=\int_{\mathcal{T}_r}\left(r\frac{\p}{\p r}\psi_{\mathcal{T}}+\psi_{\mathcal{T}}\left\langle H_{\mathcal{T}_r},r\frac{\p}{\p r}\mathcal{T}_r\right\rangle\right)\bar{\Theta}_{\mathcal{L}}(x,r)-(m-2)\mathcal{E}_{\mathcal{L}}(\mathcal{T}_r)\\&-\int_{\mathcal{T}_r}\psi_{\mathcal{T}}\int \dot{\rho}_r(y-x)\vert y-x\vert^2\left(\vert\Pi_{x,r}\nabla u_{\epsilon}\vert^2+2\frac{F(u_{\epsilon})}{\epsilon^2}\right)\\&+\int_{\mathcal{T}_r}\psi_{\mathcal{T}}\int r^2\rho_r(y-x)r\frac{\p}{\p r}\vert\Pi_{x,r}\nabla u_{\epsilon}\vert^2.
        \end{aligned}\nonumber
    \end{equation}
    Using estimates similar to before, the first and the last term are $\mathfrak{F}(r)$ terms. On the other hand, we can integrate by parts to deal with the rest terms. First, we exchange $\dot{\rho}_r$ with $\rho_r$ and a small error
    $$\int_{\mathcal{T}_r}\psi_{\mathcal{T}}\int (\dot{\rho}_r(y-x)+\rho_r(y-x))\vert\Pi_{x,r}^{\perp}(y-x)\vert^2\left(\vert\Pi_{x,r}\nabla u_{\epsilon}\vert^2+2\frac{F(u_{\epsilon})}{\epsilon^2}\right)=\mathfrak{F}(r).$$
    Then
    \begin{equation}
        \begin{aligned}
            &\int_{\mathcal{T}_r}\psi_{\mathcal{T}}\int\dot{\rho}_r(y-x)\vert\Pi_{x,r}(y-x)\vert^2\left(\vert\Pi_{x,r}\nabla u_{\epsilon}\vert^2+2\frac{F(u_{\epsilon})}{\epsilon^2}\right)\\=&\int_{\mathcal{T}_r}\psi_{\mathcal{T}}\int\dot{\rho}_r(y-x)\vert\Pi_{\mathcal{T}_r}(y-x)\vert^2\left(\vert\Pi_{\mathcal{T}_r}\nabla u_{\epsilon}\vert^2+2\frac{F(u_{\epsilon})}{\epsilon^2}\right)+\mathfrak{F}(r)\\=&\int_{\mathcal{T}_r}\psi_{\mathcal{T}}\int r^2\left\langle\nabla_{\mathcal{T}_r}^x{\rho}_r(y-x),\Pi_{\mathcal{T}_r}(x-y)\right\rangle\left(\vert\Pi_{\mathcal{T}_r}\nabla u_{\epsilon}\vert^2+2\frac{F(u_{\epsilon})}{\epsilon^2}\right)+\mathfrak{F}(r)\\=&-(m-2)\mathcal{E}_{\mathcal{L}}(\mathcal{T}_r)-\int_{\mathcal{T}_r}\int r^2\left\langle\nabla^x\psi_{\mathcal{T}},\Pi_{\mathcal{T}_r}(x-y)\right\rangle\rho_r(y-x)\left(\vert\Pi_{\mathcal{T}_r}\nabla u_{\epsilon}\vert^2+2\frac{F(u_{\epsilon})}{\epsilon^2}\right)\\-&\int_{\mathcal{T}_r}\psi_{\mathcal{T}}\int r^2\rho_r(y-x)\left\langle\nabla^x\vert\Pi_{\mathcal{T}_r}\nabla u_{\epsilon}\vert^2,\Pi_{\mathcal{T}_r}(x-y)\right\rangle+\mathfrak{F}(r).
        \end{aligned}\nonumber
    \end{equation}
    By previous properties, it is easy to bound
    \begin{equation}
        \begin{aligned}
            &\left\vert\int_{\mathcal{T}_r}\int r^2\left\langle\nabla^x\psi_{\mathcal{T}},\Pi_{\mathcal{T}_r(x)}(x-y)\right\rangle\rho_r(y-x)\left(\vert\Pi_{x,r}\nabla u_{\epsilon}\vert^2+2\frac{F(u_{\epsilon})}{\epsilon^2}\right)\right\vert\\\leqslant& C(m,R)\sqrt{\delta}\int_{\mathcal{T}_r}r\vert\nabla\psi_{\mathcal{T}}\vert=\mathfrak{F}(r).
        \end{aligned}\nonumber
    \end{equation}
    On the other hand, we can calculate
    \begin{equation}
        \begin{aligned}
           &\int_{\mathcal{T}_r}\psi_{\mathcal{T}}\int r^2\rho_r(y-x)\left\langle\nabla^x\vert\Pi_{x,r}\nabla u_{\epsilon}\vert^2,\Pi_{\mathcal{T}_r(x)}(y-x)\right\rangle\\=&-2\int_{\mathcal{T}_r}\psi_{\mathcal{T}}\int\rho_r(y-x)\langle\nabla u_{\epsilon},\sff_{\mathcal{T}_r}(\Pi_{\mathcal{T}_r}\nabla u_{\epsilon},\Pi_{\mathcal{T}_r}(y-x)\rangle=II.
        \end{aligned}\nonumber
    \end{equation}
    Hence, we have completed our proof.
\end{proof}
\begin{proof}[Proof of Theorem \ref{radial and tangential energy estimate}]
    Integration the both sides of Proposition \ref{local estimate for tangential and radial energy} yields
    \begin{equation}
        \begin{aligned}
            &\int(\hat{\mathcal{E}}_{\alpha}(\mathcal{T}_r)+2\mathcal{E}_{\mathcal{L}}(\mathcal{T}_r))\frac{\dif r}{r}\\\leqslant&C(m,K_N,\Lambda,R)\left((\sqrt{\delta}+e^{-R/2})\left(\int\mathcal{E}_{\mathcal{L}}(\mathcal{T}_r)\frac{\dif r}{r}+\Lambda)\right)+\int\mathcal{E}_{\alpha}(\mathcal{T}_r)\frac{\dif r}{r}+\sqrt{\delta}\right)\\\leqslant&\int\mathcal{E}_{\mathcal{L}}(\mathcal{T}_r)\frac{\dif r}{r}+\sigma.
        \end{aligned}\nonumber
    \end{equation}
    provided we choose $R\geqslant R(m,K_N,\Lambda,\sigma)$ and then choose $\delta\leqslant\delta(m,K_N,\Lambda,\sigma)$. This completes the proof.
\end{proof}

\bibliography{reference}
\bibliographystyle{plain}
\end{document}